\def\cal{\mathcal}
\def\Bbb{\mathbb}
\def\Im{\text{\rm Im\,}}
\def\pad{\phi^a}
\def\pred{\phi_{\rm red}}
\def \supp {\text{\rm supp\,}}
\def\dist{\text{\rm dist\,}}
\def\A{{\cal A}}
\def\F{{\cal F}}
\def\M{{\cal M}}
\def\N{{\cal N}}
\def\S{{\cal S}}
\def\T{{\cal T}}
\def\CC{{\Bbb C}}
\def\NN{{\Bbb N}}
\def\bN{{\Bbb N}}
\def\bR{{\Bbb R}}
\def\RR{{\Bbb R}}
\def\vp{{\varphi}}
\def\al{{\alpha}}
\def\be{{\beta}}
\def\ga{{\gamma}}
\def\Ga{{\Gamma}}
\def\la{{\lambda}}
\def\om{{\omega}}
\def\x{(x_1,x_2)}
\def\y{(y_1,y_2)}
\def\pa{{\partial}}
\def\all{{\rm all}}
\def\ve{{\varepsilon}}
\def\si{{\sigma}}
\def\de{{\delta}}
\def\Om{{\Omega}}
\def\ka{{\kappa}}
\def\aol{{\left[ \begin{matrix} \al \\ l\end{matrix}\right ]}}
\def\dotol{{\left[ \begin{matrix} \cdot \\ l\end{matrix}\right ]}}
\def\bpm{\begin{pmatrix}}
\def\epm{\end{pmatrix}}
\def\noi{\noindent}
\def\bee{\begin{enumerate}}
\def\ee{\end{enumerate}}
\def\qed{\smallskip\hfill Q.E.D.\medskip}
\newtheorem{thm}{Theorem}[section]
\newtheorem{prop}[thm]{Proposition}
\newtheorem{proposition}[thm]{Proposition}
\newtheorem{cor}[thm]{Corollary}
\newtheorem{lemma}[thm]{Lemma}
\newtheorem{remark}[thm]{Remark}
\newtheorem{remarks}[thm]{Remarks}
\newtheorem{assumption}[thm]{Assumption}
\newtheorem{example}[thm]{Example}
\newtheorem{conjecture}{Conjecture}
\begin{document}


\title[problems of harmonic analysis related to  hypersurfaces]
{ Estimates for maximal functions  associated to hypersurfaces in $\bR^3$
with height $h<2:$ Part II\\
 \small A \lowercase{geometric conjecture and its proof for generic 2-surfaces} }

\author[S. Buschenhenke]{Stefan Buschenhenke}
\address{Mathematisches Seminar, C.A.-Universit\"at Kiel,
Heinrich-Hecht-Platz 6, D-24118 Kiel, Germany} \email{{\tt
buschenhenke@math.uni-kiel.de}}
\urladdr{{http://analysis.math.uni-kiel.de/buschenhenke/}}


\author[I. A. Ikromov]{Isroil A. Ikromov}
\address{Institute  of Mathematics, 
University Boulevard 15, 140104, Samarkand, Uzbekistan}
 \email{{\tt ikromov1@rambler.ru}}

\author[D. M\"uller]{Detlef M\"uller}
\address{Mathematisches Seminar, C.A.-Universit\"at Kiel,
Heinrich-Hecht-Platz 6, D-24118 Kiel, Germany} \email{{\tt
mueller@math.uni-kiel.de}}
\urladdr{{http://analysis.math.uni-kiel.de/mueller/}}

\thanks{2020 {\em Mathematical Subject Classification.}
42B25}
\thanks{{\em Key words and phrases.}
  Maximal operator, hypersurface, oscillatory integral, Newton diagram}
\thanks {We acknowledge the support for this work by the Deutsche Forschungsgemeinschaft under  DFG-Grant MU 761/11-2.
 }

\begin{abstract} 
In this article, we continue the study of 
$L^p$-boundedness   of the maximal operator $\M_S$ associated to averages along isotropic dilates of a given,  smooth hypersurface $S$ of finite type  in 3-dimensional Euclidean space which satisfies a natural transversality condition. An essentially complete answer to  this problem  had been given about ten years ago   by the last named two authors in joint work with  M. Kempe for the case where  the height $h$  of the given surface is at least two. The case where $h<2,$ and  where $S$ is contained in a sufficiently small neighborhood of a point $x^0\in S$ at which both principal curvatures vanish, had been treated in the first article (Part I)  of this series.

Here  we continue the study of the case  $h<2,$  by assuming   that  exactly one of the principal curvatures of $S$ does not vanish at $x^0.$ Such surfaces exhibit singularities of type $\A$ in the sense of Arnol'd's classification. We distinguish between two sub-types $\A^-$ and $\A^+.$ Under the assumption that $S$ is analytic, denoting by $p_c$ the minimal  Lebesgue exponent such that $\M_S$ is $L^p$-bounded for $p>p_c,$ we show that for sub-type $\A^-$ we have $p_c=\max\{3/2,  h\},$ whereas for surfaces of sub-type $\A^+$ which do not belong to an exceptional subclass $\A^e,$ we have $p_c=\max\{3/2, p_e, h\}.$  Here, $p_e:=2n_e/(n_e+1),$ where $n_e$ is a new quantity, called the effective multiplicity, which can be determined from Newton polyhedra associated to the given surface $S.$ Our conjecture is that  $p_c=\max\{3/2, p_e, h\}$ also for surfaces of class 
$\A^e.$

 We also state a conjecture on how the critical exponent $p_c$ might  be determined by means of a  geometric measure theoretic  condition, which measures in some way the order of contact of arbitrary ellipsoids with $S,$ even for hypersurfaces in arbitrary dimension, and show that this conjecture  holds  indeed true for all classes of 2-hypersurfaces $S$ for which we have gained an essentially complete understanding of $\M_S$ so far.

For surfaces of type $\A^e,$ we show that the methods devised in this paper allow at least to prove that $p_c\le \max\{3/2,2n/(n+1)\}.$ Our results lead in particular to a proof of a conjecture by Iosevich-Sawyer-Seeger  for arbitrary analytic 2-surfaces.

The study of the afore-mentioned  conjecture for surfaces of type $\A^e,$ which bears amazing connections to  combinations of cone multipliers with Fourier integral operators, will be left to the third paper in this series.
  \color{black}
  \end{abstract}

\maketitle


\tableofcontents

\thispagestyle{empty}

\setcounter{equation}{0}
\section{Introduction}\label{introduction}

Let $S$ be a smooth hypersurface in $\RR^d$ and let  $0\ne \rho\in
C_0^\infty(S)$ be a smooth non-negative function with compact
support.  Consider the associated averaging operators $A_t, t>0,$
given by
$$
A_tf(x):=\int_{S} f(x-ty) \rho(y) \,d\si(y),
$$
where $d\si$ denotes the surface measure on $S.$   The associated
maximal operator is given by
\begin{equation*}
\M f(x)=\M _\rho f(x):=\sup_{t>0}|A_tf(x)|, \quad ( x\in \RR^d).
\end{equation*}
If $S$ is compact and $\rho\equiv 1,$ we also  write $\M=:\M_S.$ 
We shall be interested in the question of $L^p$-boundedness of $\M,$ i.e., we would like to determine the range of  all $p\ge 1$ such that 
\begin{equation}\label{Mb}
\|\M f(x)\|_p\le C_p\|f\|_p \quad \text{for all} \ f\in \S.
\end{equation}
We therefore define the critical  exponent 
\begin{eqnarray*}
p_c&:=&\inf\{p\ge 1: \eqref{Mb}  \text{ holds true}\},
\end{eqnarray*}
so that \eqref{Mb} holds true when $p>p_c,$ but fails to be true for $p<p_c$ (what happens when $p=p_c$ is not captured by this critical exponent).

Clearly, since $\rho$ has compact support, this  problem can be localized to considering the contributions to $\M$ by  small neighborhoods of points $x^0\in S.$ 
Fixing such a point $x^0\in S,$  and assuming  that $\rho(x^0)\ne 0$ (in order to exclude mitigating effects through the vanishing of our  density $\rho$ at $x^0$), let us define the following local  critical exponent associated to this point:
\begin{eqnarray*}
p_c(x^0)&:=&\inf\{p\ge 1: \text{there is a neighborhood } U \text{ of }  x^0 \text{ such that }\\
&&\qquad\eqref{Mb}  \text{ holds true whenever }  \supp \rho\subset U \}.
\end{eqnarray*}

 \color{black}
 
Note  that by testing $\M$ on the characteristic function of the
unit ball in $\RR^d,$ it is easy to see  that a necessary condition
for $\M_S$ to be  bounded  on $L^p(\RR^d)$  is that  $p> d/(d-1),$  so that $p_c\ge d/(d-1),$ 
provided the transversality Assumption \ref{s1.1} below is satisfied.
\smallskip

In 1976,  E.~M.~Stein \cite{stein-sphere} proved that, conversely,
if $S$ is the Euclidean unit sphere in $\RR^d,\ d\ge 3, $  then the
corresponding spherical maximal operator is bounded  on $L^p(\RR^d)$
for every $p> d/(d-1).$ The analogous result in dimension  $d=2$ was
later proven by J.~Bourgain \cite{bourgain85}. The key property of
spheres which allows to prove  such  results is the non-vanishing of
the Gaussian curvature on spheres.  These results became  the
starting point for intensive  studies of various classes of maximal
operators associated to subvarieties. Stein's  monograph
\cite{stein-book} is an excellent reference to many of these
developments.
\smallskip

Here, we continue our study of this question for maximal functions $\M$ associated to analytic 2-hypersurfaces  in $\RR^3.$   As in the preceding article \cite{IKM-max}, and the first article \cite{bdim19} of this series, we shall work under the following transversality assumption on $S:$ 
\begin{assumption}[Transversality]\label{s1.1}
The affine tangent plane $x+T_xS$ to $S$ through $x$ does not pass
through the origin   for every $x\in S.$ Equivalently,
$x\notin T_xS$ for every $x\in S,$ so that $0\notin S$ and $x$ is
transversal to $S$ for every point $x\in S.$
\end{assumption}
Let us now fix a point $x^0\in S.$  We recall that the transversality
assumption allows us to find a linear change of coordinates in
$\RR^3$ so that in the new coordinates
 $S$ can locally be represented as the graph of a function $\phi,$  and that the norm of $\M$ when acting on $L^p(\RR^3)$ is invariant under such a linear change of coordinates.
 More precisely, after applying a suitable  linear change  of coordinates  to $\RR^3$ we may assume that
$x^0=(0,0,1),$  and that within a sufficiently small neighborhood $U$ of $x^0,$ $S$  is given
as the  graph 
$$
U\cap S=\{(x_1,x_2,1+ \phi\x): \x\in \Om \}
$$
of a smooth function $1+\phi$ defined on an open neighborhood $\Om$
of $0\in\RR^2$ and satisfying the conditions
\begin{equation}\label{1.2}
\phi(0,0)=0,\, \nabla \phi(0,0)=0.
\end{equation}
Moreover, assuming now that $\supp\rho\subset U,$ the measure $\mu=\rho d\si$ is then explicitly given by
$$
\int f\, d\mu=\int f(x,1+\phi(x)) \eta(x) \,dx,
$$
with a smooth, non-negative  bump function $\eta\in
C_0^\infty(\Om),$ and we may write for  $(y,y_3)\in \RR^2\times \RR$
\begin{equation*}
A_tf(y,y_3)=f*\mu_t(y,y_3)=\int_{\bR^2} f(y-tx,
y_3-t(1+\phi(x)))\eta(x) \, dx,
\end{equation*}
where $\mu_t$ denotes the norm preserving scaling  of the measure
$\mu$ given by $\int f\, d\mu_t=\int f(tx, t(1+\phi(x))) \eta(x)
\,dx.$
\smallskip

Let us briefly recall some basic notions concerning Newton polyhedra - for further ones we refer to \cite{bdim19}, and \cite{IMmon}. 
We begin by looking at the Taylor series 
$$\phi(x_1,x_2)\sim\sum_{\al_1,\al_2=0}^\infty c_{\al_1,\al_2} x_1^{\al_1} x_2^{\al_2}$$
of $\phi$ centered at  the origin.
The set
$$\T(\phi):=\{(\al_1,\al_2)\in\bN^2: c_{\al_1,\al_2}=\frac 1{\al_1!\al_2!}\partial_{ 1}^{\al_1}\partial_{ 2}^{\al_2} \phi(0,0)\ne 0\}      \index{T@$\T(\phi)$ (Taylor support)}
$$
will be called the {\it Taylor support}    of $\phi$ at $(0,0).$  We  shall always assume that the function $\phi$ is of {\it finite type} \index{finite type function} at every point, i.e., that the associated graph $S$ of $\phi$ is of finite type.  Since we are also assuming that $\phi(0,0)=0$ and $\nabla \phi(0,0)=0,$ the finite type assumption at the origin   just means that 
$$\T(\phi)  \ne \emptyset.$$ 
 The
{\it Newton polyhedron}   $\N(\phi)$ \index{N@$\N(\phi)$ (Newton polyhedron)} of $\phi$ at the origin is
defined to be the convex hull of the union of all the quadrants
$(\al_1,\al_2)+\bR^2_+$ in $\bR^2,$ with $(\al_1,\al_2)\in\T(\phi).$  
The {\it height} of $S$ at the
point $x^0$  is  defined by $h(x^0,S):=h(\phi),$  where $h(\phi)$ is
the height of $\phi$ in the sense of Varchenko (which can be
computed by means of Newton polyhedra attached to $\phi$ with respect to local coordinate systems).  For the notions of height, and adaptedness of coordinates, we refer to \cite{Va} and  \cite{IM-ada}. The
height  is invariant under affine linear changes of coordinates in
the ambient space  $\RR^3.$ 
\medskip

We recall what is known on this question so far. 
 \begin{itemize}
\item If $h(x^0,S)\ge 2,$  and if the density $\rho$ is supported in a sufficiently small neighborhood  of $x^0,$ then  the condition $p>h(x^0,S)$  is sufficient for $\M$ to be $L^p$-bounded, and  if $\rho(x^0)\ne 0,$ this result is  sharp (with the possible exception of the endpoint $p=h(x^0,S),$ when $S$ is non-analytic) (see \cite{IKM-max}, and  also \cite{IU} for some classes of hypersurfaces in $\RR^n$). For an alternative approach to some of these results based on ''damping'' techniques, see also 
\cite {greenblatt12}, \cite {greenblatt13}.

\item  If $h(x^0,S)<2,$ and if either both principal curvatures of $S$ vanish at $x^0,$  or both do not vanish, then under the same kind of assumptions on $\rho$  as before the condition $p>\max\{3/2, h(x^0,S\})$  is sufficient, and necessary too, for \eqref{Mb} to hold (see \cite{bdim19}).
\color{black}

\item  If $h(x^0,S)<2,$ and if exactly one of  the two principal curvatures of $S$  at $x^0$ vanishes, then it follows from  Theorem 3.1 of \cite{bdim19} that we may assume that,  in a suitable linear coordinate system (recall  here that our problem is invariant under linear changes of coordinates!)),  the functions $\phi$ is of the form
\begin{equation}\label{AD}
\phi(x_1,x_2)=b(x_1,x_2)(x_2-\psi(x_1))^2 +b_0(x_1),
\end{equation}
where $b,b_0$ and $\psi$ are  smooth functions, and  $b(0,0)\ne 0.$  
Moreover, either $\psi$ is flat at $0,$ or $\psi(x_1)=x_1^m\omega(x_1)$, where $\om$ is smooth with $\om(0)\ne0$ and $m\ge 2.$ We shall view the case where $\psi$  is flat as the case where formally $m=\infty.$

Moreover, the function $b_0$ is of finite type at the origin and thus  can be written as  
$$
b_0(x_1)=x_1^n \beta(x_1),
$$ 
where $n\ge 3$ is a positive integer  and $\beta$ is a smooth function with $\beta(0)\ne 0.$ Note here that if  $b_0$ were flat at the origin, then we would have $h(x^0,S)=2$  (compare the subsequent discussion), so this case  cannot arise here.

This means that $\phi$ has a {\it singularity of  type $\A_{n-1}$ }in the sense of Arnol'd's classification of singularities (cf. \cite{agv}), with finite $n\ge 3.$ 

We remark that a representation of $\phi$ in the normal form \eqref{AD} is possible if and only if  the coordinates $\x$ are linearly adapted in the sense of  \cite{IMmon}.  Throughout this paper, let us therefore assume that  our coordinates $\x$ are linearly adapted.

The case $n=3$ of $\A_{n-1}$-type singularities had already been treated in \cite{bdim19} (compare Theorem 7.1). In this case, under the same kind of assumptions on $\rho$  as before, the condition $p>3/2$  is sufficient, and necessary too, for \eqref{Mb} to hold.
 \end{itemize}
 \medskip

 These results  clearly imply that,  for all these classes of surfaces addressed so far, we have that
\begin{equation}\label{pcrit1}
p_c(x^0)=\max\{3/2, h\},
\end{equation}
with $h:=h(x^0,S).$

  Let us also remark that matters change drastically  when the transversality assumption fails, as has been shown by E. Zimmermann in his doctoral thesis \cite{Zi}.  Zimmermann studied the case where the hypersurface  $S$ passes through the origin and proved, among other things,  that for analytic $S$ and $\supp \rho$ sufficiently small, the condition $p>2$ is always sufficient for the $L^p$- boundedness of $\M_S.$

 \medskip
Under Assumption 1.1, however, what remains to be understood are the maximal functions  associated to  functions $\phi$ of the form  \eqref{AD} that  exhibit singularities of type $\A_{n-1},$ with $n\ge 4.$ Their study  will indeed present the most difficult challenges among all cases.  In view of our discussion in Theorem 3.1 of \cite{bdim19}  we shall distinguish the following two subcases, assuming that $\phi$  given by \eqref{AD} has a singularity of  type $\A_{n-1}, n\ge 4$:
 \smallskip

To begin with, let us recall from  \cite{bdim19} when the coordinates $x$ are adapted to $\phi$ (assuming  that $\phi$ is represented in the normal form \eqref{AD}).
 \smallskip
 
 \noindent{\bf Case of adapted coordinates.} The coordinates $x=\x$ are  adapted to $\phi$  if and only if  $n\le 2m$ (with the understanding that $m:=\infty$ if $\psi$ is flat at the origin).
 \smallskip
 
In this case  the principal weight associated to the principal edge of the Newton polyhedron $\N(\phi)$ is given by $\ka:= ( 1/n,1/2),$ and
$$
d(\phi)=h(\phi)=\frac {2n}{n+2},
$$
where $d(\phi)$ and $h(\phi)$ denote the Newton distance and the height of $\phi$ in the sense of Varchenko (see 
\cite{IMmon}, also for further notions used throughout this article).

\smallskip

 \noindent{\bf Case of non-adapted coordinates.} The coordinates $x=\x$ are  not adapted to $\phi$  if and only if $n>2m.$ 
 \smallskip
 In this case, adapted coordinates are given by $(y_1,y_2):=(x_1,x_2-x_1^m\om(x_1),$ and in these coordinates $\phi$ is given by
 $$\pad(y_1,y_2)=b(y_1,y_2+y_1^m\om(y_1)) y_2^2 +b_0(y_1).$$
 
Here,  the principal weight is given by $\ka:= ( 1/(2m),1/2),$  and Newton distance and height are given by 
$$
d(\phi)=\frac {2m}{m+1}, \qquad h(\phi)=\frac {2n}{n+2}.
$$
 In particular we see that $h(\phi)\le 3/2$ if and only if $n\le 6.$ ·
\medskip

As it turns out, for our purposes a slightly different distinction, namely between   the cases where $n<2m,$ and where $n\ge 2m,$  becomes more natural.
\medskip

 \noindent{\bf Functions  of type  $\A_{n-1}^+$.} By definition, this will be the case where $n<2m.$ 
Using the notation from \cite{IMmon}, notice  that this class  of functions can equivalently be characterized by the following property of their Newton polyhedra, without any recourse to the normal form \eqref{AD} (which will indeed not be of any real use in the study of this case).
\smallskip

\noindent {\bf Property $(\A_{n-1}^+):$ }{\it \ The principal face $\pi(\phi)$ of the Newton polyhedron of $\phi$ is the line segment $[(0,2 ),(n,0)]$ with endpoints $(0,2)$ and $(n,0),$ and these are the only points of the Taylor support $\T(\phi)$ of $\phi$ which are on this line segment, i.e., the principal part $\phi_{\rm pr}$ of $\phi$ is of the form $c_1x_2^2+  c_2x_1^n,$ with $c_1\ne 0\ne c_2.$ }

\smallskip
Note that when $n\ge 3,$  property $\A_{n-1}^+$ in particular implies that  the coordinates $x$ are adapted to $\phi$  (compare Proposition 1.2 in \cite{IMmon}), that  \eqref{1.2} holds true, and that the Hessian matrix of $\phi$ at the origin is of the form
$$
H\phi(0,0)=\left(
\begin{array}{ccc}
  0  &  0 \\
  0   &  \pa^2_2\phi(0,0) \\
\end{array}
\right), 
\text { where  } \ \pa^2_2\phi(0,0)\ne 0.
$$

\smallskip
 \noindent{\bf Functions  of type  $\A_{n-1}^-$.} This will be the remaining  case where $n\ge 2m.$ This class can again be characterized just by means of Newton polyhedra: either the coordinates $x$ are not adapted to $\phi$ (this is the case where $ n>2m$), or the principal face $\pi(\phi)$ is contained in the line segment $[(0,2 ),(n,0)],$ and besides the point $(0,2)$ there is a point  $P$ different from the endpoints $(0,2)$ and $(n,0)$ which is contained in the Taylor support  $\T(\phi).$ This is the case where $n=2m$ in the normal form \eqref{AD}, and here  we have $P=(m,1).$   Note also that in this particular case the endpoint $(n,0)$ may or may not be in $\T(\phi).$ 

\medskip

\noi {\bf Remark. }{\it 
It turns out that the study of the maximal operator $\M$ for  $\A_{n-1}$-type singularities requires very refined information on the resolution of singularities. For this reason, we shall assume in our main theorems that $\phi$ is analytic (by this we mean ``real analytic'').  }

\subsection{Case of  $\A_{n-1}^-$}
It is interesting to note that  case  $\A_{n-1}^-,$ which includes all cases where the coordinates $x$ are not adapted to $\phi,$ turns out to be ``easier'' to handle  than case  $\A_{n-1}^+$. Indeed, it will be settled in a complete way (at least for analytic surfaces) in this paper by the following result:
\begin{thm}\label{thm-a-}
Assume that $S$ is the graph of $1+\phi,$ and accordingly $x^0=(0,0,1),$  where $\phi$ is  analytic and of type $\A_{n-1}^-$. Then, if the density $\rho$ is supported in a sufficiently small neighborhood  of $x^0,$   the condition \eqref{pcrit1}, i.e., $p>\max\{3/2, h(x^0,S)\},$   is still  sufficient for $\M$ to be $L^p$-bounded, and  if $\rho(x^0)\ne 0,$ it is also necessary.
In particular, here \eqref{pcrit1} still holds true.
\end{thm}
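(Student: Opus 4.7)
I would prove matching lower and upper bounds. The lower bound $p_c \ge \max\{3/2, h\}$ has two parts: the bound $p_c \ge 3/2 = d/(d-1)$ is the standard ball-test argument recalled in the introduction, and the bound $p_c \ge h$ comes from a Knapp-type counterexample built on the lower estimate $|\widehat{\rho\, d\si}(\lambda \nu)| \gtrsim \lambda^{-1/h}$ in the normal direction $\nu$ to $S$ at $x^0$, a construction already exploited in \cite{IKM-max} and \cite{bdim19}. The bulk of the work is the upper bound: showing that $\M$ is $L^p$-bounded for every $p > \max\{3/2, h\}$.

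\textbf{Reduction, dyadic decomposition and rescaling.} If the coordinates $\x$ are not adapted (case $n > 2m$), I would first change to adapted coordinates $\y = (x_1, x_2 - x_1^m\om(x_1))$, so that $\phi$ takes the form $\pad(y_1, y_2) = b(y_1, y_2 + y_1^m\om(y_1)) y_2^2 + y_1^n\beta(y_1)$; this preserves the $L^p$-norm structure of the density and leaves $h = 2n/(n+2)$ invariant. If $n = 2m$ the coordinates are already adapted. In either situation the relevant weight is $\ka = (1/(2m), 1/2)$, and I would perform a $\ka$-weighted dyadic decomposition of the cut-off $\eta$ adapted to the Newton polyhedron, i.e.\ $\eta = \sum_k \eta_k$ with $\eta_k$ supported where $|y_1| \sim 2^{-k/(2m)}$ and $|y_2| \sim 2^{-k/2}$, together with an additional finer transverse decomposition in $y_2$ near the root curve $y_2 = 0$. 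Rescaling each piece by $D_k(y_1, y_2) := (2^{-k/(2m)} y_1, 2^{-k/2} y_2)$ produces, thanks to analyticity, rescaled phases that converge uniformly on compact sets to their principal parts, with well-controlled remainders.

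\textbf{Oscillatory estimates, interpolation and summation.} On each rescaled piece I would verify the optimal Fourier decay $|\widehat{\mu^{(k)}}(\xi)| \lesssim |\xi|^{-1/h}$ in transversal directions via van der Corput's lemma applied to the rescaled phases. Combining this, after Littlewood--Paley frequency localization at scale $2^j$, with Sogge--Stein type interpolation between $L^2 \to L^2$ (gain $2^{-j/h}$) and the trivial $L^\infty$-bound yields an estimate $\|\M_{\mu_k} f\|_p \lesssim 2^{-k\si(p)}\|f\|_p$ with $\si(p) > 0$ precisely when $p > \max\{3/2, h\}$; the exponent $h$ controls the oscillation gain, while the threshold $3/2$ reflects the point at which the $L^p$-summation across the "flat" complement of the oscillatory region breaks down. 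Summing the geometric series in $k$ then concludes the proof.

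\textbf{Main obstacle.} The principal difficulty lies in the non-adapted sub-case $n > 2m$. After rescaling, the phase $\pad$ still inherits the root curve $y_2 = 0$, whose preimage in the original coordinates is the Airy-type curve $x_2 = x_1^m\om(x_1)$, and this forces a further sub-dyadic decomposition in $y_2$ together with a careful resolution of singularities. One must verify that across this sub-hierarchy the effective Fourier decay is still controlled uniformly by $h = 2n/(n+2)$, and that the number of sub-pieces grows at worst polynomially in $k$, so that the geometric summation survives. Analyticity is crucial here, since it ensures that the resolution of the singular curve is eventually stationary and provides the uniform bounds required throughout the dyadic hierarchy. The reason case $\A_{n-1}^-$ is genuinely easier than $\A_{n-1}^+$ is that both the extra Taylor-support point $(m,1)$ in the $n = 2m$ case and the non-adapted root curve in the $n > 2m$ case contribute strictly below the bisectrix line through the principal face, so that no effective-multiplicity correction $p_e$ ever enters.
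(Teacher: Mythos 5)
Your overall architecture (lower bound via ball test and Knapp, upper bound via oscillatory decay plus interpolation and summation) is sound in outline, but the upper-bound argument as written has a gap that is fatal, and the decomposition you choose is also quite different from the paper's.

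The critical issue is the interpolation endpoint. You propose to interpolate the $L^2\to L^2$ oscillatory gain against ``the trivial $L^\infty$-bound.'' Interpolation between an $L^2$ estimate and an $L^\infty$ estimate can only produce $L^p$ boundedness for $p\ge 2$. Since the theorem claims the range $p>\max\{3/2,h\}$, which for $h<2$ contains exponents strictly below $2$, this route cannot reach the stated conclusion. To go below $p=2$ one needs an estimate below $L^2$, and this is precisely what the paper supplies: it derives a pointwise bound on the frequency-localized measure $\mu^\lambda(y+\Gamma)$ (of order roughly $\lambda^{5/2}$ with various gains from integration by parts or van der Corput in the $s_1$ and $s_2$ directions) and then applies the spherical-projection lemma (Lemma~\ref{maxproj}) to convert it into an $L^{1+\ve}\to L^{1+\ve}$ bound for the localized maximal operator. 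Interpolating \emph{that} against the $L^2$ estimate from Plancherel/van der Corput is what produces the threshold $3/2$. Your sentence ``the threshold $3/2$ reflects the point at which the $L^p$-summation across the flat complement breaks down'' misidentifies the source of $3/2$; it is an interpolation phenomenon, not a summation one.

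Beyond that, the structure of your decomposition differs genuinely from the paper's. You propose a $\kappa$-weighted dyadic decomposition in the configuration variables $(y_1,y_2)$ followed by rescaling and Littlewood--Paley in frequency. The paper instead decomposes first in frequency (dyadic $\lambda$ in $\xi_3$ with $|\xi_1/\xi_3|,|\xi_2/\xi_3|\ll1$), then applies stationary phase in $x_2$ to pass to the Legendre transform $\breve\phi_1(x_1,s_2)$, and then analyzes the resulting one-dimensional oscillatory integral $J(\lambda,s)$ in $x_1$ with $s_2$ as a parameter. The heart of the argument is a Puiseux-series resolution of the zero set of $\Phi(x_1,s_2)=\pa_{x_1}^2\breve\phi_1$, a decomposition of the $(x_1,s_2)$-plane into transition domains $E_l$ and homogeneous domains $D_l$ dictated by the Newton polyhedron $\N(\Phi)$, and an iterated resolution algorithm that passes to finer coordinates $\tilde z=x_1-w(s_2)$ step by step. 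The summability of the resulting estimates over $(j,k,\lambda)$ then rests on the geometric inequality of Lemma~\ref{key}, which ties the relevant exponents to $m$ and ultimately to $h$. Your proposal's sketch of ``root curve $y_2=0$, further sub-dyadic decomposition, resolution of singularities'' gestures at this but does not supply the mechanism (Puiseux clusters, $D_l$/$E_l$ splitting, $L^{1+\ve}$ via pointwise bounds) that actually makes the estimate close. Also, your claim of uniform Fourier decay $|\widehat{\mu^{(k)}}(\xi)|\lesssim|\xi|^{-1/h}$ in transversal directions is too optimistic: the decay is highly non-uniform in the parameter $s_2$ (it degenerates near the root curves of $\Phi$), which is exactly why the elaborate resolution and domain decomposition are needed rather than a single van der Corput application.
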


\subsection{Case of   $\A_{n-1}^+$} \label{A+}

The new feature arising in  this case  is that, in contrast to all previous situations, it will no longer just be the ``order of contact'' of $S$ at the point $x^0$ with  small balls, or slightly thickened hyperplanes, which basically determines the range of Lebesgue exponents  $p$ for which \eqref{Mb} holds true, but possibly also in some sense the ``order of contact'' with slightly thickened lines. Some instances of this kind of phenomenon had already been observed in  articles by  Nagel, Seeger,  Wainger \cite{nagel-seeger-wainger}, and Iosevich and Sawyer \cite{iosevich-sawyer1},  \cite{iosevich-sawyer}, \cite{io-sa-seeger}. 

\medskip
In order to formulate our conjecture   for this case, and  the results  that we are able to prove in this paper  towards this conjecture, we need to introduce a number of further quantities.  So, let us assume that $\phi$ is of type $\A_{n-1}^+.$ 
\medskip

 Besides the \textit{height}, we are here  interested in a further characteristic quantity of the surface, which we shall call the ``effective multiplicity''.

 In order to define this notion, in a first step we  decompose 
\begin{equation}\label{pred}
\phi\x=\phi(0,x_2)+\pred\x,
\end{equation}
and consider the function $\pred$  and its associated Newton polyhedron $\N(\pred).$

We then define the  number $n^x_e=n^x_e(\phi)$ by assuming that  $(n^x_e,1)$ is the point at which the horizontal line $t_2=1$ intersects the boundary of  $ \N(\pred)$ (see Figure \ref{figure1}). 
\smallskip

A look at  \eqref{AD} will reveal that the  point $(n,0)$   is a vertex  of $\N(\pred),$   and we denote by $\ga_1$ the (non-horizontal) edge of $\N(\pred)$  which has $(n,0)$  as   the right endpoint. 

Note that this edge can  possibly also be vertical, for instance if $\phi\x=x_2^2\pm x_1^n.$    We then   choose the weight 
$\ka^e=(\ka^e_1,\ka^e_2)=(1/n,\ka^e_2)$  with $\ka^e_2\ge 0$ so that the edge $\ga_1$  lies on the line $L_1:=\{t_1,t_2): \ka^e_1t_1+\ka^e_2t_2=1\}.$  Then $(n^x_e,1)\in L_1$,  which implies that (see Figure \ref{figure1}) 
\begin{equation}\label{ne}
n^x_e=\frac{1-\ka^e_2}{\ka^e_1}=(1-\ka^e_2) n.
\end{equation}

Note that clearly $n^x_e\le n.$ 
For later use, let us also  note that $\ka^e_2<1/2.$ 

Indeed, when passing from  the Newton polyhedron $\N(\phi)$ to $\N(\pred),$ at least all of the points of the Taylor support  $\T(\phi)$ of $\phi$ are removed from the   $t_2$-axis, in particular the point $(0,2),$ and since by property ($\A_{n-1}^+$)  there is also no point  of $\T(\phi)$ on  the line segment with endpoints $(0,2)$ and $(n,0),$ we see that the line $L_1$ must intersect the $t_2$-axis above  $2,$ i.e., $1/\ka^e_2>2,$  if $\ka^e_2>0,$ or must be the vertical line passing through the point $(n,0),$ if $\ka^e_2=0.$

\color{black}

\begin{figure}[h]
\centering
\includegraphics[scale=0.4]{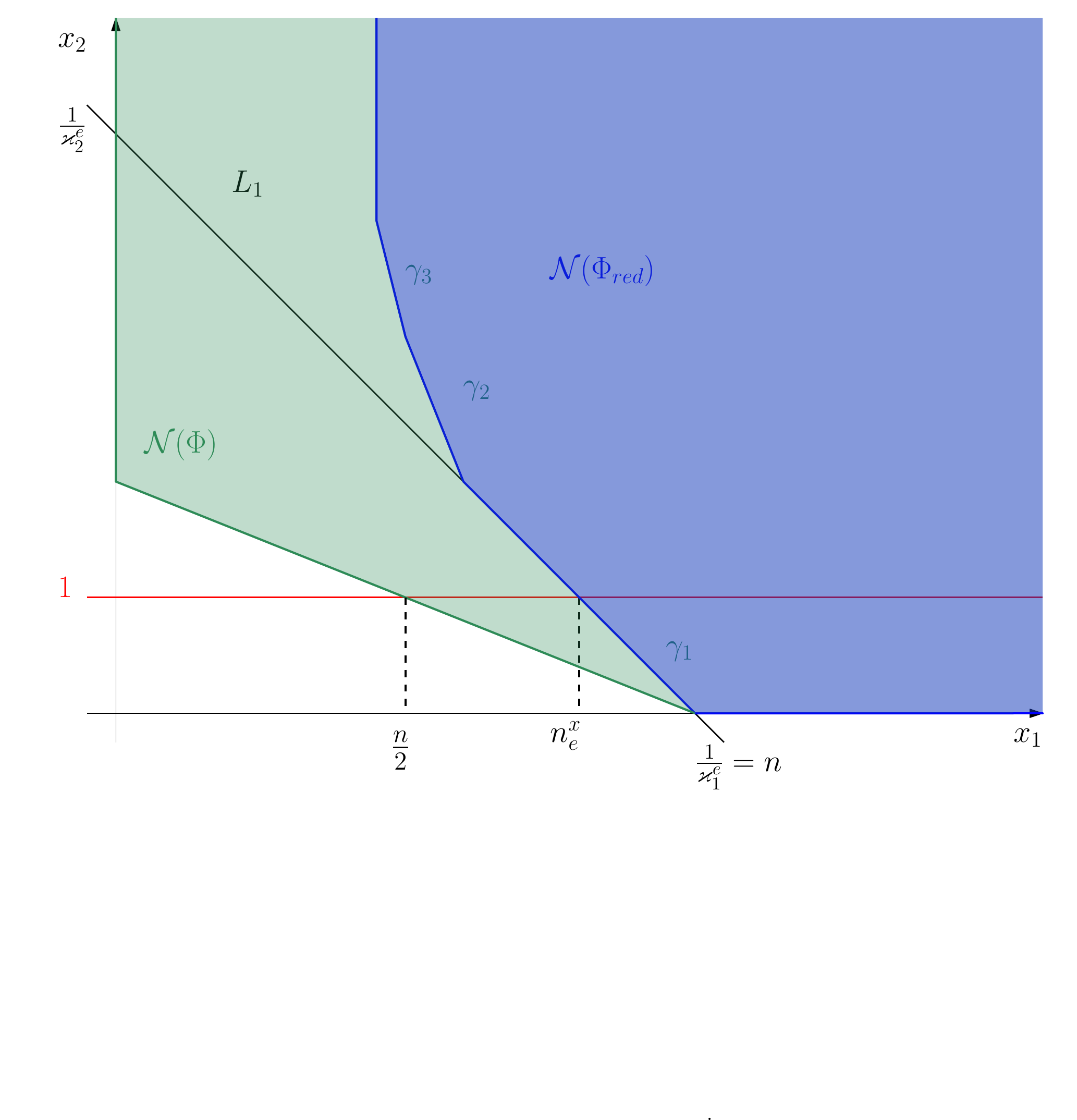}
 \caption{Effective Multiplicity} 
 \label{figure1}
\end{figure}

\medskip

\medskip

Next, somewhat in the spirit of Varchenko's algorithm for constructing adapted coordinates for $\phi$ (see, e.g., \cite{Va}, \cite{IM-ada}), we  shall allow for local coordinate changes (``non-linear shears'')  of the form
\begin{equation}\label{shear}
y_1:=x_1-\al(x_2), \quad y_2:= x_2,
\end{equation}
where $\al$ is  smooth and vanishes  at $x_2=0.$ 
Given such a coordinate system $y=(y_1,y_2),$  we express $\phi$ in these coordinates by putting 
$$
\tilde\phi(y_1,y_2):= \phi(y_1+\al(y_2),y_2).
$$

Indeed, as will be discussed  in more detail in Section \ref{lineadac}, we can even allow for a more general class of local coordinate changes  near the origin than in \eqref{shear}, of the form
\begin{equation}\label{gshear}
x_1:=\vp(y_1,y_2), \quad x_2= y_2,
\end{equation}
where $\vp$ is  smooth,  vanishes at the origin and  $\pa_1\vp(0,0)\ne 0.$ Such changes of coordinates will be called  {\it admissible}.  Here, we then put 
$\tilde\phi(y_1,y_2):= \phi(\vp(y_1,y_2), y_2).$
\smallskip

If we work in the category of analytic functions, we may and shall assume that the above changes of coordinates are analytic too, i.e., that $\al$ respectively $\vp$ are analytic.

\smallskip

{\bf Note:} The  coordinates $(y_1,y_2)$ are  also adapted to $\tilde \phi.$
\smallskip

Indeed, since $1/2>1/n,$ one easily checks that property   ($\A_{n-1}^+$) is preserved by these kind of  coordinate changes  \eqref{gshear}.

\smallskip
We next decompose again $\tilde\phi(y_1,y_2)=\tilde\phi(0,y_2)+\tilde \phi_{\rm red}(y_1,y_2)$ as we did for $\phi$ in \eqref{pred}, and compute for $\tilde \phi_{\rm red}$ as in \eqref{ne}  the corresponding number $n_e^y=(1-\tilde \ka^e_2)n,$ where $\tilde \ka^e$ is the weight corresponding to the edge $\tilde \ga_1$ with right endpoint $(n,0)$ of $\N(\tilde \phi_{\rm red}).$  Finally, we define 

$$
n_e=n_e(\phi):=\sup\limits_y n_e^y,
$$
where the supremum is taken over all analytic local coordinate systems $y=(y_1,y_2)$ of the form \eqref{gshear}. Note also  that $n_e\le n.$

We shall show in Section \ref{lineadac} that this supremum is indeed a maximum, i.e., there exists a coordinate system $y=(y_1,y_2)$ such that $n_e^y=n_e.$ This coordinate system will even be of the form \eqref{shear},  so that it  would indeed have sufficed to take the supremum over all coordinate systems \eqref{shear}  in the definition of $n_e.$ Any such coordinate system $y$ with $n^y=n_e$  will  be called  {\it line-adapted} to $\phi$ (not to be confused with the notion of ``linearly adapted'' coordinate systems introduced in \cite{IMmon}!), and the quantity $n_e$ will be called the {\it effective multiplicity} of $\phi.$ The latter notion  is motivated by the following:

\smallskip

Given $n_e,$ an important  exponent for the study of the maximal operator $\M$ associated to $\phi$ will be given by 
$$
p_e:= \frac {2n_e}{n_e+1}.
$$
For instance, every function which has a singularity of type $A_{n-1}$ can be written in suitable  local coordinates in the normal form 
$\pm x_2^2\pm x_1^n,$  which is of type $\A_{n-1}^+.$ For this normal form, we have $\ka^e_2=0,$ and $n_e=n^x_e=(1-\ka^e_2) n=n.$

Thus, the exponent $p_e$ associated to $\phi$ before is formally the same  as for the normal form $\pm x_2^2\pm x_1^{n_e}$ - this motivates our notion of effective multiplicity.

\medskip
We remark that indeed also for  $\A^-_{n-1}$ -singularities it is possible to define (in a less direct way) the notion of effective multiplicity (see Remark \ref{9effectivem}), so that this notion makes sense for any singularity of type  
$\A_{n-1}.$ However, we shall not really need to make use of this observation.

\smallskip 

\begin{remark}\label{pe-h}
If $\phi$ is of type $\A_{n-1}^+,$  with $n\ge 4,$ then $p_e>h(\phi).$ Moreover, we always have $n<2n_e,$ in particular $n_e>2,$  and $p_e\ge 3/2$ if and only if $n_e\ge 3.$ 
\end{remark}
Indeed, we have $h=d={2n}/(n+2),$ and $p_e\ge {2n^x_e}/(n^x_e+1).$ And, a look at the Newton polyhedra of $\phi$ and $\pred$ (cf. Figure \ref{figure1}) shows that the horizontal   line $t_2=1$ intersects the principal face of $\N(\phi)$ at the $t_1$-coordinate $ n /2<n_e^x\le n_e.$  But,  this implies that 
$ {2n_e}/(n_e+1)>h.$ The remaining statements are immediate.

\medskip

 Let us present some examples of  $\A_{n-1}^+$-type singularities. The first two examples show that the original coordinates $x$ may in general not be line-adapted, and that the   inequality $n_e\le n$ can indeed be strict:

\begin{example}\label{e1.3}
Let $\phi\x:= x_2^2+(x_1+x_2^\ell)^n,$ where $\ell \ge 2.$ 
\end{example}
 Then $\pred\x=(x_1+x_2^\ell)^n- x_2^{n\ell},$ so that $\ka^e_2=1/n\ell,$ if $c\ne 0,$ hence 
 $$n_e^x=(1-\frac 1{n\ell})n=n-\frac 1\ell.$$ 
 However, in the coordinates $y_1:=x_1+x_2^\ell, y_2:=x_2,$ we have $\tilde \phi(y_1,y_2)=y_2^2+y_1^n,$  hence $n^y_e=n,$ so that $n_e=n.$ In particular, in contrast to the coordinates $x,$ the coordinates $y$ are line-adapted to $\phi$ (which  follows easily from Proposition \ref{nonadapt}), and we have $p_e={2n}/(n+1).$

\begin{example}\label{e1.4}
Let $\phi\x:= (1+x_1^\al x_2^\be)x_2^2+x_1^n,\qquad \al,\be\in\NN.$
\end{example}
Here,  if $\al\ge 1,$ then $\pred\x=x_1^\al x_2^\be x_2^2+x_1^n,$  and if $\al=0,$ then $\pred\x=x_1^n.$ Thus one computes that $\ka^e_2=(n-\al)/{n(\be +2)},$ if $1\le\al< n,$  and $\ka^e_2=n,$ if $\al\ge n.$ Hence, by \eqref{ne},  $n_e^x=(n(\be +1)+\al)/(\be +2),$ if   $1\le \al< n,$  and $n_e^x=n,$ if $\al=0,$ or $\al\ge n.$ Moreover, it is easily seen that the coordinates $x$ are already line-adapted (which  follows again from Proposition \ref{nonadapt}), and thus we have $n_e=n_e^x<n$ iff $1\le \al<n.$  In particular, 
$p_e=2\big(n(\be+1)+\al\big)/\big(n(\be+1)+\al+\be+2\big),$ if  $1\le \al< n,$ and $p_e=2n/(n+1),$ if  $\al=0,$  or $\al\ge n.$

\smallskip

\begin{example}\label{e2.6}
Let $\phi\x:= \frac 1{1-x_1}x_2^2+x_1^n.$
\end{example}
In contrast to Example \ref{e1.4}, the graph of this function is   convex  for $|x_1|<1$  when $n$ is even.
Since 
$1/(1-x_1)=1+x_1+O(x_1^2),$ as in  Example \ref{e1.4} we have $n_e^x=(n+1)/{2},$ and  the coordinates $x$ are already line-adapted, so that  $n_e=(n+1)/{2}$ and   $p_e=2(n+1)/(n+3).$

\medskip

Finally, for functions of type $\A_{n-1}^+$ let us put 
$$
\tilde p_c:=\max\{3/2, p_e, h\}=\max\{3/2, p_e\},
$$
where $h:=h(\phi)=h(x^0,S).$  Note that in view of Remark \ref{pe-h} we have indeed that  $\tilde p_c=\max\{3/2, p_e\}.$
The following result will be proven in Section \ref{nec}:
\begin{proposition}\label{necA}
Assume that $S$ is the graph of $1+\phi,$ and accordingly $x^0=(0,0,1),$  where $\phi$ is of type $\A_{n-1}^+$. Then, if $\rho(x^0)\ne 0,$   the condition   $p\ge \tilde p_c$  is  necessary  for $\M$ to be $L^p$-bounded.
\end{proposition}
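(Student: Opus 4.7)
The strategy is to establish the three lower bounds $p\ge 3/2$, $p\ge h$, and $p\ge p_e$ separately by constructing, for each, a Knapp-type test family $f_\delta=\mathbf{1}_{E_\delta}$ and a ``blow-up'' set $F_\delta$ on which $\M f_\delta$ admits a matching pointwise lower bound; comparing $\|f_\delta\|_p$ and $\|\M f_\delta\|_p$ as $\delta\to 0$ then forces the claimed restriction on $p$.

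\smallskip

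\noindent\textbf{Bounds $p\ge 3/2$ and $p\ge h$.} The first is the standard necessary condition $p>d/(d-1)$ for any hypersurface maximal operator in $\R^d$ satisfying the transversality Assumption \ref{s1.1}, already mentioned in the introduction, and obtained by testing $\M$ on the characteristic function of a large ball. The second is the Iosevich--Sawyer--Seeger-type Knapp bound for the principal weight $\kappa=(\kappa_1,\kappa_2)$ of $\phi$: one takes $E_\delta$ to be a $\kappa$-homogeneous tubular neighborhood of the portion of $S$ above the box $\{|x_j|\le \delta^{\kappa_j}\}$, uses the defining property $|\phi(x_1,x_2)|\lesssim\delta$ of $\kappa$ on that box to estimate $|E_\delta|\sim\delta^{|\kappa|+1}$, and produces $F_\delta$ of fixed size on which $A_1 f_\delta\gtrsim 1$ by appropriately placing the affine reflection of $E_\delta$. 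Since $h=1/|\kappa|$, comparing norms yields $p\ge h$. Both constructions are standard and completely analogous to the ones used in \cite{IKM-max} and in \cite{bdim19}, so only the key parameters need to be recorded.

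\smallskip

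\noindent\textbf{Bound $p\ge p_e=2n_e/(n_e+1)$.} This is the genuinely new ingredient. By the discussion in Section \ref{lineadac} we may pass to a line-adapted analytic coordinate system $y=(y_1,y_2)$ of the form \eqref{gshear} in which $n_e^y=n_e$, and work with $\tilde\phi(y)$ and its associated effective weight $\kappa^e=(1/n,\kappa^e_2)$ attached to the edge $\tilde\gamma_1$ of $\N(\tilde\phi_{\rm red})$. The geometric content of $n_e$ is that $\tilde\phi_{\rm red}$, and hence $\tilde\phi(y_1,y_2)-\tilde\phi(0,y_2)$, vanishes to effective order $1$ on the $\kappa^e$-homogeneous box $\{|y_1|\le\delta^{1/n},\ |y_2|\le\delta^{\kappa^e_2}\}$, so that on this box the surface remains within $O(\delta)$ of the \emph{curve} $\Gamma:=\{(0,y_2,1+\tilde\phi(0,y_2))\}$. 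The natural Knapp set is therefore a thin slab around a substantial arc of $\Gamma$:
\[
E_\delta:=\bigl\{(y_1,y_2,y_3):\ |y_1|\le \delta^{1/n},\ |y_2|\le c_0,\ |y_3-1-\tilde\phi(0,y_2)|\le C\delta\bigr\},
\]
for suitable fixed $c_0,C>0$, so that $|E_\delta|\sim\delta^{1+1/n}$. The matching blow-up is produced by taking $t=1$ and noting that the condition $f_\delta\bigl(y-(\gamma(z))\bigr)=1$ reduces, after the change of variables $z\mapsto y-z$, to the three inequalities $|z_1|\le\delta^{1/n}$, $|z_2|\le c_0$, and
\[
\bigl|y_3-2-\tilde\phi(y_1-z_1,y_2-z_2)-\tilde\phi(0,z_2)\bigr|\le C\delta.
\]
For $y$ in a set $F_\delta$ whose $y_2$-coordinate sweeps out a fixed interval, whose $y_1$-coordinate has range $\sim\delta^{1/n}$, and whose $y_3$-coordinate is pinned to the graph of the ``folded'' function $z_2\mapsto\tilde\phi(0,z_2)+\tilde\phi(0,y_2-z_2)$ up to $O(\delta)$, one shows by a direct computation using the explicit form of $\tilde\phi(0,\cdot)$ (which, by property $(\A_{n-1}^+)$, has a non-degenerate quadratic leading term) that the good $z$-set has measure comparable to $\delta^{1/n}\cdot\delta^{1/2}$, and hence that $A_1 f_\delta(y)\gtrsim \delta^{1/2+1/n}$ on $F_\delta$. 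A volume count, together with the scaling freedom $t\in[1/2,2]$ which enlarges $F_\delta$ in the $y_3$-direction to a full unit, produces $|F_\delta|\sim 1$; plugging everything into $\|\M f_\delta\|_p\le C\|f_\delta\|_p$ and letting $\delta\to 0$ yields exactly $p\ge 2n_e/(n_e+1)=p_e$.

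\smallskip

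\noindent\textbf{Main obstacle.} The only non-routine part is step (iii): one must verify that the measure of the good $z$-set is indeed $\sim \delta^{1/n+1/2}$ (and not smaller) on a set of $y$ whose measure is truly $\sim 1$, and that it is legitimate to replace $\tilde\phi$ by $\tilde\phi(0,\cdot)+\tilde\phi_{\rm red}$ without absorbing the gain into error terms. This requires carefully exploiting the structural information encoded in $\kappa^e$, in particular the inequality $\kappa^e_2<1/2$ recorded in the paragraph below \eqref{ne}, which guarantees that $\tilde\phi(0,y_2)=\pa^2_2\tilde\phi(0,0)y_2^2/2+O(y_2^3)$ dominates $\tilde\phi_{\rm red}$ on the relevant scales.
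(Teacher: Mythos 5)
Your strategy for $p\ge 3/2$ and $p\ge h$ matches the paper's Examples (cases $k=0$ and $k=2$ in Section 2), which are applications of Proposition~\ref{prop-nec}, so these parts are fine.

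The bound $p\ge p_e$ is where the proposal breaks down, and the error is quantitative, not merely cosmetic. Your Knapp slab $E_\delta$ has the wrong shape: it is $\delta^{1/n}\times 1\times\delta$, so $|E_\delta|\sim\delta^{1+1/n}$, with the \emph{long} axis pointing in the $y_2$-direction, whereas the construction that actually produces $p_e$ (the paper's case $k=1$, via Proposition~\ref{prop-nec}) uses a box $T_\delta=[-1,1]\times[-\delta^{1-\tilde\kappa^e_2},\delta^{1-\tilde\kappa^e_2}]\times[-\delta,\delta]$, $|T_\delta|\sim\delta^{2-\tilde\kappa^e_2}$, whose \emph{long} axis is the singular direction $y_1$ and whose $y_2$-width is $\delta^{1-\tilde\kappa^e_2}$; the blow-up is then over the $z_2$-range $|z_2|\le\delta^{\tilde\kappa^e_2}$, of measure $\delta^{\tilde\kappa^e_2}$, and $|T_\delta(z')\cap S|\gtrsim\delta^{\tilde\kappa^e_1+1-\tilde\kappa^e_2}$. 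Your test set carries no $\tilde\kappa^e_2$-dependence at all, so it cannot see the effective multiplicity $n_e=(1-\tilde\kappa^e_2)n$.

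Concretely, your volume count is wrong in two places. First, the claim $|F_\delta|\sim 1$: the set you describe has $y_1$-range $\sim\delta^{1/n}$, $y_2$-range $\sim 1$, and $y_3$-range which the $t$-averaging fattens to $\sim 1$; this gives $|F_\delta|\sim\delta^{1/n}$, not $\sim 1$. Second, even if $|F_\delta|\sim 1$ held, the resulting necessary condition $p(1/n+1/2)\ge 1+1/n$, i.e.\ $p\ge 2(n+1)/(n+2)$, is \emph{strictly larger} than $p_e=2n_e/(n_e+1)$ for every $n_e\le n$, and it also exceeds $3/2$ as soon as $n\ge 3$; this would directly contradict the sufficiency half of Theorem~\ref{thm-a+}. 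With the corrected $|F_\delta|\sim\delta^{1/n}$ the condition degrades to $p(1/n+1/2)\ge 1$, i.e.\ $p\ge 2n/(n+2)=h$, which is just the bound you already have from $k=2$ and is strictly weaker than $p\ge p_e$ by Remark~\ref{pe-h}. So the construction gives either a false or a vacuous bound; in neither reading does it produce $p\ge p_e$.

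The deeper reason it fails is that on the region $|z_2|\sim 1$, $|y_1-z_1|\lesssim\delta^{1/n}$, the reduced part $\tilde\phi_{\rm red}(y_1-z_1,y_2-z_2)$ is \emph{not} $O(\delta)$: its smallest $y_1$-degree $B_1$ (the left endpoint of $\tilde\gamma_1$) satisfies $1\le B_1<n$, so the dominant term has size $\sim\delta^{B_1/n}\gg\delta$. Absorbing $\tilde\phi_{\rm red}$ into an $O(\delta)$ error therefore requires restricting $|y_2-z_2|$ down to $\delta^{\tilde\kappa^e_2}$, i.e.\ exactly the anisotropic scale dictated by the effective weight. That is precisely what the paper's choice $\delta_2=\delta^{1-\tilde\kappa^e_2}$ (for the box width) together with $|z_2|\le\delta^{\tilde\kappa^e_2}$ (for the placement) accomplishes, using $\tilde\kappa^e_2<1/2$ to control $|y_2^2\rho(y_2)-z_2^2\rho(z_2)|$; the inequality $\tilde\kappa^e_2<1/2$ plays the opposite role from what you suggest. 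You should replace your $E_\delta$ by (the translate of) the paper's $T_\delta$ and integrate over $|z_2|\le\delta^{\tilde\kappa^e_2}$, which recovers exactly $p\ge 2n_e^y/(n_e^y+1)$, with the $\tilde\kappa^e_2=0$ case handled by the limiting perturbation $y_1^{(M)}:=y_1+y_2^M$, $M\to\infty$.
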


Our main conjecture on the boundedness of the maximal operator $\M$ for singularities of type $\A_{n-1}^+$ states that the condition stated before is essentially also sufficient:

\begin{conjecture}\label{conj1}
Assume that $S$ is the graph of $1+\phi,$ and accordingly $x^0=(0,0,1),$  where $\phi$ is of type $\A_{n-1}^+.$ Then, if the density $\rho$ is supported in a sufficiently small neighborhood  of $x^0,$   the condition   $p>\tilde p_c$   is  sufficient for $\M=\M_\rho$ to be $L^p$-bounded, so that indeed  $p_c(x^0)=\tilde p_c.$
\end{conjecture}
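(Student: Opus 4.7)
The plan is to reduce to a \emph{line-adapted} coordinate system $y=(y_1,y_2)$ for $\phi$, in which $n_e^y=n_e$ is attained; such a coordinate system exists by the discussion in Section \ref{lineadac}, and the transformed function $\tilde\phi$ is still of type $\A_{n-1}^+.$ Write $\tilde\phi(y_1,y_2)=\tilde\phi(0,y_2)+\tilde\phi_{\rm red}(y_1,y_2)$ and let $\tilde\ga_1$ denote the edge of $\N(\tilde\phi_{\rm red})$ with right endpoint $(n,0),$ whose associated weight $\tilde\ka^e=(1/n,\tilde\ka_2^e)$ encodes the effective multiplicity. After localising to a small neighbourhood of the origin, I would carry out two nested anisotropic dyadic decompositions of the measure $\mu=\rho\,d\sigma$: a \emph{principal} decomposition $\mu=\sum_k \mu_k^{\rm pr}$ adapted to the principal weight $\ka=(1/n,1/2)$ of $\N(\tilde\phi),$ and, in the region close to the $y_1$-axis (where $\tilde\phi_{\rm red}$ degenerates), a secondary \emph{line} decomposition $\mu_k^{\rm pr}=\sum_\ell \mu_{k,\ell}$ adapted to $\tilde\ka^e,$ which resolves the order of contact of $S$ with thickened lines. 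The reduction to line-adapted coordinates is crucial precisely because only then is the true decay rate of the Fourier transform of the line-dyadic pieces governed by $n_e.$

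For each dyadic piece I would estimate the local maximal operator $\M_{k,\ell} f:=\sup_{t>0}|f*(\mu_{k,\ell})_t|$ by real interpolation between three ingredients: the trivial $L^\infty$ bound; an $L^2$ bound obtained from the Fourier decay of $\widehat{\mu_{k,\ell}}$ via van der Corput and stationary phase (with the exponent determined by the weights $\ka$ and $\tilde\ka^e$); and an $L^p$ bound for each fixed dilation $t,$ which is upgraded to the full maximal operator by the Sobolev embedding argument of Mockenhaupt--Seeger--Sogge or by Rubio de Francia's $g$-function method. Tracking the constants, the sum over $\ell$ should converge precisely when $p>p_e=2n_e/(n_e+1),$ the sum over $k$ should converge when $p>h,$ and the threshold $p>3/2$ comes from transversality and a Knapp-type construction. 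Together with Proposition \ref{necA} this would give $p_c(x^0)=\tilde p_c.$

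The principal obstacle is the exceptional subclass $\A^e.$ In that regime an additional ``root'' of $\tilde\phi_{\rm red}$ lies on the edge $\tilde\ga_1,$ causing the oscillatory decay of $\widehat{\mu_{k,\ell}}$ along the line-adapted curve to degenerate exactly to the borderline rate, so the naive summation over $\ell$ at $p=p_e$ diverges logarithmically and no elementary estimate closes the argument. Overcoming this requires replacing the van der Corput step by a sharp $L^p$ bound for the composition of a Fourier integral operator with a cone multiplier, reflecting the fact that the relevant Knapp caps cluster along a single direction; this refinement is precisely what the authors defer to the third paper of the series. Outside $\A^e,$ however, the decomposition scheme above is expected to close and yield the conjectured value $p_c(x^0)=\tilde p_c.$
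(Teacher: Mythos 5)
The statement in question is the \emph{conjecture} labelled Conjecture~\ref{conj1}, and it is not proved in the paper: the paper proves it only for the subclass $\A_{n-1}^+\setminus\A_{n-1}^e$ (Theorem~\ref{thm-a+}), gives the weaker Proposition~\ref{prop-ae} for $\A_{n-1}^e$, and defers the sharp exponent for $\A_{n-1}^e$ to Part~III of the series. You recognize this explicitly and correctly identify both the obstruction (the borderline degeneration on the edge $\tilde\ga_1$) and the kind of refinement needed (cone multipliers combined with Fourier integral operators); so far so good, and your high-level summary is consistent with the paper's own framing.

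Where your sketch diverges materially from the argument that the paper actually runs for $\A_{n-1}^+\setminus\A_{n-1}^e$ is in the decomposition machinery and the low-$p$ input, and these differences are not cosmetic. First, the paper does not decompose $\mu$ in physical space with two nested weights $\ka$ and $\tilde\ka^e$; instead it performs stationary phase in $x_2$ (Section~\ref{proof+}), passes to the Legendre transform $\breve\phi$, proves that the Legendre transform preserves the class $\A_{n-1}^+$ and its effective multiplicity (Theorem~\ref{equlind}), and then runs a full Phong--Stein-type \emph{iterative} resolution-of-singularities algorithm on the Newton polyhedron of the second derivative $\Phi(z_1,s_2)=\pa_{z_1}^2\breve\phi_1(z_1,s_2)$, with repeated coordinate changes $\tilde z=z_1-w(s_2)$ along Puiseux leading jets (Sections~\ref{step1}--\ref{endproof1} and~\ref{9step2}--\ref{9endproof1}). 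A single ``line'' dyadic decomposition adapted to $\tilde\ka^e$ would not resolve the clustering of roots beyond the first edge, which is precisely where the summability analysis (Lemmas~\ref{geo}, \ref{key} and estimate~\eqref{9leqm}) takes place. Second, the $L^{1+\ve}$ input is not the Mockenhaupt--Seeger--Sogge local-smoothing route or a $g$-function square-function argument: the paper derives pointwise bounds on $\mu^\la(y+\Ga)$ using the two-dimensional stationary-phase identity Lemma~\ref{pinvers} and then invokes Lemma~\ref{maxproj}, which controls $\|\M_A\|_{L^p\to L^p}$ by the spherical projection $|\pi(A)|$. Third, and most importantly, you never isolate the role of the hypotheses $(A1)$--$(A2)$: the whole point of excluding $\A_{n-1}^e$ is the Multiplicity Lemma~\ref{multi}, which guarantees $\tilde A_1\ge 1$ at Step~2 of the algorithm (or reduces to simple roots), so that Lemma~\ref{key} remains applicable in every subsequent step. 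Without this one cannot close the summation over $j,k$ at $p>p_e$. Finally, the three thresholds you list are slightly misattributed: for $\A_{n-1}^+$ with $n\ge4$ one has $p_e>h$ (Remark~\ref{pe-h}), so $h$ is never the binding constraint; $\tilde p_c=\max\{3/2,p_e\}$ comes from the joint $\la$- and $(j,k)$-summations rather than from a clean ``$k$-sum gives $h$, $\ell$-sum gives $p_e$'' split. In short: the overall architecture you propose points in the right direction, but the essential technical content of Theorem~\ref{thm-a+} (the Legendre transform preservation theorem, the iterative resolution algorithm on $\N(\Phi)$, Lemma~\ref{pinvers}+Lemma~\ref{maxproj} for low $p$, and the Multiplicity Lemma) is absent from the sketch.
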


We shall show  that for a  large class of functions $\phi$ of type $\A_{n-1}^+$  this conjecture indeed holds true.
 To describe this class, let us assume that $\y$  is a line-adapted coordinate system in which the function $\phi\x$ is represented by $\tilde\phi\y.$

  Assuming  first that $\tilde\ka^e_2>0,$ let us  decompose the function $\tilde\phi_{\rm red}\y=\tilde\phi\y-\tilde\phi(0,y_2)$ into 
\begin{equation}\label{defp}
\tilde\phi_{\rm red}\y=p(y_1,y_2)+\tilde\phi_{\rm err}(y_1,y_2),
\end{equation}
with a $\tilde\ka^e$-homogeneous polynomial $p$ of $\tilde\ka^e$-degree 1 consisting of at least two monomial terms,  with Taylor support on the first non-horizontal edge $\tilde \ga_1$ of 
$\N(\tilde\phi_{\rm red}),$ which is compact and  has right endpoint $(n,0)$ (and a remainder term $\tilde\phi_{\rm err}$  consisting of terms of higher $\tilde\ka^e$-degree).

Assume next that $\tilde\ka^e_2=0.$ Then $\tilde \ga_1$ is vertical, and in this case $p\y$ may no longer be a polynomial, but of the form $y_1^n G\y,$ with $G$ analytic  and $G(0,0)\ne 0.$

\medskip
We shall be able to handle in this paper all functions of type $\A_{n-1}^+$, except for functions from a class that we shall denote by
$\A_{n-1}^e.$ To facilitate notation, let us assume without loss of generality  henceforth that $\beta(0)=1$  (see \eqref{AD}).
\medskip

 \noindent{\bf The class of functions  of type  $\A_{n-1}^+\setminus \A_{n-1}^e$.} 
   By definition, these are those functions $\phi$ from $\A_{n-1}^+$ which satisfy the following assumptions:
 \smallskip
 
Assume  that $y=\y$ is any line-adapted coordinate system.  Then either $\tilde\ka^e_2=0,$ or $\tilde\ka^e_2>0,$ and the $\tilde\ka^e$-homogeneous polynomial $p\y$ satisfies the following two conditions:
 
  \begin{itemize}
\item[(A1)] $\pa_{y_1}^2 p\y$ consists of  at least two distinct monomial terms, one of them of course being $ n(n-1)y_1^{n-2}.$
\item[(A2)] If $\pa_{y_1}^2 p\y$  does not  vanish of maximal possible order $n-2$ along a real, non-trivial root of $\pa_{y_1}^2 p\y;$
more precisely,  $\pa_{y_1}^2 p\y$ is not  of the form 
\begin{equation}\label{maxvan}
\pa_{y_1}^2 p\y=n(n-1)(y_1-cy_2^a)^{n-2},
\end{equation}
with $c\in \RR\setminus \{0\}$ and integer exponent $a\in \NN_{\ge 1}.$  
 \end{itemize}

 \medskip

 \noindent{\bf On the exceptional  class   $ \A_{n-1}^e$.} 
We remark that if \eqref{maxvan} {\it is} satisfied, i.e., if $\phi$ is of class $\A_{n-1}^e,$ then  $\pa_{y_1} p$ must be of the form 
$\pa_{y_1} p\y=n(y_1-cy_2^a)^{n-1}+c_1 y_2^{a(n-1)},$ with $c_1\ne 0,$ for if $c_1=0,$ then the coordinates $y$ could not have been line-adapted by Corollary \ref{nonadaiff}. Consequently, we would have 
\begin{equation*}
 p\y=(y_1-cy_2^a)^n+c_1 y_1y_2^{a(n-1)} +c_0 y_2^{an}, \qquad \text{with} \ c_1\ne  0.
\end{equation*}
Note that we could here perform a change of coordinates $u_1:= y_1-cy_2^a, u_2=y_2,$  so that in the new coordinates $u=(u_1,u_2),$ we would have 
$$
p\y=u_1^n+c_1 u_1u_2^{a(n-1)} +(c_0+c c_1) u_2^{an},
$$
where $a(n-1)\ge 3,$ if $n\ge 3,$ so that we may write $a(n-1)=\beta +2,$ with $\beta\in\NN.$ Since the last term depends on $u_2$ only,  if  then we look at the function $\tilde {\tilde \phi}(u_1,u_2) $ which represents $\phi$ in these coordinates, we  see that $\tilde {\tilde \phi}_{\rm red}(u_1,u_2) $ is of the form
$$
\tilde {\tilde \phi}_{\rm red}(u_1,u_2) = u_1^n+c_1 u_1u_2^{\beta +2}+\tilde{\tilde\phi}_{\rm err}(u_1,u_2),
$$
and by the discussion in Section \ref{lineadac} it is easy to see that the coordinates $u$ are line-adapted too. Up to the error term, we see that this is now of the form described in Example \ref{e1.4}, with $\al =1.$ 
\smallskip

On the other hand, if we assume that $(A1)$ is not satisfied, then we see that $p\y$ must be of the form
$$
p\y=y_1^n +c_1 y_1 y_2^k,
$$
with $c_1\ne 0, $ and since $\tilde\ka^e_2<1/2,$ it is easily seen that $k\ge 2,$ so that again 
\begin{equation}\label{extype}
p\y=y_1^n +c_1 y_1 y_2^{\beta+2}, \qquad \text{with} \ \beta\in\NN.
\end{equation}

\smallskip 

{\it Thus, we see that $\phi$ is of exceptional type $ \A_{n-1}^e$ if and only if there is a line-adapted coordinate system in which $p(y)$ is of the form \eqref{extype}.}
\medskip

If we look at the examples, we thus see that the  functions $\phi$ in Example \ref{e1.4}
are of type $\A_{n-1}^e$ if and only if $\al=1.$ Another example of type  $\A_{n-1}^e$ is Example \ref{e2.6}, which is a  variant of the case $\al=1,\beta=0$ in Example  \ref{e1.4} in which $\phi$  is  even convex whenever $n\ge 2$ is an even number.

\color{black}
\smallskip

\begin{example}
Let $\phi\x:= x_2^2+(x_1+x_2^\ell)x_2^k+(x_1+x_2^\ell)^n,$ where $k\ge 2, \ell \ge 1$ and $\ell (n-1)<k.$ 
\end{example}
Here one checks that $\ka_1^e=1/n, \ka_2^e=1/n\ell.$ If we pass to the admissible coordinates $\y$ with $y_1:=x_1+x_2^\ell, y_2:=x_2,$ then $\phi$ is represented by $\tilde\phi\y= y_2^2+y_1y_2^k+y_1^n,$  with $\tilde \ka_1^e=1/n, \tilde \ka^e_2=(n-1)/nk.$ And, one easily verifies by means of Proposition \ref{nonadapt} that the coordinates $\y$ are line-adapted to $\phi,$ whereas the coordinates $\x$ are not, since $1/\ka^e_2<1/\tilde\ka^e_2.$ 
Moreover, $\tilde\phi_{\rm red}\y=y_1y_2^k+y_1^n,$ which shows that $\phi$ is of  type $\A_{n-1}^e.$

\smallskip

Our main result on functions of type  $\A_{n-1}^+$ in this paper is

\begin{thm}\label{thm-a+}
Assume that $S$ is the graph of $1+\phi,$ and accordingly $x^0=(0,0,1),$  where $\phi$ is  analytic and of type $\A_{n-1}^+\setminus\A_{n-1}^e$. Then, if  the density $\rho$ is supported in a sufficiently small neighborhood  of $x^0,$  the  condition  $p>\max\{3/2, p_e\}$  is  sufficient for $\M$ to be $L^p$-bounded.  Moreover, if $\rho(x^0)\ne 0,$ it is also necessary.
I.e., in this case we have $p_c(x^0)=\max\{3/2, p_e\}.$  
\end{thm}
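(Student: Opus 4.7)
The necessity of $p\ge\max\{3/2,p_e\}$ is Proposition \ref{necA}, so the task is the sufficiency of $p>\max\{3/2,p_e\}$. My plan is to pass to a line-adapted coordinate system, apply a $\tka^e$-weighted dyadic partition of the density around the edge $\tilde\ga_1$ of $\N(\tilde\phi_{\rm red})$, and on each piece run a Stein--Tomas style interpolation between an $L^2$-bound derived from Fourier decay of the rescaled surface measure and the trivial $L^\infty$-bound, together with the standard Littlewood--Paley plus Sobolev-in-$t$ scheme that converts such linear decay into a maximal estimate. The hypotheses (A1) and (A2) enter precisely to force non-degeneracy of the rescaled oscillatory phases.

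\emph{Step 1.} Using the results of Section \ref{lineadac}, I fix an admissible change of coordinates \eqref{gshear} so that $y=\y$ is line-adapted to $\phi$; then $n_e^y=n_e$ and the $\tka^e$-homogeneous polynomial $p\y$ from \eqref{defp} satisfies both (A1) and (A2). Since \eqref{gshear} is a local analytic reparametrization of $S$, the operator $\M$ is unchanged and $\eta$ merely picks up a smooth Jacobian factor. I then partition $\supp\eta$ into three kinds of pieces. Away from the $y_2$-axis both principal curvatures of $S$ are non-vanishing, and the classical maximal theorem for hypersurfaces of non-vanishing Gaussian curvature in $\RR^3$ yields the desired bound for $p>3/2$. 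Near the $y_2$-axis but with $|y_2|$ bounded below, $S$ resembles a cylinder over a finite-type curve in the $y_1$-direction, a case covered by the techniques of \cite{IKM-max} and \cite{bdim19}. The delicate region is a small neighborhood of the origin, which I decompose $\tka^e$-dyadically, writing $(y_1,y_2)=(2^{-k_1}z_1,2^{-k_2}z_2)$ with $\tka^e_1 k_1=\tka^e_2 k_2$ (modified appropriately when $\tka^e_2=0$), and refined by Varchenko-style root-jet cutoffs along $\tilde\ga_1$.

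\emph{Step 2.} Inside each rescaled piece the phase takes the form $\tilde\phi(0,2^{-k_2}z_2)+2^{-k}p(z_1,z_2)+\text{(higher-order error)}$. Condition (A1) guarantees that $\pa_{z_1}^2 p$ is not a single pure power in $z_1$, while (A2) rules out that $\pa_{z_1}^2 p$ vanishes to maximal order $n-2$ along a single curve; together they produce a uniform van der Corput style Fourier decay of the form $|\widehat{d\si_k}(\xi)|\lesssim 2^{-k\eps}(1+|\xi|)^{-1/n_e}$ on the frequency range relevant to the $k$-th piece. Interpolating with the trivial $L^1\to L^\infty$ bound and applying the standard maximal-operator scheme yields an $L^p$-estimate on the piece that sums over $k$ exactly when $p>p_e=2n_e/(n_e+1)$. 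Combining with the two easier regions delivers the claimed range $p>\max\{3/2,p_e\}$.

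\emph{Main obstacle.} The core technical difficulty is establishing the rescaled Fourier decay of Step~2 with the sharp exponent $1/n_e$ uniformly across dyadic scales. The obstruction comes from the interplay between the quadratic factor $y_2^2$ sitting inside $\tilde\phi(0,y_2)$ (which after rescaling carries an additional coefficient $2^{-2k_2+k}$) and the $\tka^e$-homogeneous polynomial $p$: one must ensure that critical points of the combined phase do not accumulate along a curve in $z$. This is exactly why the exceptional class $\A_{n-1}^e$ must be excluded at this stage: when \eqref{maxvan} holds, $\pa_{z_1}^2 p$ collapses along the single curve $z_1=cz_2^a$, producing a one-parameter family of degenerate critical points along which no gain beyond the quadratic factor is available, and $p_e$ no longer controls the relevant piece. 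The alternative mechanism alluded to in the abstract -- cone multipliers composed with Fourier integral operators -- then becomes necessary and is deferred to the sequel.
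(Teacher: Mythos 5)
Your proposal does not follow the paper's route and, more importantly, contains a central unproved claim on which the whole argument rests: the rescaled Fourier decay bound $|\widehat{d\si_k}(\xi)|\lesssim 2^{-k\eps}(1+|\xi|)^{-1/n_e}$. This estimate is asserted to follow from (A1) and (A2) via a ``uniform van der Corput style'' argument, but no mechanism is offered, and it is not clear such a bound is even correct with the exponent $1/n_e$: the standard Varchenko decay of $\widehat\mu$ is $|\xi|^{-1/h}$ with $h=2n/(n+2)$, and $1/n_e<1/2$, so the claimed decay on a single piece is strictly weaker than what the global estimate would give and does not on its own give summability in $\la$ on $L^2$. The gain $2^{-k\eps}$ over dyadic pieces is what must save you, but establishing it uniformly across scales is exactly the hard analytic content that the paper spends Sections~6--12 on. The paper does \emph{not} obtain the result from a Fourier-decay estimate on rescaled pieces; it first applies stationary phase in $x_2$ to reduce to the one-dimensional oscillatory integral $J(\la,s)$ with phase built from the Legendre transform $\breve\phi_1(x_1,s_2)$ (and uses Theorem~\ref{equlind} to transfer (A1), (A2) from $\phi$ to $\breve\phi$), then runs an iterative resolution-of-singularities algorithm on the Newton polyhedron of $\Phi=\pa_{x_1}^2\breve\phi_1$, decomposing into homogeneous domains $D_l$ and transition domains $E_l$ attached to clusters of Puiseux roots, and changing coordinates at each step by subtracting leading jets of those roots. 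Your $\tka^e$-dyadic decomposition in $(y_1,y_2)$ with a rigid coupling $\tka^e_1 k_1=\tka^e_2 k_2$ is not an adequate substitute for this multi-step root decomposition: the singular support of the phase after one rescaling is itself a curve admitting further clustering, and the exceptional case shows that this further structure genuinely matters.

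A second gap is the interpolation endpoint. You interpolate $L^2$ (from decay) with the trivial $L^1\to L^\infty$ bound, but the paper interpolates the $L^2$-estimates with $L^{1+\ve}$-estimates that are \emph{not} trivial: they come from pointwise bounds on $\mu^\la(y+\Ga)$ together with the spatial-projection Lemma~\ref{maxproj}, and their proof is where most of the effort lives (control of the functions $g$, $h$ entering the phase, Lemma~\ref{pinvers}, the case distinctions $K|\gamma|\gg1$ vs.\ $\lesssim1$, etc.). The $L^\infty$ endpoint is too crude to recover the sharp exponent $p_e$; without the $L^{1+\ve}$ estimates the summations in Subsections~\ref{geolemma}, \ref{9LpEl} do not close. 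Third, the role of (A1) and (A2) is mischaracterized: in the paper they do not directly produce a decay estimate, but rather enter through the Multiplicity Lemma~\ref{multi}, which guarantees $\tilde A_1\ge1$ (hence the applicability of the Geometric Lemma~\ref{key}) after the first coordinate change in the resolution algorithm, or else forces all non-trivial real roots of $P=\pa_{z_1}^2 p$ to be simple. Your intuitive picture (that they rule out accumulation of degenerate critical points along a curve) is on the right track as motivation but does not substitute for the actual combinatorial input the algorithm requires. Finally, the claim that away from the $y_2$-axis both principal curvatures are non-vanishing is true only at leading order; for general analytic $\phi$ the Gaussian curvature can vanish along several branches, and it is precisely the multi-branch structure the Puiseux analysis is designed to handle.
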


We shall turn to the remaining exceptional class of functions of type $\A_{n-1}^e$ in the third paper of this series. However, 
in the last Section \ref{isasee}, we shall at least show that the condition $p>\max\{3/2, 2n/(n+1)\}$  is  sufficient for $\M$ to be $L^p$-bounded. In combination with all other results, this will lead to a proof of a conjecture by Iosevich-Sawyer-Seeger  for arbitrary analytic 2-surfaces (compare Subsection \ref{isseconj}).

\subsection{A more general conjecture in arbitrary dimension}\label{genconj}
Let us come back to  more general hypersurfaces $S\subset \RR^d$ and their associated maximal operators $\M$ as discussed at the beginning of this section. Assuming that $S$ satisfies the transversality Assumption \ref{s1.1} near a given point $x^0\in S$ and is contained in a sufficiently small neighborhood of $x^0$  (as in the 3-dimensional case)  we may assume that,  after applying a suitable linear change of coordinates in $\RR^d,$ we have  $x^0=(0,1)$ with respect to the splitting $\RR^d=\RR^{d-1}\times \RR,$ and that $S$ is the graph
$$
 S=\{(x',1+ \phi(x')): x'\in \Om \}
$$
of a smooth function $1+\phi$ defined on an small  open neighborhood $\Om$
of $0\in\RR^{d-1}$ satisfying
$$
\phi(0)=0,\, \nabla \phi(0)=0.
$$

By localizing to a sufficiently small neighborhood of $x^0,$ we may also assume that
\begin{equation}\label{normali}
\Om=\Om_\ve=\{z'\in\RR^{d-1}: |z'|< \ve\} \quad   \text {and} \quad |\nabla \phi(z')| \le \frac 1{10}\quad \text{for all } z'\in \Om,
\end{equation}
where $0<\ve<1$ can be assumed to be small.
 
Let us also assume here that $\rho\ge c>0$ on $S.$  Then, without loss of generality, we may even assume that 
\begin{equation}\label{At1}
A_t f(y) = \int_\Omega f(y'-tx', y_d - t(1+\phi(x'))) \, dx' .
\end{equation}

The following proposition, which yields necessary  conditions of geometric measure theoretic type for the $L^p$- boundedness of the maximal operator 
$\M_S,$    will be proved in Section \ref{nec} under these  assumptions.

\begin{proposition}\label{prop-nec}
Let $T\subset \RR^d$ be a symmetric convex body of positive volume $|T|>0$. Assume further that \eqref{normali} is satisfied, and let   $z_d:\Om_\ve\to \RR$ be any  $C^1$- function so that 
  \begin{equation}\label{normaliz}
  z_d(0)=1,\quad \nabla z_d(0)=0, \quad \text{and}\quad  |\nabla z_d(z')|\le \frac 16 \quad \text{for all } \ z' \in \Om_\ve.
\end{equation}
Put $T(z'):=(z',z_d(z'))+T$ for $z'\in \Om_\ve,$ and 
$$
|T(z')\cap S|:=\int_{\Om_\ve} \chi_{T(z')}(x',1+\phi(x'))\, dx'.
$$
Then, for $1\le p<\infty,$ we have 
\begin{equation*}
\int_{\Om_\ve} \frac{|T(z')\cap S|^p}{|T|}\, dz' \le 4  (C_d \ve^{d-1})^{-p}\|\M\|_{L^p\to L^p}^p.
\end{equation*}
\end{proposition}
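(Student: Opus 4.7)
The strategy is a test-function argument: apply $\|\M\chi_T\|_p \le \|\M\|\,\|\chi_T\|_p = \|\M\|\,|T|^{1/p}$ to $f := \chi_T$ and lower-bound $\|\M\chi_T\|_p^p$ by an integral of $|T(z')\cap S|^p$. The symmetry $T = -T$ combined with the substitution $u' = x'-y'$ gives $A_1\chi_T(y) = \int_{\Om_\ve}\chi_T(x'-y', 1+\phi(x')-y_d)\,dx'$, so in particular $A_1\chi_T(z', z_d(z')) = |T(z')\cap S|$. More importantly, substituting $y = t(z', z_d(z'))$ and using $\chi_T(tv) = \chi_{T/t}(v)$ yields the rescaling identity $A_t\chi_T\bigl(t(z', z_d(z'))\bigr) = |(T/t)(z')\cap S|$ for all $t>0$ and $z'\in\Om_\ve$. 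Since $T/t\supset T$ for $t\in[1/2, 1]$, we obtain $\M\chi_T(\Psi(t, z')) \ge |T(z')\cap S|$ for every $(t, z') \in [1/2, 1]\times \Om_\ve$, where $\Psi(t, z') := (tz', tz_d(z'))$.

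I would then change variable from $(t,z')\in[1/2,1]\times\Om_\ve$ to $y = \Psi(t,z')\in\R^d$. A direct computation gives $|\det D\Psi(t, z')| = t^{d-1}\bigl(z_d(z') - z'\cdot\nabla z_d(z')\bigr)$, which by \eqref{normaliz} is bounded below on the domain by a positive constant $c_0 = c_0(d) > 0$; injectivity of $\Psi$ follows from the strict monotonicity of $t\mapsto tz_d(y'/t)$, whose derivative is exactly $z_d - z'\cdot\nabla z_d > 0$. Hence
$\|\M\chi_T\|_p^p \ge \int_{\Psi([1/2, 1]\times\Om_\ve)} |\M\chi_T|^p\, dy \ge c_0\int_{\Om_\ve}|T(z')\cap S|^p\, dz'$, and combining this with the operator-norm bound $\|\M\chi_T\|_p^p \le \|\M\|^p |T|$ gives $\int_{\Om_\ve} |T(z')\cap S|^p /|T| \, dz' \le c_0^{-1}\|\M\|^p$. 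For $\ve$ small enough that $(C_d \ve^{d-1})^p \le 4 c_0$, this implies the stated bound (and is in fact strictly stronger, i.e., $\ve$-independent).

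The principal obstacle is conceptual: the naive lower bound $\M\chi_T(z', z_d(z'))\ge |T(z')\cap S|$ lives only on the $(d-1)$-dimensional graph $\{(z', z_d(z')): z'\in\Om_\ve\}$ and therefore contributes nothing to $\|\M\chi_T\|_p$ over $\R^d$. The rescaling identity is the key device: it shows that the same lower bound in fact holds on the full $d$-dimensional cone-like region $\Psi([1/2, 1]\times\Om_\ve)$, on which the $L^p$ integration over $\R^d$ becomes meaningful. All other steps are routine: checking the Jacobian lower bound from the hypotheses on $z_d$, and verifying injectivity of $\Psi$ via the monotonicity argument.
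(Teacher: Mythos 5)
Your proof is correct and takes essentially the same route as the paper's: both derive the pointwise lower bound $\M\chi_T(y)\ge|T(z')\cap S|$ on the conical region $\{t(z',z_d(z')):z'\in\Om_\ve,\ 1/2\le t\le1\}$ from the inclusion $\tfrac1t T\supset T$ for $t\in[1/2,1]$, then push forward via the change of variables $(t,z')\mapsto(tz',tz_d(z'))$ with the identical Jacobian computation and conclude from $\|\M\chi_T\|_p^p\le\|\M\|_{p\to p}^p\,|T|$. Your rescaling identity $A_t\chi_T(tz)=|(T/t)(z')\cap S|$ is a clean reformulation of the paper's observation that $\tfrac1t(y+T)\supset T(z')$, and your remark that the argument in fact yields an $\ve$-independent bound is accurate: the factor $(C_d\ve^{d-1})^{-p}$ in the stated inequality is not actually used by the argument.
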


\begin{remarks}
(i) The same result would hold more generally for any star-shaped body $T,$ but that fact does not seem to be relevant for our applications.

\smallskip
\noi (ii) In view of John's theorem, we may basically  replace the class of all convex bodies $T$ in this result by  the class of all ellipsoids centered at the origin.
\smallskip

\noi (iii) Our proof will show that we even have the estimate
$$
\int_{\Om_\ve} \frac{|T(z')\cap S|^p}{|T|}\, dz' \le 4 (C_d \ve^{d-1})^{-p} \|\M_{\rm loc}\|_{L^p\to L^p}^p,
$$
where 
$$
\M_{\rm loc} f(x):=\sup_{1/2\le t\le 1}|A_tf(x)|.
$$
\end{remarks}

Under the assumptions \eqref{normali} on $\phi,$   let us put 
$$p_{\rm geom}(x^0):=\inf p,
$$ 
where the infimum is taken over all $p\ge 1$ for which there is  some $\ve >0,$ and a constant $C_{p,\ve}\in \RR$ such that 
\begin{equation*}
\int_{\Om_\ve} \frac{|T(z')\cap S|^p}{|T|}\, dz'\le C_{p,\ve}
\end{equation*}
holds true for all ellipsoids $T$ centered at the origin and all $C^1$-functions  $z_d:\Om_\ve\to \RR$ satisfying the assumptions   \eqref{normaliz}. 
\smallskip

As a corollary to  Proposition \ref{prop-nec}, we obtain
\begin{cor}\label{pgpc}
If $p<p_{\rm geom}(x^0),$ then the maximal operators $\M$ is unbounded on $L^p(\RR^d),$
 so that  in particular $p_{\rm geom}(x^0)\le p_c(x^0).$
\end{cor}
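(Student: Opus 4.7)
The plan is to derive the corollary directly from Proposition~\ref{prop-nec} by contraposition, with essentially no extra work. I would first fix $\ve>0$ small enough that the normalization \eqref{normali} for $\phi$ holds on $\Om_\ve,$ so that Proposition~\ref{prop-nec} applies in the setting of Section~\ref{genconj}. Suppose now that $\M$ is bounded on $L^p(\RR^d),$ so that $\|\M\|_{L^p\to L^p}<\infty.$ Since any ellipsoid centered at the origin is in particular a symmetric convex body of positive volume, Proposition~\ref{prop-nec} yields, for every such ellipsoid $T$ and every $C^1$-function $z_d:\Om_\ve\to\RR$ satisfying \eqref{normaliz}, the uniform bound
\[
\int_{\Om_\ve}\frac{|T(z')\cap S|^p}{|T|}\,dz'\;\le\; 4\,(C_d\,\ve^{d-1})^{-p}\,\|\M\|_{L^p\to L^p}^p,
\]
whose right-hand side depends on neither $T$ nor $z_d.$ Setting $C_{p,\ve}$ equal to this right-hand side gives exactly the condition used to define $p_{\rm geom}(x^0);$ hence $p\ge p_{\rm geom}(x^0).$ Contrapositively, $p<p_{\rm geom}(x^0)$ forces $\M$ to be unbounded on $L^p(\RR^d),$ which is the first assertion.

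The inequality $p_{\rm geom}(x^0)\le p_c(x^0)$ then follows immediately. For any $p>p_c(x^0),$ the definition of the local critical exponent provides a neighborhood of $x^0$ (which can be shrunk so that the normalizations leading to \eqref{normali}--\eqref{At1} are satisfied) on which $\M$ is $L^p$-bounded; the first part then yields $p\ge p_{\rm geom}(x^0),$ and taking the infimum over such $p$ gives the stated inequality.

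There is no genuine obstacle in the proof of the corollary itself; it is merely a restatement of Proposition~\ref{prop-nec} in the language of the geometric exponent $p_{\rm geom}(x^0).$ All of the substantive content — the construction of test functions for $\M$ concentrated on a thickened piece of the graph $S\cap T(z')$ that realize the integral bound — is contained in the proof of Proposition~\ref{prop-nec}, deferred to Section~\ref{nec}. The only point that requires a moment's thought is the transition from convex bodies in the Proposition to ellipsoids in the definition of $p_{\rm geom}(x^0),$ but this is trivial in the direction needed here, since ellipsoids are a subclass of the symmetric convex bodies to which Proposition~\ref{prop-nec} applies.
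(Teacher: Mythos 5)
Your proof is correct and follows exactly the route the paper intends: the corollary is indeed just Proposition~\ref{prop-nec} recast in terms of the infimum defining $p_{\rm geom}(x^0)$, and the paper gives no separate proof precisely because it regards this as immediate. Your contrapositive argument and the passage from $p>p_c(x^0)$ to the boundedness of a suitably localized $\M$ (so that Proposition~\ref{prop-nec} applies) is the natural way to make this precise, and the remark that ellipsoids form a subclass of symmetric convex bodies correctly disposes of the only formal gap between the hypotheses of the Proposition and the definition of $p_{\rm geom}(x^0)$.
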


\begin{conjecture}
If $p>p_{\rm geom}(x^0),$ then the maximal operator $\M=\M_\rho$ is bounded on $L^p(\RR^d),$ provided $\rho$ is supported in a sufficiently small neighborhood of $x^0,$ so that indeed $p_c(x^0)=p_{\rm geom}(x^0).$ 
\end{conjecture}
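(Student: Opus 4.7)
A first step is to verify the conjecture directly for every class of surfaces where $p_c(x^0)$ has already been computed, so as to buy confidence (and a proof in dimension three). In each case the plan is to exhibit ellipsoids $T$ realizing the required lower bound on $p_{\rm geom}(x^0),$ and then to check that the geometric quantity $\int_{\Om_\ve} |T(z')\cap S|^p/|T|\,dz'$ is indeed finite for every $p$ strictly above the known $p_c(x^0).$ The extremal test bodies should be dictated by the Newton polyhedron of $\phi$: for surfaces of height $h\ge 2,$ the Varchenko boxes with side lengths proportional to the principal weights yield $p_{\rm geom}\ge h;$ for $\A_{n-1}^+$-singularities one additionally uses thin ellipsoidal tubes adapted to the weight $\ka^e$ associated with the edge $\ga_1$ of $\N(\pred),$ yielding $p_{\rm geom}\ge p_e;$ and the unit ball gives $p_{\rm geom}\ge d/(d-1).$ Combined with Corollary \ref{pgpc} and Theorems \ref{thm-a-} and \ref{thm-a+}, this should establish $p_{\rm geom}(x^0)=p_c(x^0)$ for all classes treated so far. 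The upper bound on the geometric integral in the $\A_{n-1}^+$ case requires care, since the supremum in the definition of $n_e$ is over all admissible coordinate changes, whereas the ellipsoids are tested in the ambient coordinates; one expects line-adapted coordinates to give the extremal test family.

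\textbf{A general approach in arbitrary dimension.} For the unrestricted conjecture, the plan is to run a Littlewood--Paley decomposition of the measure $\mu=\rho\,d\sigma$ in the direction normal to $S,$ splitting $\M_\rho$ into a low-frequency piece, essentially the maximal operator associated to $\ve$-thickenings of $tS,$ and a sum over dyadic high-frequency pieces. Via duality, the $L^p$-norm of the low-frequency piece is controlled precisely by integrals of the form $\int|T(z')\cap S|^p\,dz'$ appearing in the definition of $p_{\rm geom}(x^0),$ because the kernel of the low-frequency convolution is essentially $|T|^{-1}\chi_T$ for a suitable ellipsoid $T.$ The high-frequency pieces are handled by Stein--Wainger-type $L^2$ estimates derived from decay of $\widehat\mu_k$ combined with interpolation against the trivial $L^\infty$-bound, following the Mockenhaupt--Seeger--Sogge $g$-function scheme. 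The final step is to sum over dyadic scales, using that $\max\{d/(d-1),p_{\rm geom}(x^0)\}$ always dominates the exponent produced by the $L^2$-theory.

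\textbf{Main obstacle.} The central difficulty is that $p_{\rm geom}(x^0)$ is a purely geometric incidence-type quantity, while existing proofs of $L^p$-boundedness, especially for $\A_{n-1}^+$-singularities, rely on fine analytic data from the resolution of singularities and on oscillatory integral estimates that seem richer than contact data with ellipsoids. Concretely, for $\A_{n-1}^e$-singularities the introduction already signals a connection with cone multipliers and Fourier integral operators, phenomena that live at high frequency and whose sharp bounds are not manifestly captured by intersections $T(z')\cap S.$ A successful proof will most likely require either (a) enlarging the admissible class of test bodies beyond ellipsoids, as anticipated in the remarks following Proposition \ref{prop-nec}, in such a way that the refined obstructions become visible as geometric ones, or (b) showing that after a sufficiently fine decomposition of $S$ adapted to the Newton polyhedron, the high-frequency oscillatory estimates reduce, scale by scale, to geometric contact statements that can be read off $p_{\rm geom}(x^0).$ Either route appears to require the machinery developed for Part III and is out of reach with the tools of the present paper alone.
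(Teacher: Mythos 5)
The statement in question is a \emph{conjecture}, not a theorem, and the paper does not prove it; what the paper does do is verify it in dimension $d=3$ for every class of surfaces for which $p_c(x^0)$ is known (all cases except $\A_{n-1}^e$), and observe that for $\A_{n-1}^e$ it is equivalent to Conjecture~\ref{conj1}. Your first paragraph captures that verification essentially correctly: the lower bounds $p_{\rm geom}\ge 3/2,\ p_{\rm geom}\ge h,$ and (for $\A_{n-1}^+$) $p_{\rm geom}\ge p_e$ come from the three families of test boxes constructed in Section~\ref{nec}, and the sandwich $p_{\rm geom}= p_c$ then follows from Corollary~\ref{pgpc} together with Theorems~\ref{thm-a-} and~\ref{thm-a+}. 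One small redundancy in your plan: you propose to ``check that the geometric quantity $\int_{\Om_\ve}|T(z')\cap S|^p/|T|\,dz'$ is finite for every $p$ strictly above the known $p_c(x^0)$'', but this direct finiteness check is not needed --- Proposition~\ref{prop-nec}/Corollary~\ref{pgpc} already gives $p_{\rm geom}(x^0)\le p_c(x^0)$ with no case analysis, which is the route the paper takes. You also correctly anticipate that the $k=1$ test bodies must be set up after passing to coordinates of the form~\eqref{shear}, which is what the paper does, and that the supremum over such coordinate systems is what produces $p_e$ rather than merely ${2n_e^x}/{(n_e^x+1)}$.

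Your second and third paragraphs go beyond anything in the paper. The Littlewood--Paley / ``$g$-function'' scheme you sketch has a genuine gap at the crucial step: the claim that the $L^p$-norm of the low-frequency piece is ``controlled precisely'' by integrals of the form $\int|T(z')\cap S|^p\,dz'$ via duality is not established --- the definition of $p_{\rm geom}(x^0)$ involves a supremum over all ellipsoids $T$ and all $C^1$-graphs $z_d$, and the maximal-function norm is not an $L^p$-norm of a fixed kernel, so there is no clean duality reduction of the sufficiency direction to a single incidence integral. Moreover, as you yourself note, the high-frequency analysis for $\A_{n-1}^e$-singularities hinges on FIO/cone-multiplier phenomena (to appear in Part~III) that have no evident formulation in terms of contacts with ellipsoids. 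Your honest appraisal of the obstacle --- that the existing sufficiency proofs use resolution-of-singularities data richer than what ellipsoidal contact sees --- is accurate and matches the paper's own stance: the general conjecture remains open, and even in $d=3$ the case $\A_{n-1}^e$ is open pending Conjecture~\ref{conj1}.
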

\color{black}

  Our discussion in Section \ref{nec} will reveal  that this conjecture holds indeed true in dimension $d=3,$  with the possible exception of hypersurfaces of type $\A^e,$  for which, however, it  is  equivalent to Conjecture \ref{conj1}.

  \medskip
  \subsection{A guide through the organization of the paper and the proofs of  our main results}
Section \ref{nec} will be devoted to the proof of Proposition \ref{prop-nec}, from which we shall also deduce  the necessity of the conditions in our main Theorems \ref{thm-a-} and \ref{thm-a+}. Moreover, we shall  compare our results with results and a conjecture by 
  Iosevich, Sawyer and Seeger from their article \cite{io-sa-seeger}.
  
  In preparation of the proof of Theorem \ref{thm-a+}, in Sections \ref{lineadac} and \ref{legendretrans} we  shall study functions $\phi=\phi(x_1,x_2)$ of  type $\A^+_{n-1}$ and their Legendre transforms $\breve\phi=\breve\phi(x_1,s_2)$ with respect to the second variable $x_2.$ The latter will be crucial in order  to obtain very precise information on the partial Fourier transform of the measure $\mu$ with respect to $x_2.$ In particular, we shall show in Section \ref{lineadac} how to construct line-adapted coordinates for $\phi,$ and give a necessary and sufficient condition for a given coordinate system to be line-adapted. And, in Section \ref{legendretrans}, we shall show that the classes $\A^+_{n-1}$ and $\A^e_{n-1}$ are invariant under this Legendre transform, and that the principal parts  $\pred$ of $\phi$ and $\breve\phi_{\rm red}$ of $\breve \phi$ are ``essentially'' the same, a fact which will be crucial in order to be able to  transfer the conditions $ (A1), (A2)$ on $\pred$ into the same conditions on $\breve\phi_{\rm red}.$ 
  
  Next, in Section \ref{prepsteps}, we shall provide some auxiliary results which will be used frequently later on.  First, we recall  a well-know version of a van der Corput type estimate for one-dimensional oscillatory integrals. Then, in Lemma \ref{pinvers}, we prove an  identity for two-dimensional oscillatory integrals  which has the flavour of a stationary phase identity and which will become crucial later on whenever we shall have to deal with the most  challenging  oscillatory integrals arising in our later studies. Next, we shall recall a lemma from our preceding paper \cite{bdim19} which will be applied whenever we shall prove $L^p$ - estimates  for $p$ close to $1$  for maximal operators associated to ``microlocalized'' pieces of our measure $\mu.$ Finally, we shall perform some preliminary reductions which will allow to spectrally localize the measure $\mu$ to frequencies $\xi=(\xi_1,\xi_2,\xi_3)$ where 
  $$
   |\xi_3|\sim\la\gg 1\text { and  } |s_1|,|s_2|\ll 1,\text {  with  }   s_1:=\frac{\xi_1}{\xi_3}, s_2:=\frac{\xi_2}{\xi_3}, s_3:=\frac{\xi_3}\la.
   $$
   Here,$ \la$ is a dyadic number, and the resulting measure will be denoted by $\mu^\la.$
\smallskip

The Sections \ref{proof-} to \ref{endproof1} will be devoted to the proof of the sufficiency of the conditions in Theorem  \ref{thm-a-}.
  We begin by applying the method of stationary phase in the coordinate $x_2;$ in particular, we determine the Legendre transform 
  $\breve\phi(x_1,s_2)$  (cf. \eqref{legendamins}) and show that the Fourier transform of $\mu^\la$ can be written as 
$$
\widehat{\mu^\la}(\xi)=\la^{-\frac 12} e^{-i\la s_3\big(s_2^2B(s_2)+1\big)}  J(\la, s) \, \chi_0(s_3s')\chi_1(s_3) ,
$$
where $J(\la, s)$ denotes the one-dimensional oscillatory integral
$$
J(\la, s):=\int e^{-i\la s_3\Phi_0(x_1,s')}  \eta(x_1,  s_2)\,dx_1,
$$
with phase $\Phi_0(x_1,s'):=\breve\phi_1(x_1, s_2)+s_1x_1,$ where $s':=(s_1,s_2).$
Quite important for us will also be the  two-dimensional oscillatory integral
$$
 F(\la,s_2,s_3,y_1) := \iint e^{-i\la s_3(\Phi_0(x_1,s')-s_1y_1)}  \eta(x_1,  s_2)  \chi_0(s_3s')\chi_0(s_1) \,dx_1 ds_1,
$$
which will allow us to write
$$
\mu^\la(y+\Ga) =\la^{\frac52}
\int \Big(\int F(\la,s_2,s_3,y_1) e^{-i\la s_3(s_2^2B(s_2)-s_2 y_2-y_3)} \chi_0(s_2) ds_2\Big) \chi_1(s_3) ds_3,
$$
where we have put $\Gamma:=(0,0,1).$ 

Our estimates for the maximal operator $\M^\la$ associated to the measure $\mu^\la$ will be based on an interpolation between $L^2$ - estimates and $L^{1+\ve}$ - estimates  for $\ve>0$ sufficiently small. In view of Plancherel's theorem, for these $L^2$- estimates, we shall need to estimate the oscillatory integrals  $J(\la, s),$ whereas for our $L^{1+\ve}$-estimates,  pointwise bounds on $\mu^\la(y+\Ga)$ will be required in order to be able to apply Lemma \ref{maxproj}.

As for the oscillatory integral $J(\la, s),$ which strongly depends on the parameter $s_2,$ the strategy will be to fix a certain dyadic level of $s_2,$ say where $s_2\sim 2^{-k}\ll 1,$ and then try to decompose the support of $\eta$ in $x_1$ into intervals over which the second derivative
$$
\Phi(x_1,s_2):=\pa_{x_1}^2\Phi_0(x_1,s') =\pa_{x_1}^2\breve\phi_1(x_1, s_2)
$$
will essentially be of a certain level size for these values of $s_2,$ so that for these  parts of the oscillatory integral we can apply van der Corput's estimate of order $2.$  Clearly, in order to perform this, we shall need to understand the null set  $\Phi(x_1,s_2)=0.$ It is well-known that this set is the union of  finitely many curves, either of the form $x_1=0,$  or $x_1=r(s_2),$ with non-trivial ``real roots'' $r(s_2)$ which can be expanded as Puiseux series in $s_2$ -- these facts will be re-called in Subsection \ref{roots}, as well as their connections with the two-dimensional Newton-polyhedron $\N(\Phi)$ associated to $\Phi.$ We are here following  ideas from \cite{phong-stein}. These roots come in clusters, the first type of cluster being determined by the leading exponent  of the roots (we also have to consider complex roots!). These in return are in one to one correspondence with the compact  edges of $\N(\Phi):$ if $a=a_l$ is the leading exponent of all roots in  a given cluster, then there is a unique compact edge $\ga=\ga_l$ such that $a$ is just the modulus of the slope of this edge. 

Moreover, each compact edge $\ga_l$ comes with a certain (typically) anisotropic scaling structure, which allows to decompose our $(x_1,s_2)$ - domain into certain domains $D_l$ which are invariant under the scalings  associated to $\ga_l,$ and transition domains $E_l$ in between these  homogeneous domain (see Figure \ref{domains}). This decomposition will be explained in Section \ref{step1}. Now, on each transition domain, it turns out that we do have a good resolution of singularities of $\Phi;$ indeed, by \eqref{VLaurent},
$$
\Phi(x_1,s_2)=V(x_1,s_2)\, s_2^{A_l}x_1^{B_l},\qquad (x_1,s_2)\in E_l,
$$
where $|V(x_1,s_2)|\sim 1,$ and where $(B_l,A_l)$ is a vertex of $\N(\Phi)$ associated to $E_l.$ 
Thus, if we assume that $x_1\sim 2^{j},$ then we see that $|\Phi(x_1,s_2)|\sim 2^{-kA_l-jB_l},$ so that we can apply van der Corput's estimate to the corresponding part  $J_{j,k}(\la,s)$ of $J(\la,s)$ and get an appropriate $L^2$ - estimate for the associated maximal operator 
$\M^\la_{j,k}.$ 

However, a considerably bigger  challenge comes with the $L^{1+\ve}$ - estimates of $\M^\la_{j,k}.$ Here, we need to understand  the original phase 
$\Phi_0$ and the associated functions $F_{j,k}$ and $\mu^\la_{j,k}.$ We show  in \eqref{Phi0onEl} that, for suitable functions $g$ and $h,$ 
$$
\Phi_0(x_1,s')=\breve\phi_1(x_1, s_2)+s_1x_1=U(x_1,s_2)s_2^{A_l} x_1^{B_l+2}+x_1(s_1+g(s_2))+h(s_2),
$$
where $|U(z_1,s_2)|\sim 1,$ and one problem will be to gain a suitable control on these functions $g$ and $h$ (which is not very difficult in this first step of our resolution algorithm, but becomes more involved later on) in order to later also be able to control the integration in $s_2$ in our integral formula for $\mu^\la(y+\Ga).$ 

Once the $L^{1+\ve}$ - estimates are established, we can interpolate in order to obtain $L^p$-estimates for the maximal operators
$\M^\la_{j,k}.$ In order to show that these estimates can indeed be summed over all  dyadic $\la\gg 1$ and all indices  $j,k$ associated to the domain $E_l,$ we 
also need the important ``Geometric Lemma'' \ref{geo} and its corollary, Lemma \ref{key}, which will allow us to relate information from the Newton polyhedron to the exponents which appear in our $L^p$-estimates.

We are then left with the contributions by the homogeneous domains $D_l,$ which will contain all real ``root curves'' of the associated cluster. Given any leading  term $cs_2^a$ of some real root in the  cluster, we can then essentially localize to a narrow, homogeneous neighborhood of the curve $x_1=cs_2^a,$ which will contain all ``root curves'' with this leading term.

In order to study the contribution by this narrow domain, we perform a change of coordinates, by putting $x_1-cs_2^a=:\tilde z,$ 
and express everything in these new coordinates $(\tilde z,s_2)$ in place of $(x_1,s_2).$ In particular, we can then express the function $\Phi(x_1,s_2)$ as a function $\tilde \Phi(\tilde z,s_2).$
 All this will be done in Section \ref{step1}.

In a second step of our resolution  of singularities algorithm, we shall then consider the Newton polyhedron $\N(\tilde \Phi),$ and try to iterate the procedure from the first step, with $\Phi$ replaced by  $\tilde\Phi.$ This will be performed in Section \ref{step2}. The $L^{1+\ve}$ - estimates  for the corresponding transition domains $\tilde E_l$ will become even more involved in this Step 2. There are many subtleties arising in this process, let us just highlight one: Lemma \ref{key} requires that the first compact edge of the Newton polyhedron has a left endpoint $(B_1,A_1)$ with $A_1\ge 1.$ This is immediate in Step 1, but no longer in Step 2, where this condition may fail.  But, it turns out that the cases where this happens to fail can still be handled, since we do have a good resolution of singularities in these cases  (see Subsection \ref{A1ge1}).

Proceeding in this way with our algorithm,  in order to analyze the contributions by the new homogeneous domains $\tilde D_l,$ we have again to perform  changes of coordinates by subtracting second order terms of the Puiseux series expansions of  roots from $\tilde z,$ in order to arrive at Step 3, and so forth. All this will be explained in Section \ref{endproof1}. Iterating this procedure, we shall, step by step, narrow down our considerations to smaller and smaller homogeneous domains containing only root curves from sub-clusters of higher and higher order, which, after a finite number of steps, will only contain one, real root $r(s_2),$ but  possibly with multiplicity. However, in this case one can obtain a good resolution of singularity near this root, and can then essentially apply the same techniques that we had used for transition domains also to such  kind of  homogeneous domains.

\smallskip
The Sections \ref{proof+} to \ref{9endproof1} will be covering  the proof of the sufficiency of the conditions in Theorem  \ref{thm-a+}.
By and large, we shall be able to follow the proof of Theorem  \ref{thm-a-} and shall therefore only highlight those parts of the proof which will require different or modified arguments. Major differences are caused by the change of coordinates \eqref{shear} which leads to line-adapted coordinates.   Moreover, a crucial new tool will be the ``Multiplicity Lemma`` \ref{multi}, which, under our assumptions $(A1),(A2),$ implies that in Step 2  of our resolution algorithm, the case where $A_1<1$ can only arise when all non-trivial real roots have multiplicity $1,$ so that we can then argue in a similar way as for Theorem  \ref{thm-a-}. 
Another difference is the way how we control the functions $g$ and $h$ above; this, however,  turns out to be even easier here than in the case of singularities of type $\A^-_{n-1},$ due to the fact that here $\tilde\ka^e_2<1/2.$ 
\smallskip

Finally, in Section \ref{isasee}, we shall indicate how the proof of Theorem \ref{thm-a+} can be modified to give a proof of the Iosevich-Sawyer-Seeger conjecture for surfaces of class $\A^e_{n-1}.$

  \bigskip
  
 {\bf Conventions:}  Throughout this article, we shall use the ``variable constant'' notation, i.e.,  many constants appearing in the course of our arguments, often  denoted by  $C,$  will typically have different values  at different lines.  Moreover, we shall use  symbols such as  $\sim, \lesssim$ or $\ll$ in order to avoid  writing down constants, as explained in \cite[Chapter 1]{IMmon}. 
By $\chi_0 $ and $\chi_1$ we shall  denote  smooth cut-off functions on $\RR$ with typically  small compact supports,
where $\chi_1$ vanishes near the origin and is identically one  near $1,$ whereas $\chi_0$ is identically $1$ on a small neighborhood of the origin. 
  These cut-off functions may also vary from line to line, and may  in some instances, where several of  such functions of different variables appear within the same formula, even   designate different functions.

Also, if we speak of the {\it slope}  of a line such as a supporting line to a Newton polyhedron, then we shall actually  mean the modulus of the slope. 

\medskip
{\noindent {\bf Acknowledgement:} The authors wish to thank Spyros Dendrinos for helpful annotations on the paper and in particular  for suggesting Example \ref{e2.6}.

\setcounter{equation}{0}
\section{Necessary conditions}
\label{nec}

Let us first prove  Proposition \ref{prop-nec}.
\vspace{1em}

\noindent {\it Proof of Proposition \ref{prop-nec}.} Recall that $\Om=\Om_\ve=\{z'\in\RR^{d-1}: |z'|< \ve\},$ where $0<\ve<1,$ and let
\[
B:=\{t(z',z_d(z')) : z'\in \Omega, 1/2 \le t \le 1\}.
\]
 If $y\in B$, then $y=t(z',z_d(z'))=tz$, for suitable $t=t(y), z'=z'(y)$ and $1/2 \le t\le 1$. Then,  by \eqref{At1}, for such $y,t,z$,
\begin{eqnarray*}
A_t \chi_T(y) &=& \int_\Omega \chi_T(y'-tx', y_d - t(1+\phi(x')))\, dx' \\
&=& \int_\Omega \chi_{\frac{1}{t}(y+T)}(x',1+\phi(x'))dx' \\
& \ge & C_d \ve^{d-1} |T(z')\cap S|,
\end{eqnarray*}
since $T$ is symmetric and convex, which implies that $\frac{1}{t}(y+T) \supset T(z')$. Thus,
\[
|T(z')\cap S| \le \frac 1{C_d \ve^{d-1}} \M \chi_T(tz) 
\]
for 
$ 1/2\le t \le 1$. But, $F:(t,z') \mapsto (tz', tz_d(z'))=tz$ is a $C^1$- change of coordinates, with   Jacobian $J_F$  satisfying 
$$
|J_F(t,z')|=t^{d-1}|z_d(z')- z'\cdot \nabla z_d(z')|,$$
so that, by \eqref{normaliz}, $1/2^d\le |J_F(t,z')|\le 2.$
Hence
\begin{eqnarray*}
\int_\Omega |T(z')\cap S|^p dz' & \le & 2 (C_d \ve^{d-1})^{-p} \int_\Omega \int_{1/2}^1 \M\chi_T(tz',tz_d(z')))^p dt dz' \\
& \le & 4(C_d \ve^{d-1})^{-p}  \int_B \M \chi_T(y)^p dy \\
& \le & 4 (C_d \ve^{d-1})^{-p}  \|\M\|^p_{p\to p} |T|.
\end{eqnarray*}
\hfill Q.E.D.
\medskip

In applications one would choose the function $z_d(z')$ so that $|T(z')\cap S|$ becomes essentially as large as possible. 

\subsection{The three dimensional case}
Let us now turn our attention to the $d=3$ case. We will be looking at $T$ of the form $T=[-\de_1,\de_1]\times [-\de_2,\de_2] \times [-\de_3,\de_3]$ with $\de_i\le 1.$ Let
\[
I:=\{i\in \{1,2\}:\de_i \sim 1\}.
\]
We will be taking $\de_i \ll 1$ for $i\not\in I$. Also let $k:=|I|$ and, given $z'=(z_1,z_2)\in \Om,$  let us consider $(z_1^0,z_2^0),$ where $ z_i^0:=0,$  if $i\in I,$ and $z_i^0:=z_i,$ if $i\not\in I$.  It is then plausible that a  good  choice for the function $z_3(z_1,z_2)$ might be
\begin{equation} \label{z3}
z_3(z_1,z_2) := 1 + \phi(z_1^0,z_2^0).
\end{equation}
We deduce necessary conditions for the $L^p$-boundedness of $\M$ based on the following three examples, where the cases $k=0$ and $k=2$ can indeed by treated by this kind of choice of $z_3(z_1,z_2).$ In the case $k=1,$ we will first have to apply suitable changes of coordinates of the form \eqref{shear} before making this kind of choice. 

\begin{example}
The case $k=0$.
\end{example}
Here $I=\emptyset$, i.e., $\de_i \ll 1, i=1,2,3$, and we choose $z_3(z_1,z_2) := 1 + \phi(z_1,z_2)$. Then if we choose $\de_1=\de_2=\de_3=\de\ll 1$,  meaning that $T$ is a $\delta$-cube, we have $|T(z_1,z_2)\cap S|\sim \de^2$ for all $(z_1,z_2)\in \Om$, which implies that for all $\de\ll 1,$ 
\[
 \|\M\|^p_{p\to p} \gtrsim \int_\Om \de^{2p-3} dz_1dz_2 \sim \de^{2p-3},
\]
giving the necessary  condition $p\ge 3/2$ $\M$ to be bounded on $L^p.$

\begin{example}
The case $k=2$.
\end{example}
Here  we choose $I=\{1,2\}$, hence only $\de_3=\de\ll 1,$  and thus $T$ is essentially a $\de$-slab. We choose $(z_1^0,z_2^0)=(0,0)$, so $z_3(z_1,z_2) = 1 + \phi(0,0) =1$. Then, the condition $|1+\phi(z_1,z_2) - z_3| = |\phi(z_1,z_2)| < \de$ implies that $|T(z_1,z_2)\cap S| \sim \de^{1/h}$ for all $(z_1,z_2)\in \Om$  (i.e., $|(z_1,z_2)|<\ve$), provided $\ve>0$ is chosen sufficiently small. Here,  $h=h(x^0)$ denotes  the height of $\phi$. This in turn implies that
\[
\|\M\|^p_{p\to p} \gtrsim_\ve \int_\Om \de^{p/h-1} dz_1dz_2 \sim \de^{p/h-1},
\]
giving  the  necessary condition $p\ge h$ for $\M$ to be bounded on $L^p.$

Note that the same computations show that necessarily 
\begin{equation}\label{onpgeom}
p_{\rm geom}\ge \max\{3/2, h\}.
\end{equation}
Since we have seen in the Introduction that \eqref{pcrit1} holds true for all classes of 2-hypersurfaces, with the  exception of hypersurfaces of type $\A_{n-1}^+,$  and since by Corollary \ref{pgpc}  we have $p_{\rm geom}\le p_c,$ \eqref{onpgeom} shows that  indeed $p_{\rm geom}=p_c$ for these classes of surfaces.

\begin{example}
The case $k=1$.
\end{example}

We  can here focus  on the remaining class of functions of type $\A_{n-1}^+.$   Following the discussion in Section \ref{introduction} just before Examples \ref{e1.3} and \ref{e1.4}, we may perform any  local coordinate change as in \eqref{shear} and denote by $y=(y_1,y_2)$  the new coordinate system.

  Recall the decomposition $\phi(x_1,x_2) = \tilde\phi(y_1,y_2)=\tilde\phi(0,y_2)+\tilde \phi_{\rm red}(y_1,y_2)$. Note that $\tilde\phi(0,y_2) = y_2^2 \rho(y_2)$ for some analytic function $\rho$ at the origin. Denote by $\tilde\ga_1$ the first  edge of the Newton polyhedron $\N(\tilde \phi_{\rm red})$ of 
 $\tilde \phi_{\rm red}$ with left endpoint $(n,0)$ (which must intersect  the line $t_2=1$), and by $\tilde L_1:=\{\tilde\ka ^e_1t_1+\tilde\ka ^e_2t_2=1\}$ the line  supporting $\tilde\ga_1.$ Then   
 $\tilde\ka ^e_1n_y^e+\tilde\ka ^e_2=1,$   hence (cf. \eqref{ne})
 $$n_e^y=(1-\tilde\ka ^e_2)/\tilde\ka ^e_1.$$

  Assume first that $\tilde\ka ^e_2>0,$ so that $\tilde\ga_1$ is a compact edge. 
  
  We  choose $I=\{1\}$, so that 
\[
T=[-1,1]\times [-\de_2,\de_2]\times [-\de,\de],
\]
and then choose $z_3$ according to \eqref{z3}, but with respect to the coordinates $(y_1,y_2)$. Noting that $y_2=x_2,$ this means that we put
\[
z_3(z_1,z_2):=1+\tilde\phi(0,z_2) = 1+z_2^2\rho(z_2).
\]
Assume that  the point $(x_1,x_2,1+\phi\x)$ on $S$ lies in $T(z_1,z_2).$ Then
$$
|x_1-z_1|\le 1, \ |x_2-z_2|\le \de_2, \ |1+\phi(x_1,x_2)-z_3(z_1,z_2)| \le \de.
$$
But,  we have
\begin{eqnarray}\label{k=1-1} \nonumber
|1+\phi(x_1,x_2)-z_3(z_1,z_2)| &=&|1+\tilde\phi(y_1,y_2)-z_3(z_1,z_2)| \\ 
&\le& | y_2^2 \rho(y_2)- z_2^2 \rho(z_2)|+|\tilde \phi_{\rm red}(y_1,y_2)|.
\end{eqnarray}

Note next that since $\tilde \phi_{\rm red}$ agrees with  a $\tilde\kappa^e $-homogeneous polynomial of degree $1,$ up to some error term of higher 
$\tilde\kappa^e $-degree, we see that 
$$
|\tilde \phi_{\rm red}(y_1,y_2)|\lesssim |y_1|^{1/\tilde \ka^e_1}+|y_2|^{1/\tilde\ka^e_2},
$$
provided $\ve>0$ is sufficiently small in our definition of $\Om.$
We also  recall that $\tilde\ka^e_1=1/n.$

Thus, to make sure that $|\tilde \phi_{\rm red}(y_1,y_2)|\le\de/2,$ it suffices to assume that
\begin{equation*}
|x_1-\alpha (x_2)|=|y_1|\le c \de^{ \tilde \ka^e_1} \quad \text {and} \quad |x_2|=|y_2|\le 2c\de^{ \tilde \ka^e_2},
\end{equation*}
where $0<c<1$  is sufficiently small.
Let us therefore  assume in what follows that $|z_2|\le c\de^{ \tilde \ka^e_2}$ and $|x_2|\le 2c\de^{ \tilde \ka^e_2}.$
Then also 
$$
 | y_2^2 \rho(y_2)- z_2^2 \rho(z_2)|\lesssim |x_2-z_2|(|x_2|+|z_2|)\le \de_2 4c\de^{ \tilde \ka^e_2}.
$$
We therefore choose $\de_2:= \de^{1-\tilde\kappa^e_2}$ in the above definition of $T,$  i.e., we consider 
$$T=T_\de:=[-1,1]\times [-\de^{1-\tilde\kappa^e _2},\de^{1-\tilde\kappa^e _2}]\times [-\de,\de].$$
Then, \eqref{k=1-1} and the subsequent estimates show that  
$$
|1+\phi(x_1,x_2)-z_3(z_1,z_2)|\le\delta,
$$
if $c$ is sufficiently small. To summarize: if we define the set 
$$
A_\de(z_1,z_2):=\{\x: |x_1|\le \frac 12,  |x_2|\le 2c\de^{ \tilde \ka^e_2}, \,|x_2-z_2|\le c\de^{1-\tilde\kappa^e _2}, |x_1-\alpha (x_2)|\le c\de^{\tilde\kappa^e _1}\},
$$
then if $|z_1|\le 1/2,\, |z_2|\le c\de^{ \tilde \ka^e_2}$ and $(x_1,x_2)\in A_\de(z_1,z_2),$ we have  $(x_1,x_2,1+\phi\x)\in T_\de(z_1,z_2),$  so that
\[
 |T_\de(z_1,z_2)\cap S|\ge |A_\de(z_1,z_2)|\gtrsim \de^{1-\tilde\kappa^e _2}\de^{\tilde\ka^e_1} =\de^{\tilde\kappa^e _1+1-\tilde\kappa^e _2},
 \]
 provided $\tilde\kappa^e _2 \le 1/2.$ But, since $\N(\tilde \phi_{\rm red})\subset \N(\tilde \phi),$ and property $\A^+_{n-1}$ holds true also for $\tilde \phi,$ we must indeed have $1/\tilde\ka^e_2\ge 2.$

 \smallskip
 
 Thus, by Proposition \ref{prop-nec}, for any sufficiently small $\de>0,$ 
 \begin{eqnarray*}
 \|\M\|^p_{p\to p} & \gtrsim_\ve & \int_{|z_1|\le 1, |z_2|\le \de^{\tilde\kappa^e _2}} \frac{ |T_\de(z_1,z_2)\cap S|^p}{|T|}dz_1dz_2 \\
 & \gtrsim_\ve & \de^{\tilde\kappa^e _2} \frac{\de^{(\tilde\kappa^e _1+1-\tilde\kappa^e _2)p}}{\de^{2-\tilde\ka^e_2}} \\
 & = & \de^{(\tilde\kappa^e _1+1-\tilde\kappa^e _2)p-2(1-\tilde\ka^e_2)},
\end{eqnarray*}
so we must have that 
\[
p\ge 2\frac{1-\tilde\ka^e_2}{\tilde\kappa^e _1+1-\tilde\kappa^e _2}=\frac{2n^y_e}{n^y_e+1}.
\]
Note that the right-hand side is increasing in $n_e^y.$

Assume finally that $\tilde\ka^e_2=0.$ Then, for any $M\in\NN, M\gg1,$ let us change coordinates to $y_1^{(M)}:=y_1+y_2^M, y_2^{(M)}:=y_2,$ and consider $\phi^{(M)}_{\rm red}$ representing $\tilde \phi_{\rm red}$ in these new coordinates. Then, for $M$ sufficiently large, the edge of $\N(\phi^{(M)}_{\rm red})$ passing through the line $t_2=1$ will be compact, and it is easily seen 
that $n_e^{y^{(M)}}\to n_e^y$ as $M\to \infty.$ Thus, by what we have already shown for the case where $\tilde\ga_1$ is compact, also in this case we find that necessarily 
$p\ge 2n_e^y/(n_e^y+1).$ 

 Our estimates imply the necessary condition $p\ge p_e$ for $\M$ to be bounded on $L^p.$
\medskip

In combination, these three examples show that the condition $$p\ge \tilde p_c=\max\{3/2, p_e, h\}$$ is necessary in Case $\A_{n-1}^+$, which proves Proposition \ref{necA}. In combination with Theorem \ref{thm-a+}, we see as well that indeed 
$$
p_{\rm geom}(x^0)=p_c(x^0)=\max\{3/2, p_e, h\},
$$ 
if $\phi$ is of  type $\A_{n-1}^+,$  but not of type  $\A_{n-1}^e,$ and Conjecture \ref{conj1} would even imply that it is true for all functions of type 
$\A_{n-1}^+.$

\subsection{A comparison with a conjecture by Iosevich-Sawyer-Seeger}\label{isseconj}
A. Iosevich, E. Sawyer and A. Seeger conjectured in \cite{io-sa-seeger}  (Remark 1.6 (iii)) that, at least for convex surfaces, a \textit{sufficient} condition for the $L^p(\RR^d)$ -   boundedness of the maximal operator $\M$ of Subsection \ref {genconj} is the following:
\smallskip

For  every $ l = 0,\dots ,d - 1$ and for every affine $ l$-plane $E_l$ through $x^0$ the function $x\mapsto \dist(x, E_l)^{ -1}$ belongs to $L^{(d-l)/p}(S)$, that is
\begin{equation}\label{iss}
	\int_S \dist(x,E_l)^{-(d-l)/p} d\sigma (x) <\infty. 
\end{equation}

Let us compare this conjecture with the results described in the Introduction  for the case $d=3,$ so that we have to look at the condition \eqref{iss} for  $l=0, 1, 2$. 
\smallskip

If $l=0,$ then obviously we have  $\dist(x, E_0)\sim |x-x^0|,$ so that $\dist(x, E_0)^{ -1} \in L^{3/p}(S)$ for every $p>3/2$. 

\smallskip

The case $l=2$ had already been discussed in  \cite{IKM-max}, where it been shown that \eqref{iss} is equivalent to the condition 
$p>h(\phi)=h$. 

\smallskip
Finally, if  $l=1,$ let us here concentrate  on the case of  singularities of type $\A_{n-1}; $  for all other types  analogous considerations  apply as well.
So let us assume that $\phi$ is given by the normal form \eqref{AD}, and that $x^0=(0,0,1).$  Then the maximal possible restriction on $p$ exerted  by \eqref{iss}  appears when the line $E_1$ points in the direction of the $x_1$ - axis and passes through $x^0.$ Then 
$$
\dist(x, E_1)\sim |x_2|+|b(x_1,x_2)(x_2-x_1^m\omega(x_1))^2+x_1^n\beta(x_1)|\sim |x_2|+|x_1|^n,
$$ 
at least when $n$ is finite, and then \eqref{iss} holds if and only if $p>{2n}/(n+1).$  Note that here $2n/(n+1)>2n/(n+2)=h.$

The case where $b_0(x_1)$ in \eqref{AD}
is flat at the origin, where formally $n=\infty,$ works in a similar way.

\smallskip Thus, since here $h=2n/(n+2),$  the conditions given by \eqref{iss} require that
$$
p>\max\Big\{\frac 32, h, \frac{2n}{n+1}\Big\}=\max\Big\{\frac 32,  \frac{2n}{n+1}\Big\}.
$$
 
\smallskip

 Now, by Theorem \ref{thm-a-}, we know that for singularities of type $\A_{n-1}^-,$ the condition $p>\max\{3/2, h\}=\max\{3/2, 2n/(n+2)\}$ does suffice, which is a weaker condition, unless $n\le 3.$ We remark that  the corresponding surfaces cannot be convex, if the coordinates $\x$ are not adapted to $\phi.$

\smallskip

On the other hand,  for singularities of type $\A_{n-1}^+$ we always have $n_e\le n,$ so that $p_e\le {2n}/(n+1),$ and  by Theorem \ref{thm-a+}  for type  $\A_{n-1}^+\setminus \A_{n-1}^e$   the condition 
$$
p>\max\Big\{\frac 32, p_e\Big\}=\max\Big\{\frac 32,  \frac{2n_e}{n_e+1}\Big\}
$$
is sufficient. Since  there are many cases in which we  indeed  have strict inequality  $p_e<{2n}/(n+1),$  
we see that the  Iosevich-Sawyer-Seeger conjecture does not exactly  identify the critical exponent $p_c$ in general, even for convex surfaces.

\smallskip
A concrete example of this type is Example \ref{e1.4}, i.e., $\phi\x= (1+x_1^\al x_2^\be)x_2^2+x_1^n, $ with $2\le\al<n$ and $n\ge 5.$  We  had seen that here $n_e=(n(\be +1)+\al)/(\be +2)<n,$ so that $p_e<2n/(n+1),$ and one checks easily that $n_e>3,$ so that by Remark \ref{pe-h} we have  $p_e>3/2.$ Theorem \ref{thm-a+} thus shows that  the maximal operator is $L^p$- bounded in the wider range $p>p_e$ than $p>2n/(n+1).$ 
\smallskip

Another  example is the following variant of Example \ref{e2.6}, which is of type $\A_{n-1}^+\setminus \A_{n-1}^e.$
\begin{example}
Let $\phi\x:= \frac 1{1-x_1^2}x_2^2+x_1^n,$ with even $n\ge 4.$ 
\end{example}
The graph of this function is  even convex  for $|x_1|<1,$ provided $n$ is even.
Since 
$1/(1-x_1^2)=1+x_1^2+O(x_1^4),$ one finds that $n_e^x=(n+2)/{2},$ and  the coordinates $x$ are already line-adapted, so that  $n_e=(n+2)/{2}\ge 3,$  hence $p_e\ge3/2.$ Thus,  we see that $p_e=2(n+2)/(n+4)<2n/(n+1),$ and by Theorem \ref{thm-a+}, $\M$ is bounded on the wider range  $p>p_e$ than $p>2n/(n+1).$

\medskip

We also recall that we shall give a proof of the conjecture by  Iosevich-Sawyer-Seeger for  analytic singularities of type $\A_{n-1}^e$ in Section \ref{isasee}.

Thus,  our discussion in the Introduction and the main theorems of this paper show that our results give a proof of the conjecture by  Iosevich-Sawyer-Seeger in dimension $d=3$ (at least for analytic, but not necessarily convex surfaces $S$). 
\smallskip

\smallskip

\medskip

\setcounter{equation}{0}
\section{Existence of  line-adapted coordinates for functions of type $\A^+_{n-1}$ } \label{lineadac}

  Assume again that  $\phi$ is of type  $\A^+_{n-1}.$
Our first result will give some necessary conditions for the given coordinate system $x$ not to be line-adapted to $\phi;$ it  bears some analogies with Theorem 3.3 of \cite{IM-ada}, in which necessary conditions were given  for the given coordinate system $x$ not to be adapted to $\phi$ in the sense of Varchenko. Our proof also basically follows ideas from that paper, but is somewhat simpler.

Recall from the Introduction the decomposition $\phi\x=\phi(0,x_2)+\pred\x.$  Recall also that the  point $(n,0)$   is a vertex  of $\N(\pred),$   and that we denote by $\ga_1$ the (non-horizontal) edge of $\N(\pred)$  which has $(n,0)$  as right endpoint. Moreover, the weight
$\ka^e=(\ka^e_1,\ka^e_2)=(1/n,\ka^e_2)$ had been chosen so that   the edge $\ga_1$  lies on the line $L:=\{t_1,t_2): \ka^e_1t_1+\ka^e_2t_2=1\}.$

\begin{prop}\label{nonadapt}
  Let $\phi$ by of  type $\A^+_{n-1},$   and assume that the given coordinates $x$ are
not line-adapted to $\phi.$  Then all of the following conditions hold true:
 \begin{itemize}
\item[(a)]  The edge $\ga_1$ is compact, i.e., $\ka^e_2>0.$ 
\item[(b)]  $\ka^e_1/\ka^e_2=1/(\ka^e_2n)\in \NN.$
\item[(c)]  If we denote by $\phi_{e_1}:=(\pred)_{\ka^e}$ the $\ka^e$-principal part of $\pred,$ then  $\pa_1 \phi_{e_1}$ has a (unique) root of multiplicity $n-1$ on the unit circle $S^1,$ and this root does not lie on a coordinate axis. 
 \end{itemize}
\end{prop}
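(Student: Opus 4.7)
First I would dispose of (a) by a direct contradiction: if $\ka^e_2 = 0$, then by \eqref{ne} we have $n_e^x = (1-\ka^e_2)n = n$; since $n_e^y \le n$ for every admissible coordinate system $y$ and $n_e = \sup_y n_e^y \ge n_e^x$, this would force $n_e = n_e^x$, i.e., $x$ itself would be line-adapted, contradicting the hypothesis. Hence $\ka^e_2 > 0$ and $\ga_1$ is compact.

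For (b), I would invoke the (separately established) fact stated in the paper that the supremum defining $n_e$ is attained by some analytic shear $y_1 = x_1 - \al(x_2)$, $y_2 = x_2$; since $x$ is not line-adapted, such an $\al \not\equiv 0$ exists with $\tilde\ka^e_2 < \ka^e_2$. Writing $\al(x_2) = a_l x_2^l + O(x_2^{l+1})$ with $a_l \ne 0$, $l \ge 1$ integer, and using
\[
\tilde\pred(y_1, y_2) = \pred(y_1 + \al(y_2), y_2) - \pred(\al(y_2), y_2),
\]
I would track how the Taylor support of $\pred$ transforms under the shear by comparing the leading $\ka^e$-weight $l\ka^e_2$ of $\al$ with $\ka^e_1 = 1/n$. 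Case $l\ka^e_2 > 1/n$: the shear perturbs only higher $\ka^e$-weight terms, leaving the $\ka^e$-principal part (and thus $\tilde\ka^e_2$) unchanged — a contradiction. Case $l\ka^e_2 < 1/n$: expanding the vertex contribution $c_n x_1^n \subset \pred$ under $x_1 = y_1 + a_l y_2^l + \cdots$ produces Taylor monomials of $\tilde\pred$ at the lattice points $(n-k, lk)$ for $k = 1, \ldots, n-1$, which survive the subtraction of $\pred(\al(y_2), y_2)$ because that subtraction only removes monomials supported on the $y_2$-axis. These new points lie on a line through $(n,0)$ of slope modulus $l < m := 1/(n\ka^e_2)$, forcing $\tilde\ka^e_2 \ge 1/(nl) > \ka^e_2$ — also a contradiction. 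Hence $l\ka^e_2 = 1/n$, i.e., $m = l \in \NN$, which is (b).

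With $m \in \NN$, every lattice point $(i,j) \in \T(\phi_{e_1})$ satisfies $i/n + j/(nm) = 1$ with $i,j\in\NN$, forcing $j = km$, $i = n-k$ for some $k \in \{0,\dots,n\}$, so $\phi_{e_1}(x_1,x_2) = P(x_1, x_2^m)$ for a bivariate polynomial $P(u,v)$ homogeneous of degree $n$ with $P(u,v) = c_n u^n + \cdots$, where $c_n \ne 0$ since $(n,0)$ is a vertex of $\N(\pred)$. The $\ka^e$-principal part of $\tilde\pred$ then equals $Q(y_1, y_2^m)$ with $Q(u,v) := P(u + a_m v, v) - P(a_m v, v)$, and the condition $\tilde\ka^e_2 < \ka^e_2$ requires no Taylor monomial of $\tilde\pred$ to lie on $L$ other than $(n,0)$; as the $\ka^e$-principal part is supported on $L$, this forces $Q(u,v) \equiv c_n u^n$. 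Substituting $w = u + a_m v$ and using the homogeneity of $P$ (so that $P(a_m v, v) = P(a_m,1)\, v^n =: d_0 v^n$) gives
\[
P(w,v) = c_n (w - a_m v)^n + d_0 v^n, \qquad \phi_{e_1}(x_1,x_2) = c_n (x_1 - a_m x_2^m)^n + d_0 x_2^{nm},
\]
so $\pa_1 \phi_{e_1}(x_1,x_2) = n c_n (x_1 - a_m x_2^m)^{n-1}$. Since $a_m \ne 0$, the unique linear factor $(x_1 - a_m x_2^m)$ of $\pa_1 \phi_{e_1}$ is repeated with multiplicity $n-1$, and its zero set meets $S^1$ neither on the $x_1$-axis (where $x_2 = 0$ forces $x_1 = 0$) nor on the $x_2$-axis (where $x_1 = 0$ forces $x_2 = 0$), which proves (c). I expect the main obstacle to be the algebraic identity $Q \equiv c_n u^n \Rightarrow P(u,v) = c_n(u - a_m v)^n + d_0 v^n$, which pins down the structure of $\phi_{e_1}$ entirely; the rest is Newton-polyhedral bookkeeping, with the most delicate point in (b) being to verify that the problematic monomials $(n-k, lk)$ actually survive in $\tilde\pred$.
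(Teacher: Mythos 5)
Your route through (a), (b), and (c) is essentially the paper's: the same three-way case analysis on the order of the shear relative to $\ka^e_1/\ka^e_2$ in (b), and the same algebraic pinning-down of $\phi_{e_1}$ in (c) (your $P,Q$ bookkeeping with the substitution $w=u+a_m v$ is just a clean rewriting of the paper's identity $\phi_{e_1}(x_1,x_2)=a(x_1+cx_2^k)^n+\phi_{e_1}(-cx_2^k,x_2)$). Your verification that the monomials $(n-k,lk)$ genuinely appear in $\T(\tilde\phi_{\rm red})$ in the subcritical case is correct, and for the reason you give: they lie strictly below the supporting line $L$, so no other point of $\T(\pred)$ can contribute to them after the substitution, and the subtraction $\pred(\al(y_2),y_2)$ touches only the $t_2$-axis.

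There is, however, one genuine logical gap. At the start of (b) you ``invoke the (separately established) fact stated in the paper that the supremum defining $n_e$ is attained by some analytic shear.'' That statement is Proposition \ref{existadapt}, whose proof begins with the words ``In view of Proposition \ref{nonadapt} it will suffice to show\dots''\,---\,so it is proved \emph{using} the very proposition you are trying to establish, and you cannot appeal to it here. What the hypothesis actually gives you, directly from the definition of $n_e$ as a supremum, is an admissible coordinate change of the \emph{general} form $x_1=\vp(y_1,y_2),\ x_2=y_2$ with $\pa_1\vp(0,0)\ne0$ and $n_e^y>n_e^x$; there is no reason, at this stage, for it to be a pure shear, and your entire expansion of $c_nx_1^n$ along $x_1=y_1+\al(y_2)$ presupposes exactly that. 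The repair is the paper's opening move: factor $\vp(y_1,y_2)=(y_1-\psi(y_2))\,u(y_1,\psi(y_2))$ by the implicit function theorem, normalize $u(0,0)=1$, dispose of the flat-$\psi$ case since then no Newton polyhedron changes, and observe that because $u(0,0)=1$ the $\ka^e$-principal part of $\vp$ is either $y_1$ or $y_1-cy_2^k$. Once that reduction is in place, all of your subsequent algebra on the principal parts goes through verbatim, since only $\vp_{\ka^e}$ enters the computation of $(\tilde\phi_{\rm red})_{\ka^e}$.
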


\begin{proof} If the coordinates $x$ are not line-adapted to $\phi,$  then there is an admissible system of coordinates $y=(y_1,y_2)$ such that $n_e^x<n_e^y.$   Assume it is given by $x_1=\vp(y_1, y_2), x_2=y_2,$ and let $\tilde\phi(y_1,y_2):=\phi(\vp(y_1, y_2),y_2)$ represent $\phi$ in the coordinates $y.$ Since $\pa_1\vp(0,0)\not=0,$  the implicit function theorem and a Taylor expansion show that we may write   $\vp(y_1, y_2)$  as  $\vp(y_1, y_2)=(y_1-\psi(y_2) )u(y_1, \psi(y_2)),$ with smooth functions $\psi$ and $ u,$ where  $u(0, 0)\neq0$ and  $\psi(0)=0.$ After scaling, we may assume for simplicity that $u(0,0)=1.$

 Obviously, if $\psi$ were a flat function at the origin, then the change of coordinates  would  not change the Newton polyhedra of $\phi$ and $\pred,$ so this case cannot arise here. 
 
 Thus, $\psi$ is of some finite type  $k\ge 1,$ so that $\psi(y_2)=y_2^k\om (y_2),$ with a smooth function $\om$ such that $\om(0)\ne 0.$ 
 
Part (a) is obvious, for if $\ka^e_2=0,$ then $n_e^x=n$ is already maximal possible.
 \smallskip
 
 To prove the other claims, let us first assume that $k>\ka^e_1/\ka^e_2.$ As in \cite{IM-ada} (Section 3 or Lemma 2.1 in that paper) we easily see that
\begin{eqnarray*}
\tilde\phi_{\rm red}(y_1,y_2)&=&(\pred)_{\ka^e}(\vp_{\ka^e}(y_1,y_2),y_2) -(\pred)_{\ka^e}(\vp_{\ka^e}(0,y_2),y_2) \\
&&\hskip4cm +\ \mbox{terms of higher $\ka^e$-degree,}
\end{eqnarray*}
 so that $(\tilde\phi_{\rm red})_{\ka^e}(y_1,y_2)=\phi_{e_1}(\vp_{\ka^e}(y_1,y_2),y_2) -\phi_{e_1}(\vp_{\ka^e}(0,y_2),y_2).$ Since $y_1$ has $\ka^e$-degree $\ka^e_1,$ and $y_2^k$ has $\ka^e$-degree $k\ka^e_2>\ka^e_1,$ we see that $\vp_{\ka^e}(y_1,y_2)=y_1,$ so that 
 $(\tilde\phi_{\rm red})_{\ka^e}=\phi_{e_1}.$ This would mean that the change of coordinates would not change the edge $\ga_1,$ contradicting our assumption $n_e^x<n_e^y.$ Thus, we must have $k\le\ka^e_1/\ka^e_2.$
 
 \smallskip 
 Assume next that $k<\ka^e_1/\ka^e_2.$ Then let us consider the weight $\tilde\ka^e:=(\ka^e_1, {\ka^e_1}/k).$ Note that $1/\tilde\ka^e_2<1/\ka^e_2,$ which means that the line $\tilde L:=\{(t_1,t_2): \tilde \ka^e_1t_1+\tilde\ka^e_2t_2=1\}$ is less steep then $L.$ In particular, 
 $\tilde\ka^e_1\alpha_1+\tilde\ka^e_2\alpha_2\ge 1$  for every $(\alpha_1,\alpha_2)\in \N(\pred).$ Moreover, similarly as before,
 $(\tilde\phi_{\rm red})_{\tilde\ka^e}(y_1,y_2)=(\pred)_{\tilde \ka^e}(\vp_{\tilde\ka^e}(y_1,y_2),y_2)-(\pred)_{\tilde\ka^e}(\vp_{\tilde\ka^e}(0,y_2),y_2).$ But, clearly 
$\vp_{\tilde\ka^e}(y_1,y_2)=y_1-cy_2^k,$ with $c:=\om(0)\ne 0,$ and $(\pred)_{\tilde \ka^e}=x_1^n\beta(0),$ with $\beta(0)\ne 0.$ Therefore
$(\tilde\phi_{\rm red})_{\tilde\ka^e}(y_1,y_2)=(y_1-cy_2^k)^n \beta(0)-(-cy_2^k)^n \beta(0),$ which shows that the edge $\tilde \ga_1$ associated to $\tilde\phi$ lies on the line $\tilde L.$ But this in return would imply that $n_e^x>n_e^y.$ 

We thus see that necessarily $k=\ka^e_1/\ka^e_2,$ which proves (b).

\smallskip
To prove (c), recall that $\ka^e_1/\ka^e_2=k\ge 1$ is a natural number, so that in particular $\ka^e_1\ge \ka^e_2.$   Since $\phi_{e_1}$ is a $\ka^e$-homogeneous polynomial of $\ka^e$-degree 1, arguing as before, we see that the change to the coordinates $y$ leads to the corresponding  function
$$
(\tilde\phi_{\rm red})_{\ka^e}(y_1,y_2)=\phi_{e_1}(y_1-cy_2^k,y_2) -\phi_{e_1}(-cy_2^k,y_2),
$$ 
which is again $\ka^e$-homogeneous of degree 1. However, only when the Taylor support of the latter  function consists just of the single point  $(n,0),$  then in the $y$ coordinates the edge $\tilde \ga_1$ corresponding to $\tilde\phi_{\rm red}$ can be steeper than the edge $\ga_1$ (this is equivalent to the condition $n_e^x<n_e^y$) - otherwise it would have the same slope as $\ga_1$. This means that necessarily
$$
\phi_{e_1}(y_1-cy_2^k,y_2) -\phi_{e_1}(-cy_2^k,y_2)=ay_1^n
$$ 
for some constant $a\ne 0,$ i.e., that
$$
\phi_{e_1}(x_1,x_2)= a(x_1+cx_2^k)^n+ \phi_{e_1}(-cx_2^k,x_2).
$$ 
This implies that $\pa_1\phi_{e_1}(x_1,x_2)= an(x_1+cx_2^k)^{n-1},$ so that $\pa_1\phi_{e_1}$ has indeed a (unique) root of multiplicity $n-1$ 
 on the unit circle, and since $c\ne 0,$ it does not lie on a coordinate axis.
\end{proof}

\begin{cor}
If  $\y$ is any admissible coordinate system for $\phi$ in which \eqref{extype} holds true, then these coordinates are already line-adapted  to $\phi.$ 
\end{cor}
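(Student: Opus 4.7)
My plan is to argue by contradiction, with Proposition~\ref{nonadapt} doing most of the work. Assume that the given admissible coordinates $y=\y$ are \emph{not} line-adapted to $\phi$, and let $\tilde\phi$ denote $\phi$ in these coordinates. Since property $(\A_{n-1}^+)$ is preserved by admissible changes of coordinates, $\tilde\phi$ is still of type $\A_{n-1}^+$, so Proposition~\ref{nonadapt} applies to $\tilde\phi$ with $y$ playing the role of the given coordinates. Conclusions (a)--(c) must then all hold; in particular, the explicit identity derived at the very end of the proof of part~(c) yields
\[
\pa_{y_1}\tilde\phi_{e_1}(y)\;=\;a\,n\,(y_1+c\,y_2^k)^{n-1},
\]
for some $a\ne 0$, $c\ne 0$, and some integer exponent $k=\tilde\ka^e_1/\tilde\ka^e_2\in\NN$ supplied by part~(b).

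The next step is to identify $\tilde\phi_{e_1}$ directly from the hypothesis. By definition, $\tilde\phi_{e_1}=(\tilde\phi_{\rm red})_{\tilde\ka^e}$ is the $\tilde\ka^e$-homogeneous part of $\tilde\phi_{\rm red}$ supported on the edge $\tilde\ga_1$, and this is precisely the polynomial $p$ appearing in \eqref{defp}. By hypothesis, $p$ has the exceptional form \eqref{extype}, so that
\[
\pa_{y_1}\tilde\phi_{e_1}(y)\;=\;\pa_{y_1}p(y)\;=\;n\,y_1^{n-1}+c_1\,y_2^{\beta+2},\qquad c_1\ne 0.
\]

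The decisive step is to compare the two displays. Since $c\ne 0$, expanding the binomial $(y_1+cy_2^k)^{n-1}$ produces $n$ monomials, each with a nonzero coefficient. For $n\ge 3$ at least one of them is a genuinely mixed monomial $\binom{n-1}{j}c^j y_1^{n-1-j}y_2^{jk}$ with $0<j<n-1$, whereas the right-hand side of the second display contains only the two pure terms $n\,y_1^{n-1}$ and $c_1\,y_2^{\beta+2}$. This is impossible, so $y$ must be line-adapted. I do not anticipate any serious obstacle: the only potentially delicate point is the identification $\tilde\phi_{e_1}=p$, but this is immediate from the construction of $p$ in the paragraph containing \eqref{defp}, and the binomial comparison at the end is purely algebraic.
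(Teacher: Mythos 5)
Your argument is correct and follows essentially the same route as the paper: the paper's one-line proof simply observes that $p=\tilde\phi_{e_1}$ in the form \eqref{extype} cannot satisfy condition (c) of Proposition~\ref{nonadapt}, and you supply exactly the expected justification — identifying $\tilde\phi_{e_1}$ with $p$, using that condition (c) forces $\pa_{y_1}\tilde\phi_{e_1}$ to be a nonzero multiple of $(y_1-cy_2^k)^{n-1}$ with $c\ne 0$, and noting that for $n\ge 3$ the binomial expansion contains a genuinely mixed monomial, which the two-term expression $n\,y_1^{n-1}+c_1 y_2^{\beta+2}$ cannot have.
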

\begin{proof} 
In this case, $ \phi_{e_1}=p$ is of the form \eqref{extype}, and it is easily seen that then  condition (c) in Proposition \ref{nonadapt} 
cannot be satisfied.
\end{proof}

\begin{prop}\label{existadapt}
 Let $\phi$ by of  type $\A^+_{n-1},$ and  assume that the given coordinates $x$ are
not line-adapted to $\phi.$  Then there exists  a local line-adapted coordinate system $y=(y_1,y_2)$ of  the form \eqref{shear}, i.e., 
$  y_1=x_1-\alpha (x_2), y_2=x_2,$ where we can choose for $\al(x_2)$ the unique smooth  local solution to the equation $\pa_1^{n-1}\phi(x_1, x_2)=0$ in $x_1.$ 
\end{prop}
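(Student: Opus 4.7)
The plan is to produce $\alpha$ via the implicit function theorem and then verify that the resulting shear is line-adapted by a contradiction argument using Proposition~\ref{nonadapt}(c).

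\emph{Step 1 (existence of $\alpha$).} Set $F(x_1,x_2):=\pa_1^{n-1}\phi(x_1,x_2)$. First I check that $F(0,0)=0$ and $\pa_{x_1}F(0,0)\neq 0$. Indeed, $F(0,0)$ equals $(n-1)!$ times the Taylor coefficient of $x_1^{n-1}$ in $\phi(x_1,0)$; using the normal form \eqref{AD} with $\psi(x_1)=x_1^m\om(x_1)$, one has $\phi(x_1,0)=b(x_1,0)x_1^{2m}\om(x_1)^2+x_1^n\beta(x_1)$, whose Taylor series starts at order $\min(2m,n)=n$ (because $n<2m$ for type $\A_{n-1}^+$); hence $F(0,0)=0$. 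On the other hand $\pa_{x_1}F(0,0)=\pa_1^n\phi(0,0)=n!\,\beta(0)\ne 0$ since $(n,0)\in\T(\phi)$ is a vertex. The implicit function theorem (in its analytic version when $\phi$ is analytic) then yields a unique smooth (resp.\ analytic) function $\alpha$ defined near $0$ with $\alpha(0)=0$ and $F(\alpha(x_2),x_2)\equiv 0$.

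\emph{Step 2 (the new coordinates annihilate all $y_1^{n-1}y_2^j$ monomials).} Define the shear $y_1:=x_1-\alpha(x_2)$, $y_2:=x_2$, and put $\tilde\phi(y_1,y_2):=\phi(y_1+\alpha(y_2),y_2)$. By construction,
\[
\pa_1^{n-1}\tilde\phi(0,y_2)=\pa_1^{n-1}\phi(\alpha(y_2),y_2)\equiv 0 .
\]
Expanding this identity as a Taylor series in $y_2$ shows that every Taylor coefficient $\tilde c_{n-1,j}$ of $\tilde\phi$ vanishes; equivalently, \emph{no monomial of the form $y_1^{n-1}y_2^j$ appears in the Taylor expansion of $\tilde\phi$}.

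\emph{Step 3 (contradiction with Proposition~\ref{nonadapt}(c)).} Assume the new coordinates $y$ were still not line-adapted. Proposition~\ref{nonadapt}(c) applied to $\tilde\phi$ then produces an integer $k':=\tilde\kappa^e_1/\tilde\kappa^e_2\ge 1$ and nonzero constants $A,c'$ such that
\[
\pa_1\tilde\phi_{e_1}(y_1,y_2)=An\,(y_1-c'y_2^{k'})^{n-1}.
\]
Integrating once in $y_1$ gives $\tilde\phi_{e_1}(y_1,y_2)=A(y_1-c'y_2^{k'})^n+g(y_2)$, and the binomial expansion of $A(y_1-c'y_2^{k'})^n$ produces the term $-nAc'\,y_1^{n-1}y_2^{k'}$, so $\tilde\phi_{e_1}$ contains the monomial $y_1^{n-1}y_2^{k'}$ with nonzero coefficient. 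Since $\tilde\phi_{e_1}$ consists of Taylor monomials of $\tilde\phi_{\rm red}$, and $\tilde\phi_{\rm red}=\tilde\phi-\tilde\phi(0,y_2)$ differs from $\tilde\phi$ only by pure-$y_2$ terms, the monomial $y_1^{n-1}y_2^{k'}$ (with $n-1\ge 1$) is a genuine Taylor monomial of $\tilde\phi$ itself. This contradicts Step~2, so $y$ must be line-adapted.

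The delicate point is Step~3: one must recognize that Proposition~\ref{nonadapt}(c), which is the \emph{only} obstruction to line-adaptedness, forces the Taylor expansion to contain precisely the class of monomials $y_1^{n-1}y_2^j$ that the defining equation for $\alpha$ is designed to annihilate. Thanks to this exact duality between the condition $\pa_1^{n-1}\phi(\alpha(x_2),x_2)\equiv 0$ and the structure of the $\tilde\kappa^e$-principal part, a single shear suffices and no iteration of the construction is required.
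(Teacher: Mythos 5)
Your proof is correct, and it is shorter and more direct than the paper's. Both hinge on the same elementary observation: the choice $\alpha(x_2)$ with $\pa_1^{n-1}\phi(\alpha(x_2),x_2)\equiv 0$ forces every monomial $y_1^{n-1}y_2^j$ to be absent from $\tilde\phi$, which is incompatible with the structure that Proposition~\ref{nonadapt} imposes on a non-line-adapted function. The paper expresses this at the level of $\pa_1\tilde\phi_{\rm red}$ (no $y_1^{n-2}$ term, hence its $\tilde\kappa$-principal part cannot equal a constant times $(y_1-c'y_2^{k'})^{n-1}$), whereas you express it one integration higher, at the level of $\tilde\phi_{e_1}$ itself; these are interchangeable.

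What differs is the overall architecture. The paper runs a constructive argument: it re-derives from conditions (a)--(c) that the order $\ell$ of $\alpha$ satisfies $\ell\ge k=\ka_1^e/\ka_2^e$, computes $\pa_1(\pred)_{\ka^e}$ explicitly, concludes $n_e^y>n_e^x$, and only then checks line-adaptedness of $y$; the extra intermediate facts are what yield the ``if and only if'' in Corollary~\ref{nonadaiff}. You instead argue purely by contradiction, applying Proposition~\ref{nonadapt} (only in its stated direction, as necessary conditions) to $\tilde\phi$ and colliding parts (b)--(c) with your Step~2. This yields exactly Proposition~\ref{existadapt}, but not the corollary. Two minor points worth flagging: (i) Step~3 implicitly uses condition (a), namely $\tilde\ka^e_2>0$, to even form $\tilde\phi_{e_1}$ as the principal part along a compact edge, and uses (b) to get an integer $k'$ — you cite only (c); (ii) you also implicitly rely on the fact (noted in the paper after \eqref{gshear}) that an admissible shear preserves type $\A^+_{n-1}$, so that Proposition~\ref{nonadapt} is applicable to $\tilde\phi$ at all. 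Neither is a gap, just worth making explicit.
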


\begin{remark}
Since $\al(0)=0,$ we may  locally write $\al(x_2)=x_2^l\rho(x_2),$  where  $l$ is a natural number  $l\ge 1,$  $\rho$ is smooth, and where we may assume that  $\rho(0)\ne 0,$ if $\al$ is of finite type $l$ at the origin. If $\al$ is flat at the origin, then we may choose $l$ arbitrarily large.  

Without loss of generality, we may even assume that $l\ge 2,$ for if  $\al$ is of finite type $l=1,$ we may first apply the linear change of coordinates 
$  y_1=x_1-\rho(0)x_2, y_2=x_2,$ which is harmless as the estimation of our maximal operators is invariant under linear coordinate changes, and then apply in  a second step a change of coordinates as before, but now with $l\ge 2.$ 
\end{remark}

{\it Proof of Proposition \ref{existadapt}.} In view of Proposition \ref{nonadapt} it will suffice to show that the conditions (a) -- (c) in that proposition already suffice to derive the existence of such an adapted coordinate system.

Since $\pa_1^{n-1}\phi(0, 0)=0$ but $\pa_1^{n}\phi(0, 0)\ne 0,$ the implicit function theorem guarantees locally the existence of a unique  smooth solution $x_1=\al(x_2)$ to the equation $\pa_1^{n-1}\phi(x_1, x_2)=0.$  
Applying a Taylor expansion of $\phi$ around $x_1=\al(x_2),$ we see that
\begin{eqnarray}\label{normform2}
\phi\x&=&B_n\x(x_1-\alpha (x_2))^n \nonumber\\
&+&B_{n-2}(x_2)(x_1-\alpha (x_2))^{n-2}+\dots+B_1(x_2)(x_1-\al(x_2))+B_0(x_2),
\end{eqnarray}
 where the functions $B_j$ are smooth and $B_n(0, 0)=\beta\ne 0, B_j(0)=0$ for $0=1,\dots, n-2.$ 
 
 Let  $\tilde \phi(y_1,y_2)$ henceforth  represent $\phi\x$  in the new coordinates $y_1:=x_1-\alpha (x_2), y_2:=x_2.$ 
 Since $\pa_1\pred=\pa_1\phi,$ \eqref{normform2} implies that
 \begin{eqnarray*}
\pa_1\pred\x&=&A_{n-1}(x_1-\alpha (x_2),x_2)(x_1-\alpha (x_2))^{n-1} \nonumber\\
&+&A_{n-2}(x_2)(x_1-\alpha (x_2))^{n-3}+\dots+A_1(x_2)(x_1-\alpha (x_2))+A_0(x_2),
\end{eqnarray*}
 where the functions $A_j$ are smooth and $A_{n-1}(0, 0)\ne 0.$ Recall also from (a) -- (c) in Proposition \ref{nonadapt} that $\ka_1/\ka_2=:k\in\NN$ is a positive integer. Moreover, either $\al$ is flat at the origin, or of some finite type $\ell,$ so that $\alpha (x_2)=x_2^\ell\om(x_2),$ with $\om(0)\ne 0.$ 
 But, since
 $$
  \pa_1^{n-1} \pred\x=D(x_1,x_2) (x_1-\alpha (x_2)),
 $$
 where $D(0,0)\ne 0,$ and since the $\ka$-homogeneous part of $\pa_1^{n-1} \pred$ has $\ka$-degree $1-(n-1) \ka_1=1/n=\ka_1,$ we see that we must have $\ell\ge k,$ since otherwise the homogeneity degree $\ka_2\ell=( {\ell}/k) \ka_1$ of the leading term of $\al$ would be strictly less than $\ka_1,$ and the same would apply to $\pa_1^{n-1} \pred\x.$

Consider  next the $\ka$-principal part of the function
$$
A_{n-1}(y_1,y_2) y_1^{n-1} 
+A_{n-2}(y_2)y_1^{n-3}+\dots+A_1(y_2)y_1+A_0(y_2),
$$
which is of the form
$$
A_{n-1}(0,0) y_1^{n-1} 
+a_{n-3} y_2^{l_{n-3}} y_1^{n-3}+\dots+a_1y_2^{l_1}y_1+a_0 y_2^{l_0},
$$ 
with real coefficients $a_j$ and  integer exponents $l_j>0, j=0,\dots, n-3.$ 
As in the proof of Proposition \ref{nonadapt}, we then see that 
\begin{eqnarray*}
\pa_1(\pred)_\ka\x&=&A_{n-1}(0,0) (x_1-\al_\ka(x_2))^{n-1} \\
&+&a_{n-3} y_2^{l_{n-3}}(x_1-\al_\ka(x_2) )^{n-3}+\dots+a_1 x_2^{l_1}(x_1-\al_\ka(x_2))+a_0 x_2^{l_0},
\end{eqnarray*}
where $\al_\ka$ denotes the $\ka$-principal part of $\al.$  By condition (c), this polynomial must have a root of order $n-1$ away from the coordinate axes. Clearly this is only possible if all coefficients $a_j, j=0,\dots n-3,$  are $0.$ But then  $\pa_1(\pred)_\ka=A_{n-1}(0,0) (x_1-\al_\ka(x_2))^{n-1},$ so that in the coordinates $y$ we have $\pa_1(\tilde \phi_{\rm red})_\ka=A_{n-1}(0,0)y_1^{n-1}$ plus terms of higher $\ka$-degree. This shows that in the new coordinates $y,$ the  edge $\tilde \ga_1$ of the Newton polyhedron of $ \tilde \phi_{\rm red}$  which passes through $(n,0)$ is steeper than $\ga_1,$ so that $n_e^y>n_e^x.$ We shall denote by $\tilde\ka=(1/n,\tilde\ka_2)$ the weight   associated to $\tilde \ga_1,$ which then satisfies  $\tilde\ka_2<\ka_2.$ 

Note that this shows in particular that the conditions (a) -- (c) imply that the original coordinates $x$ were not line-adapted.

\smallskip
Let us finally show that the coordinates $y$ are indeed line-adapted to $\phi.$ This is obvious if $\tilde\ka_2=0,$ for then $n^y_e=n$ is maximal possible. Let us therefore assume that $\tilde\ka_2>0.$ 

By \eqref{normform2}, $\tilde \phi(y)$ can be written as
$$
\tilde \phi(y)= \tilde B_n(y_1, y_2)y_1^n+B_{n-2}(y_2)y_1^{n-2}+\dots+ B_1(y_2)y_1+B_0(y_2),
$$
with smooth functions $B_j,$ where  $\tilde B_n(0, 0)\ne 0.$ As before, this implies that
$$
\pa_1\tilde \phi_{\rm red}(y)=\pa_1\tilde \phi(y)=\tilde A_{n-1}(y_1,y_2) y_1^{n-1} 
+A_{n-2}(y_2)y_1^{n-3}+\dots+A_1(y_2)y_1+A_0(y_2),
$$
where $\tilde A_{n-1}(0,0)\ne 0.$ Then, as before, we see that the $\tilde\ka$-homogeneous part of this function is of the form
\begin{eqnarray*}
\pa_1(\pred)_{\tilde\ka}(y_1,y_2)=\tilde A_{n-1}(0,0) y_1^{n-1} 
+\tilde a_{n-3} y_2^{\tilde l_{n-3}} y_1^{n-3}+\dots+\tilde a_1y_2^{\tilde l_1}y_1+\tilde a_0 y_2^{\tilde l_0},
\end{eqnarray*}
and not all coefficients $\tilde a_j,j=0,\dots n-3,$ can vanish. Since this function, as a polynomial in $y_1,$ contains no term with $y_1$ - exponent $n-2,$ it cannot have any root of order $n-1.$ Thus,  by Proposition \ref{nonadapt}, the coordinates $y$ must be  line-adapted to $\phi.$ 
\qed

Our proof shows that Proposition \ref{nonadapt} can even be strengthened:

\begin{cor}\label{nonadaiff}
 Let $\phi$ be of  type $\A^+_{n-1}.$   Then the given coordinates $x$ are
not line-adapted to $\phi$ if and only if all of the conditions (a)--(c) in Proposition \ref{nonadapt} hold true.
\end{cor}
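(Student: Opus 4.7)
The plan is to observe that one direction, namely that non-line-adaptedness implies (a)--(c), is exactly the content of Proposition \ref{nonadapt}, so all the work lies in the converse. For the converse, I would essentially harvest what was already built in the proof of Proposition \ref{existadapt}: under (a)--(c), the proof there produces an explicit admissible change of coordinates $y=(y_1,y_2)$ with $y_1=x_1-\alpha(x_2)$, $y_2=x_2$, where $\alpha$ is the unique smooth local root of $\partial_1^{n-1}\phi(x_1,x_2)=0$, and establishes that $n_e^y>n_e^x$. Thus the original coordinates $x$ cannot be line-adapted.

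More concretely, I would structure the converse as follows. First, use $\partial_1^{n-1}\phi(0,0)=0$ and $\partial_1^n\phi(0,0)\ne 0$ (which holds because of property $\A_{n-1}^+$) to invoke the implicit function theorem and obtain the function $\alpha$. Then perform the Taylor expansion \eqref{normform2} of $\phi$ around $x_1=\alpha(x_2)$ and differentiate once in $x_1$ to get an expansion of $\partial_1\phi=\partial_1\phi_{\rm red}$ in powers of $(x_1-\alpha(x_2))$ with the $x_1^{n-2}$-coefficient absent. Next, combining (a), (b) and the fact that $\partial_1^{n-1}\phi_{\rm red}$ must be $\kappa^e$-homogeneous of $\kappa^e$-degree $\kappa^e_1$, deduce that the leading exponent $\ell$ of $\alpha$ satisfies $\ell\ge k=\kappa^e_1/\kappa^e_2$. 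Finally, extract the $\kappa^e$-homogeneous part of $\partial_1\phi_{\rm red}$ and use assumption (c) — that $\partial_1\phi_{e_1}$ has a root of multiplicity $n-1$ on $S^1$ off the axes — to conclude that all intermediate coefficients $a_{n-3},\dots,a_0$ in the $\kappa^e$-principal part must vanish. Consequently, in the new coordinates $y$ one has $\partial_1(\tilde\phi_{\rm red})_{\kappa^e}=A_{n-1}(0,0)\,y_1^{n-1}$ plus terms of higher $\kappa^e$-degree, so the edge $\tilde\gamma_1$ is strictly steeper than $\gamma_1$, giving $n_e^y>n_e^x$.

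Since this entire chain of deductions was already carried out in the proof of Proposition \ref{existadapt} — indeed the author explicitly notes there that ``the conditions (a)--(c) imply that the original coordinates $x$ were not line-adapted'' — the proof of the corollary reduces to pointing to that argument. The main (and essentially only) conceptual obstacle is the careful bookkeeping of $\kappa^e$-degrees to see that $\alpha$ cannot have leading order smaller than $k$ (which would otherwise produce terms of too low $\kappa^e$-degree in $\partial_1^{n-1}\phi_{\rm red}$), but this is already handled cleanly in Proposition \ref{existadapt}. Therefore, the proof of Corollary \ref{nonadaiff} amounts to a one-line appeal: combine Proposition \ref{nonadapt} with the observation embedded in the proof of Proposition \ref{existadapt}.
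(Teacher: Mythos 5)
Your proposal is correct and follows exactly the route the paper intends: the forward implication is Proposition \ref{nonadapt}, and the converse is precisely the chain of deductions inside the proof of Proposition \ref{existadapt} (where the authors even flag explicitly that conditions (a)--(c) imply the original coordinates are not line-adapted), after which the paper states the corollary with no further proof. Your recapitulation of the key steps — the implicit function theorem giving $\alpha$, the Taylor expansion \eqref{normform2}, the $\kappa^e$-degree bookkeeping forcing $\ell\ge k$, and condition (c) killing the intermediate coefficients so that $n_e^y>n_e^x$ — faithfully mirrors that argument.
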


\setcounter{equation}{0}
\section{On the Legendre transform in $x_2$ of functions of type $\A^+_{n-1}$ }\label{legendretrans}

Let $\phi$ be a function of type $\A^+_{n-1}.$ Recall that we were left with the case $n\ge 4,$ on which we shall concentrate here. 

Even though for the arguments to follow this would not be really necessary, we may assume that $\phi$ is given in the normal form \eqref{AD},  i.e.,
\begin{equation*}
\phi(x_1, x_2)=b(x_1,x_2)(x_2-x_1^m\omega(x_1))^2+x_1^n\beta(x_1).
\end{equation*}

We shall later apply the method of stationary phase to the partial Fourier transform of the corresponding measure $\mu$  defined in Section \ref{introduction}, so we shall need to look for the critical point in $x_2$  of the phase 
$$
\Phi(x_1, x_2, s_2):=\phi\x+s_2x_2,
$$
i.e.,  for the solution to the equation $\pa_2\Phi(x_1, x_2, s_2)=0$ with $x_2(0,0)=0$. Note that such a solution will locally exist as a smooth function,  in view of the implicit function theorem, since $\Phi(0, x_2, 0)=b(0,x_2)x_2^2.$ 
We then denote by 
$$
\breve\phi(x_1, s_2):=\Phi(x_1, x_2^c(x_1, s_2), s_2)
$$
the {\it Legendre transform}  of $\phi\x$ in $x_2$  (note that the Legendre  transform $\L:\phi\mapsto \breve \phi$ is often defined with the opposite sign, which ensures that it is an involution, i.e., $\L\circ \L=\rm{id}$, but our definition is better adapted to the Fourier transform in $x_2$ and the usage of the method of stationary phase).

We shall see that the function  $\breve\phi,$  as a function of the two variables $x_1$ and $s_2,$ will also be of type $\A^+_{n-1},$ so that we can define its associated effective multiplicity $n_e(\breve\phi)$ too, and we have  
\begin{thm}\label{equlind}
If the function $\phi$ is of  type $\A^+_{n-1},$ then so is its Legendre transform $\breve\phi,$ and both have the same effective multiplicity, i.e., $n_e(\breve\phi)=n_e(\phi).$ 

Moreover, if $\phi$ is of  type $\A^e_{n-1},$ then so is its Legendre transform $\breve\phi.$ 
\end{thm}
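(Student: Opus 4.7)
The plan is to exploit that the Legendre transform $\mathcal{L}:\phi\mapsto\breve\phi$ in $x_2$ commutes with any weighted dilation $\delta_r(x_1,x_2):=(r^{\nu_1}x_1,r^{1/2}x_2)$ that assigns weight $1/2$ to the $x_2$ direction. Indeed, setting $\phi_r(x_1,x_2):=r^{-1}\phi(\delta_r(x_1,x_2))$ and using that the critical point rescales as $x_2^c(x_1,s_2)=r^{-1/2}x_2^c(r^{\nu_1}x_1,r^{1/2}s_2)$, one checks directly from the definition that $\breve{\phi_r}(x_1,s_2)=r^{-1}\breve\phi(r^{\nu_1}x_1,r^{1/2}s_2)$. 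Passing to $r\to 0$ yields the identity $(\breve\phi)_\nu=\mathcal{L}(\phi_\nu)$, where the subscript denotes the $\nu$-principal part. Applied to the principal weight $\ka=(1/n,1/2)$, for which property $(\A_{n-1}^+)$ gives $\phi_\ka=c_{0,2}x_2^2+c_{n,0}x_1^n$, an elementary Legendre computation produces $\mathcal{L}(\phi_\ka)(x_1,s_2)=-s_2^2/(4c_{0,2})+c_{n,0}x_1^n$, so $\breve\phi$ is again of type $\A^+_{n-1}$.

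For the equality of effective multiplicities, I would start from the chain-rule identity $\pa_1\breve\phi(x_1,s_2)=\pa_1\pred(x_1,x_2^c(x_1,s_2))$, valid because $(\pa_2\phi+s_2)\pa_1 x_2^c$ vanishes at the critical point. Writing $x_2^c(x_1,s_2)=\tau(s_2)+O(x_1)$, where $\tau(s_2):=x_2^c(0,s_2)=-s_2/(2c_{0,2})+O(s_2^2)$ is an analytic local diffeomorphism, integration in $x_1$ combined with $\breve\phi_{\rm red}(0,s_2)=0$ gives
\[
\breve\phi_{\rm red}(x_1,s_2)=\pred(x_1,\tau(s_2))+R(x_1,s_2),
\]
with $R$ of strictly higher $\ka^e$-degree than the $\ka^e$-principal part of $\pred$ (the hypothesis $\ka^e_2<1/2$ is essential here). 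Since $\tau$ is a local diffeomorphism, substitution preserves the Newton polyhedron, so $\N(\breve\phi_{\rm red})$ shares with $\N(\pred)$ the vertex $(n,0)$ and the edge $\ga_1$ with weight $\ka^e$; hence $n_e^x(\breve\phi)=n_e^x(\phi)$. To pass to the supremum, let $y_1=x_1-\al(x_2),\,y_2=x_2$ be a shear realizing $n_e^y(\phi)=n_e(\phi)$ and apply the same argument to $\tilde\phi(y_1,y_2):=\phi(y_1+\al(y_2),y_2)$, obtaining $n_e^x(\breve{\tilde\phi})=n_e(\phi)$. A direct comparison shows that $\breve{\tilde\phi}$ agrees, modulo terms of strictly higher $\tilde\ka^e$-degree, with $\breve\phi$ expressed in the dual shear $y_1=x_1-\al^\vee(s_2)$, where $\al^\vee:=\al\circ\tau$; this gives $n_e(\breve\phi)\ge n_e(\phi)$. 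The reverse inequality follows by applying the same argument to $\breve\phi$ and using the involution $\breve{\breve\phi}(x_1,x_2)=\phi(x_1,-x_2)$, whose Newton polyhedron matches that of $\phi$.

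For the class $\A^e_{n-1}$, if $p(y)=y_1^n+c_1y_1y_2^{\beta+2}$ as in \eqref{extype} in some line-adapted coordinate system, the dilation-commutation identity applied to the weight $\tilde\ka^e$ computes the corresponding polynomial $\breve p(y_1,s_2)$ for $\breve\phi$ in the dual line-adapted coordinates as $y_1^n+c_1'y_1s_2^{\beta+2}$ with $c_1'\ne 0$, by essentially the same computation as for Example \ref{e1.4} with $\al=1$; so the exceptional form \eqref{extype} is preserved. The main obstacle I expect is the comparison in the preceding step between $\breve{\tilde\phi}$ and $\breve\phi$ in the dual shear, since $\mathcal{L}$ does not literally commute with shear changes (the critical-point equation for $\tilde\phi$ contains an extra term $\al'(y_2)\pa_1\phi$ absent from that for $\phi$). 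Controlling this discrepancy amounts to showing that the extra $\al'$-contribution perturbs only terms of higher $\tilde\ka^e$-degree, and is handled cleanly by applying the dilation-commutation identity directly with the refined weight $\tilde\ka^e$, so that both sides of the comparison extract the same $\tilde\ka^e$-principal part in one stroke.
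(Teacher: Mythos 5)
Your roadmap is essentially that of the paper (Lemma~\ref{legendre} plus the proof of Theorem~\ref{equlind}): you identify the same dual shear $\alpha^\vee=\alpha\circ\tau$ as the paper's $\tilde\alpha(s_2)=\alpha(s_2w_0(s_2))$, and your envelope-theorem reduction $\breve\phi_{\rm red}(x_1,s_2)=\pred(x_1,\tau(s_2))+R$ with $R$ of higher $\kappa^e$-degree is exactly the content of Lemma~\ref{legendre} (the paper proves it by the explicit Taylor expansion around $s_2w_0(s_2)$, using $\kappa^e_2<1/2$ at precisely the places you flag). Two of your framings are genuinely different from the paper's and are nice: the dilation-commutation identity $\breve{\phi_r}(x_1,s_2)=r^{-1}\breve\phi(r^{\nu_1}x_1,r^{1-\nu_2}s_2)$ (which, with $\nu_2=1/2$, gives the type-$\A^+_{n-1}$ claim cleanly, subsuming the paper's case-b limiting argument in one stroke), and the involution $\breve{\breve\phi}(x_1,x_2)=\phi(x_1,-x_2)$ to get $n_e(\breve\phi)\le n_e(\phi)$ for free, whereas the paper instead verifies directly, via Proposition~\ref{nonadapt}, that the dual-shear coordinates are line-adapted to $\breve\phi$.

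There is, however, a real gap in the step you yourself flag as the main obstacle. Your proposed repair --- ``apply the dilation-commutation identity directly with the refined weight $\tilde\ka^e$'' --- does not parse. The identity requires taking the limit $r\to 0$ of $r^{-1}\phi(\delta_r^{\tilde\ka^e}\cdot)$; since $\tilde\ka^e_2<1/2$, the monomial $x_2^2$ (which is present in $\T(\phi)$ by property $(\A^+_{n-1})$) has $\tilde\ka^e$-degree $2\tilde\ka^e_2<1$, so $r^{-1}\cdot r^{2\tilde\ka^e_2}x_2^2\to\infty$ and there is no ``$\tilde\ka^e$-principal part of $\phi$'' in the dilation sense. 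The only weight for which the dilation limit of $\phi$ itself exists and is self-dual under $\mathcal L$ is $\ka=(1/n,1/2)$, and that weight only sees the principal face, not the finer edge $\tilde\ga_1$. To extract the $\tilde\ka^e$-level information you must first split off the non-degenerate quadratic part $\phi(0,x_2)=x_2^2b_1(x_2)$ and work with $\pred$ --- but $\mathcal L$ does not commute with the subtraction $\phi\mapsto\pred$ (the critical-point equation still sees the full $\phi$), so there is no single dilation that cleanly packages both sides of your comparison between $\breve{\tilde\phi}$ and the $\alpha^\vee$-sheared $\breve\phi$. This is exactly the technical work that the paper's Lemma~\ref{legendre} and the expansion around \eqref{legend4}--\eqref{legendrephila} perform: they verify by hand, term by term, that the discrepancy coming from the extra $\al'(y_2)\pa_1\phi$ term in the critical-point equation for $\tilde\phi$, as well as the higher-order terms in $x_2^c(x_1,s_2)-\tau(s_2)$, contribute only $\tilde\ka^e$-degree $>1$ (using both $\tilde\ka^e_1=1/n<1/2$ and $\tilde\ka^e_2<1/2$). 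You would need to carry out a comparable two-scale estimate rather than invoke a single weighted dilation.
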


In order to prepare the proof of this theorem, recall from the Introduction the decomposition
$$
\phi\x=\phi(0,x_2)+\pred\x,
$$
and denote again by $e_1$ the (non-horizontal) edge of $\N(\pred)$  which has $(n,0)$  as right endpoint.  Recall also  the weight $\ka^e =(\ka^e _1,\ka^e _2)=(1/n,\ka^e _2)$   which is chosen so that the edge $e_1$  lies on the line $L_1:=\{t_1,t_2): \ka^e _1t_1+\ka^e _2t_2=1\}. $ 
Recall also that $0\le \ka^e _2<1/2.$ We shall distinguish two cases:
\smallskip

a) If $\ka^e _2>0,$ i.e., if the line $L_1$ is non-vertical, then the edge $e_1$ is compact, and  we denote by $p=(\pred)_\ka^e $ the $\ka^e $-principal part of $\pred$ (cf. \cite{IMmon}), i.e., the sum of all terms in the Taylor series expansion of $\pred$ corresponding to points on $e_1.$ Then $p$ is $\ka^e $-homogeneous of degree $1,$  i.e., if we define the dilations $\de_r,\, r\ge 0,$ associated to $\ka^e $ by 
$\de_r\x:=(r^{\ka^e _1}x_1, r^{\ka^e _2}x_2),$ then $p(\de_r\x)=rp\x.$ Thus we may further decompose 
\begin{equation}\label{kadecomp}
\phi\x=\phi(0,x_2)+p\x+\phi_{\rm err}(x_1, x_2),
\end{equation}
where $\phi_{\rm err}(x_1, x_2)$ {\it consists of terms of higher $\ka^e $-degree.} By this we mean that there is some $\ve>0$ such that
$\phi_{\rm err}(\de_r\cdot)=O(r^{1+\ve})$ for every $r>0,$ and the same applies to any $C^k$-norm of $ \phi_{\rm err}(\de_r\cdot)$  over $\Om.$ 
Indeed, since $\N(\phi_{\rm err})\subset \{(t_1,t_2): \ka^e _1t_1+\ka^e _2t_2>1\},$ where $\ka^e _1,\ka^e _2>0,$ this can indeed easily be seen by means of a Taylor approximation to sufficiently high degree.

\smallskip

b) If $\ka^e _2=0,$ i.e., if $L$ is the line $t_1=n,$ then a Taylor expansion in $x_1$ shows that we may write 
\begin{equation*}
\phi\x=\phi(0,x_2)+x_1f_1(x_2)+\cdots x_1^{n-1} f_{n-1}(x_2) + x_1^n b_n(x_2) +\phi_{\rm err}(x_1, x_2),
\end{equation*}
where the functions $f_1(x_2), \dots  f_{n-1}(x_2) $ are flat, $b_n(0) =\beta(0)\ne 0,$ and where  $\phi_{\rm err}(x_1, x_2)$  consists of terms of higher $\ka^e $-degree (actually with $\ve =1$).

We will first consider the case a), in which the following lemma will be crucial. Case b) will simply be reduced to case a) later.

The next result will be proved in the category of (real) analytic functions, but the proof works as well in the category of smooth functions.

\begin{lemma}\label{legendre}
Assume that $\ka^e =(\ka^e _1, \ka^e _2)$ is a weight such that  $0<\ka^e _2<1/2$ and $\ka^e _1>0. $ Let $\phi$ be an analytic function which can be written in the form
$$
\phi\x=x_2^2b_1(x_2)+\pred\x =x_2^2b_1(x_2)+ p(x_1, x_2)+\phi_{\rm err}(x_1, x_2),
$$
with an analytic function $b_1$ satisfying $b_1(0)\ne 0$ and an  analytic function $\pred\x$ such that $\pred (0,x_2)\equiv 0.$  We assume that $\pred$  decomposes into a non-zero $\ka^e $-homo\-geneous polynomial $p$ of $\ka^e $-degree $1$  such that $p(0,x_2)=0,$
and a remainder term $\phi_{\rm err}$ consisting of terms of higher $\ka^e $-degree.  Then the Legendre transform  $\breve\phi(x_1, s_2)$ of the function $\phi$ with respect to $x_2$   can be written in the form
$$
\breve\phi(x_1, s_2)=s_2^2B(s_2)+\breve\phi_1(x_1, cs_2)=s_2^2B(s_2)+p(x_1, cs_2)+\breve\phi_{\rm err}(x_1, cs_2),
$$
where $B$ is an analytic  function with $B(0)=-\frac1{4b_1(0)}\neq0$, $c:=-\frac1{2b_1(0)}$,
and $\breve\phi_{\rm err}$  is again  an analytic remainder term consisting of terms of higher $\ka^e $-degree. Moreover, also 
$\breve\phi_1(0, cs_2)\equiv 0.$
\end{lemma}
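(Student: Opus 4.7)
The plan is to apply the analytic implicit function theorem to construct the critical point $x_2^c(x_1,s_2)$, compute $\breve\phi(0,s_2)$ directly as a one-dimensional Legendre transform, and then use $\ka^e$-weight bookkeeping, crucially exploiting $\ka^e_2<1/2$, to pin down the leading behaviour of the reduced part. Since $\pa_{x_2}^2\phi(0,0)=2b_1(0)\ne 0$, the analytic implicit function theorem produces a unique analytic $x_2^c(x_1,s_2)$ with $x_2^c(0,0)=0$ solving $\pa_{x_2}\phi(x_1,x_2)+s_2=0$. Setting $w(s_2):=x_2^c(0,s_2)$, the hypothesis $\pred(0,\cdot)\equiv 0$ reduces $w$ to the one-dimensional Legendre critical point of $\phi_0(x_2):=x_2^2 b_1(x_2)$; expanding $2w\,b_1(w)+w^2 b_1'(w)=-s_2$ yields $w(s_2)=cs_2+O(s_2^2)$ with $c=-1/(2b_1(0))$, and $\breve\phi(0,s_2)=\phi_0(w)+s_2 w$ works out to an analytic function of the form $s_2^2 B(s_2)$ with $B(0)=-1/(4b_1(0))$.

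Next I would write $x_2^c=w+u$ with $u(0,\cdot)\equiv 0$; subtracting the critical-point equation at $x_1=0$ and using $\pa_{x_2}^2\phi(0,\cdot)=2b_1(0)+O(\cdot)$ yields
\[
2b_1(0)\,u+O(wu+u^2)+\pa_{x_2}p(x_1,w+u)+\pa_{x_2}\phi_{\rm err}(x_1,w+u)=0,
\]
which the implicit function theorem solves analytically for $u$. I would then decompose
\[
\breve\phi(x_1,s_2)-\breve\phi(0,s_2)=\pred(x_1,w+u)+\bigl[\phi_0(w+u)-\phi_0(w)+s_2 u\bigr],
\]
and use $\phi_0'(w)=-s_2$ to reduce the bracketed term to $b_1(0)\,u^2+O(wu^2+u^3)$.

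Finally I would carry out the weight bookkeeping, with both $x_2$ and $s_2$ assigned weight $\ka^e_2$. Since $\pa_{x_2}p(x_1,cs_2)$ is $\ka^e$-homogeneous of degree $1-\ka^e_2$ while $w-cs_2=O(s_2^2)$ has degree $2\ka^e_2$, iterating the fixed-point equation gives $u(x_1,s_2)=-\pa_{x_2}p(x_1,cs_2)/(2b_1(0))+R(x_1,s_2)$ with $R$ of $\ka^e$-degree strictly greater than $1-\ka^e_2$. The two inequalities $1-\ka^e_2>\ka^e_2$ and $2(1-\ka^e_2)>1$, both forced by $\ka^e_2<1/2$, then ensure that $b_1(0)\,u^2$, $(w-cs_2)\,\pa_{x_2}p(x_1,cs_2)$, and all cross terms arising from Taylor-expanding $p(x_1,w+u)$ around $cs_2$ contribute only terms of $\ka^e$-degree $>1$; the analogous statement for $\phi_{\rm err}(x_1,w+u)$ follows by direct substitution, since each monomial of $\phi_{\rm err}$ already has $\ka^e$-degree $>1$ and this is preserved under $x_2\mapsto cs_2+(\text{higher degree})$. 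Collecting these contributions yields $\breve\phi(x_1,s_2)=s_2^2 B(s_2)+p(x_1,cs_2)+\breve\phi_{\rm err}(x_1,cs_2)$ with $\breve\phi_{\rm err}$ analytic, vanishing at $x_1=0$ (whence $\breve\phi_1(0,cs_2)\equiv 0$), and consisting of terms of $\ka^e$-degree strictly greater than $1$.

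The hard part will be this last step: one must verify methodically that every correction term produced by iterating the fixed-point equation for $u$ and by Taylor-expanding $\pred$ about $(x_1,cs_2)$ has $\ka^e$-degree strictly greater than $1$, invoking the assumption $\ka^e_2<1/2$ at each crucial comparison of weights.
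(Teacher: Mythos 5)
Your proof is correct and follows essentially the same route as the paper's: both use the analytic implicit function theorem to locate the critical point (the paper writes it in the scaled form $x_2^c = s_2 w_0(s_2) + \partial_2\phi_{\rm red}(x_1,s_2 w_0(s_2))g(x_1,s_2)$, you split it as $w(s_2)+u(x_1,s_2)$), both isolate the $x_1$-independent contribution $s_2^2 B(s_2)$, and both reduce the remaining bookkeeping to $\kappa^e$-degree counting driven by the same two inequalities that follow from $\kappa^e_2<1/2$. The only cosmetic difference is that the paper packages the quadratic-in-$(x_2-s_2w_0)$ remainder of $x_2^2 b_1(x_2)+s_2x_2$ as $(x_2-s_2w_0)^2 B_2(x_2,s_2)$ once and for all (their equation (4.2)), whereas you redo the equivalent Taylor expansion of $\phi_0$ by hand; the substance and the crucial use of $\kappa^e_2<1/2$ are identical.
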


\begin{proof} 
We begin by studying the equation
\begin{equation*}
\partial_{2}(x_2^2 b_1(x_2)+s_2x_2)=2x_2b_1(x_2)+x_2^2\pa_2b_1(x_2)+s_2=0.
\end{equation*}
Obviously the equation has an analytic solution $x_2(s_2)$ by the implicit  function theorem, however, we need more information. For this reason, we first write
$x_2=ws_2$. Then it suffices to solve the equation 
\begin{equation*}
2wb_1(ws_2)+s_2w^2\pa_2b_1(ws_2)+1=0,
\end{equation*}
which is possible by the implicit  function theorem.
Let $w_0(s_2)$ be the unique analytic solution to this equation. Note that $w_0(0)=-1/{2b_1(0)}$. Then we expand the function $x_2^2b_1(x_2)+s_2x_2$ in $x_2$ around the point $s_2w_0(s_2)$:
\begin{align}\label{expand}
	x_2^2b_1(x_2)+s_2x_2=s_2^2B(s_2)+(x_2-s_2w_0(s_2))^2B_2(x_2, s_2),
\end{align}
where $B(s_2):=w_0(s_2)\{1+w_0(s_2)b_1(s_2w_0(s_2))\}$ and $B_2$ are analytic functions, with $B(0)=-1/{4b_1(0)}$ and $B_2(x_2, 0)=b_1(x_2)$.

We also expand
\begin{eqnarray*}\nonumber
\pred\x&=&\pred(x_1, s_2w_0(s_2))
			+ (x_2-s_2w_0(s_2))\pa_2\pred(x_1, s_2w_0(s_2))\\
			&&+  (x_2-s_2w_0(s_2))^2G(x_1, x_2, s_2),
\end{eqnarray*}
where $G(x_1, x_2, s_2)$ is an analytic function with $G(0, 0, 0)=\pa_2^2\pred(0,0)=0,$ since $\kappa^e _2<1/2$ (note, e.g,  that $\pa_2^2p$ is $\ka^e $-homogeneous of degree $1-2\ka^e _2>0$).

\smallskip
For the Legendre transform, we seek a solution $x_2^c=x_2^c(x_1,s_2)$ to the equation 
$$
\pa_2\phi(x_1,x_2^c)+s_2=0.
$$
 According to \eqref{expand} and the subsequent expansion of $\pred(x_1, x_2),$ this equation can  be 
re-written in the  form
$$
(x_2-s_2w_0(s_2))G_1(x_1, x_2, s_2)+\pa_2\pred(x_1, s_2w_0(s_2))=0,
$$ 
where $G_1$ is an analytic  function with $G_1(0, 0, 0)=2(G(0,0,0)+B_2(0,0))=2b_1(0)$.

By a similar scaling argument as before, we see that the solution to the last equation is of the form 
\begin{eqnarray}\label{x2crit}
x_2^c(x_1, s_2)&=&s_2w_0(s_2)+\pa_2\pred(x_1, s_2w_0(s_2))\, g(x_1, s_2)\nonumber\\
	&=&s_2w_0(0)+s_2^2\tilde w_0(s_2)+\pa_2\pred(x_1, s_2w_0(s_2))\, g(x_1, s_2),
\end{eqnarray}
	where $g(x_1, s_2)$ is an analytic function  (to be determined)  with $g(0, 0)=-1/{2b_1(0, 0)}$.

Then we look at 
\begin{eqnarray}\label{legend1}
\breve\phi(x_1,s_2)&=&\phi(x_1,x_2^c(x_1,s_2))+s_2 x_2^c(x_1,s_2) \nonumber\\
&=&\Big(x_2^c(x_1,s_2)^2b_1(x_2^c(x_1,s_2))+s_2x_2^c(x_1,s_2)\Big)
+\pred(x_1,x_2^c(x_1,s_2)).
\end{eqnarray}

 It is easy to see by \eqref{x2crit}  and a Taylor expansion  in $x_2$ around the point $ s_2w_0(0)$ of 
 $$
 \phi_{\rm red}(x_1, s_2w_0(0)+s_2^2\tilde w_0(s_2)+\pa_2\pred(x_1, s_2w_0(s_2)
 )\, g(x_1, s_2))
 $$
 that the term $\pred(x_1,x_2^c(x_1,s_2))=p(x_1,x_2^c(x_1,s_2))+\phi_{\rm err}(x_1,x_2^c(x_1,s_2))$ is equal to the $\ka^e $-homogeneous polynomial $p(x_1,s_2w_0(0))$ plus a  remainder term consisting of terms of higher $\ka^e $-degree. Here we use  indeed that $0<\ka^e _2<1/2.$ 
 \smallskip

  To deal with what happens if we plug $x_2^c(x_1, s_2)$  into $x_2^2b_1(x_2)+s_2x_2,$ we use \eqref{expand} and \eqref{x2crit}:
\begin{eqnarray}\label{legend2}
&&(x_2^2b_1(x_2)+s_2x_2)\vert_{x_2=x_2^c\x}\nonumber \\
&=&s_2^2B(s_2)+ \Big (\pa_2\phi_{\rm red}(x_1, s_2w_0(s_2)) \,g(x_1, s_2)\Big)^2 B_2(x_2^c\x, s_2).
\end{eqnarray}

	The first term $s_2^2B(s_2)$ is just what we expect. For the second term, observe
that $\pa_2\phi_{\rm red}(x_1, s_2w_0(s_2))g(x_1, s_2)=\{\pa_2p(x_1, s_2w_0(s_2))+\pa_2\phi_{\rm err}(x_1, s_2w_0(s_2))\}g(x_1, s_2)$ is a sum of monomials of $\ka^e $-degree $1-\ka^e_2$  and error terms of even higher $\ka^e $-degree. But then 
 $(x_2^c\x-s_2w_0(s_2))^2$ consists of terms  of $\ka^e $-degree at least $2-2\kappa^e_2>1$ since $\kappa^e_2<1/2$. In other words, the second term is a remainder term too.
 
 Our discussion also shows that $\breve\phi_1(0, cs_2)\equiv 0.$
\end{proof}

\noindent {\it Proof of Theorem \ref{equlind}.} 
We are now in a position to prove Theorem \ref{equlind}.  Again, we shall work in the category of analytic functions, but the proof works as well in the category of smooth functions.
\smallskip

Assume that the function  $\phi$ is of  type $\A^+_{n-1}.$ We begin by showing that its Legendre transform $\breve\phi$ is  of  type $\A^+_{n-1}$ too.

Suppose first that we are in  case a), i.e., that $\ka^e _2>0.$ This applies in particular if the coordinates $\x$ are  not line-adapted (recall  from Proposition \ref{nonadapt} that  the coordinates $\x$ are line-adapted if $\ka^e _2=0$).
 Here, our claims follow directly from  Lemma \ref{legendre}.
 
 Next consider case b) where  $\ka^e _2=0.$  Then clearly $n_e(\phi)=n,$ and $\phi$ is of  type $\A^e_{n-1}.$ 
  Given any sufficiently small $\ka'_2>0,$ we then decompose $\phi$ in the form \eqref{kadecomp}, but  now with respect to the weight 
$\ka':=(1/n,\ka'_2 ),$ i.e., we choose for the polynomial $p$ the $\ka'$-principal part $p_{\ka'}$ of $\pred,$ which is $\ka'$-homogeneous of degree $1$. Then clearly we have $p_{\ka'} \x=x_1^n b_n(0)=x_1^n \beta(0).$ We may thus apply Lemma \ref{legendre} with $\ka'$ in place of $\ka^e$ and find that $\breve\phi(x_1, s_2)=s_2^2B(s_2)+p_{\ka'}(x_1, cs_2)+\breve\phi_{\rm err}(x_1, s_2),$ which implies that $j/n+\ka_2' k\ge 1$ for every point $(j,k)$ in the Taylor support of $\breve\phi_{\rm red}.$ Letting $\ka'_2$ tend to $0,$ we see that $j/n\ge 1,$ so that $\N(\breve\phi_{\rm red})$ is contained in the region where $t_1\ge n.$ This implies that again  $\breve\phi$ is of type $\A^+_{n-1},$  and that $n_e(\breve\phi)=n=n_e(\phi),$ so that $\breve \phi$ is of  type $\A^e_{n-1}$ too.
\smallskip

In the second step of the proof, we show that $n_e(\breve\phi)=n_e(\phi).$ In the case where the coordinates $\x$ are line-adapted  to $\phi,$ this is again immediate from  Lemma \ref{legendre} when $\ka^e _2>0,$ and for the case $\ka^e _2=0$ we have already verified this in  the first step of the proof. 
 
  There remains the case  where the  coordinates $\x$ are not line-adapted to $\phi.$ 
As shown in  the proof of Proposition \ref{existadapt} then  there exists an analytic function $\alpha (x_2)$ with $\alpha(0)=0$ 
such that $\pred$ can be written as 
\begin{eqnarray}\label{normform}
\pred\x&=&B_n\x(x_1-\alpha (x_2))^n \nonumber\\
&+&B_{n-2}(x_2)(x_1-\alpha (x_2))^{n-2}+\dots+B_1(x_2)(x_1-\alpha (x_2)),
\end{eqnarray}
 where the functions $B_j$ are smooth and $B_n(0, 0)=\beta(0)\neq0, B_j(0)=0$ for $j=1,\dots, n-2,$ and so that the coordinates $y=(y_1,y_2):=(x_1-\alpha (x_2), x_2)$ are line-adapted.  In these coordinates, $\pred$ is given by 
\begin{equation}\label{legend3}
\tilde\phi_{\rm red}(y_1,y_2)=B_n(y_1+\al(y_2),y_2)y_1^n +B_{n-2}(y_2)y_1^{n-2}+\dots+B_1(y_2)y_1.
\end{equation}
\smallskip
 We denote by $\tilde \ga_1$ the edge of $\N(\tilde\phi_{\rm red})$ which has $(n,0)$ as right endpoint, and choose the weight $\tilde \ka^e =(\tilde \ka^e _1,\tilde\ka^e _2)=(1/n,\tilde\ka^e _2)$ so that $\tilde \ga_1$ is contained in the line $\tilde \ka^e _1t_1+\tilde \ka^e _2t_2=1.$ 
 Note that if we just work with smooth functions, then  $\tilde \ka^e _2=0$ if and only if  all  of the functions $B_j$  for $j=1, \dots, n-2,$ are flat, but in the category of analytic functions they even do vanish. In any case we see that  if $\tilde \ka^e _2=0,$ then   $n_e^y=n,$ so that $n_e(\phi)=n.$ 
\medskip

Let us first consider the case where $\tilde\ka^e _2>0.$ As before, we decompose 
\begin{equation}\label{tildedecomp}
\tilde\phi_{\rm red}(y_1,y_2)=\tilde p(y_1,y_2) +\tilde\phi_{\rm err}(y_1,y_2),
\end{equation}
where $\tilde p$ is a $\tilde\ka^e $-homogeneous polynomial of degree $1$, and $\tilde\phi_{\rm err}$ consists of terms of higher $\tilde\ka^e $-degree. Next, following the proof of Lemma \ref{legendre}, we apply   \eqref{legend1} to  decompose $\breve\phi(x_1,s_2)$ as a sum of two terms. We first look at the term 
$\pred(x_1,x_2^c(x_1,s_2)),$ which according to \eqref{x2crit} we can write as 
\begin{equation}\label{legend4}
\pred(x_1,x_2^c(x_1,s_2))=\pred\big(x_1,s_2w_0(s_2)+\pa_2\pred(x_1, s_2w_0(s_2))\, g(x_1, s_2)\big).
\end{equation}
Let us assume without loss of generality that $g(0,0)=1.$ Then it will suffice to understand $\pred\big(x_1,s_2w_0(s_2)+\pa_2\pred(x_1, s_2w_0(s_2))\big),$ since the passage from $g=1$ to our more general function $g$ will only add terms of higher $\tilde\ka^e $ - degree. 

Let us put $\tilde\al(s_2):= \al(s_2w_0(s_2)),$ and introduce the new coordinate $z_1:=x_1-\tilde\al(s_2).$  We will show that in the coordinates $(z_1,s_2)$, $\breve\phi$ is line-adapted.

We apply a Taylor expansion in the second variable  of the right-hand side in \eqref{legend4} (with $g=1$) around the point $s_2w_0(s_2).$  Then, according to \eqref{normform}, the leading term is given by
$$
\pred\big(x_1,s_2w_0(s_2))=\tilde B_n(z_1,s_2)z_1^n 
+\tilde B_{n-2}(s_2) z_1^{n-2}+\dots+\tilde B_1(s_2)z_1,
$$
where $\tilde B_n(z_1,s_2):= B_n(z_1+\tilde\al(s_2), s_2w_0(s_2))$ and $\tilde B_j(s_2):= B_j( s_2w_0(s_2))$ for $j=1,\dots, n-2.$
A comparison with \eqref{legend3} then shows  that $\pred\big(x_1,s_2w_0(s_2))=\tilde p(z_1,s_2w_0(0))$ plus terms of $\tilde\ka^e $ - degree $>1.$ 

The second term in this Taylor expansion will be given by
$$
\big(\pa_2\pred(x_1, s_2w_0(s_2)\big)^2.
$$
Looking at \eqref{normform}, $\pa_2\pred \x$ is a sum of terms $B_{j}(x_2)(x_1-\alpha (x_2))^{j}, j=1,\dots,n-2,$ and $B_n\x(x_1-\alpha (x_2))^n.$
And, for $j=1,\dots,n-2,$ 
$
\pa_2\big(B_{j}(x_2)(x_1-\alpha (x_2))^{j}\big)=\pa_2B_j(x_2)\, (x_1-\alpha (x_2))^{j}-j\al'(x_2)B_j(x_2)(x_1-\alpha (x_2))^{j-1},
$
so that 
$$
\pa_2\big(B_{j}(x_2)(x_1-\alpha (x_2))^{j}\big)\vert_{x_2=s_2w_0(s_2)}=\pa_2B_j(s_2w_0(s_2))\, z_1^{j}-j\al'(s_2w_0(s_2))B_j(s_2w_0(s_2)) z_1^{j-1};
$$
 a similar reasoning holds for $j=n.$ A comparison with \eqref{legend3} then shows  that all of these summands consist, in the coordinates $(z_1,s_2),$ of terms of $\tilde \ka^e $-degree $\ge 1-\tilde\ka^e _2,$ or  $\ge 1-\tilde \ka^e _1.$ Since $\tilde\ka^e _2<\ka^e _2<1/2$ and also $\tilde\ka^e _1=1/n<1/2,$ this shows that $\big(\pa_2\pred(x_1, s_2w_0(s_2)\big)^2$ consists of error terms of  $\tilde \ka^e $-degree $> 1.$ 
A similar reasoning applies also to the higher order terms in the afore mentioned Taylor expansion, so that we see that 
$\pred(x_1,x_2^c(x_1,s_2))=\tilde p(z_1,s_2w_0(0))$ plus terms of $\tilde\ka^e $ - degree $>1.$ 

\smallskip
For the first term in \eqref{legend1}, we use again \eqref{legend2}. The first term $s_2^2B(s_2)$ is exactly what we expect, and the second term consists of error terms  of  $\tilde \ka^e $-degree $> 1,$ as we have just seen.

Combining everything, we see that in the coordinates $(z_1,s_2),$ in which $\breve\phi(x_1,s_2)$ is represented  by the function $\breve\phi^{\rm la}(z_1,s_2),$ we have 

\begin{equation}\label{legendrephila}
\breve\phi^{\rm la}(z_1,s_2)=s_2^2B(s_2)+\tilde p(z_1,s_2w_0(0))\ +\text{ terms of $\tilde\ka^e $ - degree $>1.$ }
\end{equation}


This shows that the coordinates $(z_1,s_2)$ are line-adapted to $\breve\phi,$ and that $n_e(\breve\phi)=n_e(\phi).$ It also shows  that if  $\phi$ is of  type $\A^e_{n-1},$ then so is  $\breve\phi.$

\medskip

Assume finally that $\tilde\ka^e _2=0,$ so that $n_e(\phi)=n.$ We thus have to show that also $n_e(\breve\phi)=n.$ To this end, we can apply the same kind of trick that we used before to handle case b). For any sufficiently small $\tilde\ka'_2>0,$ we decompose $\breve\phi$ is an \eqref{tildedecomp}, but now with respect to the weight $\tilde\ka':=(1/n,\tilde\ka'_2).$ Then clearly $\tilde p(y_1,y_2)=B_n(0,0) y_1^n$ (note that the functions $B_j(y_2), j=1,\dots, n-2,$ in \eqref{legend3} are now all flat!). Thus, arguing as before, we see that $j/n+\tilde\ka_2' k\ge 1$ for every point $(j,k)$ in the Taylor support of $\breve\phi^{\rm la}_{\rm red}.$ Letting $\tilde\ka'_2$ tend to $0,$ we see that $j/n\ge 1,$ so that $\N(\breve\phi^{\rm la}_{\rm red})$ is contained in the region where $t_1\ge n.$ 
\qed

\setcounter{equation}{0}
\section{Preparatory steps in the proofs of Theorems \ref{thm-a-} and \ref{thm-a+}}\label{prepsteps}

\subsection{Auxiliary results}
Let us first recall a classical version of van der Corput's lemma  (cf. Corollary to Proposition 2  in \cite[VIII]{stein-book}).
\begin{lemma}\label{Corput}
Suppose $\phi$ is smooth and real-valued in $(u,v)$ such that $|\phi^{(m)}(x)|\ge 1$ for all $x\in (u,v),$ where $m\ge 2.$ Then, for every smooth complex amplitude $a$ on $[u,v]$ we have that
$$
\Big|\int_u^v e^{i\la\phi(x)}a(x)\, dx\Big|\le c_m\la^{-\frac 1m}\Big[ |a(v)|+\int_u^v |a'(x)|\, dx\Big].
$$
\end{lemma}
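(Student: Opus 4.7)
\proof[Proof plan]
The plan is to reduce the weighted estimate to the classical amplitude-free form of van der Corput's lemma by a single integration by parts, and then to prove that amplitude-free bound by induction on $m$. Set
\[
F(t):=\int_u^t e^{i\la\phi(x)}\,dx,\qquad t\in[u,v].
\]
If we establish the uniform bound $|F(t)|\le c_m\la^{-1/m}$ for all $t\in[u,v]$ (which is just the amplitude-free estimate applied to the subinterval $[u,t]$, on which $|\phi^{(m)}|\ge 1$ still holds), then integration by parts gives
\[
\int_u^v e^{i\la\phi(x)}a(x)\,dx=a(v)F(v)-\int_u^v F(x)a'(x)\,dx,
\]
and the triangle inequality delivers the stated bound with the same constant $c_m$.

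For the amplitude-free estimate I would proceed by induction on $m\ge 1$, where the base case $m=1$ carries the additional hypothesis that $\phi'$ is monotone. In that base case one writes $e^{i\la\phi}=(i\la\phi')^{-1}\pa_x e^{i\la\phi}$, integrates by parts once, and uses the monotonicity of $1/\phi'$ to bound the resulting boundary and variation terms by $2\sup|1/\phi'|\le 2$ (since $|\phi'|\ge 1$), yielding a bound of order $\la^{-1}$. For the inductive step, suppose the result holds at level $m-1$, and that $|\phi^{(m)}|\ge 1$ on $[u,v]$. Since $\phi^{(m)}$ is continuous and bounded away from zero it has constant sign, so $\phi^{(m-1)}$ is strictly monotone; let $c\in[u,v]$ be the unique point at which $|\phi^{(m-1)}|$ is smallest (either an endpoint or the zero of $\phi^{(m-1)}$). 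For any $\de>0$, excise the window $(c-\de,c+\de)\cap[u,v]$, whose contribution is trivially bounded by $2\de$; on each of the at most two residual subintervals we have $|\phi^{(m-1)}|\ge\de$, and $\phi^{(m-1)}$ remains monotone there. Applying the induction hypothesis with $\phi$ replaced by $\de^{-1}\phi$ (so the $(m-1)$st derivative has modulus $\ge 1$) and $\la$ replaced by $\de\la$ gives a contribution of order $c_{m-1}(\de\la)^{-1/(m-1)}$ from each piece. Balancing by the choice $\de\sim\la^{-1/m}$ yields the desired bound $c_m\la^{-1/m}$, with a recursion $c_m\lesssim c_{m-1}+1$.

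The only genuine obstacle is bookkeeping: one must carry the monotonicity of $\phi^{(m-1)}$ through the induction so as to meet the hypotheses of the $m=1$ base case when the induction bottoms out, and one must track the dependence of $c_m$ on $m.$ No deeper idea is needed; once the uniform bound on $|F|$ is secured, the integration-by-parts identity above closes the proof. \epf
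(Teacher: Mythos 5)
The paper gives no proof of this lemma; it simply cites it as the Corollary to Proposition 2 in Chapter VIII of Stein's monograph \cite{stein-book}. Your sketch reproduces exactly that standard argument: integrate by parts against the primitive $F(t)=\int_u^t e^{i\la\phi}\,dx$ to peel off the amplitude, then prove the amplitude-free bound $|F(t)|\le c_m\la^{-1/m}$ by induction on $m$, with the monotone $m=1$ case as base and the excision-plus-rescaling step (window of width $\de\sim\la^{-1/m}$ around the near-zero of $\phi^{(m-1)}$) to descend. The plan is correct, and the one point worth making fully explicit is the remark you allude to at the end: when the induction bottoms out at level $m-1=1$ the monotonicity of $\phi'$ needed for the base case is automatic, because $|\phi''|\ge1$ together with continuity forces $\phi''$ to keep a fixed sign on every subinterval, including the residual pieces after excision.
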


We shall also need the  following lemma, which is closely related to \cite[Lemma 5.6]{IMmon}.

\begin{lemma}\label{pinvers} Consider the oscillatory integral
$$
I(u,\zeta):=\iint e^{iN\big(\phi(x,\zeta)+\eta(u-x)\big)} a(x,\eta,u,\zeta)\, d\eta dx,
$$
where $a$ is a smooth, compactly supported amplitude function on $\RR\times \RR\times \RR\times \RR^p,$ $\phi$ is a real-valued smooth  amplitude function defined on a neighborhood of the projection of the support of $a$ to the $(x,\zeta)$-variables, and $N\gg1.$  We also assume that we have uniform bounds on the  partial derivatives of $a$ and $\phi$ of any order. 
Then 

$$
I(u,\zeta)=N^{-1} e^{iN\phi(u,\zeta)} b(u, \zeta, N^{-1}), 
$$
where $b$ is a smooth amplitude with compact support  whose partial derivatives can be controlled by means of the uniform bounds that we have postulated on $a$ and $\phi.$
\begin{remarks}\label{critvalue}
i) Note that  the phase function $\phi(u,\zeta)$ is just the value of the function $(x,\eta)\mapsto \phi(x,\zeta)+\eta(u-x)$ at its unique critical point $(x^c, \eta^c):=(u,\pa_x\phi(u,\zeta)).$ Thus, at least formally, the result is a consequence of the method of stationary phase, but our subsequent direct argument is simpler and  leads  faster to the claimed  estimates. 

ii) It will become important  later on to recall that the critical value, i.e., the value of a function at its critical point, is invariant under coordinate changes in the argument of the function. 
\end{remarks}

\end{lemma}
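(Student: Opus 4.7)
The plan is to evaluate the $\eta$-integral first, reducing to a single oscillatory integral in $x$ with a rapidly decaying amplitude, and then localize near the critical value $x=u$ by the rescaling $x=u+y/N$.

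First, I would fix $(u,\zeta)$ and exchange the order of integration, so that the inner integral reads $\int e^{iN\eta(u-x)} a(x,\eta,u,\zeta)\,d\eta = \tilde a(x, N(x-u), u, \zeta)$, where $\tilde a$ denotes the Fourier transform of $a$ in its second slot (with the appropriate sign convention). Since $a$ is smooth and compactly supported, $\tilde a$ is Schwartz in the Fourier variable uniformly in the remaining parameters, with seminorms controlled by the uniform bounds assumed on $a$. This reformulates the double integral as
$$
I(u,\zeta)=\int e^{iN\phi(x,\zeta)}\, \tilde a\bigl(x, N(x-u), u, \zeta\bigr)\,dx.
$$
Next, I would substitute $x=u+y/N$, Taylor-expand $\phi(u+y/N,\zeta)=\phi(u,\zeta)+(y/N)\pa_x\phi(u,\zeta)+(y/N)^2 R(u,y/N,\zeta)$ with smooth remainder $R$, and factor out $N^{-1} e^{iN\phi(u,\zeta)}$ to obtain
$$
I(u,\zeta)=N^{-1} e^{iN\phi(u,\zeta)} \int e^{iy\,\pa_x\phi(u,\zeta) + i y^2 R(u,y/N,\zeta)/N}\, \tilde a\bigl(u+y/N, y, u, \zeta\bigr)\,dy.
$$
Setting $\ve:=N^{-1}$, the $y$-integral then defines $b(u,\zeta,\ve)$, and at $\ve=0$ Fourier inversion yields $b(u,\zeta,0)=2\pi\,a(u,\pa_x\phi(u,\zeta),u,\zeta)$, in agreement with the formal stationary-phase value at the critical point described in Remark \ref{critvalue}(i).

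The main obstacle is to verify that $b(u,\zeta,\ve)$ is smooth in all its variables jointly (including at $\ve=0$), with derivatives bounded uniformly in $\ve\in[0,\ve_0]$ in terms only of the given bounds on $a$ and $\phi$. This I would handle by differentiating under the integral sign: each derivative in $u$, $\zeta$, or $\ve$ produces at most polynomial factors in $y$ multiplied by bounded smooth expressions in $(u+\ve y,\zeta,\ve y,\ve)$, and these are absorbed by the Schwartz decay of $\tilde a$ in its second slot. Compact support of $b(\cdot,\cdot,\ve)$ in $(u,\zeta)$ follows, uniformly in $\ve$, from the compact support of $a$ in the corresponding variables.
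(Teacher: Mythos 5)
Your proposal is correct and follows essentially the same approach as the paper's proof: carry out the $\eta$-integration first to produce $\F_\eta a(x,N(x-u),u,\zeta)$, rescale $x=u+y/N$, factor out $e^{iN\phi(u,\zeta)}$, and then exploit the Schwartz decay of $\F_\eta a$ in its second slot to control $b$ and all its derivatives. The only cosmetic difference is that the paper keeps the phase remainder in the integral Taylor form $\psi(u,tN^{-1},\zeta)=t\int_0^1\pa_x\phi(u+stN^{-1},\zeta)\,ds$, while you split off the linear term and a quadratic remainder explicitly; both are equivalent and lead to the same conclusion.
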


\begin{proof} 
Denoting by $\F_\eta$ the partial Fourier transform in $\eta,$ we have
\begin{eqnarray*}
I(u,\zeta)&=&\int e^{i N\phi(x,\zeta)} \F_\eta a(x,N(x-u),u,\zeta)\, dx\\
&=&N^{-1} \int e^{i N\phi(u+tN^{-1}, \zeta)} \F_\eta a(u+tN^{-1},t,u,\zeta)\, dt\\
&=&N^{-1} e^{iN\phi(u,\zeta)} b(u, \zeta, N^{-1}),
\end{eqnarray*}
where 
$$
 b(u, \zeta, N^{-1}):= \int e^{i \psi(u,tN^{-1}, \zeta)} \F_\eta a(u+tN^{-1},t,u,\zeta)\, dt,
$$
where by Taylor
$$
\psi(u,tN^{-1}, \zeta):=t\int\limits_0^1\pa_x\phi(u+st N^{-1},\zeta)\, ds.
$$
Our claim now follows easily  since $\F_\eta a(u+tN^{-1},t,u,\zeta)$ is rapidly decaying in $t.$

\end{proof}

Another important tool for us will be the following result (cf. \cite[Proposition 4.2]{bdim19}):
If $A$ is a bounded Lebesgue measurable subset of  $\RR^n,$  then  we denote by $\M_A$ the corresponding maximal operator 
$$
\M_A f(x):=\sup_{t>0} \int_A |f(x+ty)|\, dy, \qquad f\in L^1_{\rm loc}(\RR^n).
$$
In particular, if $A=B_1(0)$ is the Euclidean unit ball, then $\M_A$ is the Hardy-Littlewood maximal operator $\M_{\rm HL}.$

Denote further by $\pi:\RR^n\setminus\{0\}\to S^{n-1}$ the {\it spherical projection}  onto the unit sphere $S^{n-1},$  given by $\pi (x):= x/|x|,$     and by $|\pi(A)|$ the $n-1$-dimensional volume of this set with respect to the surface measure  on the sphere.

\begin{lemma}\label{maxproj}
Assume that  $A $ is an open subset of $\RR^n$   contained in an annulus where $|x|\sim 1.$
Then, for $1<p\le \infty,$   we have that 
\begin{equation*}
\|\M_A f\|_{L^p\to L^p}\le C_p |\pi(A)|.
\end{equation*}
\end{lemma}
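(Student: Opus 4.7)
The plan is to reduce $\M_A$ to a $d\omega$-average, over directions $\omega\in\pi(A),$ of one-dimensional directional Hardy--Littlewood maximal operators, and then invoke Minkowski's integral inequality together with the classical $L^p$-boundedness of the one-dimensional maximal operator.

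First I would pass to polar coordinates $y=r\omega$ with $\omega=y/|y|\in S^{n-1}$ and $r=|y|.$ Since $A$ lies in an annulus where $|y|\sim 1,$ there exist $0<c_1<c_2$ such that the radial fibre $A_\omega:=\{r>0:r\omega\in A\}$ is contained in $[c_1,c_2]$ for every $\omega\in\pi(A).$ Writing
\[
\int_A|f(x+ty)|\,dy=\int_{\pi(A)}\int_{A_\omega}|f(x+tr\omega)|\,r^{n-1}\,dr\,d\omega
\]
and bounding $\chi_{A_\omega}(r)\,r^{n-1}\le c_2^{n-1}\chi_{[c_1,c_2]}(r),$ the trivial inequality $\sup_t\int(\cdots)\,d\omega\le\int\sup_t(\cdots)\,d\omega$ then yields the pointwise estimate
\[
\M_Af(x)\le C\int_{\pi(A)}M_\omega f(x)\,d\omega,\qquad M_\omega f(x):=\sup_{t>0}\int_{c_1}^{c_2}|f(x+tr\omega)|\,dr.
\]

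Next I would estimate $M_\omega$ on $L^p.$ The substitution $s=tr$ turns $M_\omega f(x)$ into $\sup_{t>0}t^{-1}\int_{tc_1}^{tc_2}|f(x+s\omega)|\,ds,$ which is controlled, up to the factor $c_2-c_1,$ by the one-sided one-dimensional Hardy--Littlewood maximal operator applied to the function $s\mapsto f(x+s\omega)$ at the base point corresponding to $x.$ Here the positivity $c_1>0,$ guaranteed by the annulus assumption, is what ensures that the intervals of integration stay on one side of the base point. Splitting $\RR^n=\omega^\perp\oplus\RR\omega$ and combining Fubini with the classical one-dimensional maximal inequality then gives $\|M_\omega f\|_{L^p(\RR^n)}\le C'_p\|f\|_{L^p(\RR^n)}$ uniformly in $\omega\in S^{n-1}$ for every $1<p\le\infty.$

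Finally, Minkowski's integral inequality applied to the pointwise bound displayed above produces
\[
\|\M_Af\|_p\le C\int_{\pi(A)}\|M_\omega f\|_p\,d\omega\le C\,C'_p\,|\pi(A)|\,\|f\|_p,
\]
which is the desired operator-norm bound. There is no substantive obstacle: the entire argument rests on the observation that $|y|\sim 1$ on $A$ allows disintegration over directions with uniformly bounded radial fibres, thereby reducing matters to a classical one-dimensional maximal inequality. The only mild technical point is the uniformity in $\omega$ of the bound on $M_\omega,$ which is immediate from Fubini and the rotational invariance of Lebesgue measure on $\RR^n.$
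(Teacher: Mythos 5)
Your argument is correct, and it is the natural one. The paper itself gives no proof of this lemma---it is quoted from Proposition 4.2 of the preceding paper in the series (\cite{bdim19})---so I cannot compare line-by-line, but the reduction you use (disintegrate $A$ over directions, dominate each directional piece by a one-dimensional Hardy--Littlewood maximal operator along that direction, bound that uniformly in $\omega$ by Fubini and rotation invariance, then integrate the directional bounds with Minkowski's integral inequality to pick up the factor $|\pi(A)|$) is the standard route to exactly this kind of estimate, and every step as you describe it goes through. Two small points worth making explicit: (i) the substitution $s=tr$ gives $t^{-1}\int_{tc_1}^{tc_2}|f(x+s\omega)|\,ds\le c_2\cdot\sup_{h>0}h^{-1}\int_0^h|f(x+s\omega)|\,ds$, so the constant is $c_2$ rather than $c_2-c_1$ in the comparison with the one-sided maximal function (this does not affect the conclusion since all constants are allowed to depend on the annulus, i.e.\ on $c_1,c_2$); and (ii) the hypothesis that $A$ is \emph{open} is what guarantees that $\pi(A)$ is an open, hence measurable, subset of $S^{n-1}$, so that $\int_{\pi(A)}(\cdots)\,d\omega$ is well-defined---you use this tacitly and it would be worth flagging, since for a merely Lebesgue measurable $A$ the continuous image $\pi(A)$ need not be measurable. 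Your observation that the argument also covers $p=\infty$ directly (since $M_\omega f\le(c_2-c_1)\|f\|_\infty$ pointwise) is correct as well.
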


\color{black}
\bigskip

\subsection{Dyadic frequency decomposition}

 To prove our main theorems, we shall follow in the first steps the discussion  of $\A_2$ - type singularities in Subsection 7.1 of \cite{bdim19}.

Recall that the maximal operator $\M_S$ is associated to  the  averaging
operators of  convolution with dilates  of a measure $\mu,$
whose Fourier transform at $\xi=(\xi_1,\xi_2,\xi_3)$  is given  by

\begin{equation}\label{fourtrans}
\hat\mu(\xi) :=\int e^{-i(\xi_1x_1+\xi_2x_2+\xi_3(1+\phi\x))}
\eta(x_1,x_2)\,dx_1 dx_2.
\end{equation}

 In order to estimate this maximal operator, we shall perform a non-homogeneous 
dyadic frequency decomposition with respect to the last variable $\xi_3.$ Since the low frequency part is easily controlled by the Hardy-Littlewood
maximal operator, we shall subsequently concentrate on the regions where   $\pm \xi_3\sim \la,$ with $\la\gg 1$ a sufficiently large dyadic number. 

To be more specific, let us choose suitable smooth cut-off functions $\chi_0$ and $\chi_1$ 
of sufficiently small compact supports, where $\chi_1$ vanishes near the origin and is identically one  near $1,$ whereas $\chi_0$ is identically $1$ on a small neighborhood of the origin.  Then,  for a fixed dyadic $\la\gg1,$  we shall concentrate on the contribution by  $\chi_1\left({\xi_3}/{\lambda} \right)\hat\mu(\xi).$  We shall later see that the estimates that we shall obtain for the corresponding maximal operators will sum over all such dyadic numbers $\la.$ 

As a further reduction, let  us decompose  these  contributions  into

\begin{equation}\nonumber
    \widehat{\mu^\la}(\xi) :=\chi_0\left(\frac{\xi_1}{\lambda},\,\frac{\xi_2}{\lambda} \right) \chi_1\left(\frac{\xi_3}{\lambda} \right)\hat\mu(\xi),
\end{equation}
and
$$
\left(1-\chi_0\left(\frac{\xi_1}{\lambda},\,\frac{\xi_2}{\lambda}
\right)\right)\chi_1\left(\frac{\xi_3}{\lambda} \right)\hat\mu(\xi).
$$

Note that if we choose the support of  the amplitude $\eta$ in \eqref{fourtrans}  sufficiently small, then
integrations by parts easily show that  the  latter term is of
order $O(\la^{-N})$ for every $N\in\NN$  as $\la\to+\infty, $ so
that the corresponding contribution to the maximal operators is
under control.

It therefore suffices to control the contribution by $\mu^\la.$
As in   \cite{IMmon} and \cite{bdim19}, we write
\begin{equation*}
\xi_3=\la s_3,\quad \xi_1=\la s_3s_1,\quad \xi_2=\la s_3s_2,
\end{equation*}
and put $s':=(s_1,s_2), s:=(s',s_3).$ Then, choosing the supports of $\chi_0$ and $\chi_1$ sufficiently small, we have
$$
|s_3|\sim 1\quad \mbox{and}\quad  |s'|\ll1
$$
on the support of $\widehat{\mu^\la}$. Hence
\begin{equation}\label{mulathat}
\widehat{\mu^\la}(\xi)
= e^{-i\la s_3} \chi_0(s_3s')\chi_1(s_3) \int_{\bR^2} e^{-i\la s_3(s_1x_1+s_2x_2+\phi(x))} \eta(x)\,dx.
\end{equation}
\medskip

\setcounter{equation}{0}

\section{ First steps in the proof of Theorem \ref{thm-a-}}\label{proof-}

In this section, we assume that $\phi$ is of type $\A_{n-1}^-,$ with $n\ge 4.$ Recall that then
\begin{equation*}
2m\le n \quad\text{and }\quad h=\frac {2n}{n+2}.
\end{equation*}

\subsection{ Stationary phase in $x_2$ and identities for $\mu^\la$}

We first apply the method of stationary phase to the $x_2$ - integration in \eqref{mulathat}.  
Recall that for  $\phi$ of type $\A_{n-1}^-$  we have $2m\le n,$   so that both $m$ and $n$ are finite.  We recall that the coordinates $(x_1,y_2)$ with $y_2:=x_2-\psi(x_1)$ are adapted to $\phi.$ 
In these coordinates, $\phi$ is of the form $\pad(x_1,y_2)=b^a(x_1,y_2) y_2^2+b_0(x_1),$ where we had written $b_0(x_1)=x_1^n\be(x_1)$ and $\psi(x_1)=x_1^m\om(x_1),$ with $\be(0)\ne 0$ and $\om(0)\ne0.$   Changing coordinates in \eqref{mulathat}, we see that
$$
\widehat{\mu^\la}(\xi) = e^{-i\la s_3} \chi_0(s_3s')\chi_1(s_3) \int_{\bR^2} e^{-i\la s_3(s_1x_1+ s_2\psi(x_1) + s_2y_2+\pad(x_1,y_2)} \eta(x_1,y_2+\psi(x_1))\,dy_2 dx_1.
$$
Applying the method of stationary phase to the $y_2$-integration, this leads to 
\begin{eqnarray*}
\widehat{\mu^\la}(\xi)
&= &e^{-is_3\la} \chi_0(s_3s')\chi_1(s_3) \big[ \la^{-1/2} \int_{\bR} e^{-i\la s_3(s_1x_1+ s_2\psi(x_1) +\pad(x_1, y_2^c(x_1,s_2)))}\tilde \eta(x_1,  s_2)\,dx_1\\
&&\hskip 8cm+r(\la,s) \big],
\end{eqnarray*}
with a slightly modified cut-off function $\chi_1,$  
where $\tilde \eta$ is another smooth bump function supported in a
sufficiently small neighborhood of  the origin,  $r(\la,s)$ is a
remainder term of order
$$
r(\la,s)=O(\la^{-\frac32})\quad \mbox{as} \quad  \la\to+\infty,
$$
and $y_2^c(x_1, s_2)$ denotes  the unique (non-degenerate)
critical point  of the  phase $\pad$ with respect to $y_2.$ Then  $\pad(x_1, y_2^c(x_1,s_2))=\breve\pad(x_1,s_2)$ is the Legendre transform of $\pad.$ 

Applying a similar scaling argument as in the proof of Lemma \ref{legendre},  we see that locally near the origin there exists a unique smooth function $w(x_1,s_2)$ with $w(0,0)=-1/b^a(0,0)$ such that $y_2^c(x_1, s_2)=s_2 w(x_1,s_2).$  This implies that $\breve\pad(x_1, s_2)=s_2^2\breve b^a(x_1,s_2)+x_1^n\be(x_1),$ where 
$\breve b^a$ is smooth, with $\breve b^a(0,0)=-1/4b^a(0,0)\ne 0.$ Taylor expansion around $x_1=0$ then shows that $\breve\pad(x_1, s_2)$ can be written as
\begin{equation}\label{brevepad1}
\breve\pad(x_1, s_2)=s_2^2B(s_2)+s_2^2x_1q(x_1, s_2)+x_1^n\be(x_1),
\end{equation}
with a smooth functions $B$ and $q,$ where $B(0)\neq 0$.

The contribution of the error term $r(\la,s)$ to $\widehat{\mu^\la},$ which we denote by $\mu^\la_{\rm error},$ 
and the corresponding maximal operator $\M^\la_{\rm error}$ are easily estimated, and we
shall henceforth ignore it.  

Indeed, let us briefly sketch the argument:   since $\widehat{\mu^\la_{\rm error}}=O(\la^{-\frac32}),$
by  applying the usual  arguments to control the  maximal operator on $L^2,$ which are  based on  almost orthogonality and a variant of Sobolev's  embedding theorem (as for instance in the proof of  \cite[Proposition 4.1]{bdim19}),   we obtain that 
$$
\|\M^\la_{\rm error} \|_{L^2\mapsto L^2} \lesssim   \la^{\frac 12}\la^{-\frac32}
 = C\la^{-1}.
$$
On the other hand, we find that $|\mu^\la_{\rm error}(y+\Gamma)|\lesssim \la^{3/2}$ (compare \eqref{mula}, where the exponent $5/2=3-1/2$ has to be replaced by $3-3/2=3/2$), and by Lemma \ref{maxproj} this implies that for every $\ve>0$,
$$
\|\M^\la_{\rm error} \|_{L^{1+\ve}\mapsto L^{1+\ve}} \lesssim  \la^{\frac 52} \la^{-1}.
$$
Interpolation between these estimates leads to 
$$
\|\M^\la_{\rm error} \|_{L^p\mapsto L^p} \lesssim   \la^{\frac 52 (\frac 2p -1)+\ve }\la^{-1}=\la^{\frac 5p-\frac 72+\ve}
$$
for every $\ve >0.$ However, we are assuming that $p>3/2,$ so that $ 5/p-7/2<-1/6,$ which shows that the estimates can be summed over all dyadic $\la$s.

\smallskip

Putting 
\begin{equation}\label{legendamins}
\breve\phi_1(x_1, s_2):=x_1^n\beta(x_1)+s_2x_1^m\omega(x_1)+s_2^2x_1 q(x_1, s_2),
\end{equation}
we thus see that  finally may assume that 
\begin{eqnarray}\nonumber\label{mula2}
\widehat{\mu^\la}(\xi)
&= &\la^{-\frac 12} \int_{\bR}e^{-i\la s_3\big(s_1x_1+ s_2x_1^n\om (x_1) +\breve \pad(x_1,s_2)+1\big)}\, \eta(x_1,s_2) dx_1 \, \chi_0(s_3s')\chi_1(s_3) \\
&= &\la^{-\frac 12} e^{-i\la s_3\big(s_2^2B(s_2)+1\big)}  J(\la, s) \, \chi_0(s_3s')\chi_1(s_3) ,
\end{eqnarray}
\medskip

where we have put 
\begin{equation}\label{Jla}
J(\la, s):=\int e^{-i\la s_3(\breve\phi_1(x_1, s_2)+s_1x_1)}  \eta(x_1,  s_2)\,dx_1.
\end{equation}

In order to defray the notation, we have simply dropped the tilde from $\eta.$
Note also that, by Fourier inversion and a change from the coordinates $\xi$ to $s,$ we have that (with slightly modified functions $\chi_0, \chi_1$)
\begin{eqnarray}\nonumber
\mu^\la(y+\Ga)&=&\la^{\frac52}
\int  J(\la, s) e^{-i\la s_3\big( s_2^2B(s_2)-s_1y_1-s_2 y_2-y_3\big)}\chi_0(s_3s')\chi_1(s_3)\,ds\\ \label{mula}
&=&\la^{\frac52}
\int \Big(\int F(\la,s_2,s_3,y_1) e^{-i\la s_3(s_2^2B(s_2)-s_2 y_2-y_3)} \chi_0(s_2) ds_2\Big) \chi_1(s_3) ds_3,
\end{eqnarray}
where we have put $\Gamma:=(0,0,1),$  and 
\begin{eqnarray}\nonumber
 F(\la,s_2,s_3,y_1) &:=&\int  J(\la, s) e^{i\la s_3s_1y_1} \chi_0(s_3s')\chi_0(s_1)\,ds_1\\  \label{defiF}
  &=&\iint e^{-i\la s_3(\breve\phi_1(x_1, s_2)+s_1x_1-s_1y_1)}  \eta(x_1,  s_2)  \chi_0(s_3s')\chi_0(s_1) \,dx_1 ds_1.
 \end{eqnarray}
Quite important for us will thus be the function
\begin{equation}\label{Phi0}
\Phi_0(x_1,s'):=\breve\phi_1(x_1, s_2)+s_1x_1, 
\end{equation}
which allows to re-write
\begin{eqnarray*}
J(\la, s)&=&\int e^{-i\la s_3\Phi_0(x_1,s')}  \eta(x_1,  s_2)\,dx_1,\\
 F(\la,s_2,s_3,y_1) &=& \iint e^{-i\la s_3(\Phi_0(x_1,s')-s_1y_1)}  \eta(x_1,  s_2)  \chi_0(s_3s')\chi_0(s_1) \,dx_1 ds_1.
\end{eqnarray*}
We should like to mention that the variable $s_3,$ which is of size $s_3\sim 1,$ will be irrelevant for our analysis, so that we hall basically consider it to be frozen. 

\medskip
In view our definition of the oscillatory integral $J(\la,s)$ and van der Corput's lemma, what will be quite important for our analysis will indeed be the second derivative of $\Phi_0$ with respect to the variable $x_1,$ i.e., the function 
\begin{equation}\label{Phi}
\Phi(x_1,s_2):=\pa_{x_1}^2\Phi_0(x_1,s') =\pa_{x_1}^2\breve\phi_1(x_1, s_2),
\end{equation}
which depends only on the integration  variable $x_1$ in $J(\la,s),$ and the ``parameter'' $s_2.$ In order to understand the oscillatory integrals $J(\la,s)$ and  $F(\la,s_2,s_3,y_1),$ it turns out that we need to understand the singularities of the function 
$\Phi(x_1,s_2)$ as a function of $x_1$ and $s_2,$ and to this end we shall follow ideas from \cite {phong-stein}, \cite{PSS}, \cite{IKM-max}, \cite{IM-ada}, \cite{IMmon} and related papers,  by looking at Newton polyhedra associated to $\Phi$ and the related Puiseux series expansions of the roots of $\Phi.$

\begin{remark}
 In its definition, as well as in the proofs of our main Theorems \ref{thm-a-} and  \ref{thm-a+}, we shall make use of certain local coordinate changes of the form \eqref{shear}, in which the roles of the variables $x_1$ and $x_2$ are interchanged, compared to the changes of coordinates that had been  used to pass to adapted coordinates in the monograph \cite{IMmon} and related  papers, such as \cite{IM-ada}. 
 
 Thus, when comparing with Newton polyhedra in these articles, one should imaging the coordinate axis to be interchanged compared to the present situation.
 \end{remark}
 This should be taken into account when comparing the discussions in the next subsection with the discussions in these papers.

\medskip
\subsection{Description of the Newton polyhedron $\N(\Phi)$ of $\Phi$  in terms of the roots.}\label{roots}
\medskip
 In the course of our arguments, by applying an algorithm for a resolution of singularities, we shall have to change coordinates from $x_1$ to $z_\iota$ in the step $\iota.$ Therefore let us begin by writing  $z_1:=x_1$ in Step 1 of this algorithm, where we work in our original coordinate $x_1,$ so that
$\Phi(z_1,s_2)=\Phi(x_1,s_2)=\pa_{z_1}^2\breve\phi_1(z_1, s_2).$

\medskip

Since $\Phi$ is  real-analytic and real valued,  the Weierstra\ss{}  preparation theorem allows to write
$$
\Phi(z_1,s_2)= U_0(z_1,s_2)z_1^{\nu_1} s_2^{\nu_2} \Psi(z_1,s_2)
$$
near the origin, where $\Psi(x_1,s_2)$ is a pseudo-polynomial of the form
$$
\Psi(z_1,s_2)=z_1^{N}+a_1(s_2)z_1^{N-1}+\dots+ a_{N}(s_2),\qquad (\nu_1+N=n-2),
$$
and $U_0,\, a_1,\dots , a_{N}$ are real-analytic functions satisfying 
 $U_0(0,0)\neq0, \, a_j(0)=0$. By \eqref{legendamins}, we here have $\nu_2=0.$ 
 
 Observe that the Newton polyhedron of $\Phi$ is the same as that of $z_1^{\nu_1}  \Psi(z_1,s_2).$ We shall also assume without loss of generality that $a_{N}$ is a non-trivial function, so that the roots $r(s_2)$ of the  equation $\Psi(z_1,s_2)=0,$ considered as a polynomial in $z_1,$ are all non-trivial. 
 
 It is well-known  that these roots can be expressed in a small neighborhood of $0$ as   Puiseux series 
$$
r(s_2)=c_{l_1}^{\al_1}s_2^{a_{l_1}}+c_{l_1l_2}^{\al_1\al_2}s_2^{a_{l_1l_2}^{\al_1}}+\cdots+
c_{l_1\cdots l_p}^{\al_1\cdots \al_p}s_2^{a_{l_1\cdots
l_p}^{\al_1\cdots \al_{p-1}}}+\cdots,
$$
where
$$
c_{l_1\cdots l_p}^{\al_1\cdots \al_{p-1}\be}\neq c_{l_1\cdots
l_p}^{\al_1\cdots \al_{p-1}\ga} \quad \mbox{for}\quad \be\neq \ga,
$$

$$
a_{l_1\cdots l_p}^{\al_1\cdots \al_{p-1}}>a_{l_1\cdots
l_{p-1}}^{\al_1\cdots \al_{p-2}},
$$
with strictly positive exponents $a_{l_1\cdots l_p}^{\al_1\cdots \al_{p-1}}>0$ and non-zero complex coefficients $c_{l_1\cdots l_p}^{\al_1\cdots \al_p}\ne 0,$ and where we have kept enough terms to distinguish between all the non-identical roots of $\Psi.$

By the {\it cluster } $\left[ \begin{matrix} \al_1&\cdots &\al_p\\
l_1&\dots &l_{p}
\end{matrix}\right ],$
we shall designate all the roots $r(s_2)$, counted with  their multiplicities,
which satisfy
\begin{equation*}
r(s_2)-\Big(c_{l_1}^{\al_1}s_2^{a_{l_1}}+c_{l_1l_2}^{\al_1\al_2}s_2^{a_{l_1l_2}^{\al_1}}+\cdots+
c_{l_1\cdots l_p}^{\al_1\cdots \al_p}s_2^{a_{l_1\cdots
l_p}^{\al_1\cdots \al_{p-1}}}\Big)=O(s_2^b) \
\end{equation*}
for some exponent $b>a_{l_1\cdots l_p}^{\al_1\cdots \al_{p-1}}$.  The corresponding function
$$
c_{l_1}^{\al_1}s_2^{a_{l_1}}+c_{l_1l_2}^{\al_1\al_2}s_2^{a_{l_1l_2}^{\al_1}}+\cdots+
c_{l_1\cdots l_p}^{\al_1\cdots \al_p}s_2^{a_{l_1\cdots
l_p}^{\al_1\cdots \al_{p-1}}}
$$
will be called the {\it leading jet} of the sub-cluster 
$\left[ \begin{matrix} \al_1&\cdots &\al_p\\
l_1&\dots &l_{p}
\end{matrix}\right ].$

We
also introduce the sub-clusters 
 $\left[ \begin{matrix}
  \al_1&\cdots &\al_{p-1}&\cdot\\
l_1&\dots &l_{p-1}& l_p
\end{matrix}\right ],$
by
$$
 \left[ \begin{matrix} 
 \al_1&\cdots &\al_{p-1}&\cdot\\
l_1&\dots &l_{p-1}& l_p
\end{matrix}\right ]
:=
\bigcup\limits_{\al_p}
\left[ \begin{matrix} 
\al_1&\cdots &\al_p\\
l_1&\dots &l_p
\end{matrix}\right ].
$$

Each index $\al_p$ or $l_p$ varies in some finite range which we shall not specify here. We finally put 
$$
N\left[ \begin{matrix} 
\al_1&\cdots &\al_p\\
l_1&\dots &l_p
\end{matrix}\right ]:=\mbox{number of roots in} \,
\left[ \begin{matrix} 
\al_1&\cdots &\al_p\\
l_1&\dots &l_p
\end{matrix}\right ],
$$
$$
N \left[ \begin{matrix} 
 \al_1&\cdots &\al_{p-1}&\cdot\\
l_1&\dots &l_{p-1}& l_p
\end{matrix}\right ]:=\mbox{number of roots in}
\,  \left[ \begin{matrix} 
 \al_1&\cdots &\al_{p-1}&\cdot\\
l_1&\dots &l_{p-1}& l_p
\end{matrix}\right ]
$$
The number  
$
N\left[ \begin{matrix} 
\al_1&\cdots &\al_p\\
l_1&\dots &l_p
\end{matrix}\right ]
$ 
will be called the {\it multiplicity of the sub-cluster}
$\left[ \begin{matrix} \al_1&\cdots &\al_p\\
l_1&\dots &l_{p}
\end{matrix}\right ].
$

\medskip

Let $a_1<\dots< a_l<\dots<a_L$ be the distinct leading exponents of all the 
roots of $\Psi.$ Each exponent $a_l$ corresponds to the cluster $\left[ \begin{matrix} 
\cdot\\
l
\end{matrix}\right ],$ so that  the set of all roots of $\Psi$ can be divided as $\bigcup\limits_{l=1}^L
\left[ \begin{matrix} 
\cdot\\
l
\end{matrix}\right ]$.
Then we may write
\begin{equation}\label{Phifac}
\Phi(z_1,s_2)=U_0(z_1,s_2)z_1^{\nu_1}  \prod_{l=1}^L
\Phi\left[ \begin{matrix} 
\cdot\\
l\end{matrix}\right ](z_1,s_2),
\end{equation}
where
$$
\Phi\left[ \begin{matrix} 
\cdot\\
l
\end{matrix}\right ](z_1,s_2):=\prod_{r\in \left[ \begin{matrix} 
\cdot\\
l
\end{matrix}\right ]}(z_1-r(s_2)).
$$

We introduce the following quantities:
\begin{equation}\label{BandA}
B_l=B\left[ \begin{matrix} 
\cdot\\
l
\end{matrix}\right ]:=\nu_1+\sum_{\mu\ge l+1}N\left[ \begin{matrix} 
\cdot\\
\mu
\end{matrix}\right ],\quad
A_l=A\left[ \begin{matrix} 
\cdot\\
l
\end{matrix}\right ]:=\sum_{\mu\le l}a_\mu N\left[ \begin{matrix} 
\cdot\\
\mu
\end{matrix}\right ].
\end{equation}
Notice that $B_l$ is just the number of all roots with leading exponent strictly  greater than $a_l$ (where we here interpret the trivial roots corresponding to the factor $(z_1-0)^{\nu_1}$ in our representation of $\Phi$ as roots with exponent $+\infty$),
 and that 
$$
B_0>B_1>\dots>B_L,\qquad A_0<A_1\dots <A_L.
$$

If $N_{\all}$ denotes the total number of all roots of $\Phi$ away from the axis $s_2=0,$ including the trivial ones and  counted with their multiplicities, then  we can also write 
\begin{equation}\label{Blall}
B_l=N_\all-\sum_{\mu\le l}N\left[ \begin{matrix} 
\cdot\\
\mu
\end{matrix}\right ].
\end{equation}

Then the vertices of the Newton diagram $\N_d(\Phi)$ of $\Phi$ are the  points
$(B_l, A_l)), \  l=0,\dots,L,$ and the Newton polyhedron $\N(\Phi)$ is the convex hull
of the set $\cup ((B_l,A_l)+\bR_+^2)$ (compare Observation 1 in \cite {phong-stein}).

\begin{figure}[!h]
\centering
\includegraphics[scale=0.5]{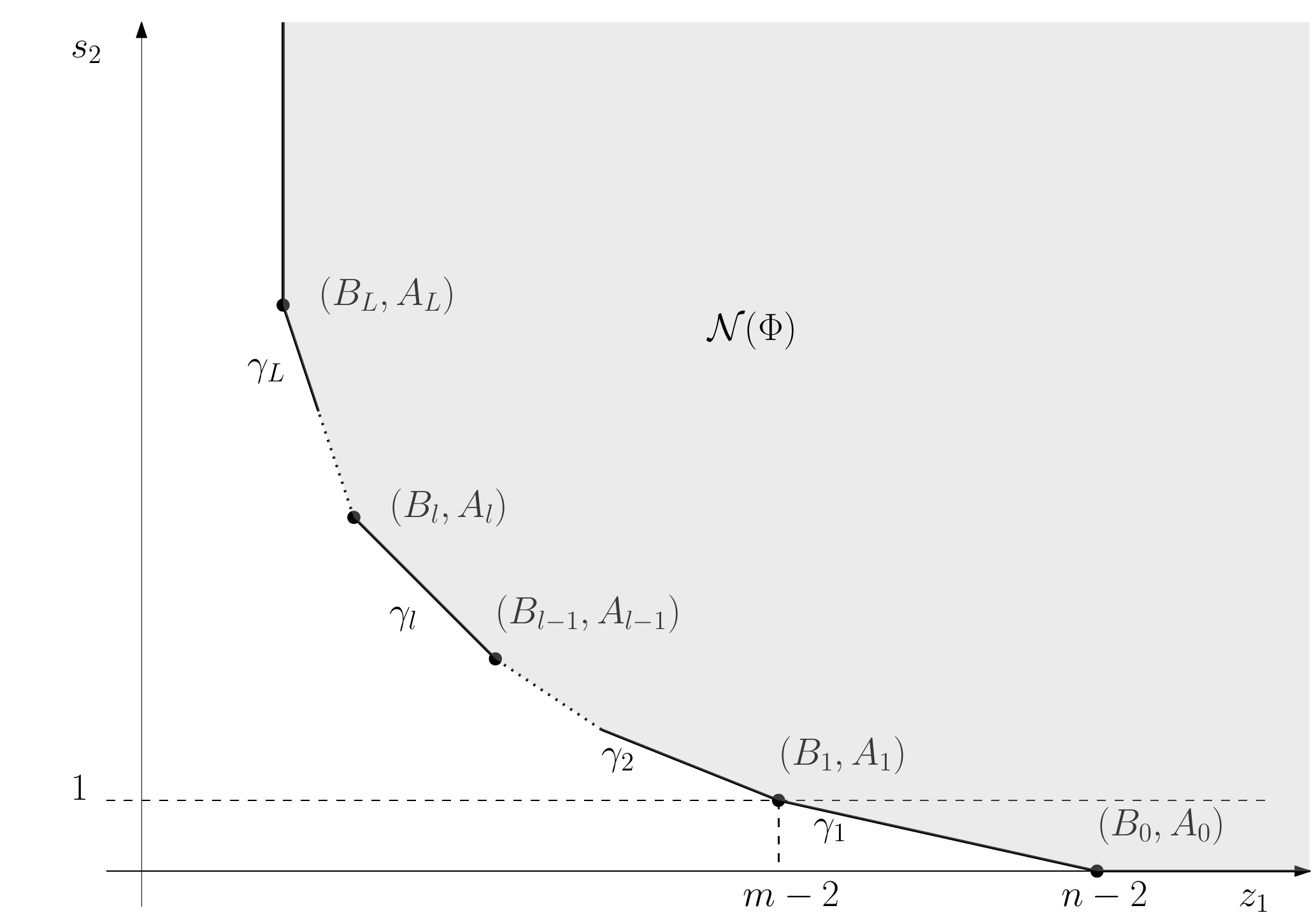}
  \caption{Edges}
  \label{edges}
\end{figure}

Notice also that 
$$A_l+a_lB_l=A_{l-1}+a_lB_{l-1},
$$
so that, if we put $\Delta A_l:=A_l-A_{l-1}, \Delta B_l:=B_l-B_{l-1},$ then 
$$
a_l=\frac {\Delta A_l}{-\Delta B_l}
$$
is just the  (modulus) of the slope of the edge $\ga_l:=[(B_{l},A_{l}),(B_{l-1},A_{l-1})]$ (compare Figure \ref{edges}).
\smallskip

Let $L_l:=\{(t_1,t_2)\in \bN^2:\ka^l_1t_1+\ka^l_2 t_2=1\}$ denote the line  supporting this edge $\ga_l.$
 It is easy to see that it is given by
\begin{eqnarray*}\nonumber
\ka^l_1&=&\frac {\Delta A_l}{A_l\Delta B_l-B_l\Delta A_l}=\frac {a_l}{A_l+a_lB_l},\nonumber \\
&&\\ 
\ka^l_2&=&\frac {\Delta B_l}{A_l\Delta B_l-B_l\Delta A_l}=\frac 1{A_l+a_lB_l}.\nonumber
\end{eqnarray*}
This defines a weight $\ka^l:=(\ka_1^l,\ka_2^l)$  associated to the edge $\ga_l,$ and we see that 
\begin{equation*}
\frac {\ka^l_1}{\ka^l_2} =a_l.
\end{equation*}
Note that
$$
a_1<a_2<\dots <a_l<\dots <a_L.
$$

Finally, fix $l,$ and let us determine the $\ka^l$-principal part $\Phi_{\ka^l }$ of $\Phi$ corresponding to the supporting line $L_l.$ To this end, observe that $\Phi$ has the same $\ka^l$-principal part  as the function 
$$
U_0(0,0)z_1^{\nu_1}\prod_{\al,\mu}\Big(z_1-c^\al_\mu s_2^{a_{\mu}}\Big)^{N \left[ \begin{matrix} 
\al\\
\mu
\end{matrix}\right ]}.
$$
Moreover, the $\ka^l$-principal part of $z_1-c^\al_\mu s_2^{a_{\mu}}$ is given by $-c^\al_\mu s_2^{a_{\mu}},$ if $\mu<l, $ and by $z_1$ if $\mu>l.$ This implies that 
\begin{equation}\label{Phiprinc}
\Phi_{\ka^l }(z_1,s_2)=c_l \,s_2^{A_{l-1}} z_1^{B_l}\prod_\al \Big(z_1-c^\al_l s_2^{a_{l}}\Big)^{N\aol}.
\end{equation}
In view of this identity, we shall say that the edge $\ga_l$ is {\it associated}  to the cluster of roots $\dotol.$
We collect these results in the following lemma.

\begin{lemma}
The vertices of the Newton polyhedron $\N(\Phi)$ of $\Phi$ are the  points
$(B_l,A_l),$  $ l=0,\dots,L,$ with $B_j,A_j$ given by \eqref{BandA}, and its  compact edges are the intervals 
$\ga_l:=[(B_{l},A_{l}),(B_{l-1},A_{l-1})],$ $l=1,\dots,L.$  Moreover the $\ka^l$-principal part  of $\Phi$ corresponding to the supporting line $L_l$ through the edge $\ga_l$ is given by \eqref{Phiprinc}.
\end{lemma}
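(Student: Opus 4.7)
The plan is to read off both $\N(\Phi)$ and the $\kappa^l$-principal parts of $\Phi$ directly from the factorization~\eqref{Phifac}, using compatibility of the Newton polyhedron and of the $\kappa$-principal part with products. Since $U_0(0,0)\ne 0$ we have $\N(U_0)=\RR_+^2$, so $U_0$ contributes nothing, and the monomial $z_1^{\nu_1}$ just translates everything by $(\nu_1,0)$. Expanding $\Phi\dotol(z_1,s_2)=\prod_{r\in\dotol}(z_1-r(s_2))$ in elementary symmetric polynomials of the roots, and using that every $r\in\dotol$ has leading $s_2$-exponent $a_l$ with nonzero leading coefficient, one sees that the coefficient of $z_1^{N\dotol-k}$ in $\Phi\dotol$ is $s_2^{k a_l}$ times a nonzero constant plus higher-order terms in $s_2$. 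Hence $\N(\Phi\dotol)$ has a single compact edge from $(N\dotol,0)$ to $(0,a_l N\dotol)$, of slope $a_l$, whose endpoint monomials carry nonzero coefficients.

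Next I would apply the Minkowski-sum identity $\N(fg)=\N(f)+\N(g)$, which is valid in our setting because the extremal coefficients never cancel. This gives $\N(\Phi)=(\nu_1,0)+\sum_{l=1}^L\N(\Phi\dotol)$, and since the slopes $a_1<\dots<a_L$ are all distinct, the compact boundary of this Minkowski sum is the concatenation of translates of the $L$ edges in order of increasing slope from lower-right to upper-left. A direct bookkeeping shows that after the edges of slopes $a_1,\dots,a_l$ have been traversed one arrives at the point
\begin{equation*}
\Big(\nu_1+\sum_{\mu>l}N\!\left[\begin{matrix}\cdot\\ \mu\end{matrix}\right],\ \sum_{\mu\le l}a_\mu N\!\left[\begin{matrix}\cdot\\ \mu\end{matrix}\right]\Big)=(B_l,A_l),
\end{equation*}
yielding the claimed vertices and the edges $\gamma_l=[(B_l,A_l),(B_{l-1},A_{l-1})]$ of slope $a_l=\Delta A_l/(-\Delta B_l)$.

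For the $\kappa^l$-principal part I would use $(fg)_{\kappa^l}=f_{\kappa^l}g_{\kappa^l}$ together with the identity $\kappa^l_1/\kappa^l_2=a_l$. For each factor $z_1-r(s_2)$ with $r\in\left[\begin{matrix}\alpha\\ l'\end{matrix}\right]$, the $\kappa^l$-principal part depends only on the comparison of $a_{l'}$ with $a_l$: if $l'<l$ only $-c^\alpha_{l'}s_2^{a_{l'}}$ survives; if $l'>l$ only $z_1$ survives; if $l'=l$ both $z_1$ and $-c^\alpha_l s_2^{a_l}$ lie on $L_l$ and survive. Since $U_0(0,0)$ is the $\kappa^l$-principal part of $U_0$ and $z_1^{\nu_1}$ is its own principal part, when one multiplies everything together the $s_2$-powers from $l'<l$ collapse to $s_2^{A_{l-1}}$, the $z_1$-powers from $l'>l$ together with $z_1^{\nu_1}$ collapse to $z_1^{B_l}$, the scalar factors combine into a single nonzero $c_l$, and the $l'=l$ terms produce $\prod_\alpha(z_1-c^\alpha_l s_2^{a_l})^{N\aol}$, giving exactly~\eqref{Phiprinc}.

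The only delicate point is the justification of the Minkowski-sum identity and of $(fg)_{\kappa^l}=f_{\kappa^l}g_{\kappa^l}$; both rest on the non-cancellation of extremal coefficients, which is guaranteed by the nonvanishing of the $c^\alpha_l$ (by definition of the clusters) and of $U_0(0,0)$. Once this is checked, the argument is essentially bookkeeping with the exponents $a_l$ and the multiplicities $N\dotol$.
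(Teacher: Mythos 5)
Your argument follows essentially the same route as the paper, which credits the description of $\N(\Phi)$ in terms of the clusters to Observation~1 of Phong--Stein \cite{phong-stein} and then, exactly as you do, reads off $\Phi_{\ka^l}$ by taking the product of the $\ka^l$-principal parts of $U_0$, $z_1^{\nu_1}$, and the linear factors $z_1-r(s_2)$, sorted according to whether the leading exponent of $r$ is $<a_l$, $=a_l$, or $>a_l$. One small inaccuracy worth flagging: the coefficient of $z_1^{N\dotol-k}$ in $\Phi\dotol$ equals $(-1)^k e_k(r_1,\dots,r_{N\dotol})$, whose leading $s_2$-term is $(-1)^k e_k(c_1,\dots,c_{N\dotol})\,s_2^{ka_l}$, and this elementary symmetric polynomial of the leading coefficients \emph{can} vanish for $0<k<N\dotol$ --- the nonzero leading coefficients of the roots in a cluster are not prevented from summing in ways that kill $e_k$. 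Only $k=0$ (the monic top term) and $k=N\dotol$ (the product $\prod_i c_i\ne 0$) are guaranteed nonzero, and all other exponents lie on or above the line joining $(N\dotol,0)$ to $(0,a_l N\dotol)$; that is all your Minkowski-sum argument actually uses, so the conclusion stands, but the sentence ``is $s_2^{ka_l}$ times a nonzero constant plus higher-order terms'' overstates what is known for intermediate $k$.
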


Based on the information given in this subsection and  following ideas from \cite{phong-stein}, as well as from \cite{IKM-max} or \cite{IMmon}, we shall apply a {\it resolution (of singularity) algorithm}  which will allow us to reduce considerations step by step to suitable neighborhoods of smaller sub-clusters of roots until one ends up with a neighborhood a sub-cluster containing only one single root (with multiplicity)

\medskip

\setcounter{equation}{0}

\section{Analysis in  Step 1 of the resolution algorithm  }\label{step1}

We begin by working in our original coordinate $x_1,$ and recall that we want to  denote $x_1$ by  $z_1.$  Recall  also that we are restring attention to the upper half-plane  $s_2>0.$ 
\medskip

We shall decompose this half-plane into  $\ka^l$-{\it homogeneous  domains}  $D_l=D_l^{(1)}$ of the form
$$
D_l:=\{(z_1,s_2):   2^{-M} s_2^{a_l}\le |z_1|\le 2^{M} s_2^{a_l}\}, \quad l=1,\dots,L,
$$
 and  {\it transition domains}
\begin{eqnarray*}
E_{0}&:=&\{(z_1,s_2):   2^M s_2^{a_{1}}<|z_1|\}, \\
E_l&:=&\{(z_1,s_2):   2^M s_2^{a_{l+1}}<|z_1|<2^{-M} s_2^{a_l}\}, \quad l=1,\dots,L-1,\\
E_L&:=&\{(z_1,s_2):  |z_1|<2^{-M} s_2^{a_L}\} 
\end{eqnarray*}
between two such domains of different  type of homogeneity.
Here,  $M\gg 1$ is an integer which will  have to be chosen sufficiently large later on.

\begin{figure}[h]
\centering
\includegraphics[scale=0.4]{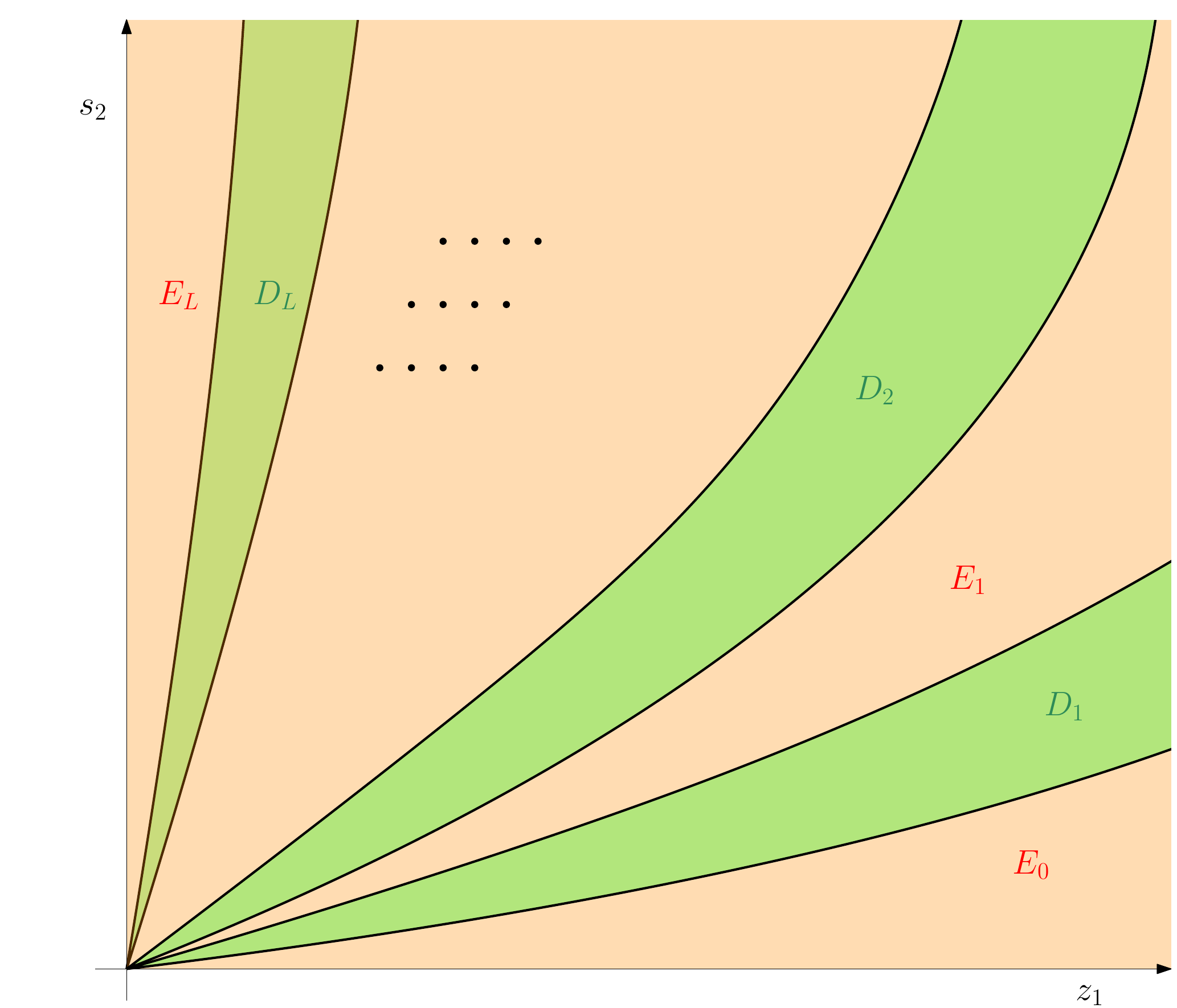}
  \caption{Domain Decomposition}
  \label{domains}
\end{figure}

\medskip
To localize to domains of type $D_l$ in a smooth way, we put more precisely
\begin{equation*}
\rho_l(z_1,s_2):=\chi_0\Big(\frac{ z_1}{2^M s_2^{a_l}}\Big) -\chi_0\Big(\frac{ z_1}{2^{-M} s_2^{a_l}}\Big), \quad l=1,\dots,L,
\end{equation*}
and replace $J(\la, s)$ in \eqref{Jla} by
$$
J^{\rho_l}(\la, s):=\int e^{-i\la s_3(\breve\phi_1(z_1, s_2)+s_1z_1)}  \eta(z_1,  s_2) \rho_l(z_1,s_2)\,dz_1.
$$
The corresponding functions such as $\mu^\la$ and $ F(\la,s_2,s_3,y_1),$ and the corresponding maximal operator  will be designated  by means an extra upper index $\rho_l,$ such as 
$\mu^{\rho_l,\la},$  $ F^{\rho_l}(\la,s_2,s_3,y_1)$ and $\M^{\rho_l}.$

  \medskip
Similarly, in order to localize to domains of type $E_l,$ we put, for $l=1,\dots, L-1,$ 
\begin{eqnarray*}
\tau_l(z_1,s_2)&:=&\chi_0\Big(\frac{ z_1}{2^{-M} s_2^{a_l}}\Big) \, -\chi_0\Big(\frac{ z_1}{2^{M} s_2^{a_{l+1}}}\Big),
\end{eqnarray*}
and define $\tau_0$ and $\tau_L$ by means of the obvious modifications of this definition. The corresponding functions  and the corresponding maximal operator will be denoted by $\mu^{\tau_l,\la}, F^{\tau_l}(\la,s_2,s_3,y_1),\M^{\tau_l},$ ... .

Recall that we may assume that $|s_2|<\ve,$ for any given $\ve>0.$ It is then easy to see that, once we have chosen $M,$ by assuming that $\ve>0$ is chosen sufficiently small, then $2^{-M} s_2^{a_l}>2^{M} s_2^{a_{l+1}}$ for every $l.$ 
Then  the functions  $\rho_l, l=1,\dots L,$ jointly with the functions $\tau_l, l=0, \dots L,$ form a partition of unity consisting of non-negative functions.
\smallskip

Finally note that it suffices to suitably control all the maximal operators  defined in this way in order to prove the first statement in Theorem \ref{thm-a-}.

\medskip
\subsection{Dyadic localization in $z_1$ and $s_2$}\label{dyadic}
\medskip

Let us localize to 
$$
|z_1|\sim 2^{-j}, \ s_2\sim 2^{-k},
$$
where we may assume that $j,k$ are sufficiently large integers $\gg 1.$ To this end, we shall  define scaled coordinates $z, \si_2$ by writing 
$z_1=2^{-j} z, \ s_2=2^{-k} \si_2,$ so that
$$
|z|\sim 1, \ s_2\sim 1.
$$
Accordingly, let 
\begin{eqnarray*}
J_{j,k}(\la, s)&:=&\chi_1(2^k s_2)\int e^{-i\la s_3(\breve\phi_1(z_1, s_2)+s_1z_1)}  \eta(z_1,  s_2)\chi_1(2^jz_1)\,dz_1\\
&=&\chi_1(\si_2)\, 2^{-j} \int e^{-i\la s_3(\breve\phi_1(2^{-j}z, s_2)+2^{-j}s_1z)}  \chi_1(z)\, \eta(2^{-j}z,  2^{-k}\si_2) \,dz,
\end{eqnarray*}
and 
$$
J^{\tau_l}_{j,k}(\la, s)=\chi_1(\si_2)\, 2^{-j} \int e^{-i\la s_3(\breve\phi_1(2^{-j}z, s_2)+2^{-j}s_1z)}\chi_1(z)\,   (\tau_l\eta)(2^{-j}z,  2^{-k}\si_2)\,dz.
$$
The corresponding functions such as $\mu^\la$ and $F(\la,s_2,s_3,y_1),$   and the corresponding maximal operator  will be designated  by means an extra lower index $j,k,$ such as 
$\mu^{\la}_{j,k}, F_{j,k}(\la,s_2,s_3,
y_1),\M_{j,k},$ and $\M^{\rho_l}_{j,k}.$ 
\medskip

 \noi {\bf Convention:} In order to defray  the notation, we shall  use the shorthand notation
$\eta dz= \eta(2^{-j}z,  2^{-k}\si_2) \,dz$ in these integrals, as well as  in many similar situations later on, where a smooth amplitude  factor $\eta$ depending on integration variables (here the variable $z$) and small parameters (here $2^{-j}$ and $2^{-k}$) in such a way that we have uniform estimates for  derivatives of these amplitude factors. The concrete meaning of $\eta$  will be allowed  to be different at every instance where this symbol is used. 

Thus, for instance we shall write
$$
J^{\tau_l}_{j,k}(\la, s)=\chi_1(\si_2)\, 2^{-j} \int e^{-i\la s_3(\breve\phi_1(2^{-j}z, s_2)+2^{-j}s_1z)}\chi_1(z)\,   \tau_l(2^{-j}z,  2^{-k}\si_2)\,\eta dz.
$$
Similarly, we write
$$
J^{\rho_l}_{j,k}(\la, s)=\chi_1(\si_2)\, 2^{-j} \int e^{-i\la s_3(\breve\phi_1(2^{-j}z, s_2)+2^{-j}s_1z)}\chi_1(z)\,   \rho_l(2^{-j}z,  2^{-k}\si_2)\,\eta dz.
$$

For the contribution by the transition domain $E_l,$ i.e., for $ J^{\tau_l},$ it is easily seen that we may assume that $J^{\tau_l}_{j,k}\equiv 0,$ unless 
\begin{eqnarray}\label{Ejk1}
&&j\ge ka_l+M/2, \qquad \text{if} \quad l=1,\dots L,\\
&&j\le ka_{l+1}-M/2,\qquad \text{if} \quad l=0,\dots L-1.\label{Ejk2}
\end{eqnarray}
Similarly,  for the contribution by the homogeneous domain $D_l, 1\le l\le L,$ i.e., for $ J^{\rho_l},$ we  may assume that $J^{\rho_l}_{j,k}\equiv 0,$ unless 
\begin{equation}\label{Djk1}
|j-ka_l|\le M/2.
\end{equation}
 We thus see that the factors $\tau_l(2^{-j}z,  2^{-k}\si_2)$ respectively  $\rho_l(2^{-j}z,  2^{-k}\si_2)$ can be absorbed into a modified factor  $\eta$ in the definitions of $J^{\tau_l}_{j,k},$ respectively $J^{\rho_l}_{j,k},$ so that, with a slight abuse of notation, we may and shall henceforth assume that 
 
 \begin{eqnarray*}
J^{\tau_l}_{j,k}(\la, s)&=&J^{\rho_l}_{j,k}(\la, s)=J_{j,k}(\la, s)=\chi_1(\si_2)\, 2^{-j} \int e^{-i\la s_3(\breve\phi_1(2^{-j}z, s_2)+2^{-j}s_1z)}  \chi_1(z)\, \eta dz
\end{eqnarray*}
for the range of $j$s and $k$s described by \eqref{Ejk1} and  \eqref{Ejk2}, respectively \eqref{Djk1}.

\medskip
\subsection{Resolution of singularity on the transition domain $E_l$}\label{Elresolution}
\medskip
 
It is well-known that on $E_l,$ we may write 
\begin{equation}\label{Elreso}
\Phi(z_1,s_2)=V(z_1,s_2)\, s_2^{A_l}z_1^{B_l},\qquad (z_1,s_2)\in E_l,
\end{equation}
where the function $V$ is analytic away from the coordinate axes and  $|V(z_1,s_2)|\sim 1$ (compare \cite{phong-stein}). Actually, we shall need more precise information on the function $V:$  by \eqref{Phifac}, we may write 
$$
\Phi(z_1,s_2)=U_0(z_1,s_2)z_1^{\nu_1}  \prod_{l'=1}^L\prod_{r\in \left[ \begin{matrix} 
\cdot\\
l'
\end{matrix}\right ]}(z_1-r(s_2)).
$$
Now, if $l'\ge l+1,$ then $a_{l'}\ge a_{l+1},$ and since $a_{l'}$ is the leading exponent of any root $r$  in the cluster  
$\left[ \begin{matrix} 
\cdot\\
l'
\end{matrix}\right ],
$
we may factor 
$$
z_1-r(s_2)=z_1\big(1-\frac {r(s_2)}{z_1}\big),\quad\text{where}\  \frac {r(s_2)}{z_1}=O(2^{-M/4}).
$$
Similarly, if $l'\le l,$ then we may factor 
$$
z_1-r(s_2)=s_2^{a_{l'}}\big(-\frac {r(s_2)}{s_2^{a_{l'}}}+\frac {z_1}{s_2^{a_{l'}}}\big),\quad\text{where}\  \frac{r(s_2)}{s_2^{a_{l'}}}=c^\al_{l'}\ne 0, \text{   and  }  \frac {z_1}{s_2^{a_{l'}}}=O(2^{-M/4}).
$$
In combination with \eqref{BandA}, this implies \eqref{Elreso}, with 
\begin{equation*}
V(z_1,s_2):=U_0(z_1,s_2) \prod_{l'=1}^l\prod_{r\in \left[ \begin{matrix} 
\cdot\\
l'
\end{matrix}\right ]}\big(-\frac {r(s_2)}{s_2^{a_{l'}}}+\frac {z_1}{s_2^{a_{l'}}}\big)
 \prod_{l'=l+1}^L\prod_{r\in \left[ \begin{matrix} 
\cdot\\
l'
\end{matrix}\right ]}\big(1-\frac {r(s_2)}{z_1}\big).
\end{equation*}
This implies in particular that, as a function of $z_1,$ $V(z_1,s_2)$ admits a convergent Laurent expansion of the form
\begin{equation}\label{VLaurent}
V(z_1,s_2)=\sum\limits_{k=-B_l}^\infty z_1^k \rho_k(s_2),
\end{equation}
where the $\rho_k(s_2)$ are fractionally analytic in $s_2.$  Moreover, on $E_l,$ we may assume that $|\rho_0(s_2)|\sim 1$ and
\begin{equation*}
\sum\limits_{0\ne k=-B_l}^\infty |z_1^k \rho_k(s_2)|\ll 1.
\end{equation*}

In order derive information from \eqref{Elreso} on the function $\Phi_0$ in \eqref{Phi0}, the following simple lemma will be useful:
\begin{lemma}\label{int}
Given $B\in\NN$ and a smooth function $V(z)$ on an interval $I$ not containing $0,$ then there is a smooth function $U(z)$ on this interval such hat 
$$
\frac d{dz} \big( U(z) z^{B+1}\big)=V(z) z^B.
$$
\end{lemma}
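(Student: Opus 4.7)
The plan is to recognize the left-hand side as the output of an integrating factor and reduce the claim to solving an explicit first-order linear ODE. Expanding the derivative via the product rule, the equation
$$
\frac{d}{dz}\bigl(U(z) z^{B+1}\bigr) = V(z) z^B
$$
becomes $z\,U'(z) + (B+1) U(z) = V(z)$ after cancelling the factor $z^B$, which is legitimate since $0\notin I$. Conversely, any solution $U$ of this ODE clearly satisfies the original identity.

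Since $z^{B+1}$ is precisely the integrating factor that produced the left-hand side, I would just write down a primitive. Fix any $z_0\in I$ and set
$$
U(z) := z^{-(B+1)} \int_{z_0}^{z} V(t)\, t^B \, dt.
$$
Because $I$ does not contain $0$, the function $z \mapsto z^{-(B+1)}$ is smooth on $I$, and the integral is a smooth function of $z$ (as $V$ is smooth); hence $U$ is smooth on $I$. Differentiating the product $U(z) z^{B+1} = \int_{z_0}^z V(t) t^B\,dt$ by the fundamental theorem of calculus yields $V(z) z^B$, as required.

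There is essentially no obstacle: the only point to note is the indispensable role of the hypothesis $0\notin I$, which both allows us to cancel $z^B$ and ensures that $z^{-(B+1)}$ is smooth on $I$. Adding a constant of integration gives the general solution, so $U$ is unique only up to a term of the form $C z^{-(B+1)}$.
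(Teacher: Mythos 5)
Your proof is correct and takes essentially the same approach as the paper: both define $U(z) = z^{-(B+1)}$ times a primitive of $V(z)z^B$ and verify the identity by differentiating.
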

\begin{proof} 
We can choose 
$$
U(z):= z^{-(B+1)}\int z^B V(z)\, dz,
$$
where the integral can designate any primitive of the integrand.
\end{proof} 
Applying this to the term of order $k$ in \eqref{VLaurent}, with $B=B_l$ and $B_l+k\ge 0,$ we see that if we define 
$$
V_1(z_1,s_2):=\sum\limits_{k=-B_l}^\infty z_1^k \frac {\rho_k(s_2)}{B_l+k+1},
$$
then we find that 
$$
\pa_{z_1} \big( V_1(z_1,s_2) z_1^{B_l+1}\big)=V(z_1,s_2)z_1^{B_l},
$$
and iterating this once more, we obtain
$$
\pa_{z_1}^2 \big(U(z_1,s_2) z_1^{B_l+2}\big)=V(z_1,s_2)z_1^{B_l},
$$
if we set
$$
U(z_1,s_2):=\sum\limits_{k=-B_l}^\infty z_1^k \frac {\rho_k(s_2)}{(B_l+k+1)(B_l+k+2)}.
$$
Consequently, 
\begin{equation*}
\pa_{z_1}^2\big(U(z_1,s_2)s_2^{A_l} z_1^{B_l+2}\big)=\Phi(z_1,s_2)=\pa_{z_1}^2\breve\phi_1(z_1, s_2).
\end{equation*}
Clearly, $U(z_1,s_2)$ has similar properties as $V(z_1,s_2); $ in particular, $|U(z_1,s_2)|\sim 1.$ 
Note also that the function $U(z_1,s_2)s_2^{A_l} z_1^{B_l+2}$ is analytic in $z_1$ and vanishes of second order at $z_1=0.$ 
Consequently, we see that 
\begin{equation*}
\breve\phi_1(z_1, s_2)=U(z_1,s_2)s_2^{A_l} z_1^{B_l+2}+z_1g(s_2)+h(s_2),
\end{equation*}
where 
\begin{eqnarray*}
\begin{split}
h(s_2)&= \breve\phi_1(0, s_2)\equiv 0, \\
 g(s_2)&=\pa_{z_1}\breve\phi_1(0, s_2)=s_2^2q(0,s_2)=:s_2^2q_0(s_2).
 \end{split}
\end{eqnarray*}
Thus, we find that, on $E_l,$
\begin{equation}\label{Phi0onEl}
\Phi_0(z_1,s'):=\breve\phi_1(z_1, s_2)+s_1z_1=U(z_1,s_2)s_2^{A_l} z_1^{B_l+2}+z_1(s_1+s^2q_0(s_2)).
\end{equation}

Let us finally again apply our changes of coordinates $z_1=2^{-j} z, \ s_2=2^{-k} \si_2.$  Then, if we put $A:=A_l, B:= B_l$ and 
$$
d_{j,k}:=2^{-kA-j(B+2)},
$$
assuming that \eqref{Ejk1}, \eqref{Ejk2} hold true, we have that 
\begin{equation}\label{Phi0s}
\Phi_0^s(z,s_1,\si_2):=\Phi_0(z_1,s')= U^s(z,\si_2) \, d_{j,k}\si_2^{A} z^{B+2}+2^{-j}z\big(s_1+2^{-2k}\si_2^2g^s(\si_2)\big),
\end{equation}
with $U^s(z,\si_2):=U(2^{-j}z,2^{-k}\si_2)$ and $g^s(\si_2):=q_0(2^{-k}\si_2).$ 
Note that $|U^s(z,\si_2)|\sim 1,$ whereas derivatives of $ U^s(z,\si_2)$ can be assumed to be very small, and recall that
$$
 |z|\sim 1, \sigma_2\sim 1, |s_1|\ll 1.
 $$
 Note also that 
 \begin{equation}\label{jdjk}
d_{j,k}\le2^{-2j}\ll 2^{-j}.
\end{equation}

\subsection{$L^2$ - estimation of $\M_{j,k}^\la$ for the contribution by $E_l$}\label{L2onEl}
\medskip
We begin by observing that since $d_{j,k}\ll 2^{-j},$ by choosing $s_1$ and $s_2$ properly we can always find a critical point 
of $\Phi_0^s$ in \eqref{Phi0s} w.r. to $z.$ 
Thus, in view of Plancherel's theorem,  the best possible estimate of $\M_{j,k}^\la$ can be achieved by means of an application of van der Corput's estimate of order $2$ to 
$$
J_{j,k}(\la, s)=\chi_1(\si_2)\, 2^{-j} \int e^{-i\la s_3\Phi_0^s(z,s_1,\si_2)}  \chi_1(z)\, \eta dz.
$$ 
This leads to 
$
|J_{j,k}(\la, s)|\lesssim 2^{-j} (\la d_{j,k})^{-1/2},
$
if $\la d_{j,k}\ge 1.$ If $\la d_{j,k}\le 1,$ the trivial estimate
$
|J_{j,k}(\la, s)|\lesssim 2^{-j} 
$
holds true. Since, by \eqref{mula2}, $|\widehat{\mu^\la_{j,k}}|\lesssim \la^{-1/2} |J_{j,k}(\la, s)|,$ the usual arguments show that this implies the following $L^2$- estimate:
\begin{equation}\label{L2Eljk}
\|\M^\la_{j,k}\|_{2\to 2}\lesssim 2^{-j}(1+\la d_{j,k})^{-\frac 12}.
\end{equation}

\medskip
\begin{remark}
Throughout Section \ref{step1}, we shall  assume that $\la 2^{-j}\ge 1,$ i.e., $j\lesssim \log\la.$ The case where $\la 2^{-j}\ge 1$ will be discussed in   Subsection \ref{tLpElla<j} of Section \ref{step2}.
\end{remark}

\medskip
\subsection{$L^{1+\ve}$ - estimates of $\M_{j,k}^\la$ for the contribution by $E_l, l=1,\dots,L,$ when $\la\ge 2^j$}\label{L1onEl}
\medskip
In view of Lemma \ref{maxproj}, the $L^{1+\ve}$ - estimates  require pointwise bounds for $\mu_{j,k}^\la.$ By \eqref{mula}, we have
$$
\mu_{j,k}^\la(y+\Ga)=\la^{\frac52}2^{-k}
\int \int F_{j,k}(\la,2^{-k}\si_2,s_3,y_1) e^{-i\la s_3(s_2^2B(s_2)-2^{-k}\si_2 y_2-y_3)} \chi_1(\si_2) d\si_2 \chi_1(s_3) ds_3,
$$
where 
\begin{eqnarray*}
F_{j,k}(\la,2^{-k}\si_2,s_3,y_1) &=& 2^{-j}\iint e^{-i\la s_3(\Phi_0^s(z,s_1,\si_2)- s_1y_1)} \chi_0(s_1)\chi_1(z)  \,\eta dz ds_1.
\end{eqnarray*}

\medskip

Let us put $Y_1:=2^jy_1,$ and use \eqref{Phi0onEl} to write the phase function in the latter integral as
\begin{equation}\label{phaseF}
\Phi_0^s(z,s_1,\si_2)- s_1y_1=2^{-j}\big[U^s(z,\si_2) \,2^j d_{j,k}\si_2^{A} z^{B+2}+z2^{-2k}\si_2^2 g^s(\si_2) +s_1(z-Y_1)\big]
\end{equation}

Let us first consider the contribution by the region where  $|Y_1|\gg1 $ and $|Y_1|\ge \la^{\de-1} 2^j,$  where we assume that $0<\de \ll 1.$ Here, $\la 2^{-j} |Y_1|\gg \la^\de,$ and since $|z|\sim 1,$ integrations by parts in $s_1$ show that 
$F_{j,k}(\la,2^{-k}\si_2,s_3,y_1)=O(\la^{-N})$ for every $N\in\NN,$ and thus also $\mu_{j,k}^\la(y+\Ga)=O(\la^{-N})$ for every $N\in\NN,$ which shows that this leads just to a small error term.

\smallskip

Consider next the contribution by the region where  $1\ll |Y_1|\le \la^{\de-1} 2^j.$ Then, by \eqref{jdjk}, 
$$
\la d_{j,k}\le 2^{\frac j{1-\de}}2^{-2j} \ll 1,
$$
if we choose $\de$ sufficiently small. This shows that we can include the exponential factor corresponding to the term $U^s(z,\si_2) \, d_{j,k}\si_2^{A} z^{B+2}$ of the phase $\Phi_0^s$ into the amplitude and consequently  ignore it. Let us then collect all summands of the complete phase in  the oscillatory integral $\mu_{j,k}^\la(y+\Ga)$ which depend on $\si_2:$
\begin{eqnarray}\label{onlys2}
\begin{split}
&2^{-j}z2^{-2k}\si_2^2\tilde g(\si_2)) +2^{-2k}\si_2^2B(2^{-k}\si_2)-2^{-k}\si_2 y_2\\
&=2^{-2k}\big[\si_2^2B(2^{-k}\si_2)-\si_2 Y_2+2^{-j}\si_2^2z\tilde g(\si_2)\big],
\end{split}
\end{eqnarray}
where we have set $Y_2:=2^ky_2.$
\medskip

Let us now decompose the region where  $1\ll |Y_1|\le \la^{\de-1} 2^j$ into the regions 
$$
\Delta_{\iota_1}:= \{y:|Y_1|\sim 2^{\iota_1}\}, 
$$
where clearly we may assume that $1\ll\iota_1<j.$ We further decompose $\Delta_{\iota_1}$ into the subregions
\begin{eqnarray*}
\Delta_{\iota_1,\iota_2}&:=& \{y:|Y_1|\sim 2^{\iota_1}, |Y_2|\sim 2^{\iota_2}\}, \quad 1\ll \iota_2\le k,\\
\Delta_{\iota_1,0}&:=& \{y:|Y_1|\sim 2^{\iota_1}, |Y_2|\lesssim 1\}.
\end{eqnarray*}
Then 
$$
|\Delta_{\iota_1,\iota_2}|\lesssim 2^{\iota_1+\iota_2-j-k}.
$$
The corresponding measures which are given by restricting  $\mu_{j,k}^\la $ to the corresponding sets and the associated maximal operators will be denoted $\mu_{k,j,\iota}^\la$ and $\M_{j,k,\iota}^\la.$ 
\smallskip

Now, clearly, if $y\in \Delta_{\iota_1},$ then we may integrate by parts in $s_1.$ 

Moreover, if $\iota_2\gg1,$  and if $\la 2^{-2k+\iota_2}\ge 1,$ then \eqref{onlys2} shows that we can also integrate by parts in $\si_2,$ and altogether this leads to the following estimate for $\mu_{k,j,\iota}^\la:$
$$
|\mu_{j,k,\iota}^\la(y+\Ga)|\lesssim \la^{\frac 52} 2^{-j-k} (\la 2^{-j} 2^{\iota_1})^{-1}(\la 2^{-2k} 2^{\iota_2})^{-1}=\la^{\frac 12} 2^{k-\iota_1-\iota_2}.
$$
Thus, Lemma \ref{maxproj} shows that, for every $\ve >0,$ 
$$
\|\M_{j,k,\iota}^\la\|_{1+\ve\to 1+\ve}\lesssim \la^{\frac 12}2^{-j}.
$$
But, recall that we are assuming that $j\lesssim \log\la,$ and 
note that $\la 2^{-2k+\iota_2}\ge 1$ implies that $\la 2^{-k}\ge 1,$ hence $\iota_2\le k\lesssim \log\la,$ 
so that by summing over these $\iota${\tiny s} we will just pick up an extra factor $(\log\la)^2$ compared to previous estimate:
\begin{equation}\label{Mjkiota}
\sum\limits_{\iota_1\lesssim j,\iota_2\le k}\|\M_{j,k,\iota}^\la\|_{1+\ve\to 1+\ve}\lesssim \la^{\frac 12+\ve}2^{-j},
\end{equation}
for every $\ve>0.$
\smallskip

Assume next that $\iota_2\gg1$  and $\la 2^{-2k+\iota_2}<1.$ Then $1\ll2^{\iota_2}<2^{2k}\la^{-1},$ and so we cannot gain anything from the integration in $\si_2$ and arrive at the following estimate:
$$
|\mu_{j,k,\iota}^\la(y+\Ga)|\lesssim \la^{\frac 52} 2^{-j-k} (\la 2^{-j} 2^{\iota_1})^{-1}=\la^{\frac 32} 2^{-k-\iota_1},
$$
hence $\|\M_{j,k,\iota}^\la\|_{1+\ve\to 1+\ve}\lesssim \la^{\frac 32}2^{-j+\iota_2 -2k}.$ 
Thus, summing over the corresponding $\iota$s,  we get 
$$
\sum\limits_{\iota_1\lesssim j,\, 2^{\iota_2}<2^{2k}\la^{-1}}\|\M_{j,k,\iota}^\la\|_{1+\ve\to 1+\ve}
\lesssim \sum\limits_{\iota_1\lesssim j,\, 2^{\iota_2}<2^{2k}\la^{-1}}\la^{\frac 32}2^{-j+\iota_2 -2k}<\la^{\frac 12+\ve}2^{-j},
$$
which matches with \eqref{Mjkiota}.
\smallskip

Assume finally that $\iota_2=0.$ Then, in place of an integration by parts in $\si_2,$ we apply van der Corput's estimate of order 2  in $\si_2,$ which leads to 
$$
|\mu_{j,k,\iota}^\la(y+\Ga)|\lesssim \la^{\frac 52} 2^{-j-k} (\la 2^{-j} 2^{\iota_1})^{-1}(\la 2^{-2k})^{-\frac 12}
=\la 2^{-\iota_1},
$$
hence
\begin{equation}\label{mujkL1la}
\|\M_{j,k,(\iota_1,0)}^\la\|_{1+\ve\to 1+\ve}\lesssim \la^{1+\ve}2^{-j-k}.
\end{equation}
Again, we can sum in $\iota_1\lesssim j,$ and combining all our estimates we see that the contribution $\M_{j,k,|Y_1|\gg 1}^\la$
of the region where $|Y_1|\gg 1$ to  $\M_{j,k}^\la$ can be estimated by 

\begin{equation}\label{Mjk>1}
\|\M_{j,k,|Y_1|\gg 1}^\la\|_{1+\ve\to 1+\ve}\lesssim \la^{\frac 12+\ve}2^{-j}+ \la^{1+\ve}2^{-j-k},
\end{equation}
for every $\ve>0.$
\medskip

What remains is the contribution $\M_{j,k,|Y_1|\lesssim 1}^\la$ of the region where $|Y_1|\lesssim 1$ to  $\M_{j,k}^\la.$ 
A look at the phase \eqref{phaseF}  shows that we are now in the position to apply Lemma \ref{pinvers} to the oscillatory integral $F_{j,k}(\la,2^{-k}\si_2,s_3,y_1)$ in the variables $(s_1,z)$ in place of $(\eta,x),$ with $N:=\la 2^{-j}\ge 1.$ 
Moreover, observe that $s_1$ and $z$   result  from our original coordinates $s_1$ and $z_1=x_1$ by means of  a smooth change of coordinates.  Passing back to our original coordinate, according to Remarks \ref{critvalue}  we therefore must evaluate the original phase $\breve\phi_1(x_1, s_2)+s_1x_1-s_1y_1$ in \eqref{defiF} at $x_1=y_1,$ and thus get
\begin{equation}\label{applemma5.2}
 F_{j,k,|Y_1|\lesssim1}(\la,s_2,s_3,y_1)   = (\la2^{-j})^{-1} 2^{-j}e^{-i\la s_3(\breve\phi_1(y_1, s_2)}    \eta(y_1,2^{-k}\si_2).
  \end{equation}
  This implies that
  $$
\mu_{j,k,|Y_1|\lesssim1}^\la(y+\Ga)=\la^{\frac32}2^{-k}
\int  e^{-i\la s_3\big(\breve\phi_1(y_1, 2^{-k}\si_2)+2^{-2k}\si_2^2B(2^{-k}\si_2)-2^{-k}\si_2 y_2-y_3\big)} \chi_1(\si_2) \chi_1(s_3) \,\eta d\si_2ds_3.
$$
By \eqref{legendamins}, the complete phase is here given by
$$
2^{-2k}\big[\si_2^2 B(2^{-k}\si_2)+y_1\si_2^2 q(y_1, 2^{-k}\si_2) -2^{k}\si_2 (y_2 -y_1^m\omega(y_1))\big]   +y_1^n\beta(y_1)-y_3.
$$
In order to estimate the result of the integration in $\si_2,$ we can argue now in a similar way as before and decompose the region where $|Y_1|\lesssim 1$ into the subregions
\begin{eqnarray*}
\Delta_{\iota_2}&:=& \{y:|Y_1|\lesssim 1,2^k |y_2 -y_1^m\omega(y_1)|\sim 2^{\iota_2}\}, 1\ll \iota_2\le k,\\
\Delta_{0}&:=& \{y:|Y_1|\lesssim 1, 2^k|y_2 -y_1^m\omega(y_1)|\lesssim 1\}.
\end{eqnarray*}
Then 
$$
|\Delta_{\iota_2}|\lesssim 2^{-j} 2^{\iota_2-k}.
$$
If $\iota_2\gg 1,$ then we may integrate by parts in $\si_2$ and obtain 
$$
|\mu_{j,k,|Y_1|\lesssim1,\iota_2}^\la(y+\Ga)|\lesssim \la^{\frac32}2^{-k}(\la 2^{-2k+\iota_2})^{-1}=\la^{\frac 12}2^{k-\iota_2},
$$
hence 
$$
\|\M_{j,k,|Y_1|\lesssim1,\iota_2}^\la\|_{1+\ve\to 1+\ve}\lesssim \la^{\frac 12}2^{-j}.
$$
And, if $\iota_2=0,$ then we apply van der Corput's estimate of order 2 in $\si_2,$ which leads to 
$$
|\mu_{j,k,|Y_1|\lesssim1,\iota_2}^\la(y+\Ga)|\lesssim \la^{\frac32}2^{-k}(\la 2^{-2k})^{-\frac 12}=\la,
$$
hence 
$$
\|\M_{j,k,|Y_1|\lesssim1,\iota_2}^\la\|_{1+\ve\to 1+\ve}\lesssim \la 2^{-k} 2^{-j}.
$$
These estimates match with \eqref{Mjk>1}, and thus altogether we find that, for every $\ve>0,$
\begin{equation}\label{Mjk}
\|\M_{j,k}^\la\|_{1+\ve\to 1+\ve}\lesssim \la^{1+\ve}2^{-j-k}+  \la^{\frac 12+\ve}2^{-j}.
\end{equation}

\medskip
\subsection{$L^p$ - estimates of $\M_{j,k}^\la$  for the contribution by $E_l$ when $\la\ge 2^j$}\label{LpEl}
\medskip

Interpolation between the $L^2$ - estimate \eqref{L2Eljk} and the $L^{1+\ve}$ - estimate \eqref{Mjk} leads to 
\begin{eqnarray}\nonumber
\|\M_{j,k}^\la\|_{p\to p}&\lesssim&
 (\la^{1+\ve}2^{-k})^{\frac 2p -1}(1+\la d_{j,k})^{-\frac 12 (2-\frac 2p)} 2^{-j} 
+ (\la^{\frac 12+\ve})^{\frac 2p -1}(1+\la d_{j,k})^{-\frac 12 (2-\frac 2p)} 2^{-j}\\ \nonumber
&\lesssim&
\la^{\ve'} (\la d_{j,k})^{\frac 2p -1} (1+\la d_{j,k})^{\frac 1p-1} \, (d_{j,k}^{-1})^{\frac 2p-1}\, 2^{-k(\frac 2p -1)}\,2^{-j} \\
&&\quad +\, \la^{\ve'} 
       (\la d_{j,k})^{\frac 1p -\frac 12}(1+\la d_{j,k})^{\frac 1p-1} \, (d_{j,k}^{-1})^{\frac 1p-\frac 12}\, 2^{-j},\label{Mlajkp}
\end{eqnarray}
if $1<p< 2.$ 

\smallskip
\begin{itemize}
\item Since $1/p-1/2>0,$ we can first  sum both terms over all dyadic $\la$s such that $\la d_{j,k}\le 1,$ provided we choose $\ve'>0$ sufficiently small.
\item If $\la d_{j,k}> 1,$ then the total exponent of $\la d_{j,k}$ in the first summand is 
$$
\frac 2p-1+\frac 1p-1=\frac 3p-2<0,
$$
since we are assuming that $p>3/2.$ Similarly, the  total exponent of $\la d_{j,k}$ in the second summand is 
$$
\frac 1p-\frac 12+\frac 1p-1=\frac 2p-\frac 32<\frac 43-\frac 32<0.
$$
Thus, again both terms can be summed over all dyadic  $\la$s such that $\la d_{j,k}>1,$ provided we choose $\ve'>0$ sufficiently small.

\end{itemize}

This easily shows that, for every $\ve'>0,$ 
\begin{equation}\label{suminla1}
\sum\limits_{\la\gg 1}\|\M_{j,k}^\la\|_{p\to p}\lesssim (d_{j,k}^{-1})^{\frac 2p-1+\ve'}\, 2^{-k(\frac 2p -1)}\,2^{-j} 
+  (d_{j,k}^{-1})^{\frac 1p-\frac 12+\ve'}\, 2^{-j}.
\end{equation}

In order to also sum over the $j$s and $k$s associated to $E_l,$  it turns out that we shall have to make use of the condition 
\eqref{Ejk1}. To this end, we shall first derive a little lemma which  will help to relate any edge of $\N(\Phi)$ having the
vertex $(B,A):=(B_l,A_l)$ associated to the transition domain $E_l$ as an endpoint.

\medskip
\subsection{A geometric lemma and $L^p$-estimation of $\M^{\tau_l}, l=1,\dots,L$}\label{geolemma}
\medskip

\begin{lemma}\label{geo}
Given  a weight $\ka=(\ka_1,\ka_2)$ such that $\ka_1>0, \ka_2\ge 0,$ and the associated  line 
$$
L_\ka:=\{(t_1,t_2): \ka_1t_1+\ka_2 t_2=1\},
$$
assume that $(B,A)$ is any point on $L_\ka.$ We define the number $n_\ka-2$ as the $t_1$-coordinate of the intersection of the line $L_\ka$ with the horizontal line $t_2=1$ (compare Figure \ref{figure geo}). Then 
$$
n_\ka=\frac{A-1}a+B+2,
$$
where $a:=\ka_1/\ka_2$ is the modulus of the slope of the line $L_\ka.$
\end{lemma}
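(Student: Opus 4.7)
The statement is a direct computation in elementary plane geometry, so the plan is very short: simply write down the defining equations of the line $L_\ka$ at the two points $(B,A)$ and $(n_\ka-2,1)$, then subtract.

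First I would note that since $(B,A)\in L_\ka$, we have $\ka_1 B+\ka_2 A=1$, and since the second intersection point $(n_\ka-2,1)$ also lies on $L_\ka$, we have $\ka_1(n_\ka-2)+\ka_2=1$. Subtracting these two equalities gives
\[
\ka_1\bigl((n_\ka-2)-B\bigr)+\ka_2(1-A)=0,
\]
hence
\[
n_\ka-2-B=\frac{\ka_2}{\ka_1}(A-1)=\frac{A-1}{a},
\]
which is exactly the claimed identity after rearranging.

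The only subtlety worth mentioning is the degenerate case $\ka_2=0$ (so that $a=\ka_1/\ka_2=+\infty$ and $L_\ka$ is the vertical line $t_1=1/\ka_1$). Here $L_\ka$ still meets $t_2=1$ at the single point $(1/\ka_1,1)$, so $n_\ka-2=1/\ka_1=B$ for any $(B,A)\in L_\ka$; interpreting $(A-1)/a=0$ when $a=\infty$, the formula $n_\ka=(A-1)/a+B+2$ continues to hold. There is no genuine obstacle in the proof; the lemma is a book-keeping identity that will be invoked repeatedly to translate information about vertices and edges of $\N(\Phi)$ into the exponents appearing in the $L^p$-estimates.
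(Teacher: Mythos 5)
Your proof is correct and follows essentially the same route as the paper: both use the fact that $(B,A)$ and $(n_\ka-2,1)$ satisfy the line equation $\ka_1 t_1+\ka_2 t_2=1$ and then do elementary algebra, the only difference being that you subtract the two equations while the paper rewrites $\ka_1 B+\ka_2 A=1$ as $A/a+B=1/\ka_1$ and substitutes. Your explicit treatment of the degenerate case $\ka_2=0$ (vertical line, $a=\infty$) is a small but worthwhile addition, since the paper allows $\ka_2\ge 0$ in the hypothesis but does not comment on it in the proof.
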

\begin{proof}   The condition
$\ka_1B+\ka_2 A=1$ is equivalent to $A/a+B=1/\ka_1,$ hence to 
$$
\frac {A-1}a+B+2=\frac 1{\ka_1}-\frac {\ka_2}{\ka_1}+2=\frac {1-\ka_2}{\ka_1}+2=(n_\ka-2)+2=n_\ka.
$$
\end{proof}

\begin{figure}[h]
\centering
\includegraphics[scale=0.4]{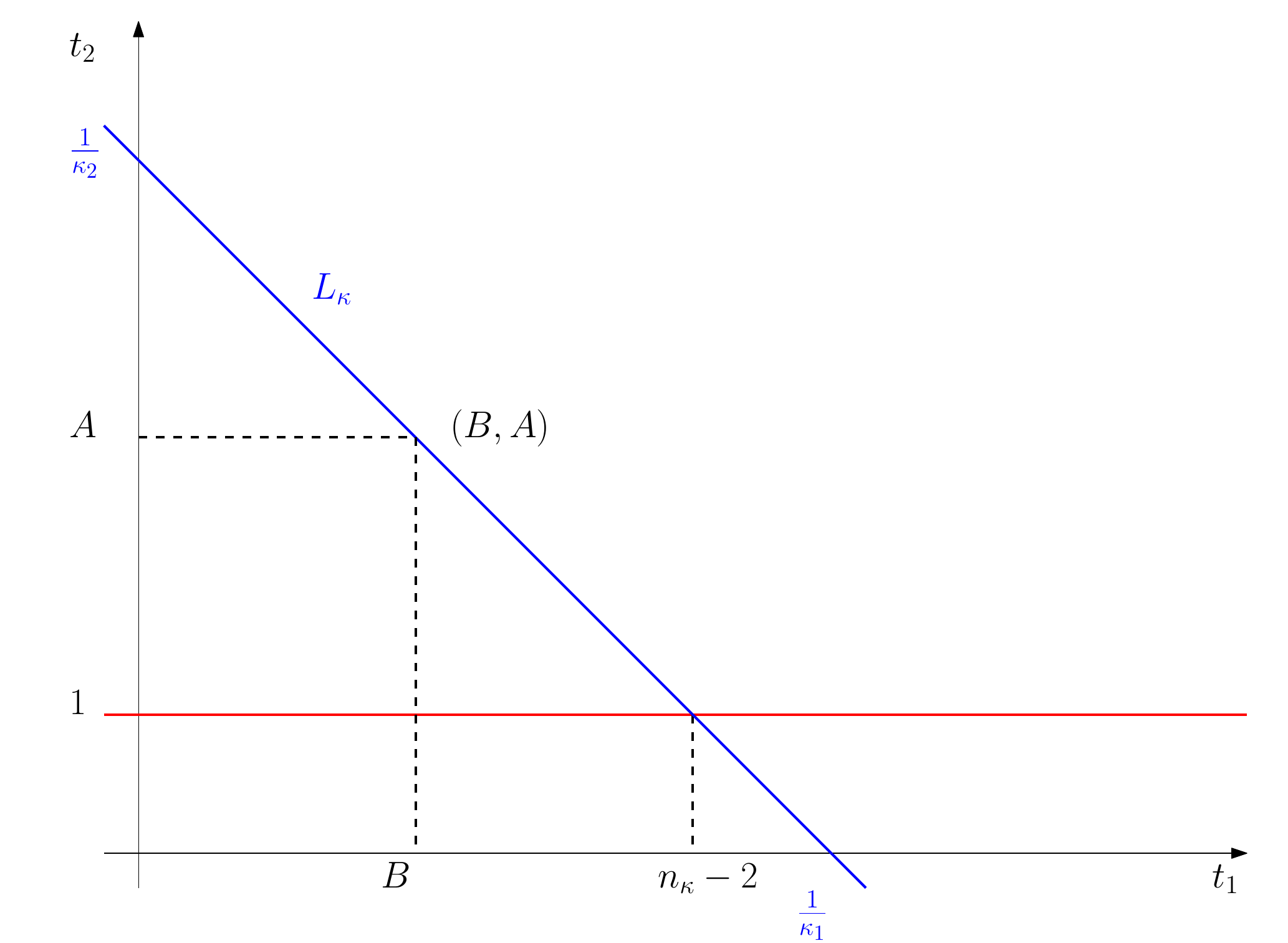}
  \caption{Meaning of $n_\ka$}
  \label{figure geo}
\end{figure}

The next lemma will allow us to show that the estimates in \eqref{suminla1} can be summed over the relevant $j$s and $k$s. Since in Step 2 and higher we shall have to deal with functions $\Phi(z,s_2)$ which are analytic in the first variable $z,$ but only fractionally analytic in the second variable, we shall state it more generally for Newton-Puiseux polyhedra.
\smallskip

So, assume that $\Phi$ is such a function, with Newton-Puiseux polyhedron $\N(\Phi),$ whose vertices are the points 
$(B_l,A_l), l=0,\dots L,$ where $(B_0,A_0)=(n-2,0),$ somewhat   as in Subsection \ref{roots} (compare Figure \ref{edges}). Note, however, that in contrast to the case of Newton polyhedra, where necessarily  $A_1\ge 1,$ we may possibly have $A_1<1.$ Otherwise, we adapt the same notation as in Subsection \ref{roots}.  

\begin{lemma}\label{key}
Assume that $A_1\ge 1,$ so that $A_l\ge 1$ for every $l\ge 1.$ Then  
\begin{equation*}
\frac{A_l-1}{a_l}+B_l+2\le \frac{A_1-1}{a_1}+B_1+2=\frac{A_0-1}{a_1}+B_0+2
\end{equation*}
for $l=1,\dots,L.$
\end{lemma}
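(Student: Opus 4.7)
The plan is to interpret the quantity $\frac{A_l-1}{a_l}+B_l+2$ geometrically via Lemma \ref{geo}: it equals $n_{\kappa^l}$, the abscissa (plus $2$) at which the line $L_l$ supporting the edge $\gamma_l$ meets the horizontal line $t_2=1$. Since $L_l$ passes through both endpoints $(B_{l-1},A_{l-1})$ and $(B_l,A_l)$ of $\gamma_l$, these two points yield the same value when plugged into the line's equation, giving the elementary identity
\begin{equation*}
\frac{A_l-1}{a_l}+B_l \;=\; \frac{A_{l-1}-1}{a_l}+B_{l-1}.
\end{equation*}
Applying this with $l=1$ already gives the claimed equality $\frac{A_1-1}{a_1}+B_1+2=\frac{A_0-1}{a_1}+B_0+2$.

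For the inequality, I would proceed by telescoping. Subtracting successive terms and using the identity above to rewrite the $l$-th term, one finds
\begin{equation*}
\Bigl[B_l+\frac{A_l-1}{a_l}\Bigr]-\Bigl[B_{l-1}+\frac{A_{l-1}-1}{a_{l-1}}\Bigr]
\;=\;(A_{l-1}-1)\Bigl(\frac{1}{a_l}-\frac{1}{a_{l-1}}\Bigr).
\end{equation*}
Convexity of the Newton (or Newton--Puiseux) polyhedron forces the slopes of the compact edges to be strictly increasing, $a_1<a_2<\cdots<a_L$, so the second factor is negative. The hypothesis $A_1\ge1$ together with $A_{l-1}\ge A_1$ (the $A_j$ being increasing) gives $A_{l-1}-1\ge0$ for every $l\ge 2$, so each increment is non-positive. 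Telescoping from $l$ down to $1$ then yields $B_l+\frac{A_l-1}{a_l}\le B_1+\frac{A_1-1}{a_1}$, which is the lemma after adding $2$.

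There is no genuine obstacle here; the only thing to watch is the role of the hypothesis $A_1\ge1$. It enters precisely in guaranteeing $A_{l-1}-1\ge0$, and without it the sign of the telescoped increments would flip, breaking the monotonicity. This is consistent with the motivation mentioned in the text: in Step 1 of the resolution algorithm the condition $A_1\ge 1$ is automatic, while in later steps it must be checked (or handled separately), which is exactly the situation the lemma is designed for.
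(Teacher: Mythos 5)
Your proof is correct, and it is essentially the paper's intended argument: the paper omits the proof, saying only that it is "geometrically evident in view of Lemma \ref{geo}" (the sequence $n_{\kappa^l}$ is decreasing) and "can easily be carried out by means of an induction over $l$"; your telescoping computation, together with the identity obtained by evaluating Lemma \ref{geo} at both endpoints of $\gamma_l$, is precisely that induction made explicit, and your observation that $A_1\ge 1$ enters only to ensure $A_{l-1}-1\ge 0$ matches the paper's own remark about when the lemma applies.
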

The proof is geometrically evident in view of Lemma \ref{geo} (note that apparently the sequence $\{n_{\ka^l}\}_l$ is decreasing) and can easily by carried out by means of an  induction over $l\ge0.$ It will therefore be omitted.

\medskip
We are now in the position to show that the estimates in \eqref{suminla1} can be summed over all relevant $j$s and $k$s associated to a given transition domain $E_l, l=1,\dots, L,$ provided $p>\max\{3/2, h\}.$  Actually, we shall only need condition \eqref{Ejk1} here, i.e.,
$ j\ge k a_l+M/2.$   Indeed, we shall prove that under these assumptions 
\begin{equation}\label{suminjk1}
\sum\limits_{j,k\gg1:  j\ge k a_l}\Big((d_{j,k}^{-1}2^{-k})^{\frac 2p-1+\ve'}\, 2^{-j} 
+  (d_{j,k}^{-1})^{\frac 1p-\frac 12+\ve'}\, 2^{-j}\Big)<\infty,
\end{equation}
provided we choose $\ve'>0$ sufficiently small.

In combination with estimate \eqref{suminla1} this will clearly imply that for if $l=1,\dots,L,$
\begin{equation}\label{Lpest1}
\|\M^{\tau_l}\|_{p\to p} <\infty,
\end{equation}
provided $p>\max\{3/2, h\}.$

\medskip
Now, from \eqref{legendamins} and \eqref{Phi}, we find that the  first two vertices of $\N(\Phi)$ are given by $(B_0,A_0)=(n-2,0)$ and  $(B_1,A_1)=(m-2,1),$  and that $\ka_1^1=1/(n-2)>0$  and
\begin{equation*}
a_1=\frac {\ka_1^1}{\ka^1_2}=\frac 1{n-m}.
\end{equation*}
 (compare also Figure \ref{edges}). Thus, Lemma \ref{key} implies that
\begin{equation}\label{leqm}
\frac{A_l-1}{a_l}+B_l+2\le m
\end{equation}
for $l=1,\dots, L.$  Fix such an $l,$ and put $A:=A_l, B:= B_l$ and $a=a_l>a_1=1/(n-m)>0.$ Then 
$
d_{j,k}^{-1}= 2^{kA+j(B+2)}
$
and  $k\le j/a$ in \eqref{suminjk1}. Thus, we can re-write  \eqref{suminjk1} as 
\begin{equation}\label{crucsum}
\sum\limits_{j,k\gg1:  k\le j/a}\Big(( 2^{k(A-1)+j(B+2)})^{\frac 2p-1+\ve'}\, 2^{-j} 
+  ( 2^{kA+j(B+2)})^{\frac 1p-\frac 12+\ve'}\, 2^{-j}\Big)<\infty.
\end{equation}
Since the exponents $\frac 2p-1+\ve'$ and $\frac 1p-\frac 12+\ve'$ are strictly positive, we can first sum over all 
$k\le j/a$ and are left with showing that
\begin{equation*}
\sum\limits_{j\gg1}\Big( 2^{j\big[(\frac{A-1}a+B+2)(\frac 2p-1+\ve')-1\big]}
+  2^{j\big[\frac Aa+B+2)(\frac 1p-\frac 12+\ve')-1\big]}\Big)<\infty.
\end{equation*}

But, by \eqref{leqm}, 
$$
\frac{A-1}a+B+2\le m \quad \text{and} \quad \frac 2p-1<\frac 2n \le \frac 1m, 
$$ 
since $1/p<1/h=1/2+1/n$ and $n\ge 2m,$ so that we may assume that the exponent of the first summand in \eqref{crucsum} is strictly negative and we can sum in $j.$

Similarly, 
$$
\frac{A}a+B+2\le m +\frac 1a\le m+(n-m)=n\quad \text{and} \quad \frac 1p-\frac 12<\frac 1n, 
$$ 
so that we may assume that the exponent of the second summand in \eqref{crucsum} is strictly negative  too and we can again  sum in $j.$
This completes the proof of \eqref{suminjk1}.

\medskip
\subsection{$L^p$-estimation of $\M^{\tau_0}$}\label{Lpl0}
\medskip
Let us finally look at the case $l=0.$ Recall that here only the condition \eqref{Ejk2}, i.e., $j\le ka_1-M/2,$ is available, and that now
$$
d_{j,k}^{-1}= 2^{kA_0+j(B_0+2)}=2^{jn}.
$$
Thus, here we would like  to show that
\begin{equation}\label{suminjk2}
\sum\limits_{j,k\gg1:  k\ge j/a_1}\Big(( 2^{jn-k})^{\frac 2p-1+\ve'}\, 2^{-j} 
+  ( 2^{jn})^{\frac 1p-\frac 12+\ve'}\, 2^{-j}\Big)<\infty.
\end{equation}
In the first summand, we can first sum over all $k\ge j/a_1$ and arrive at the sum
$$
\sum\limits_{j\gg 1} 2^{j\big[(n-\frac 1{a_1})(\frac 2p-1+\ve')-1\big]}.
$$
But, 
$$
n-\frac 1{a_1}=m, \quad \text{and} \quad \frac 2p-1<\frac 2n \le \frac 1m,
$$
so that we may assume that the exponent of the first summand in \eqref{suminjk2} is strictly negative by choosing $\ve'$ sufficiently small,  and  thus we can sum in $j.$

The second summand is insufficient for a summation in $k,$ unless, say,  $k\lesssim j.$  In the latter case, we can sum in $k\lesssim j$ and  arrive at the sum 
\begin{equation}\label{suminj1}
\sum\limits_{j\gg 1}2^{jn(\frac 1p-\frac 12+\ve')}\, 2^{-j},
\end{equation}
with any slightly bigger $\ve'>0$ than before. This series is  convergent for $\ve'$ sufficiently small, since we are assuming that  $p>h.$ 

\medskip

So, let us assume henceforth that, say, $k> jn.$  
The crucial observation  is that  the second term in \eqref{Mjk}  can be improved when  $\la2^{-k}\le 1,$ and this will be needed for the case $l=0.$
\smallskip

To be more precise, assume first that  $\la2^{-k}>1,$ i.e., $k\lesssim\log \la.$ Then we can keep the second term in \eqref{Mlajkp}, which is here given by 
$$
 \la^{\ve'} (\la 2^{-jn})^{\frac 1p -\frac 12}(1+\la 2^{-jn})^{\frac 1p-1} \, (2^{jn})^{\frac 1p-\frac 12}\, 2^{-j}.
$$
Summing now first in $k\lesssim\log \la,$ we can estimate by the same kind of expression, but with a slightly bigger exponent $\ve'>0.$ 
After summing in $\la,$ we are again left with the series in \eqref{suminj1} and are done.

\medskip
 Assume next that  $\la2^{-k}\le 1.$ Then the exponential factor of the oscillatory integral $\mu_{j,k}^\la(y+\Gamma)$ which collects all terms depending  only on $\si_2$  (compare \eqref{onlys2}) is essentially non-oscillatory, and so we  shall rather estimate for $y\in \Delta_{\iota_1}$ by 
$$
|\mu_{j,k,\iota_1}^\la(y+\Ga)|\lesssim \la^{\frac 52} 2^{-j-k} (\la 2^{-j} 2^{\iota_1})^{-1}=\la^{\frac 32} 2^{-k-\iota_1}.
$$
Since $|\Delta_{\iota_1}|\lesssim 2^{\iota_1-j},$ we obtain the improved estimate
$$
\|\M_{j,k,\iota_1}^\la\|_{1+\ve\to 1+\ve}\lesssim \la^{\frac 32}2^{-j-k}.
$$
This shows that we can replace the second term in \eqref{Mjk} by $\la^{\frac 32+\ve}2^{-j-k},$ and finally, after interpolation with our $L^2$-estimate \eqref{L2Eljk}, we are led to summing 
$$
M(\la,j,k):=(\la^{\frac 32+\ve}2^{-k})^{\frac 2p -1}(1+\la 2^{-jn})^{\frac 1p-1}2^{-j}
$$
over our $\la$s, $j$s and $k$s.
\smallskip

Assume first that $2^k\ge \la>2^{jn}.$ Then 
$
M(\la,j,k)\sim(\la^{\frac 32+\ve}2^{-k})^{\frac 2p -1}(\la 2^{-jn})^{\frac 1p-1}2^{-j},
$
hence 
\begin{eqnarray*}
\sum\limits_{k:2^k\ge \la} M(\la,j,k)&\lesssim&  \la^{\frac 1p -\frac 12+\ve'}(\la 2^{-jn})^{\frac 1p-1}2^{-j}=(\la 2^{-jn})^{\frac 1p-\frac 12+\frac 1p-1+\ve'} 2^{jn(\frac 1p-\frac 12+\ve')} 2^{-j}\\
&=&(\la 2^{-jn})^{\frac 2p-\frac 32+\ve'} 2^{j\big[n(\frac 1p-\frac 12+\ve')-1\big]}.
\end{eqnarray*}
But, since we are assuming that $p>\max\{3/2, h\},$ we have $ 2/p- 3/2=-1/6<0,$ and again $n(1/p- 1/2)-1<0,$ 
so that we can first  sum in $\la>2^{jn},$ and then in $j\gg1,$ provided $\ve'>0$ is sufficiently small.
\medskip

We are thus left with the case where $ \la\le 2^{jn}$ and $k>jn.$ Then 
$
M(\la,j,k)\sim(\la^{\frac 32+\ve}2^{-k})^{\frac 2p -1}2^{-j},
$
and thus 
$$
\sum\limits_{\la:\la\le 2^{jn}} M(\la,j,k)\lesssim  (2^{jn(\frac 32+\ve)}2^{-k})^{\frac 2p -1}2^{-j}.
$$
This implies
$$
\sum\limits_{k: k\ge jn}\sum\limits_{\la:\la\le 2^{jn}} M(\la,j,k)\lesssim  (2^{jn(\frac 12+\ve)})^{\frac 2p -1}2^{-j}.
$$
and again we can finally also sum in $j,$ since $p>h.$
\smallskip

Consequently, estimate \eqref{Lpest1} holds true even for $l=0.$

\medskip
\subsection{On the contributions by the homogeneous domains $D_l, l=1,\dots,L$}\label{Dlcont}
\medskip
Let us fix $l_1\in \{1,\dots,L\}.$ We recall that 
$$
D_{l_1}:=\{(z_1,s_2):   2^{-M} s_2^{a_{l_1}}\le |z_1|\le 2^{M} s_2^{a_{l_1}}\}, $$
where we may assume that $\rho_{l_1}$ is supported in $D_{l_1}.$ We also recall from Subsection \ref{dyadic} that we may assume that
$$
J^{\rho_{l_1}}(\la,s)=\sum\limits_{j,k: |j-k a_{l_1}|\le M/2}J_{j,k}(\la,s).
$$

We shall decompose  $D_{l_1}$ into a finite number of very narrow subregions. To this end, fix any point $c$  in the compact interval  $I:=[2^{-M}, 2^M],$ and put
$$
D_{l_1}^c:=\{(z_1,s_2):   |z_1-cs_2^{a_{l_1}}|<  \ve s_2^{a_{l_1}}\}, 
$$
where $\ve>0$ is supposed to be sufficiently small. More precisely, recall from Subsection \ref{roots} that every root $r(s_2)$ in the cluster 
$\left[ \begin{matrix} 
\cdot\\
l_1
\end{matrix}\right ]$
associated to the compact edge $\ga_{l_1}$ of $\N(\Phi)$ admits a Puiseux series expansion
$$
r(s_2)=c_{l_1}^{\al_1}s_2^{a_{l_1}}+c_{l_1l_2}^{\al_1\al_2}s_2^{a_{l_1l_2}^{\al_1}}+\cdots+
c_{l_1\cdots l_p}^{\al_1\cdots \al_p}s_2^{a_{l_1\cdots
l_p}^{\al_1\cdots \al_{p-1}}}+\cdots,
$$
with leading term $c_{l_1}^{\al_1}s_2^{a_{l_1}},$ where the leading coefficient $c_{l_1}^{\al_1}$ may be real, or complex.
By having chosen $M$ to be sufficiently large, we are of course assuming that  all real coefficients  $c_{l_1}^{\al_1}$ will be contained in the interval $I.$ Denote by  $\mathcal C_{l_1}$ the set of those {\it real} coefficients $c_{l_1}^{\al_1}.$ 

If $c\notin \mathcal C_{l_1},$ we choose $\ve>0$ at least so small that $\ve\ll\dist(c,\mathcal C_{l_1}).$


\begin{figure}[!h]
\centering
\includegraphics[scale=0.45]{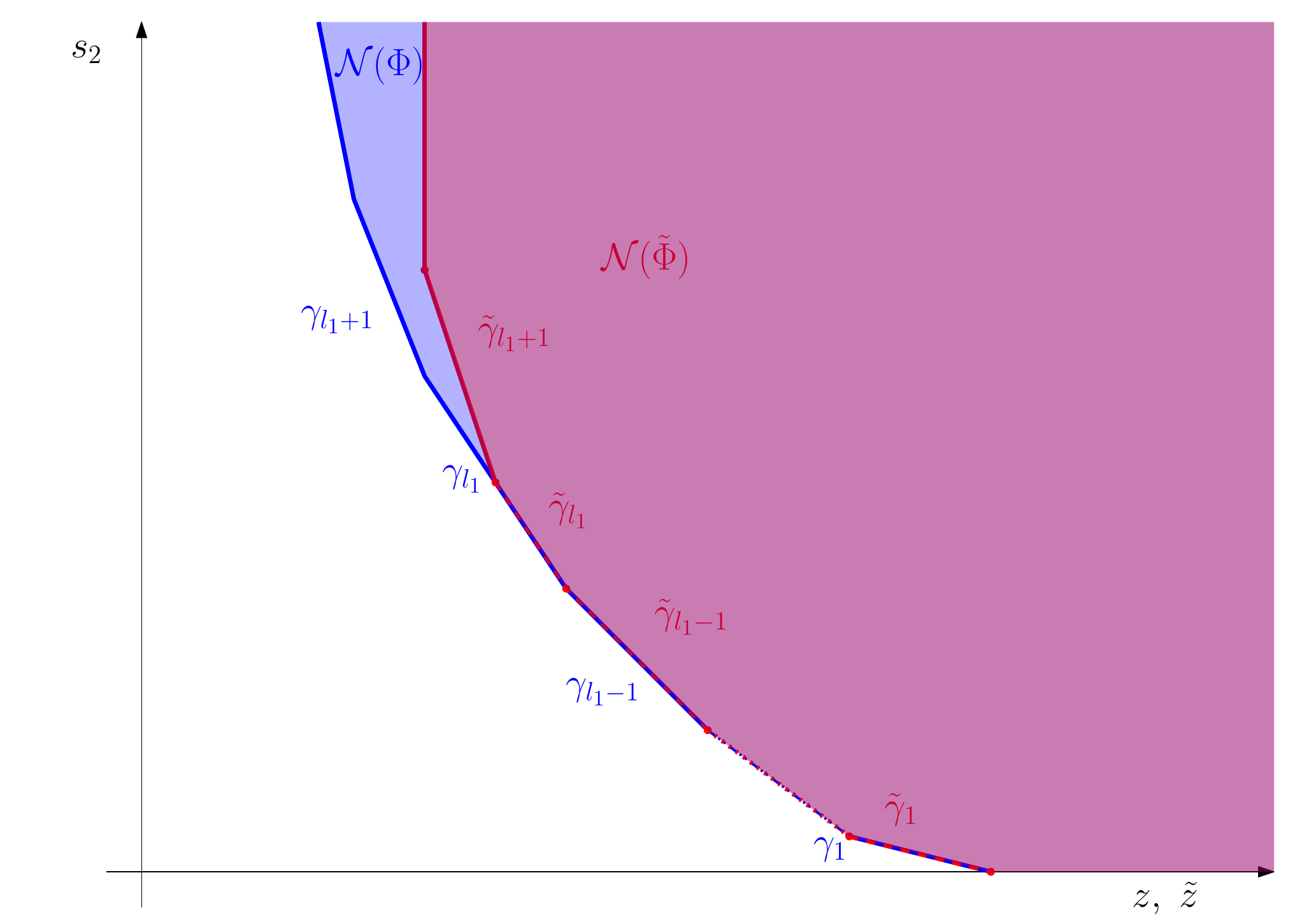}
  \caption{Comparison between $\N(\Phi)$ and $\N(\tilde\Phi)$}
  \label{Phit}
\end{figure}

Now, for any $c\in I,$ let us introduce the new variable $\tilde z:=z_1-cs_2^{a_{l_1}},$ and accordingly define 
\begin{equation*}
\tilde\Phi(\tilde z,s_2):= \Phi(\tilde z+cs_2^{a_{l_1}}, s_2).
\end{equation*}
Note that $\tilde \Phi$ is still analytic in $\tilde z,$ but may be only fractionally analytic in $s_2.$ The region $D_{l_1}^c$ then corresponds to 
$$
\tilde D_{l_1}^c:=\{(\tilde z, s_2):   |\tilde z|<  \ve s_2^{a_{l_1}}\}.
$$

In Step 2 of our resolution algorithm,  which will be discussed in the next section, we shall base our analysis on the Newton polyhedron of $\tilde\Phi$  in place of $\N(\Phi).$ The corresponding  vertices and compact edges of  $\N(\tilde\Phi)$ will  be denoted by $(\tilde B_l, \tilde A_l), l=0,\dots, \tilde L$ and  $\tilde \ga_l:=(\tilde B_{l},\tilde A_{l}),(\tilde B_{l-1},\tilde A_{l-1})], l=1,\dots, \tilde L.$
The  line supporting the edge $\tilde \ga_l$ will be denoted by $\tilde L_l,$ the associated weight by 
$\tilde \ka^l,$ and the corresponding modulus of slope  by $\tilde a_l= \tilde \ka_1^l/\tilde \ka_2^l.$ The corresponding homogeneous domains  and transition domains into which we shall decompose the upper half-plane where $\tilde z\in \RR, s_2>0,$ will be denoted by $\tilde D_l, l=1,\dots, \tilde L,$  and $\tilde E_l, l=0,\dots, \tilde L.$

\medskip

Coming back to $\tilde D_{l_1}^c,$  note that  by \eqref{Phifac},

 \begin{equation}\label{tildePhifac}
\tilde\Phi(\tilde z,s_2)=\tilde U_0(\tilde z,s_2)(\tilde z+cs_2^{a_{l_1}})^{\nu_1}  \prod_{l=1}^L
\Phi\left[ \begin{matrix} 
\cdot\\
l\end{matrix}\right ](\tilde z+cs_2^{a_{l_1}},s_2),
\end{equation}
where
$$
\Phi\left[ \begin{matrix} 
\cdot\\
l
\end{matrix}\right ](\tilde z+cs_2^{a_{l_1}},s_2)
=\prod_{r\in \left[ \begin{matrix} 
\cdot\\
l
\end{matrix}\right ]}(\tilde z+cs_2^{a_{l_1}}-r(s_2))=\prod_{r\in \left[ \begin{matrix} 
\cdot\\
l
\end{matrix}\right ]}(\tilde z-\tilde r(s_2)),
$$
if we define adjusted  roots $\tilde r$  by
$$
\tilde r(s_2):=r(s_2)-cs_2^{a_{l_1}}.
$$

If now $l<l_1$  and 
$r\in \left[ \begin{matrix} 
\cdot\\
l
\end{matrix}\right ], $
then $a_l<a_{l_1},$ so the roots $\tilde r(s_2)$ and $r(s_2)$ do have the same leading term and the same multiplicity. 

Thus the identities \eqref{BandA} and \eqref{Blall} show that we indeed have 
\begin{equation*}
(\tilde B_l, \tilde A_l)=( B_l, A_l), \quad 0\le l<l_1.
\end{equation*}
In particular,
$$
\tilde \ga_l=\ga_l \quad \text{and}  \quad \tilde a_l=a_l, \  l< l_1
$$
(compare Figure \ref{Phit}).
\medskip

\noi {\bf  Case 1:} $c\notin \mathcal C_{l_1}.$ Then, if $l=l_1$ and 
$r\in \left[ \begin{matrix} 
\cdot\\
l
\end{matrix}\right ], $
we can factor 
$$
\tilde z-\tilde r(s_2)=s_2^{a_{l_1}}\Big(c-c_{l_1}^{\al_1}+\frac {\tilde z}{s_2^{a_{l_1}}}\Big)+\text{higher degree terms},
$$
where $|{\tilde z}/{s_2^{a_{l_1}}}|<\ve.$ Moreover, if $c_{l_1}^{\al_1}$ is real, then $|c-c_{l_1}^{\al_1}|\gg\ve,$ and otherwise
$|c-c_{l_1}^{\al_1}|\ge |\Im  c_{l_1}^{\al_1}|>0,$ so that, by choosing $\ve $ small enough, we can assume that in both cases
$|c-c_{l_1}^{\al_1}|\gg\ve.$ Thus we may assume that
$
|c-c_{l_1}^{\al_1}+ {\tilde z}/{s_2^{a_{l_1}}}|\sim 1.
$

Arguing in a similar way also for $l\ne l_1$ (compare our discussion in Subsection \ref{Elresolution}), we find that the edge 
$\tilde\ga_{l_1}$  has the same slope  and right endpoint as $\ga_{l_1},$  but as left endpoint a point of the form 
$(0,\tilde A_{l_1}),$ and that we can factor 
\begin{equation*}
\tilde\Phi(\tilde z,s_2)=V(\tilde z,s_2)\, s_2^{\tilde A_{l_1}},\qquad (\tilde z,s_2)\in \tilde D_{l_1}^c,
\end{equation*}
where  the function $V(\tilde z,s_2)$ behaves in a similar way is in Subsection \ref{Elresolution}. In particular, here $\tilde L=l_1.$ 
\medskip

\begin{remark}\label{cnotinC}
This shows that, by changing from $z_1$ to $\tilde z,$ we can regard the  region $\tilde D_{l_1}^c$  now like a transition domain $\tilde E_{\tilde L},$ in the coordinates $(\tilde z, s_2).$  With view to Lemma \ref{key}, we also remark that here $\tilde A_{l_1}\ge A_{l_1}\ge A_1=1.$
These kind of  domains will be  handled in Step 2 of our resolution algorithm.
\end{remark}
\medskip

\noi {\bf Case 2:} $c\in \mathcal C_{l_1}.$ 
Then $c$ is of the form $c=c_{l_1}^{\al_1}\in\RR.$  This will fix $\al_1$ in the sequel. We then recall from \eqref{Phiprinc} that, up to a constant factor,  the $\ka^{l_1}$-principal part of $\Phi$ is given by
$$
\Phi_{\ka^{l_1} }(z_1,s_2)= s_2^{A_{l_1-1}} z_1^{B_{l_1}}\prod_\al \Big(z_1-c^\al_{l_1} s_2^{a_{l_1}}\Big)^{N\left[ \begin{matrix} 
\al\\
l_1
\end{matrix}\right ] }.
$$
Consequently, the   $\ka^{l_1}$-principal part of $\tilde \Phi$ is given by
$$
\tilde\Phi_{\ka^{l_1} }(\tilde z,s_2)= s_2^{A_{l_1-1}} (\tilde z+cs_2^{a_{l_1}})^{B_{l_1}}\prod_\al \Big(\tilde z-(c^\al_{l_1}-c_{l_1}^{\al_1})\ s_2^{a_{l_1}}\Big)^{N\left[ \begin{matrix} 
\al\\
l_1
\end{matrix}\right ] }.
$$
We must therefore distinguish two sub-cases:

\smallskip

\noi {\bf Subcase (i):} 
$B_{l_1}>0, $ or 
$\left[ \begin{matrix} 
\cdot\\
l_1
\end{matrix}\right ] 
\supsetneqq\left[ \begin{matrix} 
\al_1\\
l_1
\end{matrix}\right ]. $
 Then $\tilde\Phi_{\ka^{l_1}}$ is not a monomial, and thus the edge $\tilde \ga_{l_1}$  lies on the same line as $ \ga_{l_1}.$ It thus will have the same slope  $\tilde a_{l_1}= a_{l_1}$ and right endpoint $(B_{l_1-1},A_{l_1-1})$ as $\ga_{l_1},$ but the left endpoints of these two edges may be different.

\smallskip
Here
$$
\tilde E_{l_1}=\{(\tilde z,s_2):   2^M s_2^{\tilde a_{l_1+1}}<|\tilde z|<2^{-M} s_2^{a_{l_1}}\}.
$$
Thus, if we choose $\ve<2^{-M},$ and decompose 
$$
\tilde D_{l_1}^c=\tilde D_{l_1,1}^c \cup \tilde D_{l_1,2}^c,
$$
where
\begin{eqnarray*}
\tilde D_{l_1,1}^c&:=&\{(\tilde z,s_2):   2^M s_2^{\tilde a_{l_1+1}}\le |\tilde z|<\ve s_2^{a_{l_1}}\},\\
\tilde D_{l_1,2}^c&:=&\{(\tilde z,s_2):  |\tilde z|<2^Ms_2^{a_{l_1+1}}\},
\end{eqnarray*}
then $\tilde D_{l_1,1}^c\subset \tilde E_{l_1}.$ Note that for  $l_1=\tilde L,$ we even have $\tilde D_{l_1}^c\subset \tilde E_{l_1}.$

\smallskip
Moreover, clearly $\tilde D_{l_1,2}^c$ is contained in the union of the $\tilde E_l$s  and $\tilde D_l$s  with $l\ge l_1+1,$ and therefore
\begin{equation*}
\tilde D_{l_1}^c\subset \bigcup\limits_{l=l_1}^{\tilde L} \tilde E_l\, \cup \bigcup\limits_{l=l_1+1}^{\tilde L}\tilde D_l.
\end{equation*}

\begin{remarks}\label{cinCi}  (a) We thus see that in Subcase (i) the contribution by $D_{l_1}^c$ to our maximal operator can be covered by the contributions by the 
$\tilde E_l$s  with $l\ge l_1$ and the $\tilde D_l$s  with $l\ge l_1+1.$ Those by the $ \tilde E_l$s will be completely covered  by  Step 2  of our resolution algorithm in the next section, and to the $\tilde D_l$s we shall apply a similar kind of reasoning as in the present section,  which will lead to Step 3 of our resolution algorithm, and so forth. Note also that $l_1+1\ge 2,$ so that the first edge with right endpoint $(n-2,0)$  will be the same for all steps $\ge 2,$ namely $\tilde \ga_1.$  Thus, if $\tilde A_1\ge 1,$ then we can apply Lemma \ref{key} in Step 2, and as well in any higher step.

(b) Note also that  if $\tilde a_l$  with $l\ge l_1+1$ is the leading exponent of a root $\tilde r(s_2)$ of  $\tilde\Phi,$  then $\tilde a_l$ is of the form $a_{l_1l_2}^{\al_1},$ and the leading term of $\tilde r(s_2)$ is of the form
$
c_{l_1l_2}^{\al_1\al_2}s_2^{a_{l_1l_2}^{\al_1}}.
$

\end{remarks}
\medskip

\noi {\bf Subcase (ii):}  $B_{l_1}=0$ and 
$\left[ \begin{matrix} 
\cdot\\
l_1
\end{matrix}\right ] 
=\left[ \begin{matrix} 
\al_1\\
l_1
\end{matrix}\right ], $
i.e., all roots in $\left[ \begin{matrix} 
\cdot\\
l_1
\end{matrix}\right ]$  have the same leading term $cs_2^{a_{l_1}}.$  Then 
$\tilde\Phi_{\ka^{l_1} }(\tilde z,s_2)= s_2^{A_{l_1-1}} 
\tilde z^{N\left[ \begin{matrix} 
\cdot\\
l_1
\end{matrix}\right ] }
=s_2^{A_{l_1-1}} \tilde z^{B_{l_1-1}} 
$
is a monomial, so that the edge $\tilde \ga_{l_1}$ has ``shrunk''  to the vertex $(B_{l_1-1},A_{l_1-1})$ of $\N(\tilde\Phi)$ by the change of coordinates from $z_1$ to $\tilde z.$ Here,  the edge $\tilde \ga_{l_1}$ will thus  have a slope $\tilde a_{l_1}> a_{l_1}.$

\smallskip
Here
$$
\tilde E_{l_1-1}=\{(\tilde z,s_2):   2^M s_2^{\tilde a_{l_1}}<|\tilde z|<2^{-M} s_2^{a_{l_1-1}}\}, 
$$
since $\tilde a_{l_1-1}=a_{l_1-1}.$ And, we claim that we may assume that 
$$
2^M s_2^{\tilde a_{l_1}}< \ve s_2^{a_{l_1}}\le 2^{-M} s_2^{a_{l_1-1}}.
$$
Indeed, the first inequality is equivalent to $ s_2^{\tilde a_{l_1}-a_{l_1}}<\ve 2^{-M}.$ But, $\tilde a_{l_1}-a_{l_1}>0,$ so that the latter inequality  can be assumed to be true  because  we may assume  that  $s_2\ll1 $ is  sufficiently small.  The second inequality  follows in a similar way. Let us here decompose 
$
\tilde D_{l_1}^c=\tilde D_{l_1,1}^c \cup \tilde D_{l_1,2}^c,
$
where now
\begin{eqnarray*}
\tilde D_{l_1,1}^c&:=&\{(\tilde z,s_2):   2^M s_2^{\tilde a_{l_1}}\le |\tilde z|<\ve s_2^{a_{l_1}}\},\\
\tilde D_{l_1,2}^c&:=&\{(\tilde z,s_2):  |\tilde z|<2^Ms_2^{\tilde a_{l_1}}\}.
\end{eqnarray*}
Then $\tilde D_{l_1,1}^c\subset \tilde E_{l_1-1},$  in a similar way as in the previous subcase we see that here
\begin{equation}\label{coverDl122}
\tilde D_{l_1}^c\subset \bigcup\limits_{l=l_1-1}^{\tilde L} \tilde E_l\, \cup \bigcup\limits_{l=l_1}^{\tilde L}\tilde D_l.
\end{equation}

\medskip
\begin{remarks}\label{cinCii}
(a) If $l_1=1,$ then for our phase $\Phi$ in \eqref{Phi}, by \eqref{legendamins} we have
\begin{eqnarray}\label{l=1}
\begin{split}
\ka^1&=\big(\frac 1{n-2},\frac {n-m}{n-2}\big), \quad a_1=\frac 1{n-m},\\
\Phi_{\ka^{1} }(z_1,s_2)&= z_1^{n-2} n(n-1)\beta(0)+s_2 z_1^{m-2} m(m-1)\omega(0).
\end{split}
\end{eqnarray}
Note that here $B_{l_1}=m-2.$  Moreover, since we are assuming that $n\ge 2m,$ we have $n-m\ge m\ge 2,$ so that the polynomial $z_1^{n-m} n(n-1)\beta(0)+s_2  m(m-1)\omega(0)$ has at least 2 different (real or complex) roots, each of multiplicity 1, hence 
$\left[ \begin{matrix} 
\cdot\\
1
\end{matrix}\right ] 
\supsetneqq\left[ \begin{matrix} 
\al_1\\
1
\end{matrix}\right ]. $ 
This shows that here  only Subcase (i) can arise.  

In particular, in Step 2, Subcase (ii) can only arise when $l_1\ge 2,$ and the same applies then to all higher steps, so that $l_1-1\ge 1$ in \eqref{coverDl122} -- thus  no transition domains $\tilde E_0$ will appear in \eqref{coverDl122}.
\smallskip

(b)  By \eqref{coverDl122} we see that  in Subcase (ii) the contribution by $D_{l_1}^c$ to our maximal operator can be covered by the contributions by the 
$\tilde E_l$s  with $l\ge l_1-1\ge 1$ and the $\tilde D_l$s  with $l\ge l_1\ge 2.$ Those by the $ \tilde E_l$s will be completely covered  by  Step 2  of our resolution algorithm in the next section, and to the $\tilde D_l$s we shall apply a similar kind of reasoning as in the present section,  which will lead to Step 3 of our resolution algorithm, and so forth. Note again that since here $l_1\ge 2,$ the first edge with right endpoint $(n-2,0)$  will be the same for all steps $\ge 2,$ namely $\tilde \ga_1.$  Thus, if $\tilde A_1\ge 1,$ then we can apply Lemma \ref{key} in Step 2, and as well in any higher step.
\smallskip

(c)  Note also that  if $\tilde a_l$  with $l\ge l_1$ is the leading exponent of a root $\tilde r(s_2)$ of  $\tilde\Phi,$  then $\tilde a_l$ is of the form $a_{l_1l_2}^{\al_1},$ and the leading term of $\tilde r(s_2)$ is of the form
$
c_{l_1l_2}^{\al_1\al_2}s_2^{a_{l_1l_2}^{\al_1}}.
$
\end{remarks}

\medskip

We finally remark that, by compactness of the interval $I,$ we can cover  the homogeneous domain $D_{l_1}$ by the sub-domains $D_{l_1}^c,c\in \mathcal C_{l_1},$ and a finite number of subdomains $D_{l_1}^{c^j},$ with $c^j\notin \mathcal C_{l_1}.$

Thus we may reduce our studies to considering such sub-domains $D_{l_1}^c,$ for which we shall then, as explained before,  pass to the coordinate $\tilde z$ in place of $z_1.$  The study of the contributions of the corresponding homogeneous domains $\tilde D_{l_1}^c$ will be carried out in Step 2  (and higher) of our algorithm.

\medskip
\subsection{Stopping domains $D^c_{l_1}, c\in  \mathcal C_{l_1},$ for the resolution algorithm at Step 1}\label{stopstep1}
\medskip
We shall stop on the domain $D^c_{l_1}, c=c_{l_1}^{\al_1}\in  \mathcal C_{l_1},$ when the multiplicity 
$N\left[ \begin{matrix} 
\al_1\\
l_1
\end{matrix}\right ]
$
of the cluster 
$\left[ \begin{matrix} 
\al_1\\
l_1
\end{matrix}\right ]
$
agrees  with the multiplicity of every sub-cluster of it.

Observe that this is equivalent to the condition that the cluster 
$\left[ \begin{matrix} 
\al_1\\
l_1
\end{matrix}\right ]
$
contains only one, real root $r$ of multiplicity 
$N\left[ \begin{matrix} 
\al_1\\
l_1
\end{matrix}\right ].
$
The fact that this root must be real stems from the observation that with any root in 
$\left[ \begin{matrix} 
\al_1\\
l_1
\end{matrix}\right ]
$,
 its complex conjugate must also be in the same cluster, since $\Phi$ is real.
 
 The contribution by such a domain  $D^{c}_{l_1}$ will be covered in a complete way in  Subsection \ref{D1c} of the next section.

\begin{remark}[Stopping when $l_1=1$]\label{stopl1}
If $l_1=1,$ we have seen in Remark \ref{cinCii} (a) that any real root in 
$\left[ \begin{matrix} 
\al_1\\
1
\end{matrix}\right ]
$
has multiplicity $1,$ so that by \eqref{Phiprinc} we do have 
$N\left[ \begin{matrix} 
\al_1\\
1
\end{matrix}\right ]=1.
$
This shows that for $l_1=1$ we stop our resolution algorithm in Step 1.
\end{remark}

\medskip

\setcounter{equation}{0}

\section{ Analysis in Step 2 of the resolution algorithm  }\label{step2}

In this section, we shall proceed in a similar way as in the previous Section \ref{step1}, but our analysis will now be based on the coordinates 
$\tilde z$  and the function $\tilde \Phi$ in place of $z_1$ and the function $\Phi.$  Adapting the notation from the previous subsection, recall that 
\begin{equation*}
z_1=\tilde z+ w(s_2),
\end{equation*}
where here $w(s_2)=cs_2^{a_{l_1}}.$  The non-trivial extra term $w(s_2),$ however, will create additional challenges compared to the analysis in the previous section. 

Of course, we shall have to apply the same change of coordinates also to our original  phase functions $\breve\phi_1(x_1, s_2)$ and $\Phi_0(x_1,s'):=\breve\phi_1(x_1, s_2)+s_1x_1$ in \eqref{Phi0}, and therefore put
\begin{eqnarray*}
\tilde\Phi_{0,0}(\tilde z, s_2)&:=& \breve\phi_1(\tilde z+ w(s_2), s_2),\\
\tilde\Phi_{0}(\tilde z, s')&:=& \Phi_{0}(\tilde z+ w(s_2),s_2)=\tilde\Phi_{0,0}(\tilde z, s_2)+s_1\tilde z+s_1w(s_2).
\end{eqnarray*}
Note that then 
\begin{equation}\label{tPhi}
\tilde\Phi(\tilde z,s_2):=\pa_{\tilde z}^2\tilde\Phi_{0}(\tilde z, s') =\pa_{\tilde z}^2\tilde\Phi_{0,0}(\tilde z, s_2)
\end{equation}
(compare with \eqref{Phi}).

A crucial tool will again be Lemma \ref{key}. Its application will here require that $\tilde A_1\ge 1,$ so let us begin by discussing this condition.

\subsection{On the condition $\tilde A_1\ge 1$}\label{A1ge1}
\medskip
By Remark  \ref{cnotinC}, we see that  $\tilde A_1\ge 1$ in Case 1, i.e., when $c\notin \mathcal C_{l_1}.$ Assume next that we are in  Case 2 where $c=c_{l_1}^{\al_1}\in \mathcal C_{l_1}.$ Then, if $l_1\ge 2,$ we have that $\tilde A_1=A_1\ge 1.$

But, in Remark \ref{stopl1} we have seen that for $l_1=1$ we can stop our resolution algorithm already at Step 1, so that henceforth we may and shall assume that $l_1\ge 2,$   hence $\tilde A_1=A_1\ge 1.$

\medskip
For the sake of completeness, let us finally observe that when $l_1=1,$ then by   Remark \ref{cinCii} (a)   we must be in Subcase (i), and by \eqref{l=1} we have 
$$
\Phi_{\ka^{1}}(z_1,s_2)= C_1z_1^{m-2} \big(z_1^{n-m}+ C_2 s_2 \big)
=C_1z_1^{m-2}\prod_\al \Big(z_1-c^\al_{l_1} s_2^{a_{1}}\Big),
$$
with  nontrivial real constants  $C_1,C_2.$ This implies that
$$
\tilde\Phi_{\ka^{1} }(\tilde z,s_2)=C_1\tilde z\, (\tilde z+c_{l_1}^{\al_1}s_2^{a_1})^{m-2} \prod_{\al\ne \al_1} \Big(\tilde z -(c^\al_{l_1}-c_{l_1}^{\al_1})\ s_2^{a_{1}}\Big),
$$
where the product over the $\al$s such that $\al\ne \al_1$ will consist of $n-m-1\ge 1$ different factors, since all non-trivial roots  of 
$\Phi_{\ka^{1}}$  have multiplicity 1. 
By \eqref{BandA} this implies that 
$\tilde A_1=(n-3)a_1=(n-3)/(n-m),$  and $\tilde B_1=1,$ so that $\tilde \ga_1=[(1,(n-3)/(n-m)),(n-2,0)].$ 

Thus, $\tilde A_1<1$ if and only if  $m=2 .$

\smallskip

\begin{remarks}\label{onstop}
(a) Notice that  by Remark \ref{stopl1}  and our discussion in Subsection \ref{D1c} we see that   we shall have dealt in a complete way with the contribution by the homogeneous domain $D_1$ and therefore will be allowed to assume that $l_1\ge 2$ in Step 3 and higher.
\smallskip

(b) We shall see later (see Section \ref{endproof1})  that our resolution algorithm will stop whenever we have arrived at a narrow subdomain  containing only one  sub-cluster of roots consisting of just  one  root   (possibly with multiplicity) whose leading jet $w(s_2)$ is real,  i.e., when no further ``fine splitting'' of roots can occur. In such a  case 
 we shall be able to argue in a very similar way as we are doing here. 
\end{remarks}

Recall also that since the edges $\tilde\ga_1$ and $\ga_1$ are lying on the same line $L_1,$ we have $\tilde a_1=a_1,$ and 
\begin{equation}\label{keyestl=1}
\frac{\tilde A_1-1}{\tilde a_1}+\tilde B_1+2=m.
\end{equation}
For later use, note that this identity remains valid even  when $\tilde A_1<1.$ And,  if $m\ge 3,$  so that  $\tilde A_1\ge 1,$ then  by Lemma \ref{key} we also have
\begin{equation}\label{keyestlg1}
\frac{\tilde A_l-1}{\tilde a_l}+\tilde B_l+2\le m,\quad  l=1,\dots, \tilde L.
\end{equation}
Clearly, this holds true also for $l_1\ge 2.$

\medskip

\medskip
\subsection{Domain decomposition and dyadic localization in $\tilde z$ and $s_2$}\label{dyadictildez}
\medskip
After having fixed $l_1\in\{1,\dots,L\},$  according to our discussion in the previous section we can cover the domain $\tilde D_{l_1}^c$ by $\tilde \ka^l$- homogeneous  domains  $\tilde D_l$ of the form
$$
 \tilde D_l:=\{(\tilde z,s_2):   2^{-M} s_2^{\tilde a_l}\le |\tilde z|\le 2^{M} s_2^{\tilde a_l}\}, \quad l\le \tilde L,
$$
 and  transition domains of the form
\begin{eqnarray*}
\tilde E_l&:=&\{(\tilde z,s_2):   2^M s_2^{\tilde a_{l+1}}<|\tilde z|<2^{-M} s_2^{\tilde a_l}\}, \quad l\le \tilde L-1,\\
\tilde E_{\tilde L}&:=&\{(\tilde z,s_2):  |\tilde z|<2^{-M} s_2^{\tilde a_{\tilde L}}\} .
\end{eqnarray*}

More precisely, in Case 1 where $c\notin \mathcal C_{l_1}, $ according to Remark \ref{cnotinC} we just need the transition domain $\tilde E_{\tilde L},$ where $\tilde L=l_1.$  

In Case 2, where $c\in \mathcal C_{l_1},$ we need to distinguish two subcases: if we are in  Subcase (i), then   
according to  Remark \ref{cinCi}   (a) we shall need the transition domains $\tilde E_l$ with $l\ge l_1\ge 1$ and the homogeneous domains  $\tilde D_l$ with $l\ge l_1+1\ge 2.$ Moreover, according to Remark \ref{onstop} (a), we shall treat  the case $l_1=1$ 
separately (in Subsection \ref{D1c}), so that we shall here even assume that $l_1\ge 2.$

In Subcase (ii) of Case 2, according to Remark \ref{cinCii} we may assume that $l_1\ge 2$ and shall then need the transition domains $\tilde E_l$ with $l\ge l_1-1\ge 1$ and the homogeneous domains  $\tilde D_l$ with $l\ge l_1\ge 2.$

\smallskip

Moreover, our discussion in Subsection \ref{A1ge1} shows that under these assumptions we always have that  $\tilde A_1\ge 1.$ 

Recall also that in all cases that had appeared we had seen that the edges $\tilde \ga_1$ and $\ga_1$ were supported on the same line $\tilde L_1=L_1$ of ``slope'' $\tilde a_1=a_1,$ so that  the identity \eqref{keyestl=1} and  the inequalities \eqref{keyestlg1} will hold true. 


Having fixed $l_1$ and $l$  according to these assumptions, we shall put 
$$
a:= a_{l_1}, \quad \tilde a:= \tilde a_l.
$$
Note that if we are in Case 1, or Subcase (i) of Case 2, then $\tilde a\ge \tilde a_{l_1}=a,$ but in Subcase (ii) it may also happen that $\tilde a<a$ (for instance if $l=l_1-1,$ where $\tilde a=a_{l_1-1}<a_{l_1}=a$).

Note also that, in analogy to  \eqref{keyestlg1}, by Lemma \ref{key} we also have
\begin{equation}\label{tkeyestlg1}
\frac{\tilde A_l-1}{\tilde a_l}+\tilde B_l+2\le m,\quad  l=1,\dots, \tilde L,
\end{equation}
since $\tilde A_1\ge 1.$

\medskip
As in Section \ref{step1}, we shall  again  localize to  these domains $\tilde D_l$ and $\tilde E_l$ in a smooth way by means of the corresponding  functions $\tilde\rho_l$ respectively $\tilde\tau_l$  given by
$$
\tilde\rho_l(\tilde z,s_2):=\chi_0\Big(\frac{ \tilde z}{2^M s_2^{\tilde a_l}}\Big) -\chi_0\Big(\frac{ \tilde z}{2^{-M} s_2^{\tilde a_l}}\Big),
$$
respectively 
$$
\tilde\tau_l(\tilde z,s_2):=\chi_0\Big(\frac{ \tilde z}{2^{-M} s_2^{\tilde a_l}}\Big) \, -\chi_0\Big(\frac{ \tilde z}{2^{M} s_2^{\tilde a_{l+1}}}\Big).
$$
The corresponding functions such as $\mu^\la, J(\la,s)$ and $F(\la,s_2,s_3,y_1),$  will be designated  by means of an extra upper index $\tilde \rho_l$ respectively $\tilde \tau_l,$ such as 
$\mu^{\tilde\rho_l,\la},J^{\tilde \rho}(\la,s),$ or $F^{\tilde \tau_l}(\la,s_2,s_3,y_1).$

\smallskip

Similarly as before, let us here  localize to 
$$
|\tilde z|\sim 2^{-j}, \ s_2\sim 2^{-k},
$$
where we may assume that $j,k$ are sufficiently large integers $\gg 1.$ To this end, we define scaled coordinates $z, \si_2$ by writing 
$\tilde z=2^{-j} z, \ s_2=2^{-k} \si_2,$ so that
$$
|z|\sim 1, \ \sigma_2\sim 1.
$$
Accordingly, let us here put 
\begin{eqnarray*}
\tilde J_{j,k}(\la, s)&:=&\chi_1(2^k s_2)\int e^{-i\la s_3\tilde\Phi_0(\tilde z, s')} \chi_1(2^j\tilde z)\,\eta d\tilde z\\
&=&\chi_1(\si_2)\, 2^{-j} \int  e^{-i\la s_3\tilde\Phi_0^s(z, s_1,\si_2)}  \chi_1(z)\, \eta dz,
\end{eqnarray*}
where we have put
$$
\tilde\Phi_0^s(z, s_1,\si_2):=\tilde\Phi_0(2^{-j}z, s_1,2^{-k}\si_2),
$$
and 
\begin{eqnarray*}
\tilde F_{j,k}(\la,2^{-k}\si_2,s_3,y_1) &:=& 2^{-j}\iint e^{-i\la s_3(\tilde\Phi_0^s(z,s_1,\si_2)- s_1y_1)} \chi_0(s_1)\chi_1(z)  \,\eta dz ds_1.
\end{eqnarray*}
The corresponding contribution to $\mu^\la$ is here given by
\begin{eqnarray}\nonumber
\mu_{j,k}^\la(y+\Ga)&:=&\la^{\frac52}2^{-k}\times \\
&&\iint  \tilde F_{j,k}(\la,2^{-k}\si_2,s_3,y_1) e^{-i\la s_3(s_2^2B(s_2)-2^{-k}\si_2 y_2-y_3)} \chi_1(\si_2) d\si_2 \chi_1(s_3) ds_3. \label{tmujk}
\end{eqnarray}

Of course, we are following here the same conventions as in Subsection \ref{dyadic}. Applying the same reasoning as in that subsection, in analogy to \eqref{Ejk1}, \eqref{Ejk2} and \eqref{Djk1} we see here that 
for the contribution by the transition domain $\tilde E_l,$ i.e., for $ J^{\tilde \tau_l},$ we may assume that $J^{\tilde\tau_l}_{j,k}\equiv 0,$ unless 
\begin{equation}\label{tEjk1}
j\ge k\tilde a_l+M/2, \qquad \text{if} \quad l=1,\dots \tilde L.
\end{equation}
Similarly,  for the contribution by the homogeneous domain $\tilde D_l, 2\le l\le\tilde  L,$ i.e., for $ J^{\tilde \rho_l},$ we  may assume that $J^{\tilde \rho_l}_{j,k}\equiv 0,$ unless 
\begin{equation}\label{tDjk1}
|j-k\tilde a_l|\le M/2.
\end{equation}
For the range of $j$s and $k$s described by \eqref{tEjk1}, respectively \eqref{tDjk1}, we are then also allowed to assume for simplicity that  
 \begin{eqnarray*}
J^{\tilde \tau_l}_{j,k}(\la, s)&=&J^{\tilde \rho_l}_{j,k}(\la, s)=\tilde J_{j,k}(\la, s).
\end{eqnarray*}

The corresponding maximal operators  will be designated  by means of an extra lower index $j,k,$ such as 
$\M_{j,k}^\la,$ or $\M^{\rho_l}_{j,k}.$ 
\medskip

\medskip
\subsection{Resolution of singularity on the transition domain $\tilde E_l$}\label{tElresolution}

Fix $\tilde E_l$ with $l\ge l_1,$ respectively $l\ge l_1-1\ge 1$  if we are in the Subcase (ii) of Case 2.
Arguing as in Subsection \ref{Elresolution}, we may write 
\begin{equation}\label{tElreso}
\tilde\Phi(\tilde z,s_2)=\tilde V(\tilde z,s_2)\, s_2^{\tilde A_l}\tilde z^{\tilde B_l},\qquad (\tilde z,s_2)\in \tilde E_l,
\end{equation}
where $\tilde V$ is analytic in $\tilde z$ and fractional analytic in $s_2$ away from the coordinate axes and $|\tilde V|\sim 1.$
Moreover, by means of Lemma \ref{int} we see again that there is a function $\tilde U(\tilde z,s_2)$  having similar properties as $\tilde V(\tilde z,s_2)$ such that
\begin{equation*}
\pa_{\tilde z}^2\big(\tilde U(\tilde z,s_2)s_2^{\tilde A_l} \tilde z^{\tilde B_l+2}\big)=\tilde\Phi(\tilde z,s_2)=\pa_{\tilde z}^2\tilde\Phi_{0,0}(\tilde z,s_2).
\end{equation*}
 In particular, $|\tilde U|\sim 1.$ 
Again the function $\tilde U(\tilde z,s_2)s_2^{\tilde A_l} \tilde z^{\tilde B_l+2}$ is analytic in $\tilde z$ and vanishes of second order at $\tilde z=0.$ Consequently, we see that (compare also \eqref{tPhi})
\begin{equation*}
\tilde\Phi_{0,0}(\tilde z,s_2)=\tilde U(\tilde z,s_2)s_2^{\tilde A_l} \tilde z^{\tilde B_l+2}+\tilde z \tilde g(s_2)+\tilde h(s_2),
\end{equation*}
with fractional analytic functions $\tilde g(s_2),\tilde h(s_2).$ Since $\tilde z=0$ iff $z_1=w(s_2)$  (recall that 
$\tilde z=z_1-w(s_2)$), these  are explicitly given by 
\begin{eqnarray}\label{tgh}
\begin{split}
\tilde h(s_2)&= \breve\phi_1(w(s_2), s_2), \\
 \tilde g(s_2)&=\pa_{z_1}\breve\phi_1(w(s_2), s_2).
 \end{split}
\end{eqnarray}
Consequently,
\begin{equation*}
\tilde\Phi_{0}(\tilde z, s')=\tilde U(\tilde z,s_2)s_2^{\tilde A_l} \tilde z^{\tilde B_l+2}+ s_1(\tilde z+w(s_2))+\tilde z \tilde g(s_2)+\tilde h(s_2).
\end{equation*}
Let us finally again apply our changes of coordinates $\tilde z=2^{-j} z, \ s_2=2^{-k} \si_2.$  Then, if we put $A:=\tilde A_l, B:= \tilde B_l$ and 
$$
d_{j,k}:=2^{-kA-j(B+2)},
$$
assuming that \eqref{tEjk1} holds true, we obtain
$$
\tilde\Phi_0^s(z,s_1,\si_2)- s_1y_1=d_{j,k} U^s(z,\si_2) \si_2^{A} z^{B+2}+ s_1(2^{-j} z+w(s_2)-y_1)+2^{-j} z \tilde g(s_2)
+\tilde h(s_2),
$$
where $U$ has similar properties as in \eqref{Phi0s}.

\begin{remark}\label{onw}
Note that we have picked up in particular the new term $s_1w(s_2),$ where
$$
|w(s_2)|\sim s_2^{a}\sim 2^{-ka}
$$
(recall that  $a=a_{l_1}$). The same will in fact apply in all subsequent steps of our resolution algorithm, where in Step $p$  we will have to choose for $w(s_2)$ the leading jet 
$$
w(s_2):=c_{l_1}^{\al_1}s_2^{a_{l_1}}+c_{l_1l_2}^{\al_1\al_2}s_2^{a_{l_1l_2}^{\al_1}}+\cdots+
c_{l_1\cdots l_p}^{\al_1\cdots \al_p}s_2^{a_{l_1\cdots
l_p}^{\al_1\cdots \al_{p-1}}}.
$$
\end{remark}
Let us then write
$$
w(s_2)=2^{-ka} \si_2^ a\tilde w(\si_2),
$$
so that $|\tilde w|\sim 1,$ whereas all derivatives of $\tilde w$ can be assumed to be very small. We shall use the following  abbreviations:
\begin{eqnarray*}
Y_1:&=& 2^{ka} y_1,\\
\gamma(\si_2,Y_1)&:=&\si_2^ a\tilde w(\si_2)-Y_1,\\
K&:=&2^{j-ka}.
\end{eqnarray*}
These allow us  to re-write
\begin{eqnarray}\nonumber
&&\tilde\Phi_0^s(z,s_1,\si_2)- s_1y_1\\
&=&d_{j,k} U^s(z,\si_2) \si_2^{A} z^{B+2}+ 2^{-j} s_1\big(K\ga(\si_2,Y_1)+z\big)+2^{-j} z \tilde g(s_2)+\tilde h(s_2),\label{tPhi0s}
\end{eqnarray}
where $z\sim 1\sim \si_2.$ We can then re-write
\begin{eqnarray}\nonumber
&&\tilde F_{j,k}(\la,2^{-k}\si_2,s_3,y_1) = 2^{-j}\iint e^{-i\la s_3(\tilde\Phi_0^s(z,s_1,\si_2)- s_1y_1)} \chi_0(s_1)\chi_1(z)  \,\eta dz ds_1\\ \label{tFjk}
&&=2^{-j} e^{-i\la s_3\tilde h(s_2)} \,\int \vp\big(s_3\la2^{-j} [K\ga(\si_2,Y_1)+z]\big)\\ \nonumber
&&\hskip5cm \times e^{-i\la s_3\big(d_{j,k} U^s(z,\si_2) \si_2^{A} z^{B+2} +2^{-j}z \tilde g(2^{-k}\si_2)\big)}\chi_1(z)\eta dz,
\end{eqnarray}
where $\vp:=\widehat {\chi_0}\in \S(\RR)$ is rapidly decaying.

\subsection{$L^2$ - estimation of $\M_{j,k}^\la$ for the contribution by $\tilde E_l$}\label{L2ontEl}
\medskip
In the same way as we had  argued in Subsection \ref {L2ontEl}, we obtain the same kind of $L^2$-estimate:
\begin{equation}\label{tL2Eljk}
\|\M^\la_{j,k}\|_{2\to 2}\lesssim 2^{-j}(1+\la d_{j,k})^{-\frac 12}.
\end{equation}

\subsection{$L^{1+\ve}$ - estimates of $\M_{j,k}^\la$ for the contribution by $\tilde E_l$ when $\la\ge 2^j$}
\medskip


\subsubsection{Reduction to $K|\gamma(\si_2,Y_1)|\lesssim 1$}\label{Kgagg1}

a) Let us first  consider the $\si_2$ -region where 
\begin{equation}\label{Kgag1}
K|\gamma(\si_2,Y_1)|\gg 1.
\end{equation}
Then $K\ga(\si_2,Y_1)+z\sim K\ga(\si_2,Y_1).$

\medskip
\noi {\bf Case 1: $\la d_{j,k}\ge 1$.} Note that this implies that
$$
\la\ge 2^{k A+j(B+2)}\ge 2^{2j},\quad \text{hence} \quad 2^j\le \la^{\frac 12}.
$$

Given any $0<\de\ll 1,$ we may then assume that $K|\gamma(\si_2,Y_1)|\ge \la^{\de-1} 2^j.$
For, if 
$$
1\ll K|\gamma(\si_2,Y_1)|< \la^{\de-1} 2^j,
$$
then we would have $1\ll \la^{\de-1/2},$ contradicting our assumption that $\la\gg1,$ if we choose $\de<1/2.$ 

\medskip
But, if $K|\gamma(\si_2,Y_1)|\ge \max\{1, \la^{\de-1} 2^j\},$ then 
$
|\la 2^{-j} [K\ga(\si_2,Y_1)+z]|\ge \la^{\de},
$
so that by \eqref{tFjk} and \eqref{tmujk} we see that the contribution to $\mu_{j,k}$ by this $\si_2$-region is a small error term of order $O(\la^{-N})$  for every $N\in \NN.$ 

\medskip
\noi {\bf Case 2: $\la d_{j,k}< 1$.}  Here, we may ignore the term   $d_{j,k} U^s(z,\si_2) \si_2^{A} z^{B+2}$ of the phase in \eqref{tFjk}, since we can include the factor $e^{-i\la s_3(d_{j,k} U^s(z,\si_2) \si_2^{A} z^{B+2})} $ in the amplitude. Let us thus assume that 
\begin{eqnarray*}
\tilde F_{j,k}(\la,2^{-k}\si_2,s_3,y_1) 
&=&2^{-j} e^{-i\la s_3\tilde h(s_2)} \,\int \vp\big(s_3\la2^{-j} [K\ga(\si_2,Y_1)+z]\big)\\
&&\hskip 3cm \times e^{-i\la s_3 \big(2^{-j}z \tilde g^s(\si_2)+\tilde h^s(\si_2)\big)}\chi_1(z)\eta dz,
\end{eqnarray*}
where we have put
$$
\tilde g^s(\si_2):=\tilde g(2^{-k}\si_2),\quad  \tilde h^s(\si_2):=\tilde h(2^{-k}\si_2).
$$
This case will require a more careful analysis. Observe first that by  \eqref{tgh} and \eqref{legendamins}, we have 
\begin{eqnarray*}
\tilde h(s_2)&=& w(s_2)^n\beta(w(s_2))+s_2w(s_2)^m\omega(w(s_2))+s_2^2w(s_2) q(w(s_2), s_2),\\
\tilde g(s_2)&=& w(s_2)^{n-1}\tilde\beta(w(s_2))+s_2w(s_2)^{m-1}\tilde\omega(w(s_2))+s_2^2\tilde q(s_2)
=:\tilde g_0(s_2)+s_2^2\tilde q(s_2).
\end{eqnarray*}
Note that the term $s_2^2w(s_2) q(w(s_2), s_2)$ can be absorbed  by the term $s_2^2B(s_2)$ in \eqref{tmujk}, by slightly modifying the definition of $B(s_2), $ still keeping the condition $|B(s_2)|\sim 1.$  Let us therefore henceforth simply assume that 
$$
\tilde h(s_2)= w(s_2)^n\beta(w(s_2))+s_2w(s_2)^m\omega(w(s_2)).
$$

\begin{lemma}\label{thest}
For every $\al\in\NN,$ the following estimate holds true:
$$
|\pa_{\si_2}^\al \tilde h^s(\si_2)|\lesssim 2^{-k(1+ma)}, 
\qquad |\pa_{\si_2}^\al \tilde g_0^s(\si_2)|\lesssim 2^{-k(1+(m-1)a)},
\quad \text {if} \quad \si_2\sim 1.
$$
\end{lemma}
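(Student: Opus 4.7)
\medskip
\noindent\textbf{Proof plan.} My approach is to substitute the explicit expressions for $\tilde h(s_2)$ and $\tilde g_0(s_2)$ recalled immediately before the lemma, namely
\begin{align*}
\tilde h(s_2) &= w(s_2)^n \beta(w(s_2)) + s_2\, w(s_2)^m \omega(w(s_2)),\\
\tilde g_0(s_2) &= w(s_2)^{n-1}\tilde\beta(w(s_2)) + s_2\, w(s_2)^{m-1}\tilde\omega(w(s_2)),
\end{align*}
and then simply rescale via $s_2 = 2^{-k}\si_2$. In Step~2 we have $w(s_2) = c\, s_2^{a}$ with $a = a_{l_1}$ and $c = c_{l_1}^{\al_1} \in \RR\setminus\{0\}$, so that after rescaling $w(2^{-k}\si_2) = c\, 2^{-ka}\si_2^a$. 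Each of the four summands therefore takes the form
$$
2^{-k\gamma}\, \si_2^\gamma\, G\!\left(c\, 2^{-ka}\si_2^a\right),
$$
with $G \in \{\beta,\omega,\tilde\beta,\tilde\omega\}$ smooth near $0$, and with $\gamma$ equal to one of $an$, $1+am$, $a(n-1)$, $1+a(m-1)$. Since $\si_2 \sim 1$ and the argument of $G$ tends to $0$ as $k \to \infty$, every $\si_2$-derivative of any order $\al$ of such a summand is bounded by $C_\al\, 2^{-k\gamma}$ with $C_\al$ independent of $k$.

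The remaining step is to identify the dominant exponent in each sum. Subtracting the mixed from the pure exponent gives
$$
an - (1+am) = a(n-m) - 1,\qquad a(n-1) - (1+a(m-1)) = a(n-m) - 1,
$$
so the comparison is governed in both cases by the sign of $a(n-m) - 1$. But $a = a_{l_1} \ge a_1 = 1/(n-m)$, hence $a(n-m) \ge 1$, and therefore the pure-power terms $w^n$ and $w^{n-1}$ produce strictly smaller (or equal) contributions than the mixed terms $s_2 w^m$ and $s_2 w^{m-1}$. Consequently
$$
|\pa_{\si_2}^\al \tilde h^s(\si_2)| \lesssim 2^{-k(1+ma)},\qquad |\pa_{\si_2}^\al \tilde g_0^s(\si_2)| \lesssim 2^{-k(1+(m-1)a)},
$$
uniformly for $\si_2 \sim 1$, which is the claim.

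There is no serious obstacle in this argument; it is essentially a bookkeeping exercise once one observes the key inequality $a \ge 1/(n-m)$ coming from the geometry of $\N(\Phi)$. The only point that requires brief attention is the generalization to higher steps of the resolution algorithm, where $w(s_2)$ picks up additional monomials $c_{l_1\cdots l_p}^{\al_1\cdots \al_p} s_2^{a_{l_1\cdots l_p}^{\al_1\cdots \al_{p-1}}}$ with strictly larger exponents than $a = a_{l_1}$; but these only contribute still smaller terms under the same scaling, so the leading behaviour and the bounds are unchanged.
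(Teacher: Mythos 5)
Your proof is correct and follows essentially the same route as the paper: both hinge on the observation that $a = a_{l_1} \ge a_1 = 1/(n-m)$ gives $na \ge 1+ma$ and $(n-1)a \ge 1+(m-1)a$, after which the bound $|w(s_2)| \sim s_2^a \sim 2^{-ka}$ immediately yields the claimed estimates. You have merely spelled out the scaling bookkeeping a bit more explicitly than the paper's terse ``the claimed estimates now follow easily,'' and you also correctly note that the higher-step leading jets only add smaller-order terms, which leaves the leading behaviour unchanged.
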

\begin{proof} 
Note that 
$$
na-(1+ma)=(n-m)a-1\ge (n-m)a_1-1=0,
$$
so that $na\ge 1+ma$ and also $(n-1)a\ge 1+(m-1)a.$
Since $|w(s_2)|\sim s_2^a\sim 2^{-ka},$ the claimed estimates now follow easily.
\end{proof}

\smallskip
\noi {\bf Subcase 2.a: $\la 2^{-ka}\lesssim 1.$} Here,  by Lemma \ref{thest}, $\la \tilde h^s(\si_2)=O(2^{-k})$ and 
$\la \tilde g_0^s(\si_2)=O(2^{-k}),$ since $m\ge 2.$ We may thus ignore the terms $\tilde h^s(\si_2)$ and  $2^{-j}z \tilde g_0(\si_2)$ in the amplitude of $\tilde F_{j,k}.$

Moreover, since  we shall here not use any oscillation from the $z$-integration, we may also  assume that the term $2^{-j} zs_2^2 \tilde q(s_2)$ stemming from our decomposition of $\tilde g(s_2)$ is absorbed by the term $s_2^2B(s_2)$ in \eqref{tmujk}, so that 
$$
\tilde F_{j,k}(\la,2^{-k}\si_2,s_3,y_1) = 2^{-j}\iint e^{-i\la s_32^{-j} s_1[K\ga(\si_2,Y_1)+z]} \chi_0(s_1)\chi_1(z)  \,\eta dz ds_1.
$$

Note  also that
$$
\la2^{-j}[K\ga(\si_2,Y_1)+z]=\la2^{-ka}\si_2^a \tilde w(\si_2) -\la 2^{-ka}Y_1 +\la 2^{-j}z.
$$
The first  term $\la 2^{-ka}\si_2^a \tilde w(\si_2)$ can later on be included into the amplitude in  \eqref{tmujk}, since $\la 2^{-ka}\lesssim 1,$ so that we can ignore this term.
\smallskip

{\bf Assume first that $|Y_1|\gg 1.$} Then 
$1\ll |K\ga(\si_2,Y_1)|\sim K|Y_1|,$ hence 
\begin{equation}\label{Kgasize}
\la 2^{-j} |K\ga(\si_2,Y_1)+z|\sim \la 2^{-ka} |Y_1|.
\end{equation}
We can then proceed in a similar way as we did in  the first part of  Subsection \ref{L1onEl}. 

If  $|Y_1|\gg1 $ and $|Y_1|\ge \la^{\de-1} 2^{ka},$  integrations by parts in $s_1$ show that 
$F_{j,k}=O(\la^{-N})$ for every $N\in\NN,$ and thus also $\mu_{j,k}^\la(y+\Ga)=O(\la^{-N})$ for every $N\in\NN,$ which shows that this leads just to a small error term.

So, assume next that
$$
1\ll |Y_1|\lesssim \la^{\de-1} 2^{ka}.
$$
We then decompose this region  into the regions 
$$
\Delta_{\iota_1}:= \{y:|Y_1|\sim 2^{\iota_1}\}, 
$$
where clearly we may assume that $1\ll\iota_1\lesssim k.$ We further decompose $\Delta_{\iota_1}$ into the subregions
\begin{eqnarray*}
\Delta_{\iota_1,\iota_2}&:=& \{y:|Y_1|\sim 2^{\iota_1}, |Y_2|\sim 2^{\iota_2}\}, \quad 1\ll \iota_2\le k,\\
\Delta_{\iota_1,0}&:=& \{y:|Y_1|\sim 2^{\iota_1}, |Y_2|\lesssim 1\}.
\end{eqnarray*}
Then 
$$
|\Delta_{\iota_1,\iota_2}|\lesssim 2^{\iota_1+\iota_2-ka-k}.
$$
For $y\in \Delta_{\iota_1},$ then we may integrate by parts in $s_1.$ 
Moreover, if $\iota_2\gg1,$  and if $\la 2^{-2k+\iota_2}\ge 1,$ then  our preceding discussion and \eqref{onlys2} show that we can also integrate by parts in $\si_2,$ and altogether this leads to the following estimate for $\mu_{k,j,\iota}^\la:$
$$
|\mu_{j,k,\iota}^\la(y+\Ga)|\lesssim \la^{\frac 52} 2^{-j-k} (\la 2^{-ka} 2^{\iota_1})^{-1}(\la 2^{-2k} 2^{\iota_2})^{-1}=\la^{\frac 12} 2^{-j+k+ka-\iota_1-\iota_2}.
$$
Thus, Lemma \ref{maxproj} shows that, for every $\ve >0,$ 
$$
\|\M_{j,k,\iota}^\la\|_{1+\ve\to 1+\ve}\lesssim \la^{\frac 12}2^{-j}.
$$
And, if $\iota_2=0,$ we can again apply van der Corput's estimate of order 2  in $\si_2,$ which leads to 
$$
|\mu_{j,k,\iota}^\la(y+\Ga)|\lesssim \la^{\frac 52} 2^{-j-k} (\la 2^{-ka} 2^{\iota_1})^{-1}(\la 2^{-2k})^{-\frac 12}
=\la 2^{-j+ka-\iota_1},
$$
hence
$$
\|\M_{j,k,(\iota_1,0)}^\la\|_{1+\ve\to 1+\ve}\lesssim \la^{1+\ve}2^{-j-k}.
$$
These estimate have to be summed over the  $\iota_2$s, which will produce an extra factor of order $O(k).$ However, later on we have to sum over the $j$s and $k$s such that $k\lesssim j/\tilde a,$ so effectively by summing we shall only pick up a factor of order $O(2^{\ve j}),$ where we may choose $\ve>0$ as small as we need;  this will cause no harm.

\smallskip

{\bf Assume next that $|Y_1|\lesssim 1.$} Here we decompose into  the regions
\begin{eqnarray*}
\Delta_{\iota_2}&:=& \{y:|Y_1|\lesssim 1, |Y_2|\sim 2^{\iota_2}\}, \quad 1\ll \iota_2\le k,\\
\Delta_{0}&:=& \{y:|Y_1|, |Y_2|\lesssim 1\}.
\end{eqnarray*}
Then 
$$
|\Delta_{\iota_2}|\lesssim 2^{\iota_2-ka-k}.
$$
We can now only gain from integration by parts (respectively an application of van der Corput's estimate) in $\si_2,$ and thus obtain 
$$
|\mu_{j,k,\iota_2}^\la(y+\Ga)|\lesssim \la^{\frac 52} 2^{-j-k} (\la 2^{-2k} 2^{\iota_2})^{-1}=\la^{\frac 32} 2^{-j+k-\iota_2}\lesssim 
\la^{\frac 12} 2^{-j+k+ka-\iota_2},
$$
where we have used our assumption $\la 2^{-ka}\lesssim 1.$
Similarly, 
$$
|\mu_{j,k,0}^\la(y+\Ga)|\lesssim \la^{\frac 52} 2^{-j-k} (\la 2^{-2k})^{-\frac 12}
=\la^2 2^{-j}\lesssim \la 2^{-j+ka}.
$$
This shows that we arrive at the same estimates as  before, if we choose $\iota_1=0$ therein.

\medskip
\noi {\bf Subcase 2.b: $\la 2^{-ka}\gg1.$} {\bf Assume again first that $|Y_1|\gg 1,$} so that \eqref{Kgasize} holds true.

If  $|Y_1|\ge\la^\de$ for some $\de $ such that $0<\de\ll 1,$ then $\la 2^{-j} |K\ga(\si_2,Y_1)+z|\gtrsim \la^{\de},$ hence
\begin{equation}\label{rdecay}
\mu_{j,k}^\la(y+\Gamma)=O(\la^{-N})\qquad\text{for every}\quad N\in \NN,
\end{equation}
since $\vp$ is rapidly decaying, so this case contributes again only a small error term.
\medskip

 So, assume next w.l.o.g.  that $1\ll |Y_1|\le \la^\de.$ Then 
\begin{equation}\label{y1size}
|y_1|\lesssim \la^\de2^{-ka},
\end{equation}
and, by \eqref{Kgasize},
\begin{equation}\label{phisize}
|\vp\big(s_3\la2^{-j} [K\ga(\si_2,Y_1)+z]\big)|\lesssim (\la2^{-ka} |Y_1|)^{-N}\le  (\la2^{-ka})^{-N}
\end{equation}
for every $N\in\NN.$ 
\smallskip

Again, if $\la 2^{-ka}\ge \la^\de,$ this implies \eqref{rdecay}, so let us assume that $\la 2^{-ka}\le \la^\de.$
Then, by Lemma \ref{thest}, since $m\ge 2,$ 
$$
|\la \tilde h^s(\si_2)|\lesssim \la 2^{-k(1+ma)}=\la^{1-m} (\la 2^{-ka})^m 2^{-k}\ll \la^{1-m} \la^{\de m}\le\la^{\de m-1}\ll1,
$$
if we choose $\de>0$ sufficiently small. Similarly,
$$
|\la 2^{-j} z\tilde g_0^s(\si_2)|\lesssim 2^{-j}\la 2^{-k(1+(m-1)a)}\le 2^{-j}(\la 2^{-ka})2^{-k}.
$$
Let us next write $\tilde a=da,$ where $d>0.$  Since we are on $\tilde E_l,$ by \eqref{tEjk1} we then have that  $j\ge k\tilde a=kda.$  Thus, for every $\ve >0,$ 
\begin{eqnarray*}
|\la 2^{-j} z\tilde g_0^s(\si_2)|&\lesssim&  2^{-kda}(\la 2^{-ka})2^{-k}= \la^{-\ve} (\la 2^{-ka})^{1+\ve} 2^{-k(1+da-a\ve)}\\
&\le& \la^{-\ve +\de(1+\ve)} 2^{-k(1+da-a\ve)}.
\end{eqnarray*}
Choosing first $\ve>0$ sufficiently small, and then $\de>0$ sufficiently small, we see that also $|\la 2^{-j} z\tilde g_0^s(\si_2)|\ll 1.$ 

We may thus again ignore the terms $\tilde h^s(\si_2)$ and  $2^{-j}z \tilde g_0(\si_2)$ in the amplitude of $\tilde F_{j,k},$ so that we may assume that 

\begin{eqnarray}\nonumber
\mu_{j,k}^\la(y+\Ga)&=&\la^{\frac52}2^{-j-k}\iint \vp\big(s_3\la2^{-j} [K\ga(\si_2,Y_1)+z]\big)  \,e^{i\la s_3 y_3} (1-\chi_0(K\ga(\si_2,Y_1)))\\
&&\hskip 0.2cm \times  e^{-i\la s_32^{-2k}\Big(\si_2^2\big(B(2^{-k}\si_2)+2^{-j}z \tilde q^s(\si_2)\big)-\si_2 Y_2\Big)}\chi_1(z)  \chi_1(\si_2) \chi_1(s_3) \eta dz d\si_2  ds_3.\label{tmujk2}
\end{eqnarray}
Here, we assume that $\chi_0\equiv 1$ on a sufficiently large neighborhood of the origin, and we have put 
$
Y_2:= 2^k y_2.
$
\medskip

\noi {\bf Reduction to $|Y_2|\lesssim 1.$} Note that here
\begin{eqnarray*}
\Big|\pa_{\si_2} \big(\vp\big(s_3\la2^{-j} [K\ga(\si_2,Y_1)+z]\big)\big)\Big|&\lesssim& \la 2^{-ka}\Big| \vp'\big(s_3\la2^{-j} [K\ga(\si_2,Y_1)+z]\big)\Big|\\
&\lesssim& \Big| \tilde \vp\big(s_3\la2^{-j} [K\ga(\si_2,Y_1)+z]\big)\Big|,
\end{eqnarray*}
where $\tilde \vp\in \S(\RR)$, and where we have used \eqref{Kgasize} and $1\le |Y_1|$  for the last inequality. Note that also the $\si_2$-derivative of  $\chi_0(K\ga(\si_2,Y_1))$ can be controlled in a similar way, since  $ K=2^j2^{-ka}\le \la2^{-ka}.$

Thus, the $\si_2$-derivative  of the amplitude in \eqref{tmujk2} behaves in the same way as the factor itself.
In a similar way as in Subsection \ref{L1onEl}, let us consider now the subregions 
\begin{eqnarray*}
\Delta_{\iota}&:=& \{y: 1\ll |Y_1|\le \la^\de, |Y_2|\sim 2^{\iota}\}, \quad 1\ll \iota_2\le k.
\end{eqnarray*}
Since for $y\in \Delta_{\iota}$
$$
\Big|\pa_{\si_2}\Big(\si_2^2\big(B(2^{-k}\si_2)+2^{-j}z \tilde q^s(\si_2)\big)-\si_2 Y_2\Big)\Big|\sim 2^\iota,
$$
an integration by parts in $\si_2,$  using also \eqref{phisize} with $N=1,$  shows that
$$
|\mu_{j,k}^\la(y+\Ga)|=\la^{\frac52}2^{-j-k} (\la2^{-ka})^{-1}\, (\la2^{-2k}2^\iota)^{-1}=\la^{\frac 12}2^{-j}2^{k(a+1)} 2^{-\iota}.
$$
And, by \eqref{y1size}, we find that
$
|\Delta_\iota|\lesssim \la^\de2^{-ka} 2^{-k} 2^{\iota},
$
so that we conclude that 
$$
\|\M_{j,k,\iota}^\la\|_{1+\ve\to 1+\ve}\lesssim \la^{\frac 12+\ve}2^{-j},
$$
provided we choose $\de>0$ sufficiently small. 
Here, $\M_{j,k,\iota}^\la$ denotes the contribution  by $\Delta_\iota$  to $\M_{j,k}^\la.$ Thus, we obtain the following analogue of 
\eqref{Mjkiota}:
$$
\sum\limits_{1\ll \iota\le k}\|\M_{j,k,\iota}^\la\|_{1+\ve\to 1+\ve}\lesssim \la^{\frac 12+\ve}2^{-j}.
$$

We have thus reduced ourselves to the region 
$$
\Delta_{0}:= \{y: 1\ll |Y_1|\le \la^\de, |Y_2|\lesssim 1\}.
$$
Here,  we apply van der Corput's estimate of order $2$ (Lemma \ref{Corput}) in $\si_2$ and obtain
$$
|\mu_{j,k}^\la(y+\Ga)|=\la^{\frac52}2^{-j-k} (\la2^{-ka})^{-1}\, (\la2^{-2k})^{-\frac 12}=\la2^{-j}2^{ka},
$$
whereas $|\Delta_0|\lesssim \la^\de2^{-ka} 2^{-k},$ so that we shall here obtain  the estimate
$$
\|\M_{j,k,0}^\la\|_{1+\ve\to 1+\ve}\lesssim \la^{1+\ve}2^{-j-k}
$$
as in \eqref{mujkL1la} of Section \ref{step1}, and altogether we arrive also here at  the estimate \eqref{Mjk>1} for the contribution by the region where $|Y_1|\gg 1.$ 
\medskip

We have thus reduced ourselves to estimating {\bf the contribution by the region where  $|Y_1|\lesssim1.$} 
Recall also that we are still assuming \eqref{Kgag1} and $\la2^{-j}\ge 1.$ 
\medskip

We next observe that we can further reduce to the case where $1\ll |K\ga(\si_2,Y_1)|\lesssim 2^j\la^{\de-1},$ again with $0<\de\ll1,$ for if $|K\ga(\si_2,Y_1)|\gg 2^j\la^{\de-1},$ we see from \eqref{tmujk2} that the contribution to $\mu_{j,k}$ by this $\si_2$-region is a small error term of order $O(\la^{-N})$  for every $N\in \NN.$ 

So, let us assume that 
$$
\si_2\in \Sigma:=\{\si_2\sim 1: |\ga(\si_2,Y_1)|\lesssim \la^{\de-1}2^{ka}\}.
$$
Then $|\Sigma|\lesssim \la^{\de-1}2^{ka},$ and thus
$$
\iint \limits_\Sigma\big|\vp\big(s_3\la2^{-j} [K\ga(\si_2,Y_1)+z]\big)\big|\, \chi_1(\si_2) |\eta| dz d\si_2 
\lesssim |\Sigma|(\la2^{-j})^{-1}=\la^{\de-2}2^{j+ka},
$$
hence, by \eqref{tmujk2},
$$
|\mu_{j,k}^\la(y+\Ga)|=\la^{\frac52}2^{-j-k} \la^{\de-2}2^{j+ka}=\la^{\frac12+\de}2^{k(a-1)}.
$$
Moreover, since here $y\in \Delta:=\{y: 1\ll |Y_1|\lesssim 1\},$ where $|\Delta|\lesssim 2^{-ka},$ by choosing $\de>0$ sufficiently small  we  may conclude that
\begin{equation*}
\|\M_{j,k,|Y_1|\lesssim 1}^\la\|_{1+\ve\to 1+\ve}\lesssim \la^{\frac 12+\ve}2^{-k}.
\end{equation*}
{\it  Note that this type of estimate had not yet appeared in Section \ref{step1}.}

\medskip

b) We have thus reduced ourselves to estimating the contribution by the $\si_2$ -region where 
\begin{equation}\label{Kgal1}
K|\gamma(\si_2,Y_1)|\lesssim 1.
\end{equation}
A look at the phase $\tilde\Phi_0^s(z,s_1,\si_2)- s_1y_1$ in \eqref{tPhi0s} then shows that we can now again apply Lemma \ref{pinvers} in the variables $(s_1,z)$ in place of $(\eta,x)$ to the oscillatory integral $\tilde F_{j,k}(\la,2^{-k}\si_2,s_3,y_1)$ in \eqref{tFjk}, with $N:=\la 2^{-j}\ge 1.$ 
Moreover, observe that $s_1$ and $z$   result  from our original coordinates $s_1$ and $z_1=x_1$ by means of  a smooth change of coordinates.  Passing back to our original coordinate, according to Remarks \ref{critvalue}  we therefore must evaluate the original phase $\breve\phi_1(x_1, s_2)+s_1x_1-s_1y_1$ in \eqref{defiF} at $x_1=y_1,$ and thus get
\begin{equation*}
 \tilde F_{j,k}(\la,2^{-k}\si_2,s_3,y_1)   = (\la2^{-j})^{-1} 2^{-j}e^{-i\la s_3(\breve\phi_1(y_1, s_2)}    \eta(y_1,2^{-k}\si_2).
  \end{equation*}
  This implies that
 \begin{eqnarray*}
\mu_{j,k,K|\ga|\lesssim1}^\la(y+\Ga)&=&\la^{\frac32}2^{-k}
\int  e^{-i\la s_3\big(\breve\phi_1(y_1, 2^{-k}\si_2)+2^{-2k}\si_2^2B(2^{-k}\si_2)-2^{-k}\si_2 y_2-y_3\big)} \\
&&\hskip2cm \times \chi_1(\si_2) \chi_1(s_3) \, \chi_0(K\ga(\si_2,Y_1))\, \eta d\si_2ds_3
\end{eqnarray*}
for the contribution by the region where $K|\ga(\si_2,Y_1)|\lesssim 1.$
By \eqref{legendamins}, we have 
$$
\breve\phi_1(y_1, 2^{-k}\si_2):=y_1^n\beta(y_1)+2^{-k}\si_2y_1^m\omega(y_1)+2^{-2k}\si_2^2 y_1 q(y_1, 2^{-k}\si_2).
$$
The  last term  can be absorbed by the term $2^{-2k}\si_2^2B(2^{-k}\si_2),$ by slightly modifying $B(s_2)$ to $B(s_2,y_1),$ which allows us  to assume  for simplicity that $q\equiv 0.$  Then we may re-write 
 \begin{eqnarray*}
\mu_{j,k,K|\ga|\lesssim1}^\la(y+\Ga)&=&\la^{\frac32}2^{-k}
\int  e^{-is_3\la \big(2^{-k}\Phi_1(\si_2, y)+y_1^n\beta(y_1)-y_3\big)} \\
&&\hskip2cm \times \chi_1(\si_2) \chi_1(s_3) \, \chi_0(K\ga(\si_2,Y_1))\, \eta d\si_2ds_3,
\end{eqnarray*}
where the phase $\Phi_1(\si_2, y)$ is given by 
$$
\Phi_1(\si_2, y):=-\si_2[y_2-y_1^m\omega(y_1)]+2^{-k}\si_2^2B(2^{-k}\si_2,y_1).
$$
We shall now distinguish two subcases:

\medskip

\noi{\bf The subcase where $K\lesssim 1.$} We claim that then our assumption \eqref{Kgal1} implies that  $|y_1|\lesssim 2^{-j}.$

To this end, note first that the condition $K\lesssim 1$ is equivalent to $2^{-ka}\lesssim 2^{-j}.$ Thus, if $|Y_1|\lesssim 1,$ then $|y_1|\lesssim 2^{-ka}\lesssim 2^{-j}.$ And, if $|Y_1|\gg 1,$  then 
$1\gtrsim K|\ga(\si_2,Y_1))|\sim K|Y_1|=2^j|y_1|,$ so that again  $|y_1|\lesssim 2^{-j}.$

\smallskip Observe next that here all $\si_2$-derivatives of $\chi_0(K\ga(\si_2,Y_1))$ are uniformly bounded. This shows that we can now just follow the arguments from  the last part of Subsection \ref{L1onEl} (after identity \eqref{applemma5.2}), by either integrating by parts in $\si_2,$ or applying van der Corput's estimates, in order to arrive again at  estimates of the form \eqref{Mjk} for the corresponding contributions to $\M_{j,k}^\la$ (notice that in Subsection \ref{L1onEl} the quantity $Y_1$ had been  defined in a different way, so that there 
$|Y_1|\lesssim 1$ had been equivalent to $|y_1|\lesssim 2^{-j}$).
 
\medskip

\noi{\bf The subcase where $K\gg 1.$} Here we see that  necessarily $|Y_1|\lesssim 1.$  We shall again try to follow our reasoning from  Subsection \ref{L1onEl}, but shall see that the case $\iota_2=0$ will still require a more refined reasoning.

Let us again decompose the region where  $|Y_1|\lesssim 1$ into the subregions
\begin{eqnarray*}
\Delta_{\iota_2}&:=& \{y:|Y_1|\lesssim 1,2^k |y_2 -y_1^m\omega(y_1)|\sim 2^{\iota_2}\}, 1\ll \iota_2\le k,\\
\Delta_{0}&:=& \{y:|Y_1|\lesssim 1, 2^k|y_2 -y_1^m\omega(y_1)|\lesssim 1\}.
\end{eqnarray*}
Then 
$$
|\Delta_{\iota_2}|\lesssim 2^{-ka} 2^{\iota_2-k}.
$$

{\bf Assume first that $\iota_2\gg 1.$} Observe first that 
$$
\int \Big|\pa_{\si_2}\big( \chi_0(K\ga(\si_2,Y_1)) \chi_1(\si_2)  \eta \big)\Big|\,d\si_2
\lesssim K\int \big| \chi'_0(K\ga(\si_2,Y_1))\big| \chi_1(\si_2)\,d\si_2 \lesssim 1,
$$
since here $\si_2\sim 1,$ so that we may use $\tau_2:= \si_2^a\tilde w(\si_2)$ as a new variable of integration in place of $\si_2.$
Thus, we may  again integrate by parts in $\si_2$ and obtain that 
$$
|\mu_{j,k,K|\ga|\lesssim1,\iota_2}^\la(y+\Ga)|\lesssim \la^{\frac32}2^{-k}(\la 2^{-2k+\iota_2})^{-1}=\la^{\frac 12}2^{k-\iota_2},
$$
hence 
\begin{equation}\label{L1K>1<iota}
\|\M_{j,k,K|\ga|\lesssim1,\iota_2}^\la\|_{1+\ve\to 1+\ve}\lesssim \la^{\frac 12+\ve}2^{-ka}.
\end{equation}
{\it  Note that this type of estimate had not yet appeared in Section \ref{step1}.}

{\bf Assume finally  that $\iota_2=0.$}  Then a direct application of  van der Corput's estimate of order 2 in $\si_2$ would lead in a similar way to the estimate
$$
\|\M_{j,k,K|\ga|\lesssim1,0}^\la\|_{1+\ve\to 1+\ve}\lesssim \la 2^{-k} 2^{-ka},
$$
which turns out to be insufficient. Indeed, we shall need to gain a factor $K^{-1}=2^{ka} 2^{-j}$ over  this estimate, which will require a more refined argument.
\medskip

Note first that 
$|\pa_{\si_2}\ga(\si_2,Y_1)|\sim 1.$  This allows us to  write, for $\si_2\sim 1,$ 
$$
\ga(\si_2,Y_1)	= (\sigma_2-\sigma_2^c) W(\sigma_2,Y_1),
$$
where $\si_2^c=\si_2(Y_1)$ and $ W$ are analytic functions with $|\si_2^c|\sim 1\sim |W|.$
It is then natural to change coordinates from $\si_2$ to 
$$
\tau_2:=K(\si_2-\si_2^c),\quad \text{i.e.,}\quad \si_2=\si_2^c+K^{-1}\tau_2.
$$

Then, for $y\in \Delta_0,$ we may assume that, with respect to our new variables of integration, 
\begin{eqnarray}\nonumber
\mu_{j,k,K|\ga|\lesssim1,0}^\la(y+\Ga)&=&\la^{\frac32}2^{-k}K^{-1}
\int  e^{-is_3\la \big(2^{-k}\Phi_2(\tau_2, y)+y_1^n\beta(y_1)-y_3\big)} \\
&&\hskip2cm \times \chi_1(\si_2^c+K^{-1}\tau_2) \chi_1(s_3) \, \chi_0(\tau_2)\, \eta d\tau_2ds_3,\label{tmutau2}
\end{eqnarray}
where
\begin{eqnarray*}
\Phi_2(\tau_2,y):=\Phi_1(\si_2^c+K^{-1}\tau_2,y)=-\big(\si_2[y_2-y_1^m\omega(y_1)]+2^{-k}\si_2^2B(2^{-k}\si_2,y_1)\big)
\Big|_{\si_2:=\si_2^c+K^{-1}\tau_2}.
\end{eqnarray*}
Since $y\in \Delta_0,$ we may further write 
$$
y_2-y_1^m\omega(y_1)=:2^{-k}Y_2, \quad \text{where}\quad |Y_2|\lesssim 1,
$$
so that
$$
\Phi_2(\tau_2,y)=\Phi_1(\si_2^c+K^{-1}\tau_2,y)=2^{-k}\big(-\si_2Y_2+\si_2^2B(2^{-k}\si_2,y_1)\big)
\Big|_{\si_2:=\si_2^c(Y_1)+K^{-1}\tau_2}.
$$

By performing a  Taylor expansion of the phase  function $\Phi_1(\si_2^c+K^{-1}\tau_2,y)$ with respect to $K^{-1}\tau_2,$ we see that we may write
$$
\la2^{-k}\Phi_2(\tau_2,y)
= \la2^{-2k}\Big(H_0(Y)+ K^{-1} \tau_2\,H_1(Y)+
K^{-2} \tau_2^2\, H_2(Y,K^{-1}\tau_2)\Big),
$$
where the $H_j$ are analytic functions of their very variables, and where
\begin{equation}\label{onH}
|H_2|\sim 1, \qquad H_1(Y)=-(Y_2-A(y_1))=-2^k\big(y_2-y_1^m\om(y_1)-2^{-k}A(y_1)\big).
\end{equation}
Here, 
$A(y_1)=2\si_2^c(Y_1)B(0,y_1)+D(y_1),$ with an error  term $D(y_1)$ of order $O(2^{-k}).$ Note also that
$$
\big|\pa_{\tau_2}^2\big[\tau_2^2H_2(Y,K^{-1}\tau_2)\big]\big|\sim 1, \qquad \big|\pa_{\tau_2}\big[\tau_2^2H_2(Y,K^{-1}\tau_2)\big]\big|\lesssim 1,
$$
since $K\gg 1.$

We are thus led to decomposing the $y$-region $\Delta_0$  into the following subsets:
\begin{eqnarray*}
\Delta_{0,\nu}&:=&\{y\in \Delta_0: |H_1(Y)|\sim  2^\nu K^{-1}\}, \qquad 1\ll \nu \lesssim  \log K\le \log\la;\\
\Delta_{0,0}&:=&\{y\in \Delta_0: |H_1(Y)|\lesssim K^{-1}\}.
\end{eqnarray*}
As usually, the corresponding measures will be denoted by $\mu_{j,k,\nu}^\la,$ and so forth. 
Since $2^{ka}|y_1|=|Y_1|\lesssim 1$ on $\Delta_0,$ in combination with \eqref{onH} we see that 
\begin{equation}\label{tsizeDeltanu}
|\Delta_{0,\nu}|\lesssim 2^{-ka} 2^{-k} 2^\nu K^{-1}.
\end{equation}

\smallskip
{\bf Assume first that $\nu\gg 1.$} Then an integration by parts with respect  to  $\tau_2$ in \eqref{tmutau2} leads to 
$$
|\mu_{j,k,\nu}^\la(y+\Gamma)|\lesssim 
\la^{\frac32}2^{-k}K^{-1}\big[\la 2^{-2k} \, 2^\nu K^{-2}\big]^{-1}=\la^{\frac 12} 2^{k-\nu} K,
$$
hence, in combination with \eqref{tsizeDeltanu},
$$
\|\M_{j,k,\nu}^\la\|_{1+\ve\to 1+\ve}\lesssim \la^{\frac 12+\ve}2^{-ka},
$$
i.e., the same type of estimate as in \eqref{L1K>1<iota}.
\medskip

{\bf Assume next  that $\nu=0.$} Here, we can again apply van der Corput's estimate of order 2 in $\tau_2$ and obtain that
$$
|\mu_{j,k,0}^\la(y+\Gamma)|\lesssim 
\la^{\frac32}2^{-k}K^{-1}\big[\la 2^{-2k} \,K^{-2}\big]^{-\frac 12}=\la,
$$
hence 
$$
\|\M_{j,k,0}^\la\|_{1+\ve\to 1+\ve}\lesssim \la 2^{-ka} 2^{-k} K^{-1}=\la^{1+\ve} 2^{-j-k},
$$
which is  again of the form \eqref{mujkL1la}.
\bigskip

Combining all the estimates from this subsection, we see that in analogy with the estimate \eqref{Mjk} which we had established for    Step 1, for  Step 2 we obtain that for every $\ve>0$ we have the following estimate:
\begin{equation}\label{tMjk}
\|\M_{j,k}^\la\|_{1+\ve\to 1+\ve}\lesssim \la^{1+\ve}2^{-j-k}+  \la^{\frac 12+\ve}2^{-j} +\la^{\frac 12+\ve}2^{-k}+\la^{\frac 12+\ve}2^{-ka}.
\end{equation}

\medskip
\subsection{$L^p$ - estimates of $\M_{j,k}^\la$  for the contribution by $E_l$ when $\la\ge 2^{j}$}
\medskip

Interpolation between the $L^2$ - estimate \eqref{tL2Eljk} and the $L^{1+\ve}$ - estimate \eqref{tMjk} leads to 
\begin{eqnarray}\nonumber
\|\M_{j,k}^\la\|_{p\to p}&\lesssim&
 (\la^{1+\ve}2^{-k})^{\frac 2p -1}(1+\la d_{j,k})^{-\frac 12 (2-\frac 2p)} 2^{-j} 
+ (\la^{\frac 12+\ve})^{\frac 2p -1}(1+\la d_{j,k})^{-\frac 12 (2-\frac 2p)} 2^{-j}\\ \nonumber
&+&(\la^{\frac 12+\ve}2^{-k})^{\frac 2p -1}\big(2^{-j}(1+\la d_{j,k})^{-\frac 12}\big)^{2-\frac 2p}\\ \nonumber
&+& (\la^{\frac 12+\ve}2^{-ka})^{\frac 2p -1}\big(2^{-j}(1+\la d_{j,k})^{-\frac 12}\big)^{2-\frac 2p}\\ \nonumber
&\lesssim&
\la^{\ve'} (\la d_{j,k})^{\frac 2p -1} (1+\la d_{j,k})^{\frac 1p-1} \, (d_{j,k}^{-1})^{\frac 2p-1}\, 2^{-k(\frac 2p -1)}\,2^{-j} \\ \nonumber
&&+\, \la^{\ve'}  (\la d_{j,k})^{\frac 1p -\frac 12}(1+\la d_{j,k})^{\frac 1p-1} \, (d_{j,k}^{-1})^{\frac 1p-\frac 12}  2^{-j}\\
&&+ \la^{\ve'} 
(\la d_{j,k})^{\frac 1p -\frac 12}(1+\la d_{j,k})^{\frac 1p-1} \, (d_{j,k}^{-1})^{\frac 1p-\frac 12}\, 2^{-k(\frac 2p-1)}2^{-j(2-\frac 2p)}\nonumber\\
      &&+ \la^{\ve'} 
(\la d_{j,k})^{\frac 1p -\frac 12}(1+\la d_{j,k})^{\frac 1p-1} \, (d_{j,k}^{-1})^{\frac 1p-\frac 12}\, 2^{-ka(\frac 2p-1)}2^{-j(2-\frac 2p)}, \nonumber
\end{eqnarray}
if $1<p< 2.$ 
Arguing as in Subsection \ref{LpEl}, we  see that we can sum over all dyadic $\la$s and obtain that, for every $\ve'>0,$ 
\begin{eqnarray}\nonumber
\sum\limits_{\la\gg 1}\|\M_{j,k}^\la\|_{p\to p}&\lesssim& (d_{j,k}^{-1}2^{-k})^{\frac 2p-1+\ve'}\,2^{-j} 
+  (d_{j,k}^{-1})^{\frac 1p-\frac 12+\ve'}\, 2^{-j}\\  \label{tsuminla1}
&+&(d_{j,k}^{-1})^{\frac 1p-\frac 12+\ve'}\, 2^{-k(\frac 2p-1)}2^{-j(2-\frac 2p)}\\ 
&+&(d_{j,k}^{-1})^{\frac 1p-\frac 12+\ve'}\, 2^{-ka(\frac 2p-1)}2^{-j(2-\frac 2p)}. \nonumber
\end{eqnarray}
Recall next that $A=\tilde A_l, B=\tilde B_l,$ so that by \eqref{tkeyestlg1}
$$
\frac{A-1}{\tilde a}+ B+2\le m, \quad  \text{and} \quad d_{j,k}^{-1}=2^{kA+j(B+2)}.
$$
Moreover, since we are on $\tilde E_l,$ by \eqref{tEjk1} we have $j\ge k\tilde a.$ 
\smallskip
This shows that we can argue in exactly the same way as in Subsection \ref{geolemma}  in order to show that we can sum the first two terms on the right-hand side of \eqref{tsuminla1} over all $k$s and $j$s such that $j\ge k\tilde a.$

\smallskip 
This leaves us with showing that 
\begin{equation}\label{tsuminjk1}
\sum\limits_{j,k\gg1:  j\ge k \tilde a}\Big((d_{j,k}^{-1})^{\frac 1p-\frac 12+\ve'}\, 2^{-k(\frac 2p-1)}2^{-j(2-\frac 2p)}
+  (d_{j,k}^{-1})^{\frac 1p-\frac 12+\ve'}\, 2^{-ka(\frac 2p-1)}2^{-j(2-\frac 2p)}\Big)<\infty,
\end{equation}
provided $\ve'>0$ is sufficiently small.

\smallskip
But, if $j\ge k \tilde a,$  since $A\ge 1,$ we see that 
\begin{eqnarray*}
(d_{j,k}^{-1})^{\frac 1p-\frac 12+\ve'}\, 2^{-k(\frac 2p-1)}2^{-j(2-\frac 2p)}
&=& ( 2^{kA+j(B+2)})^{\frac 1p-\frac 12+\ve'}\, 2^{-k(\frac 2p-1)}2^{-j(2-\frac 2p)}\\
&=& ( 2^{k(A-1)+j(B+2)})^{\frac 1p-\frac 12+\ve'}\, 2^{-k(\frac 1p-\frac 12 -\ve')}2^{-j(2-\frac 2p)}\\
&\le& 2^{j\big(\frac{A-1}{\tilde a}+B+2\big)(\frac 1p-\frac 12+\ve')}\, 2^{-k(\frac 1p-\frac 12 -\ve')}2^{-j(2-\frac 2p)}\\
&\le& 2^{jm (\frac 1p-\frac 12+\ve')}\, 2^{-k(\frac 1p-\frac 12 -\ve')}2^{-j(2-\frac 2p)}\\
&=& 2^{j\big[m (\frac 1p-\frac 12+\ve')+\frac 2p -2\big]}\, 2^{-k(\frac 1p-\frac 12 -\ve')}.
\end{eqnarray*}
But, 
$$
m (\frac 1p-\frac 12)+\frac 2p -2<m \frac 1{2m} +\frac 2p -2=\frac 2p-\frac 32<0,
$$  
since $p>\max\{3/2, h\}.$  Thus we see that if we choose  $\ve'$ sufficiently small, then both the exponent of $2^{j}$ as well as that of $2^k$ in the last estimate are negative, so that we can sum these estimates over all $j,k\ge 1.$  
\smallskip

We are thus left with the second sum in \eqref{tsuminjk1}. But, since $j\ge k\tilde a,$
\begin{eqnarray*}
(d_{j,k}^{-1})^{\frac 1p-\frac 12+\ve'}\, 2^{-ka(\frac 2p-1)}2^{-j(2-\frac 2p)}
&=& ( 2^{kA+j(B+2)})^{\frac 1p-\frac 12+\ve'}\, 2^{-ka(\frac 2p-1)}2^{-j(2-\frac 2p)}\\
&=& ( 2^{k(A-1)+j(B+2)})^{\frac 1p-\frac 12+\ve'}\, 2^{k(\frac 1p-\frac 12 +\ve')-ka(\frac 2p-1)}2^{-j(2-\frac 2p)}\\
&\le &  2^{j\big(\frac {A-1}{\tilde a}+B+2\big)(\frac 1p-\frac 12+\ve')}\, 2^{k(\frac 1p-\frac 12 +\ve'-\frac {2a}p+a)}2^{-j(2-\frac 2p)}\\
&\le &  2^{j\big[m(\frac 1p-\frac 12+\ve')+\frac 2p-2\big]}\, 2^{k(\frac 1p-\frac 12 +\ve'-\frac {2a}p+a)}.
\end{eqnarray*}
Again, we may assume that the exponent of $2^j$ is strictly negative. 
\smallskip

{\bf Assume first that $\tilde a\ge a.$ } Then $j\ge ka,$ hence  after summing first in $j\ge ka,$ we are left with the sum
\begin{eqnarray*}
\sum\limits_{k\ge 1} 2^{ka\big[m(\frac 1p-\frac 12+\ve')+\frac 2p-2\big]}\, 2^{k(\frac 1p-\frac 12 +\ve'-\frac {2a}p+a)}.
\end{eqnarray*}
But, the exponent of $2^k$ can be estimated as follows:
\begin{eqnarray*}
a\big[m(\frac 1p-\frac 12)+\frac 2p-2\big]+\frac 1p-\frac 12 -\frac {2a}p+a&=&(am+1)(\frac 1p-\frac 12)-a\\
<(am+1)\frac 1n-a=\frac{1-(n-m)a}n\le 0,
\end{eqnarray*}
since $p>h$ and $a\ge a_1=1/(n-m).$ 
By choosing $\ve'$ sufficiently small, we can thus  finally also sum over all $k\ge 1.$

\smallskip

{\bf Assume finally  that $\tilde a< a.$ } Then $-2a/p+a=-2a(1/p-1/2)<-2\tilde a(1/p-1/2)=-2\tilde a/p+\tilde a,$ so that we can here estimate
\begin{eqnarray*}
(d_{j,k}^{-1})^{\frac 1p-\frac 12+\ve'}\, 2^{-ka(\frac 2p-1)}2^{-j(2-\frac 2p)}
&\le &  2^{j\big[m(\frac 1p-\frac 12+\ve')+\frac 2p-2\big]}\, 2^{k(\frac 1p-\frac 12 +\ve'-\frac {2\tilde a}p+\tilde a)}.
\end{eqnarray*}
Thus, after summing first in $j\ge k\tilde a,$ we are left with the sum
\begin{eqnarray*}
\sum\limits_{k\ge 1} 2^{k\tilde a\big[m(\frac 1p-\frac 12+\ve')+\frac 2p-2\big]}\, 2^{k(\frac 1p-\frac 12 +\ve'-\frac {2\tilde a}p+\tilde a)}.
\end{eqnarray*}
From here on, we can argue as before, only with $a$ replaced by $\tilde a;$  note here that also  $\tilde a\ge a_1.$

\smallskip
This concludes the proof of estimate \eqref{tsuminjk1}.

\medskip
\subsection{$L^p$ - estimates of $\M_{j,k}^\la$  for the contribution by $E_l$ and $\tilde E_l$  when $\la<2^{j}$}\label{tLpElla<j}
\medskip
We shall study here both the situations in Step 1 as well as in Step 2. 
Let us begin by observing that the condition $\la<2^{j}$ implies that 
\begin{equation}\label{la<j}
\la d_{j,k} <2^{-j}\ll1.
\end{equation}

This implies by \eqref{L2Eljk}, respectively \eqref{tL2Eljk}, that here we have the $L^2$-estimate
\begin{equation}\label{L2Eljk2}
\|\M^\la_{j,k}\|_{2\to 2}\lesssim 2^{-j}.
\end{equation}

\bigskip

As for the $L^{1+\ve}$-estimates, let us first look at the {\bf Step 1-situation and the contribution by $E_l$:}
By \eqref{phaseF} and \eqref{la<j}, we may here assume that
$$
 \Phi_0^s(z,s_1,\si_2)- s_1y_1=-s_1y_1;
$$ 
the exponentials of all other terms of the phase can be included into the amplitude. Thus, 

\begin{eqnarray*}
F_{j,k}(\la,2^{-k}\si_2,s_3,y_1) &=& 2^{-j}\iint e^{-i\la s_3(\Phi_0^s(z,s_1,\si_2)- s_1y_1)} \chi_0(s_1)\chi_1(z)  \,\eta dz ds_1\\
&\sim&2^{-j}\vp(-\la s_3y_1),
\end{eqnarray*}
where $\vp:=\widehat {\chi_0}\in \S(\RR)$ is rapidly decaying.
This shows that, up to a small error term, we can reduce to the region where 
$$
|y_1|\le \la^{\de-1},
$$
where again we may later on choose  $\de>0$ as small as we like. But then we obtain
\begin{eqnarray}\nonumber
\mu_{j,k}^\la(y+\Ga)&=&\la^{\frac52}2^{-j-k}\\
&&\times \int e^{-i\la s_3(2^{-2k}\si_2^2B(2^{-k}\si_2)-2^{-k}\si_2 y_2-y_3)}\chi_1(\si_2) d\si_2 \chi_1(s_3)\,  \vp(-\la s_3y_1)ds_3.
\label{mujklastep22}
\end{eqnarray}

\medskip

{\bf Let us first assume that $\la\ge 2^k.$}
In  a similar way as in Subsection \ref{L1onEl} we  may then  decompose the remaining $y$-region into the subregions
\begin{eqnarray*}
\Delta_{\iota}&:=& \{y: |y_1|\le \la^{\de-1}, |y_2|\sim 2^{\iota-k}\}, \quad 1\ll \iota\le k,\\
\Delta_{0}&:=& \{y:|y_1|\le \la^{\de-1}, |y_2|\lesssim 2^{-k}\}.
\end{eqnarray*}
Then
$$
|\Delta_{\iota}|\lesssim \la^{\de-1} 2^{\iota -k}.
$$
And, as before,  if $\iota\gg1,$  an integration by parts in $\si_2$ shows that
$$
|\mu_{j,k,\iota}^\la(y+\Ga)|\lesssim \la^{\frac52}2^{-j-k} (\la 2^{\iota-2k})^{-1}=\la^{\frac 32} 2^{-j}2^{k-\iota}.
$$
Hence, if we choose $\de$ sufficiently small, 
$$
\|\M_{j,k,\iota}^\la\|_{1+\ve\to 1+\ve}\lesssim \la^{\frac 32+\ve/2}2^{-j} 2^{k-\iota}  \la^{\de-1} 2^{\iota -k}\lesssim\la^{\frac 12+\ve}2^{-j},
$$
which leads to the following analogue of \eqref{Mjkiota}:
\begin{equation*}\nonumber
\sum\limits_{\iota\le k}\|\M_{j,k,\iota}^\la\|_{1+\ve\to 1+\ve}\lesssim \la^{\frac 12+\ve}2^{-j}.
\end{equation*}

If $\iota=0,$ we can again apply van der Corput's estimate and obtain in a similar way that 
$
\|\M_{j,k,0}^\la\|_{1+\ve\to 1+\ve}\lesssim \la^{1+\ve}2^{-j-k}.
$
Altogether, we thus obtain the following analogue of \eqref{Mjk}:
\begin{equation*}
\|\M_{j,k}^\la\|_{1+\ve\to 1+\ve}\lesssim \la^{1+\ve}2^{-j-k}+ \la^{\frac 12+\ve}2^{-j}.
\end{equation*}

Interpolating the estimates \eqref{L2Eljk2} and \eqref{Mjk}, we find that for $1<p< 2,$
$$
\|\M_{j,k}^\la\|_{p\to p}\lesssim
 (\la^{1+\ve}2^{-k})^{\frac 2p -1} 2^{-j} 
+ (\la^{\frac 12+\ve})^{\frac 2p -1} 2^{-j}.
$$
As for the first term, summing over all  $k\ge 0$ and all dyadic $\la$s such that $\la<2^j,$ we are left with the sum
$$
\sum\limits_{j\ge 0} 2^{j\big((1+\ve)(\frac 2p-1)-1\big)},
$$
which is easily seen to be convergent if we choose $\ve>0$ sufficiently small.

As for the second term, summing first over all  all $k\le \log\la$  and then all dyadic $\la$s   that $\la<2^j,$ we are left with the sum
$$
\sum\limits_{j\ge  0} 2^{j\big((\frac 12+\ve)(\frac 2p-1)-1\big)},
$$
with an $\ve>0$ which may have to be chosen slightly bigger than before. Again, it is easily seen that the latter series is convergent if we choose $\ve>0$ sufficiently small, since $p>3/2.$ 

\medskip

{\bf Let us finally assume that $\la< 2^k.$}  Then the ``oscillatory integral'' $\mu_{j,k}^\la(y+\Ga)$ has indeed essentially no oscillation, so that we only get the estimates
$$
|\mu_{j,k}^\la(y+\Ga)|\lesssim \la^{\frac52}2^{-j-k}\quad\text{and}\quad |\Delta|\lesssim\la^{\de-1},  
$$
where $\Delta:=\{y:|y_1|\le \la^{\de-1}, |y_2|\ll 1\}.$ Thus,
\begin{equation*}\nonumber
\|\M_{j,k}^\la\|_{1+\ve\to 1+\ve}\lesssim \la^{\frac 32+\ve}2^{-j-k},
\end{equation*}
hence
$$
\|\M_{j,k}^\la\|_{p\to p}\lesssim (\la^{\frac 32+\ve}2^{-k})^{\frac 2p -1} 2^{-j}.
$$
Summing first over all $j$s such that $2^j>\la$ and all $k$s such that  $2^k>\la,$ we are left with the sum 
$$
\sum\limits_{\la\gg 1} \la^{\big( (\frac 32+\ve-1)(\frac 2p-1)-1\big)}.
$$
But, $(3/2-1)(2/p-1)-1=1/p-3/2<-5/6,$ since $p>3/2,$ so that the sum  is again convergent if we choose $\ve>0$ sufficiently small.

\bigskip
Let us next look at the {\bf Step 2-situation  and the contribution by $\tilde E_l$:}
By \eqref{tPhi0s} and \eqref{la<j}, we may here assume that
$$
\tilde\Phi_0^s(z,s_1,\si_2)- s_1y_1= s_12^{-ka}\ga(\si_2,Y_1)+\tilde h(s_2),
$$
where $z\sim 1\sim \si_2.$ Putting again $\tilde h^s(\si_2):=\tilde h(2^{-k}\si_2),$ we find that
\begin{equation}\nonumber
\tilde F_{j,k}(\la,2^{-k}\si_2,s_3,y_1) 
=2^{-j} e^{-i\la s_3\tilde h^s(\si_2)} \,\int \vp\big(s_3\la2^{-ka} \ga(\si_2,Y_1)\big)\chi_1(z)\eta dz,
\end{equation}
where $\vp:=\widehat {\chi_0}\in \S(\RR)$ is rapidly decaying and, by \eqref{tmujk},
\begin{eqnarray*}
\mu_{j,k}^\la(y+\Ga)&=&\la^{\frac52}2^{-k}\times \\
&&\iint  \tilde F_{j,k}(\la,2^{-k}\si_2,s_3,y_1) e^{-i\la s_3(s_2^2B(s_2)-2^{-k}\si_2 y_2-y_3)} \chi_1(\si_2) d\si_2 \chi_1(s_3) ds_3. 
\end{eqnarray*}
\medskip

{\bf Assume first that $\la2^{-ka}\ge 1.$} We then decompose in $y$ into the regions
\begin{eqnarray*}
\Delta_{\iota}&:=& \{y: |Y_1|\sim 2^{\iota}\}, \quad 1\ll \iota\le ka,\\
\Delta_{0}&:=& \{y:|Y_1|\lesssim 1\}, \end{eqnarray*}
and denote the corresponding contributions by $\mu_{j,k,\iota}^\la.$ 
Then 
$$
|\Delta_{\iota}|\lesssim 2^\iota 2^{-ka}.
$$
And, if $\iota\gg1,$ then clearly
$$
|\tilde F_{j,k,\iota}(\la,2^{-k}\si_2,s_3,y_1)|\lesssim 2^{-j}( \la2^{-ka}2^\iota)^{-1},
$$
hence 
$$
|\mu_{j,k,\iota}^\la(y+\Ga)|\lesssim \la^{\frac52}2^{-j-k}( \la2^{-ka}2^\iota)^{-1}.
$$
This implies that 
\begin{equation}\label{Mjkiota2}
\|\M_{j,k,\iota}^\la\|_{1+\ve\to 1+\ve}\lesssim \la^{\frac 32+\ve}2^{-j-k}.
\end{equation}

Finally, if $\iota=0,$ we can  estimate 
\begin{eqnarray*}
|\mu_{j,k,0}^\la(y+\Ga)|\lesssim\la^{\frac52}2^{-j-k} \int  \big|\vp\big(s_3\la2^{-ka} \ga(\si_2,Y_1)\big)\big|\chi_1(\si_2) d\si_2 \,\chi_1(s_3)\chi_1(z)dz ds_3. 
\end{eqnarray*}
But, 
$$
\int \big|\vp\big(s_3\la2^{-ka} (\si_2^ a\tilde w(\si_2)-Y_1)\big)\big|\chi_1(\si_2) d\si_2
\lesssim ( \la 2^{-ka})^{-1},
$$
since  $\si_2\sim 1,$ so that we may use $\tau_2:= \si_2^a\tilde w(\si_2)$ as a new variable of integration in place of $\si_2.$ Thus,
$$
|\mu_{j,k,0}^\la(y+\Ga)|\lesssim \la^{\frac52}2^{-j-k}( \la2^{-ka})^{-1},
$$
so that \eqref{Mjkiota2} remains valid also for $\iota=0.$ Interpolation with \eqref{L2Eljk2} leads to 
$$
\|\M_{j,k,\iota}^\la\|_{p\to p}\lesssim \big(\la^{\frac 32+\ve}2^{-k}\big)^{\frac 2p-1} 2^{-j}.
$$

Summing the estimate in  first in $\iota\le ka,$ then in $k\ge 0$ and next over all  $j$s such that $2^j>\la,$ we are left with the sum
$$
\sum\limits_{\la\gg1} \la^{(\frac 32+\ve)(\frac 2p-1)}\la^{-1}=\la^{\frac 3p-\frac 52+\ve'},
$$
where $\ve'>0$ can be chosen as small as we like. Since $p>3/2,$ we have that  $3/p-5/2<2-5/2<0,$ so that the remaining series in $\la$ is convergent.

\medskip

{\bf Assume next that $\la2^{-ka}<1.$} Here, we must again invoke Lemma \ref{thest}. According to this lemma, 
we have that for $\si_2\sim 1,$ 
$$
|\pa_{\si_2}^\al \tilde h^s(\si_2)|\lesssim 2^{-k(1+ma)}.
$$  
This implies that $\la 2^{-k(1+ma)}\ll \la 2^{-ka}<1,$ which shows that we can now include the factor $e^{-i\la s_3\tilde h^s(\si_2)}$ of $\tilde F_{j,k}(\la,2^{-k}\si_2,s_3,y_1)$ in the oscillatory integral defining  $\mu_{j,k}^\la(y+\Ga)$ into the amplitude. I.e., we may  simply assume that  
$\tilde h^s\equiv 0,$ and thus write
\begin{eqnarray*}
\mu_{j,k}^\la(y+\Ga)&=&\la^{\frac52}2^{-j-k}\times \\
&&\hskip-2cm \iint  e^{-i\la s_3(2^{-2k}\si_2^2B(2^{-k}\si_2)-2^{-k}\si_2 y_2-y_3)} \chi_1(\si_2)  \vp\big(s_3\la2^{-ka} \ga(\si_2,Y_1)\big)\eta d\si_2\, \chi_1(s_3) ds_3. 
\end{eqnarray*}
But note  that $\la 2^{-ka} \ga(\si_2,Y_1)=\la 2^{-ka}\si_2^ a\tilde w(\si_2)- \la y_1,$ where the first term is harmless, since  $\la 2^{-ka}\le 1.$
A comparison with the identity \eqref{mujklastep22} thus shows that we may now argue exactly as we did for the Step 1- situation.
\smallskip

This concludes our discussion of the case where $\la<2^j.$

\medskip
\subsection{$L^{p}$ - estimates for the contributions by domains  $D_{l_1}^c, c\in \mathcal C_{l_1},$ when the algorithm stops after step 1}\label{D1c}
\medskip
Assume now that $c=c^{\al_1}_{l_1}\in \mathcal C_{l_1}$ is such that  the cluster 
$\left[ \begin{matrix} 
\al_1\\
l_1
\end{matrix}\right ]
$
contains only one, real root $r_0$ of multiplicity 
$N_0:=N\left[ \begin{matrix} 
\al_1\\
l_1
\end{matrix}\right ].
$
The leading term of $r_0(s_2)$ is then given by $c^{\al_1}_{l_1}s_2^a,$ where again $a:=a_{l_1}.$ 
\smallskip

We can then {\it modify our change of coordinates by re-defining}
$$
\tilde z:=z_1-r_0(s_2),\qquad \text{and} \qquad \tilde\Phi(\tilde z,s_2):=\Phi(\tilde z+r_0(s_2),s_2).
$$
The domain corresponding to $D_{l_1}^c$ is then essentially again of the form
$$
\tilde D_{l_1}^c=\{(\tilde z, s_2):   |\tilde z|<  \ve s_2^{a_{l_1}}\},
$$
with $0<\ve\ll 1.$ We claim that  then also on $\tilde D_{l_1}^c$ we have a good resolution of singularities of $\tilde \Phi,$ namely \begin{equation}\label{Dlreso}
\tilde\Phi(\tilde z,s_2)=\tilde V(\tilde z,s_2)\, s_2^{\tilde A}\tilde z^{N_0},\qquad (\tilde z,s_2)\in \tilde D_{l_1}^c,
\end{equation}
for some $\tilde A\ge 1,$ where $\tilde V$ has similar properties as in \eqref{tElreso}.

\smallskip
Indeed, in analogy to \eqref{tildePhifac}, we have that 
\begin{equation*}
\tilde\Phi(\tilde z,s_2)=\tilde U_0(\tilde z,s_2)(\tilde z+r_0(s_2))^{\nu_1}  \prod_{l=1}^L
\Phi\left[ \begin{matrix} 
\cdot\\
l\end{matrix}\right ](\tilde z+r_0(s_2),s_2).
\end{equation*}
Since $|\tilde z|\ll |r_0(s_2)|,$ by modifying $\tilde U_0(\tilde z,s_2)$ appropriately, we may replace here 
$(\tilde z+r_0(s_2))^{\nu_1}$ by $s_2^{a\nu_1},$ as well as 
$\Phi\left[ \begin{matrix} 
\cdot\\
l\end{matrix}\right ](\tilde z+r_0(s_2),s_2)
$
by the following:

--  if  $l<l_1,$  then by
$
s_2^{a_l N\left[ \begin{matrix} 
\cdot\\
l\end{matrix}\right ]};
$

--  if  $l=l_1,$  then by
$
\prod\limits_{\al\ne \al_1} s_2^{a N\left[ \begin{matrix} 
\al\\
l_1\end{matrix}\right ]}\, \tilde z^{N_0};
$

--  if  $l>l_1,$  then by
$
 s_2^{a N\left[ \begin{matrix} 
\cdot\\
l\end{matrix}\right ]}.
$
\medskip

This shows that indeed \eqref{Dlreso} holds true.  Moreover, a quite similar reasoning shows that the $\ka^{l_1}$-principal part of 
$\tilde \Phi$  is given by 
\begin{eqnarray*}
\tilde \Phi_{\ka^{l_1}}={\rm constant} \, (\tilde z+cs_2^a)^{\nu_1}\prod\limits_{l<l_1} s_2^{a_l N\left[ \begin{matrix} 
\cdot\\
l\end{matrix}\right ]} \prod\limits_{\al\ne \al_1} \big(\tilde z-(c_{l_1}^{\al}-c_{l_1}^{\al_1})s_2^a\big)^{N\left[ \begin{matrix} 
\al\\
l_1\end{matrix}\right ]}\, \tilde z^{N_0}
 \prod\limits_{l>l_1}  (\tilde z+cs_2^a)^{ N\left[ \begin{matrix} 
\cdot\\
l\end{matrix}\right ]}.
\end{eqnarray*}
The term with the smallest power of $\tilde z$ in this $\ka^{l_1}$ - homogenous function agrees with  a constant multiple of $s_2^{\tilde A}\tilde z^{N_0},$ which shows that the Newton-Puiseux polyhedron of $\tilde\Phi$ must be of the type sketched in Figure \ref{stop}.
\smallskip

\begin{figure}[!h]
\centering
\includegraphics[scale=0.6]{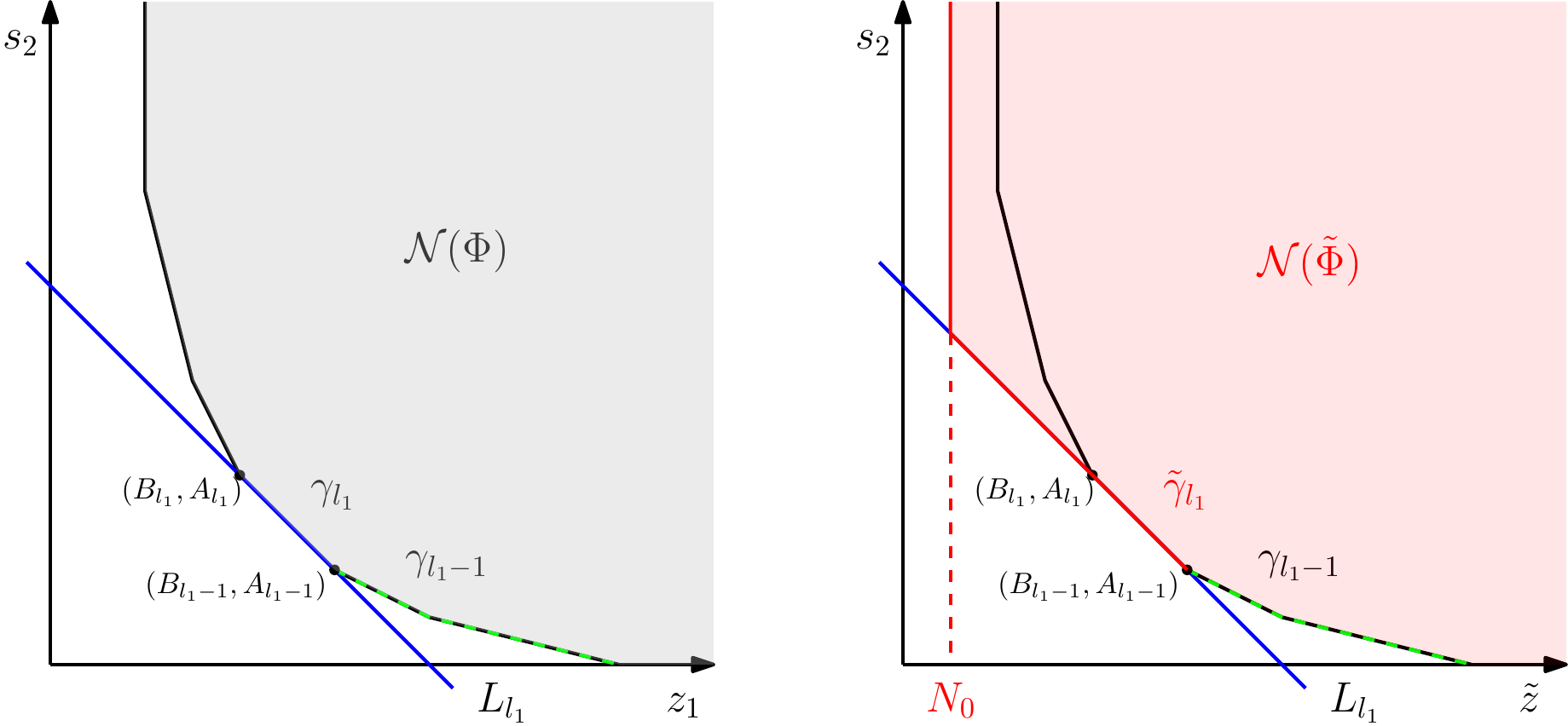}
  \caption{Stopping: comparison between $\N(\Phi)$ and $\N(\tilde\Phi)$}
  \label{stop}
\end{figure}

Consequently, we see that $\tilde L=l_1,$ $(\tilde B_{\tilde L},\tilde A_{\tilde L})=(N_0,\tilde A)$  and $\tilde a_{\tilde L}=a_{l_1}=a,$ if $N_0<B_{l_1-1},$ 
whereas $\tilde L=l_1-1,$ $(\tilde B_{\tilde L},\tilde A_{\tilde L})=(N_0,\tilde A)=(B_{l_1-1}, A_{l_1-1})$  and 
$\tilde a_{\tilde L}=a_{l_1-1},$ if $N_0=B_{l_1-1}.$
This shows that the region $ \tilde D_{l_1}^c$ is contained in the transition domain
$$
\tilde E_{\tilde L}:=\{(\tilde z,s_2):  |\tilde z|<2^{-M} s_2^{\tilde a_{\tilde L}}\}
$$
defined as in Subsection \ref{dyadictildez}, provided we choose $\ve\ll 2^{-M},$ so that the desired $L^p$-estimates for the contribution by $ \tilde D_{l_1}^c$ follow immediately from our discussions in the previous subsections.

\medskip
\subsection{On the contributions by the homogeneous domains $\tilde D_l$ }
\medskip
Recall that we are here in Case 2, i.e.,  $c\in \mathcal C_{l_1},$ and that more precisely  in Subcase (i)  we shall assume that $l\ge l_1+1\ge 2,$ where $l_1\ge 1,$ and in  Subcase (ii)  we assume that $l\ge l_1\ge 2.$ 

The following notation, which lends for a generalisation to higher steps of the resolution algorithm, will be come useful:
We shall write  $\Phi=:\Phi^{[1]}, D_l=:D_l^{[1]},$ and so forth, and 
$$
\tilde \Phi=:\Phi^{[2]},  \quad\tilde z=:z_2=z_1-c^{\al_1}_{l_1}s_2^a,  \quad\tilde D_l=:D_l^{[2]},
$$
and so forth, where as before $a=a_{l_1}.$

Recall next from Remark \ref{cinCii} (c)  that if we are on $D_l^{[2]}$ and 
if $\tilde a_l$ is the leading exponent of a root $\tilde r(s_2)$ of  $\Phi^{[2]}$ lying in  $D_l^{[2]},$ then $\tilde a_l$ is of the form $a_{l_1l_2}^{\al_1},$ and the leading term of $\tilde r(s_2)$ is of the form
$
c_{l_1l_2}^{\al_1\al_2}s_2^{a_{l_1l_2}^{\al_1}}.
$
\smallskip

Fixing from here on $l_2,$ by replacing $\Phi=:\Phi^{[1]}$ by $ \Phi^{[2]},$ and the set $\mathcal C_{l_1}$ by the set
$
\mathcal C^{\al_1}_{l_1,l_2}
$
of all {\it real}  coefficients $c_{l_1l_2}^{\al_1\al_2},$ 
we  can thus from here on proceed  as in Subsection \ref{Dlcont}, changing first 
from the variable $z_2$ to 
$$
\tilde z_2=z_3:=z_2-cs_2^{a_{l_1l_2}^{\al_1}}=z_1-\big(c^{\al_1}_{l_1}s_2^a+c s_2^{a_{l_1l_2}^{\al_1}}\big),
$$
in order to pass from homogeneous regions 
$$
D_l^{[2],c}:=\{(z_2,s_2):   |z_2-cs_2^{a_{l_1l_2}^{\al_1}}|<  \ve s_2^{a_{l_1l_2}^{\al_1}}\}, 
$$
to
$$
\tilde D^{[2],c}:=\{(z_3,s_2):   |z_3|<  \ve s_2^{a_{l_1l_2}^{\al_1}}\}, 
$$
and from $\Phi^{[2]}$ to $\tilde \Phi^{[2]}=: \Phi^{[3]},$ 
and so forth. 
Note also that if $c=c_{l_1l_2}^{\al_1\al_2}\in \mathcal C^{\al_1}_{l_1,l_2},$ then we can write 
$$
z_3=z_1-w(s_2),
$$
if we choose for $w(s_2)$ here 
$$
w(s_2):=c^{\al_1}_{l_1}s_2^a+c_{l_1l_2}^{\al_1\al_2} s_2^{a_{l_1l_2}^{\al_1}},
$$
i.e., the leading jet of the sub-cluster 
$\left[ \begin{matrix} \al_1&\al_2\\
l_1 &l_{2}
\end{matrix}\right ].$
Moreover, we clearly still have that
$$
|w(s_2)|\sim s_2^a
$$
(compare with Remark \ref{onw}).
\medskip

We are thus led to Step 3 of the resolution of singularities algorithm, in which we can apply the same arguments as in Step 2, now  applied to $\Phi^{[2]}$ in place of $\Phi,$ in order to derive the desired $L^p$-estimates for the associated  maximal operators.

\medskip
\section{Higher steps of the resolution algorithm and conclusion of the proof of Theorem \ref{thm-a-}}\label{endproof1}
\medskip

In the same way as we had passed from Step 1 to Step 2, and  from Step 2 to Step 3, we can now proceed recursively.
Suppose  we have already introduced the functions $\Phi^{[1]}, \dots,\Phi^{[p]}, p\ge 2,$ where
$$
\Phi^{[j]}(z_j,s_2)=\Phi(z_j+w_j(s_2),s_2) \qquad j=2,\dots, p,
$$
and where  $w_j(s_2)$ is the real,  leading jet of a sub-cluster 
$\left[ \begin{matrix} \al_1&\cdots &\al_{j-1}\\
l_1&\dots &l_{j-1}
\end{matrix}\right ].
$
Note that then $z_j=z_1-w_j(s_2).$ 
The transition domains associated to $\Phi^{[p]}$ can be handled as before in Step 2.
And, by narrowing down in a similar way as in the previous steps to a  homogeneous subdomain $ D^{[p+1],c},$ which corresponds to a narrow subdomain in the original coordinates $z_1$  containing only roots from a sub-cluster 
$
\left[ \begin{matrix} \al_1&\cdots &\al_{p}\\
l_1&\dots &l_{p}
\end{matrix}\right ]
$
of order $p$ of the previous sub-cluster
$
\left[ \begin{matrix} \al_1&\cdots &\al_{p-1}\\
l_1&\dots &l_{p-1}
\end{matrix}\right ]
$
of order $p-1$ 
whose  associated leading jet 
$$
w_{p+1}(s_2):=c_{l_1}^{\al_1}s_2^{a_{l_1}}+c_{l_1l_2}^{\al_1\al_2}s_2^{a_{l_1l_2}^{\al_1}}+\cdots+
c_{l_1\cdots l_p}^{\al_1\cdots \al_p}s_2^{a_{l_1\cdots
l_{p}}^{\al_1\cdots \al_{p-1}}}
$$
is real, we can  introduce the next function 
$$
\Phi^{[p+1]}(z_{p+1},s_2):=\Phi(z_{p+1}+w_{p+1}(s_2),s_2).
$$
Note that then 
$
z_{p+1}=z_p-c_{l_1\cdots l_p}^{\al_1\cdots \al_p}s_2^{a_{l_1\cdots
l_{p}}^{\al_1\cdots \al_{p-1}}},
$ 
and 
$$
\Phi^{[p+1]}(z_{p+1},s_2)=\Phi^{[p]}(z_{p+1}+c_{l_1\cdots l_p}^{\al_1\cdots \al_p}s_2^{a_{l_1\cdots
l_{p}}^{\al_1\cdots \al_{p-1}}},s_2).
$$
We have thus arrived at  Step $p+1$ and can bootstrap.
\smallskip

In this way, we shall construct nested sequences of  subclusters
$
\Big\{\left[ \begin{matrix} \al_1&\cdots &\al_{p}\\
l_1&\dots &l_{p}
\end{matrix}\right ]\Big\}_p.
$
The corresponding sequences of multiplicities of these sub-clusters are  then decreasing and must therefore eventually become constant, say  after  step $p=P,$ and we are reduced to  a narrow homogeneous domain.  But, arguing as in Subsection \ref{stopstep1}, then the sub-cluster of order $P$ can  contain only one, real root, of some multiplicity $N_0,$ and we can estimate the contribution by  this narrow homogeneous domain in the same way as we did in Subsection \ref{D1c}.

\smallskip
This shows that our resolution algorithm will always stop  after a finite number of steps; this completes the proof of Theorem \ref{thm-a-}.

\color{black}

\setcounter{equation}{0}

\section{ First steps in the proof of Theorem \ref{thm-a+}  }\label{proof+}

In this section, we assume that $\phi$ is of type $\A_{n-1}^+\setminus\A_{n-1}^e,$  with $n\ge 4.$ Since the coordinates $\x$ in the normal form \eqref{AD} are in this case already adapted to 
$\phi$ (because $n<2m),$  we shall here not really  make use of this normal form, but instead use  Property $(\A_{n-1}^+)$ in these linearly adapted coordinates $\x.$

\subsection{Stationary phase in $x_2$}
We again  first apply the method of stationary phase to the $x_2$-integration in \eqref{mulathat}, which  leads to 
\begin{eqnarray}\nonumber
\widehat{\mu^\la}(\xi)
&= &e^{-is_3\la} \chi_0(s_3s')\chi_1(s_3) \big[ \la^{-1/2} \int_{\bR} e^{-i\la s_3(s_1x_1+\phi(x_1, x_2^c(x_1,s_2)))}\tilde \eta(x_1,  s_2)\,dx_1\\
&&\hskip 8cm+r(\la,s) \big],\label{9mulahat}
\end{eqnarray}
with a slightly modified cut-off function $\chi_1,$  
where $\tilde \eta$ is another smooth bump function supported in a
sufficiently small neighborhood of  the origin,  $r(\la,s)$ is a
remainder term of order
$$
r(\la,s)=O(\la^{-\frac32})\quad \mbox{as} \quad  \la\to+\infty,
$$
and $x_2^c(x_1, s_2)$ denotes  the unique (non-degenerate)
critical point  of the  phase $\phi$ with respect to $x_2.$ Then, similarly  as before,  $\phi(x_1, x_2^c(x_1,s_2))=\breve\phi(x_1,s_2)$ is the Legendre transform of $\phi.$

Following the proof of Theorem \ref{equlind}, let us choose line-adapted coordinates  $(y_1,y_2)=(x_1-\alpha (x_2), x_2),$ in which $\phi\x$ is given by $\tilde \phi\y,$ and  decompose 
$$
\tilde \phi\y=\tilde\phi(0,y_2)+\tilde\phi_{\rm red}\y.
$$ 

We then denote by $\tilde \ga_1$ the first non-vertical edge of $\N(\tilde\phi_{\rm red}),$ which has $(n,0)$ as its left endpoint, and choose the weight $\tilde \ka^e =(\tilde \ka^e _1,\tilde\ka^e _2)=(1/n,\tilde\ka^e _2)$ so that $\tilde \ga_1$ is contained in the line $\tilde \ka^e _1t_1+\tilde \ka^e _2t_2=1.$

 Assume first  that  $\tilde \ka^e _2>0.$   Then, as we have seen,
we may decompose 
\begin{equation*}
\tilde\phi_{\rm red}(y_1,y_2)= p(y_1,y_2) +\tilde\phi_{\rm err}(y_1,y_2),
\end{equation*}
where $ p=p_{\tilde e_1}$ is a $\tilde\ka^e $-homogeneous polynomial of $\tilde\ka^e$-degree $1$ with Taylor support on the first compact edge $\tilde \ga_1$ of $\N(\tilde\phi_{\rm red}),$ consisting of at least two monomial terms, one of them being of the form $cy_1^{n},$   and where $\tilde\phi_{\rm err}$ consists of terms of higher $\tilde\ka^e $-degree.

\medskip
If we put $z_1:=x_1-\tilde\al(s_2),$  with $\tilde\al(s_2):= \al(s_2w_0(s_2)),$  then we had seen in  the proofs of Lemma \ref{legendre} and Theorem \ref{equlind} that  the new coordinates  $(z_1,s_2)$  are line-adapted to $\breve\phi(x_1,s_2).$ Moreover,  if we assume  without loss of generality (for simplicity) that $w_0(0)=1,$ then if   $\breve\phi(x_1,s_2)$ is represented  by the function $\breve\phi^{\rm la}(z_1,s_2)$ in these line-adapted coordinates, then by  \eqref{legendrephila} we have the following analogue of \eqref{brevepad1}:
\begin{equation*}
\breve\phi^{\rm la}(z_1,s_2)=s_2^2B(s_2)+ \breve\phi_1(z_1, s_2)
\end{equation*}
where
\begin{equation}\label{9newphi}
\breve\phi_1(z_1, s_2)=p(z_1,s_2) +\breve\phi_{\rm err}(z_1,s_2),
\end{equation}
with the same polynomial $p$ as in \eqref{defp} and  an error term $\breve\phi_{\rm err}$ consisting  of terms of  $\tilde\ka^e $-degree $>1,$ and where $p(0,s_2)\equiv 0\equiv \breve\phi_{\rm err}(0,s_2).$
Note that this {\it defines} here the function $\breve\phi_1 $ for the present setting.
\smallskip

On the other hand, if $\tilde \ka^e _2=0,$  then we have seen in the proof of Theorem \ref{equlind} that $\N(\breve\phi^{\rm la}_{\rm red})$ is contained in the region where $t_1\ge n.$ Since $\phi$ is analytic, this implies that 
\begin{equation}\label{9legendrephila3}
\breve\phi^{\rm la}(z_1,s_2)=s_2^2B(s_2)+ \breve\phi_1(z_1, s_2)=s_2^2B(s_2)+z_1^n \big(G(s_2) +\breve\phi_{\rm err}(z_1,s_2)\big), 
\end{equation}
where   $G$ and $ \breve\phi_{\rm err}$ are analytic  and  $G(0)\ne 0,$ whereas $\breve\phi_{\rm err}(0,s_2)\equiv 0.$

 \begin{remark}\label{ka2=0}
The identity \eqref{9legendrephila3} shows that  the function $\breve\phi^{\rm la}$  has already a good resolution of singularity when   $\tilde \ka^e _2=0,$ and that here $n_e=n.$ We shall therefore assume that  $\tilde \ka^e _2>0$ in the subsequent sections,  and  only briefly comment on the case $\tilde \ka^e _2=0$ in Subsection \ref{kae2=0}. 
\end{remark}

\color{black}
The contribution of the error term $r(\la,s)$ to $\widehat{\mu^\la}$ in \eqref{9mulahat}
and   the corresponding maximal operator are again easily estimated, and we shall henceforth ignore this error term.

After changing from the coordinate $x_1$ to the coordinate $z_1$ in the integral in \eqref{9mulahat}, we then we may finally assume that 
\begin{eqnarray*}\nonumber
\widehat{\mu^\la}(\xi)
&=& \la^{-\frac 12} \chi_0(s_3s')\chi_1(s_3) e^{-i\la s_3\big(s_2^2B(s_2)+s_1\tilde\al(s_2)+1\big)}\\
&&\hskip3cm  \times \int_{\bR} e^{-i\la s_3(s_1z_1+ \breve\phi_1(z_1, s_2))} \eta\,dz_1 \\ 
&=& \la^{-\frac 12} \chi_0(s_3s')\chi_1(s_3) e^{-i\la s_3\big(s_2^2B(s_2)+s_1\tilde\al(s_2)+1\big)}J(\la, s),\nonumber
\end{eqnarray*}
where again
\begin{equation*}
J(\la, s):=\int e^{-i\la s_3(\breve\phi_1(z_1, s_2)+s_1z_1)}  \eta\,dz_1.
\end{equation*}
In order to defray the notation, we have simply dropped the tilde from $\eta.$
Introducing also 
\begin{eqnarray}\nonumber
 F(\la,s_2,s_3,y_1) &:=&\int  J(\la, s) e^{i\la s_3s_1(y_1-\tilde\al(s_2))} \chi_0(s_3s')\chi_0(s_1)\,ds_1\\  \label{9defiF}
  &=&\iint e^{-i\la s_3\big(\breve\phi_1(z_1, s_2)+s_1(z_1+\tilde\al(s_2)-y_1)\big)}  \eta(z_1,  s_2)  \chi_0(s_3s')\chi_0(s_1)\,  \eta dz_1 ds_1,
 \end{eqnarray}
by  Fourier inversion and a change from the coordinates $\xi$ to $s,$ we then  have that (with slightly modified functions $\chi_0, \chi_1$)
\begin{eqnarray*}\nonumber
\mu^\la(y+\Ga)
&=&\la^{\frac52}
 \iint F(\la,s_2,s_3,y_1) e^{-i\la s_3(s_2^2B(s_2)-s_2 y_2-y_3)}\\ 
 &&\hskip2cm\times  \chi_0(s_2) \, \eta ds_2 \chi_1(s_3) ds_3,
\end{eqnarray*}
where we have  again put $\Gamma:=(0,0,1).$

\medskip

Observe that \eqref{9newphi} again allows to write $\breve\phi_1(z_1, s_2)$ in a similar way as we did this in \eqref{legendamins}:
\begin{equation}\nonumber
\breve\phi_1(z_1, s_2)=z_1^n\beta(z_1)+s_2z_1^{m'}\omega(z_1)+s_2^2z_1 q(z_1, s_2),
\end{equation}
where $x_1^{m'}\omega(z_1)$ stands for a finite type function, if $m'$ is finite, and a flat function, if $m'=\infty.$ Note also that here $\tilde \ka^e _2+m'/n\ge 1.$ 

\begin{remark}[On the notion of effective multiplicity for  $\A^-_{n-1}$-singularities]\label{9effectivem}

 A comparison between the identities \eqref{legendamins} and \eqref{9newphi} for $\breve\phi_1$ suggests that at this stage we could as well have 
introduced the notion of effective multiplicity $n_e$ for functions $\phi$ of type $\A^-_{n-1},$  namely by means of  the identity \eqref{legendamins} for $\breve\phi_1.$  Indeed, for such $\phi,$ the edge $\breve\ga_1$ of the Newton polyhedron $\N(\breve\phi_1)$ with right vertex $(n,0)$ lies on the line $\ka^e_1t_1+\ka^e_2t_2=1,$ with the weight 
$$
\ka^e:=\big(\frac 1n, \frac {n-m} n\big),
$$
and if we define the number $n_e$ in same way  by means of the edge $\ga_1$ as we did this for functions of type 
$\A^+_{n-1}$ in Subsection \ref{A+}, we find that here $n_e=m,$ since the point $(m,1)$ lies on the edge $e_1.$ The according exponent $p_e$ would then be given by $p_e=2m/(m+1).$ But, note that here $p_e\le h=2n/(n+2),$ since $2m\le n,$ so $p_e$ is not really relevant for $\A^-_{n-1}$-type singularities.  This was the reason why we had introduced the notion of effective multiplicity only for $\A^+_{n-1}$-singularities, for which  in addition a direct definition of $n_e$ in terms of Newton polyhedra associated to $\phi$ is available.
\end{remark}

Recall, however, that  in contrast to Section \ref{proof-}, we now have 
\begin{equation}\label{kae2<1}
\tilde\ka^e_2<1/2.
\end{equation}
Recall also  that since $\phi$ is of type $\A_{n-1}^+\setminus\A_{n-1}^e,$  $p$ does satisfy the following two assumptions:

{\begin{itemize}
\item[(A1)] $\pa_{z_1}^2 p(z_1,s_2)$ consists of  at least two distinct monomial terms, one of them of course being $ n(n-1)z_1^{n-2}.$
\item[(A2)] If $\pa_{z_1}^2 p(z_1,s_2)$  does not  vanish of maximal possible order $n-2$ along a real, non-trivial root of $\pa_{z_1}^2 p(z_1,s_2);$
more precisely,  $\pa_{z_1}^2 p(z_1,s_2)$ is not  of the form 
\begin{equation}\label{maxvan2}
\pa_{z_1}^2 p(z_1,s_2)=n(n-1)(z_1-cs_2^d)^{n-2},
\end{equation}
with $c\in \RR\setminus \{0\}$ and an integer exponent $d\in \NN_{\ge 1}.$  
 \end{itemize}
}

Let us also recall from Remark \ref {pe-h} that 
\begin{equation}\label{nne}
n<2n_e, \quad\text{in particular}\quad n_e>2.
\end{equation}

\medskip
With our new function $\breve\phi_1$ as defined in \eqref{9newphi}, we  can now  essentially follow the proof of Theorem \ref{thm-a-}, however, with  some important  modifications and  changes, and we shall therefore mostly concentrate on those.  

To this end, we shall adopt  the same notation, putting for instance again (compare \eqref{Phi0}, 
 \eqref{Phi})
$$
\Phi_0(z_1,s'):=\breve\phi_1(z_1, s_2)+s_1z_1, \qquad \Phi(z_1,s_2):=\pa_{z_1}^2\Phi_0(z_1,s') =\pa_{z_1}^2\breve\phi_1(z_1, s_2),
$$
and shall use the same notation for the associated  oscillatory integrals $J$ and $F,$ Newton polyhedra, and so forth.

It is important to observe that condition (A1) ensures that the Newton polyhedra of $\breve\phi_1$ and of $\Phi$ are closely related:
\smallskip

$\N(\Phi)$ is obtained by translating $\N(\breve\phi_1)$  by $-2$ in horizontal direction and then considering only the part with $t_1\ge 0$ (compare Figure \ref{on-ne}).

\begin{figure}[!h]
\centering
\includegraphics[scale=0.37]{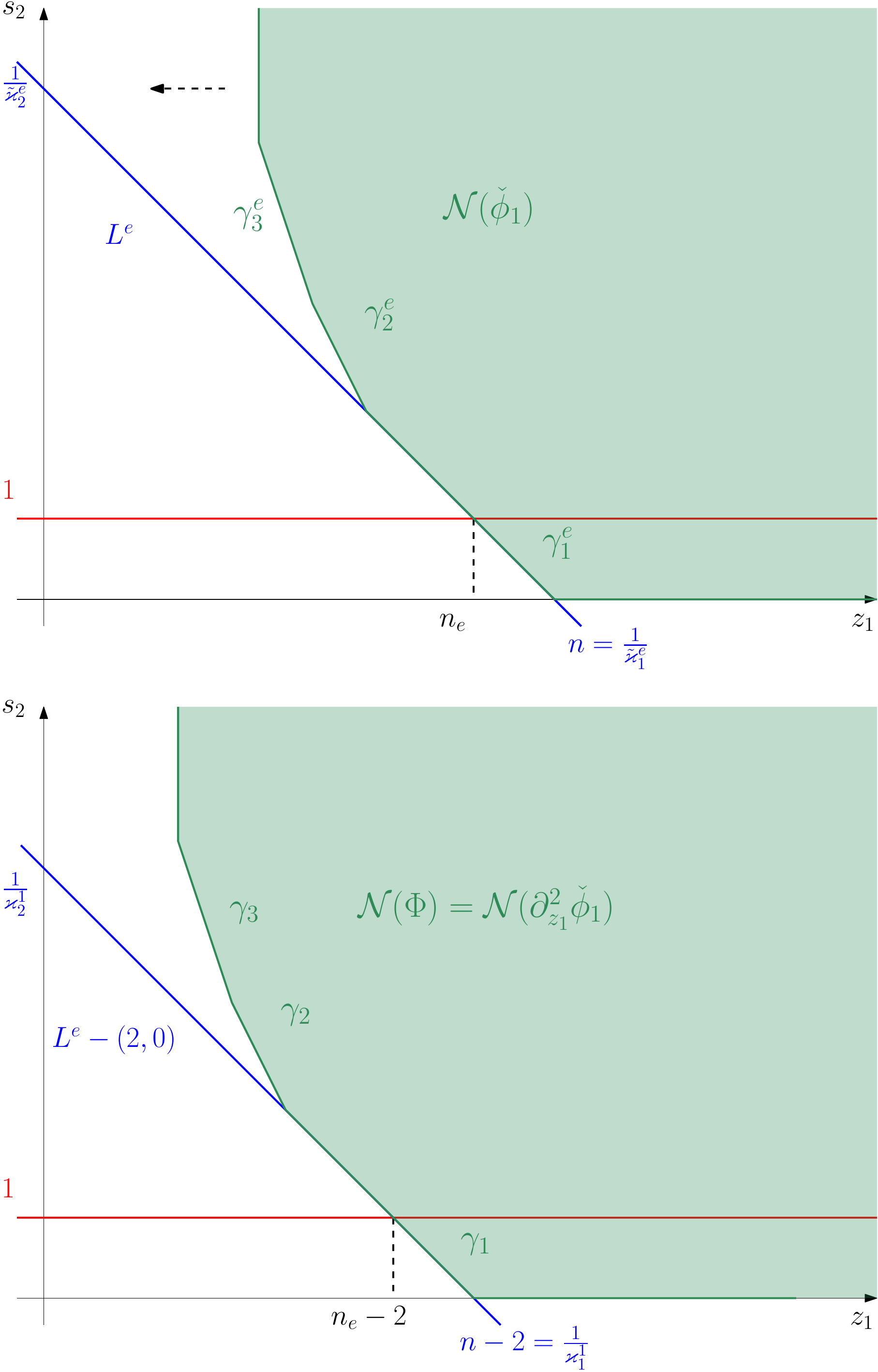}
  \caption{Comparison between $\N(\breve\phi_1)$ and $\N(\Phi)$}
  \label{on-ne}
\end{figure}

In particular, since $n_e>2,$ the first edge $\ga_1$ of $\N(\Phi)$, whose endpoints are  $(B_0,A_0)=(n-2,0)$ and $(B_1,A_1),$ where $A_1\ge 1,$ and the first edge $\ga^e_1$ of $\N(\breve\phi_1),$ whose right endpoint is $(n,0),$ are having the same  modulus of slope
\begin{equation}\label{9a1}
a_1=\frac {\ka_1^1}{\ka^1_2}=\frac 1{n-n_e}.
\end{equation}
Actually, a similar result holds true also for the other edges  of $\N(\phi).$ To this end, let us denote by $\breve \ka^l$ the weight associated to the $l$th edge $\ga^e_l$ of $\N(\breve \phi_1).$ 
Then 
$$
a_l=\frac {\ka_1^l}{\ka^l_2}=\frac {\breve\ka_1^l}{\breve\ka^l_2}\ge a_1, \qquad l=1,\dots, L.
$$
In particular, in a similar way as  in Subsection \ref{geolemma}, by  Lemma \ref{key} this will now imply (for Step 1) that
\begin{equation}\label{9leqm}
\frac{A_l-1}{a_l}+B_l+2\le n_e
\end{equation}
for $l=1,\dots, L.$  
This is the analogue of the estimate \eqref{leqm}, which had been crucial for the proof of Theorem \ref{thm-a-}.
\smallskip

{\it One should therefore basically think of simply replacing $m$ by $n_e$ in our  proof of Theorem \ref{thm-a-} in order to get a proof  of Theorem \ref{thm-a+}. However, this is, of course, an oversimplification, and  there are also some important difference to be noticed, on which we shall  concentrate subsequently.}

\medskip

\setcounter{equation}{0}

\section{Analysis in  Step 1 of the resolution algorithm  }

\subsection{$L^2$ - estimation of $\M_{j,k}^\la$ for the contribution by $E_l$}
\medskip
Since our arguments in Subsection \ref {L2onEl} did not make use of the special form of $\breve\phi_1,$  the $L^2$-estimates  
\eqref{L2Eljk} remain valid also here.

\medskip
\subsection{$L^{1+\ve}$ - estimates of $\M_{j,k}^\la$ for the contribution by $E_l, l=1,\dots,L,$ when $\la\ge 2^j$}

\subsubsection{The case where $\tilde\al(s_2)\not\equiv 0$} This case behaves more like a simplified version of the Step 2 - situation. We shall therefore defer its discussion to the end of Section \ref{9step2}.
 
\smallskip

\subsubsection{The case where $\tilde\al(s_2)\equiv 0$} 

Here, we can basically follow our discussion of the Step 1 - situation in Section \ref{step1}.
Observe first that all estimates up to \eqref{Mjk>1} will again remain valid. However, after having applied Lemma \ref{pinvers}, the identity \eqref{applemma5.2} for $F_{j,k,|Y_1|\lesssim1}(\la,s_2,s_3,y_1)$ makes  explicit use of  the form of the phase $\breve\phi_1,$ as well as the oscillatory integral giving $\mu_{j,k,|Y_1|\lesssim1}^\la(y+\Ga).$ We shall see that this part will indeed require a different discussion here. The complete phase is now given by
\begin{eqnarray*}
&& 2^{-2k}\big[\si_2^2 B(2^{-k}\si_2) -2^{k}\si_2 y_2 \big]  +\breve\phi_1(y_1,2^{-k}\si_2) -y_3\\
&=&2^{-2k}\big[\si_2^2 B(2^{-k}\si_2) -2^{k}\si_2 y_2 \big]  +p(y_1,2^{-k}\si_2) +\breve\phi_{\rm err}(y_1,2^{-k}\si_2)-y_3.
\end{eqnarray*}
Recall also that here $Y_1:=2^jy_1,$ so that the condition $|Y_1|\lesssim 1$ is equivalent to $|y_1|\lesssim 2^{-j}.$

But, since $p$ is $\tilde\ka^e$-homogeneous of degree $1,$ we have 
\begin{equation}\label{pdil}
p(y_1,2^{-k}\si_2)=2^{-k/\tilde \ka^e_2}p(2^{ka_1}y_1,\si_2). 
\end{equation}
And, by \eqref{kae2<1}, we may write $2^{-k/\tilde \ka^e_2}=2^{-\ve_0 k}2^{-2k},$ for some $\ve_0>0.$ Moreover, 
$$
|2^{ka_1}y_1|=|2^{-j+ka_1}Y_1|\lesssim 2^{-j+ka_1}\le 2^{-(j-ka_l)},
$$ 
and since we are on $E_l,$  by \eqref{Ejk1} we have that $j-ka_l\gtrsim 1.$
Thus, $|2^{ka_1}y_1|\lesssim 1.$ 
These estimates show that the term $p(y_1,2^{-k}\si_2) +\breve\phi_{\rm err}(y_1,2^{-k}\si_2)$ can be absorbed into  the term 
$2^{-2k}\si_2^2 B(2^{-k}\si_2),$ i.e., we may assume that the complete phase is of the form
\begin{equation*}
2^{-2k}\big[\si_2^2 B(2^{-k}\si_2,Y_1) -2^{k}\si_2 y_2 \big]  -y_3,
\end{equation*}
where $B(2^{-k}\si_2,Y_1)$ has similar properties as  $B(2^{-k}\si_2),$ in particular $|B(2^{-k}\si_2,Y_1)|\sim 1.$ 
\smallskip

From here on, we can thus again follow the arguments from Subsection \ref{L1onEl}, if we assume there that $\om(y_1)\equiv 0,$ and arrive at  the same $L^{1+\ve}$-estimates as in that subsection.

\medskip
\subsection{$L^p$ - estimation  of $\M^{\tau_l}$  for the contribution by $E_l,l=1,\dots,L,$ when $\la\ge 2^j$}\label{9LpEl}
\medskip
The previous discussions show that  also the $L^p$-estimates \eqref{Mlajkp}  remain valid. Since, for the summation in $\la$ of these estimates, only the condition $p>3/2$ was needed, and since the subsequent summation over all $k\le j/a,$  with $a:=a_l\ge a_1$  works as well, 
 we can thus eventually reduce to proving  that the estimate
\begin{equation}\label{9crucsum}
\sum\limits_{j\gg1}\Big( 2^{j\big[(\frac{A-1}a+B+2)(\frac 2p-1+\ve')-1\big]}
+  2^{j\big[\frac Aa+B+2)(\frac 1p-\frac 12+\ve')-1\big]}\Big)<\infty
\end{equation}
still  holds true. 

But, by \eqref{9leqm}, 
$$
\frac{A-1}a+B+2\le n_e \quad \text{and} \quad \frac 2p-1< \frac 1{n_e}, 
$$ 
since $1/p<1/p_e=1/2+1/(2n_e),$ so that we may assume that the exponent of the first summand in \eqref{9crucsum} is strictly negative, and thus we can sum in $j.$

Similarly, by \eqref{9a1} and \eqref{nne},
$$
\frac{A}a+B+2\le n_e +\frac 1a\le n_e+\frac 1{a_1}=n_e+(n-n_e)=n\quad \text{and} \quad \frac 1p-\frac 12<\frac 1{2n_e}<\frac 1n, 
$$ 
so that we may assume that the exponent of the second summand in \eqref{9crucsum} is strictly negative  too and we can again  sum in $j.$

\medskip
\subsection{$L^p$-estimation of $\M^{\tau_0}$ when $\la\ge 2^j$}\label{9Lpl0}
\medskip
Here we need to argue in a different way in order to see that the $L^{1+\ve}$- estimate \eqref{Mjk} still remains true.

Since $p$ is a $\tilde\ka^e$-homogeneous polynomial function, we can write  it in the form
$$
p(y_1,s_2)=y_1^n\beta(y_1)+s_2\tilde \rho(y_1) +s_2^2\tilde q(y_1,s_2),
$$
where $\tilde \rho$ and $\tilde q$ are $\tilde\ka^e$-homogeneous polynomial functions. Suppose we had $\tilde q(0,0)\ne 0,$ so that $\tilde q$ would be a constant function. Then $s_2^2\tilde q(y_1,s_2)$ would be $\tilde\ka^e$-homogeneous of degree $2\tilde\ka^e_2=1,$ which would contradict \eqref{kae2<1}.
Hence we must have $\tilde q(0,0)=0.$

Essentially in the same way we can show that 
$$
\breve \phi_1(y_1,s_2)=y_1^n\beta(y_1)+s_2 \rho(y_1) +s_2^2q(y_1,s_2),
$$
with analytic functions $\rho$ and $q$ such that $q(0,0)=0.$ Thus, the complete phase can be written as
$$
2^{-2k}\big[\si_2^2 B(2^{-k}\si_2) +\si_2^2q(y_1,2^{-k}\si_2)-2^{k}\si_2 (y_2-\rho(y_1) \big]  +y_1^n\beta(y_1)  -y_3,
$$
and since we may assume that $|q|\ll 1,$ the term $\si_2^2q(y_1,2^{-k}\si_2)$ can again be absorbed into the first term. Thus, we can indeed now 
argue as in Subsection \ref{L1onEl}, simply replacing the term $y_1^m\om(y_1)$ in this argument by $\rho(y_1).$
\smallskip

 As in  Subsection \ref{Lpl0}, what then remains to show is that 
\begin{equation*}
\sum\limits_{j,k\gg1:  k\ge j/a_1}\Big(( 2^{jn-k})^{\frac 2p-1+\ve'}\, 2^{-j} 
+  ( 2^{jn})^{\frac 1p-\frac 12+\ve'}\, 2^{-j}\Big)<\infty.
\end{equation*}
But, we  can now essentially follow the arguments from  Subsection \ref{Lpl0}. 
For instance, in the first summand, we can first sum over all $k\ge j/a_1$ and arrive at the sum
$$
\sum\limits_{j\gg 1} 2^{j\big[(n-\frac 1{a_1})(\frac 2p-1+\ve')-1\big]}.
$$
But, 
$$
n-\frac 1{a_1}=n_e, \quad \text{and} \quad \frac 2p-1< \frac 1{n_e},
$$
so that by choosing $\ve'$ sufficiently small we can indeed  sum in $j.$ The remaining arguments from Subsection \ref{Lpl0} can easily be adapted in a similar way, since for $p>p_e$ we still have 
$$
n (\frac 1p-\frac 12)<n\frac 1{2n_e}<n\frac 1n=1.
$$
\medskip

Before we come to discuss the contributions by the homogeneous domains $D_l,$ we need to  prove a crucial lemma on multiplicities of roots.

\medskip

\subsection{On the condition $\tilde A_1\ge 1:$ a multiplicity lemma}
\medskip

\begin{lemma}[Multiplicity Lemma]\label{multi}
Let $P(y_1,y_2)$ be a polynomial function which is $\ka^1$-homogeneous  of degree 1 and satisfies $P(y_1,0)=y_1^{n-2},$  where $0<\ka^1_2\le1.$ Define 
$n_e$ as in Figure \ref{multiplicity}, i.e.,
$$
n_e-2=\frac{1-\ka^1_2}{\ka^1_1}.
$$
Note that then $a:=\ka^1_1/\ka^1_2$ is the  modulus of the slope of the line $L_1.$ 
Assume that $P$ has a non-trivial real root of multiplicity strictly greater then $n_e-2.$ 

Then, either $P$ is of the form
\begin{equation}\label{Pfac}
P(y_1,y_2)=(y_1-\la y_2^p)^{n-2}, \quad\text{where}\quad p\in\NN_{>0}, \la\in \RR\setminus\{0\},
\end{equation}
or all non-trivial real roots of $P$ have multiplicity 1, and, more precisely, $P(y_1,y_2)$ is of the form
$$
P(y_1,y_2)=y_1^{\nu_1} (y_1^q-\la y_2^p), \quad \text{where}\quad p,q \in\NN_{>0}, q\ge 2, \frac pq=a, \la\in \RR\setminus\{0\}.
$$
\end{lemma}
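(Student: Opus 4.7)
The plan is to exploit the $\ka^1$-homogeneity of $P$ in order to reduce the claim to one-variable algebra. The hypotheses force $\ka^1_1=1/(n-2),$ so the Taylor support of $P$ is contained in the lattice points of the line $L_1:=\{t\in\RR^2:\ka^1_1 t_1+\ka^1_2 t_2=1\}.$ Since $P$ has a non-trivial root one cannot have $P=y_1^{n-2},$ hence $L_1$ carries at least two lattice points and $a=\ka^1_1/\ka^1_2\in\QQ_{>0}.$ I will write $a=p/q$ with $\gcd(p,q)=1,$ $p,q\in\NN_{>0},$ and decompose $n-2=qs+r$ with $0\le r<q.$ The admissible monomials of $P$ are then exactly $y_1^{n-2-jq}y_2^{jp}$ for $j=0,\dots,s.$

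Next I record the factorization forced by this support. The substitution $w=y_1/y_2^a$ identifies $P$ with $y_2^{1/\ka^1_2}p(w)$ for a monic univariate polynomial $p(w)$ whose support lies in $\{n-2,n-2-q,\dots,r\},$ which forces $p(w)=w^r Q(w^q)$ for a monic real $Q$ of degree $s.$ Restoring $(y_1,y_2),$ this gives
\begin{equation*}
P(y_1,y_2)=y_1^r\prod_{i}(y_1^q-\al_i y_2^p)^{m_i},\qquad \textstyle\sum_i m_i=s,\qquad \al_i\in\CC\setminus\{0\}\ \text{distinct},
\end{equation*}
where $Q(u)=\prod_i(u-\al_i)^{m_i}.$ Non-trivial real root curves of $P$ correspond bijectively to real $\al_i,$ and the multiplicity of such a curve equals $m_i.$ Since $n_e-2=(n-2)-q/p,$ the hypothesis supplies a real $\al_i$ with $m:=m_i>(n-2)-q/p;$ combined with $m\le s=(n-2-r)/q$ and clearing denominators, this gives the crucial pinch
\begin{equation*}
q^2>p\bigl[(q-1)(n-2)+r\bigr].
\end{equation*}

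The conclusion then follows by splitting on $q.$ If $q=1$ then $r=0,$ $s=n-2,$ and $n_e-2=n-2-1/p;$ because $1/p\le 1$ and $m$ is an integer, the inequality $m>n_e-2$ yields $m\ge n-2=s,$ so $m=s$ and $Q(u)=(u-\la)^{n-2}$ for some $\la:=\al_i\in\RR\setminus\{0\},$ producing Case~(i): $P=(y_1-\la y_2^p)^{n-2}.$ If $q\ge 2,$ suppose toward a contradiction that $s\ge 2.$ Then $n-2\ge 2q+r\ge 2q,$ and the pinch gives $q^2>p(q-1)(n-2)\ge 2q(q-1),$ i.e.\ $q<2,$ a contradiction. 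Hence $s=1,$ so $Q$ is linear with a single real root $\la$ of multiplicity~$1,$ and we read off $P=y_1^r(y_1^q-\la y_2^p),$ which is Case~(ii) with $\nu_1=r;$ in particular every non-trivial real root of $P$ has multiplicity~$1.$

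The main conceptual step is recognizing that the lattice constraint on the Taylor support of $P$ reduces everything to the univariate polynomial $Q$ of degree $s=\lfloor(n-2)/q\rfloor$ via the substitution $u=w^q;$ once that is in place, the dichotomy is an elementary counting inequality. The one minor subtlety to flag is that for even $q$ a positive real root $\al_i=\mu^q>0$ of $Q$ gives rise to two real root curves $y_1=\pm\mu y_2^{p/q}$ of $P,$ each of multiplicity $m_i;$ but since the bookkeeping depends only on $m_i\le\deg Q=s,$ the argument is unaffected.
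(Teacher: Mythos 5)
Your proof is correct and follows essentially the same route as the paper's: you derive the factorization $P=y_1^{\nu_1}\prod_k(y_1^q-\lambda_ky_2^p)^{n_k}$ (the paper cites it from Proposition 2.2 of \cite{IM-ada}, while you re-derive it from the lattice structure of the Taylor support via the substitution $w=y_1/y_2^a$), extract an inequality from the multiplicity hypothesis (your ``pinch'' $q^2>p[(q-1)(n-2)+r]$ is, after clearing denominators with $a=p/q$ and $n-2=\nu_1+Nq$, the same as the paper's $(p-a)N<1-a\nu_1$ up to the replacement of $N$ by the slightly larger bound $s$), and then split on $q=1$ versus $q\ge 2$ exactly as the paper does. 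One small slip in the write-up: after writing $p(w)=w^rQ(w^q)$ with $Q$ monic of degree $s$, you claim $\sum_i m_i=s$ with all $\al_i\ne 0$; this is only true if $Q(0)\ne 0$, since a root of $Q$ at $0$ gets absorbed into $y_1^{\nu_1}$ with $\nu_1=r+q(s-\sum m_i)>r$. This does not damage the argument, because all you actually use downstream is $m\le s$ (which survives) and, in each terminal case, the hypothesis that $P$ has a non-trivial real root forces the surviving root to be nonzero; but the identity $\nu_1=r$ should be stated as a consequence rather than assumed.
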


\begin{figure}[!h]
\centering
\includegraphics[scale=0.4]{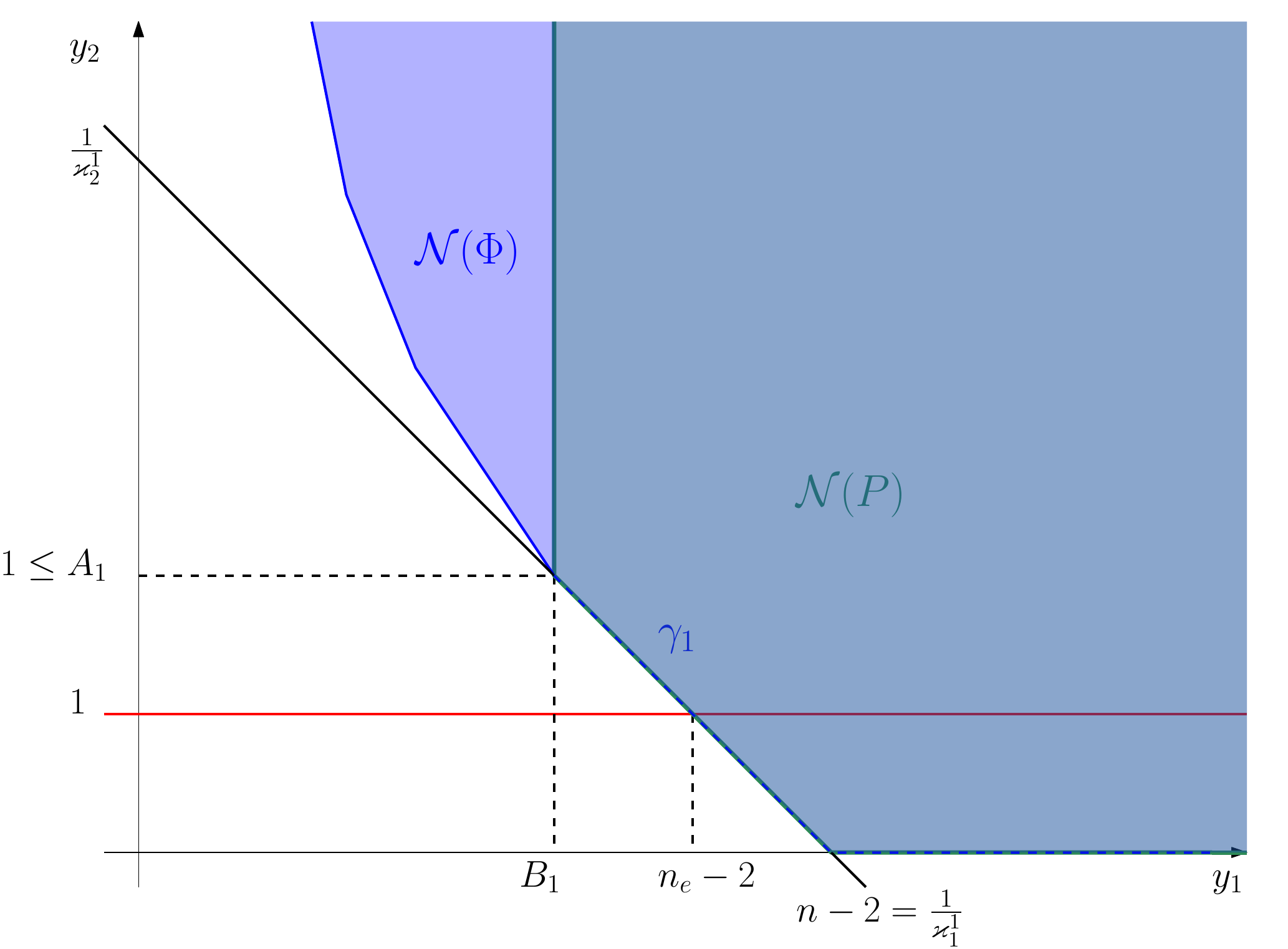}
  \caption{}
  \label{multiplicity}
\end{figure}

\begin{proof} 
According to \cite[Proposition 2.2]{IM-ada}, 
$P$ can be written in the form
\begin{equation}\label{Pfac2}
P(y_1,y_2)= y_1^{\nu_1}\prod_{k} (y_1^q-\la_k y_2^p)^{n_k},
\end{equation}
with distinct $\la_k\in \CC\setminus\{0\}$ and multiplicities $n_k\in \bN\setminus\{0\},$ with  $\nu_1\in\NN.$
\smallskip

Let us put $N:=\sum\limits_k n_k,$ so that $n-2=\nu_1+Nq,$  and note that $a=p/q.$ Note also that the  monomial of lowest degree in $y_1$ in $P$ is then of the form $cy_1^{\nu_1} y_2^{N p},$ which implies that $(B_1,A_1)=(\nu_1,Np).$ But then
$$
\ka^1_1\nu_1+\ka^1_2 Np=1, \quad \ka^1_1=a\ka^1_2,
$$
hence
$
\ka^1_2= 1/(a\nu_1+Np).
$
This implies that
\begin{equation}\label{ne-2}
n_e-2=\frac{1-\ka^1_2}{\ka^1_1}=\frac{1/\ka^1_2-1}a=\frac {a\nu_1+Np-1}a.
\end{equation}

Now, by our assumption, there is some $k'$ such that $\la_{k'}\in \RR\setminus\{0\}$ and $n_{k'}>n_e-2.$ In particular, \eqref{ne-2} implies that 
\begin{equation}\label{aN}
N\ge n_{k'} >\nu_1+\frac {Np-1}a,
\end{equation}
hence
\begin{equation}\label{paN}
(p-a)N<1-a\nu_1.
\end{equation}
\smallskip

\noi{\bf (i) If $q=1,$} then $a=p,$ so that by \eqref{paN} we have  $p\nu_1<1,$ hence $\nu_1=0.$ Thus, by \eqref{Pfac2}
$$
P(y_1,y_2)= \prod_{k} (y_1-\la_k y_2^p)^{n_k}.
$$
But, by \eqref{aN}, $n_{k'}>N-1/p,$ hence $n_{k'}=N=n-2.$ This implies 
$$
P(y_1,y_2)=(y_1-\la_{k'} y_2^p)^{n-2}.
$$

\smallskip

\noi{\bf (i) If $q\ge 2,$} then by \eqref{paN},
$$
N<\frac {1-a\nu_1}{p-a}\le \frac 1{p-a}=\frac 1{p(1-1/q)}\le \frac 1{p/2}=\frac 2p\le 2.
$$
Thus, $N=1,$ and without loss of generality we may assume that $N=n_1=1.$ This implies that
$$
P(y_1,y_2)= y_1^{\nu_1} (y_1^q-\la_1 y_2^p),
$$
hence all non-trivial real  roots of $P$ are simple.
\end{proof} 

\medskip
\subsection{On the contributions by the homogeneous domains $D_l, l=1,\dots,L$}\label{9Dlcont}
\medskip
As in Subsection \ref{9Dlcont}, the case $l=1$ requires special considerations. To this end, 
recall that the $\ka^e$-principal part of $\breve\phi_1(z_1,s_2)$ is the polynomial $p(z_1,s_2),$  and accordingly
$$
P(z_1,s_2):=\pa_{z_1}^2p(z_1,s_2)
$$
is the $\ka^1$-principal part of $\Phi(z_1,s_2)=\pa_{z_1}^2\breve\phi_1(z_1,s_2),$  which is associated to the first edge $\ga_1$ of $\N(\Phi).$
Moreover, if $a=a_1,$ then every non-trivial  real root of $P$ will be of the form $cs_2^a,$  with $c\ne 0.$ 

Now, suppose again that we change in a narrow homogeneous neighborhood $D_1^c$ of this root to the coordinates $\tilde z:=z_1-cs_2^a,$ so that we pass from $\Phi$ to $\tilde \Phi(\tilde z,s_2):=\Phi(\tilde z+cs_2^a,s_2),$  and from $P$ to $\tilde P(\tilde z,s_2):=P(\tilde z+cs_2^a,s_2).$ 
\smallskip

Observe first that Subcase (ii) of Case 2 from Subsection \ref{9Dlcont} will appear  here if and only if $\tilde P$ will be of the form $c_0z_1^{n-2},$ i.e., if the edge $\ga_1$ will ``shrink'' to its right vertex $(\tilde B_0,\tilde A_0)=(B_0, A_0)=(n-2,0)$  under the change of coordinates. But, this would mean that 
$$
P(z_1,s_2)=n(n-1)(z_1-cs_2^a)^{n-2}.
$$
However, this is excluded by our assumption (A2) (cf. \eqref{maxvan2}).
\smallskip

Thus, again, only Subcase (i)  of Case 2 can appear (compare Remark \ref{cinCii} (a)), so that the first compact edge  $\tilde\ga_1$ of  $\N(\tilde\Phi)$ 
lies on the same line $L_1$ as $\ga_1.$ Denote again by $(\tilde B_0,\tilde A_0)=(B_0, A_0)=(n-2,0)$ its right endpoint, and by  $(\tilde B_1,\tilde A_1)$ its left endpoint. Then $\tilde \ga_1$ is associated to $\tilde P.$ 

\smallskip

Assume now that we had $\tilde \nu_1=\tilde B_1>n^e-2,$ where $\tilde \nu_1$ denotes the power of $y_1$ of the first factor in \eqref{Pfac2}, if we look at 
$\tilde P$ in place of $P.$ But that would mean that $P(z_1,s_2)$ would have a  non-trivial real root of order  strictly greater than $n^e-2,$  so that we could   apply Lemma \ref{multi} to $P(z_1,s_2)$  and conclude that  either 
$$
P(z_1,s_2)=(z_1-c s_2^p)^{n-2},
$$
(compare \eqref{Pfac}), or  all non-trivial real roots of $P$ have multiplicity 1. The first case is, however, again excluded by assumption  (A1).

\medskip

In conclusion, we see that either all of the non-trivial real roots of $P$ have multiplicity  1 (as it had been in Section \ref{step1}), or we must have 
$\tilde B_1\le n^e-2.$ But, this is equivalent to having $\tilde A_1\ge 1$ (compare Figure \ref{multiplicity}).

\smallskip
From here on, all of the remaining arguments from Section \ref{step1} carry over to our present situation.

\setcounter{equation}{0}

\section{ Analysis in Step 2 of the resolution algorithm  }\label{9step2}
Fixing again $l_1\in\{1,\dots,L\}$ as in Section \ref{step2}, putting  accordingly 
$$
a:=a_{l_1}\ge a_1,
$$  and proceeding in the same way as before,  we are again led to considering  changes of coordinates of the form 
$$
\tilde z:=z_1-w(s_2), 
$$
where $w(s_2)=cs_2^a$ (or a full root $r_0$ with this leading term). The function $\Phi(z_1)$ will then lead to the function $\tilde \Phi(\tilde z).$ Note, however, one difference in the effect on the function $F:$

Due to the presence of the extra term $\tilde\al(s_2)$  in the phase for  $F(\la,s_2,s_3,y_1)$ in \eqref{9defiF}, we should  here indeed better  define 
$\Phi_0(x_1,s'):=\breve\phi_1(x_1, s_2)+s_1(x_1+\tilde \al(s_2))$ , and therefore 
\begin{eqnarray*}
\tilde\Phi_{0,0}(\tilde z, s_2)&:=& \breve\phi_1(\tilde z+ w(s_2), s_2),\\
\tilde\Phi_{0}(\tilde z, s')&:=& \Phi_{0}(\tilde z+ w(s_2),s_2)=\tilde\Phi_{0,0}(\tilde z, s_2)+s_1\tilde z+s_1\big(w(s_2)+\tilde \al(s_2)\big).
\end{eqnarray*}

\noi We shall therefore also introduce the function
$$
W(s_2):=w(s_2)+\tilde \al(s_2),
$$
besides the function $w(s_2),$ in Step 2, and similarly  also in every higher order step.
\smallskip

Now, either $W(s_2)\equiv 0,$ but then we can proceed in Step 2 (or higher) essentially  in the same way as in Step 1, so that we shall ignore this case in the sequel.
\smallskip

Or, $W(s_2)\not\equiv 0,$ and then
$$
|W(s_2)|\sim s_2^{a'} 
$$
for some rational $a'>0,$ since $W(s_2)$ is fractionally analytic. 

We remark that the value of $a'$ can depend on the  step of the resolution algorithm, since in higher steps of our  algorithm we shall have to replace the function $w(s_2)$ by the leading jet of some  higher-order sub-cluster of roots.  Observe also: since $\tilde \al(s_2)$ is analytic, we have that 
$|\tilde \al(s_2)|\sim s_2^{a_0}$ for some integer $a_0\ge 1.$  The following lemma is then immediate:

 \begin{lemma}\label{ona'}
Either $a'\ge 1,$ or $a'<1,$ and then $a<1$ and hence $1>a'=a\ge a_1.$
\end{lemma}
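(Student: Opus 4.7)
The plan is to compare the orders of vanishing at $s_2=0$ of the two summands in
\[
W(s_2)=w(s_2)+\tilde\alpha(s_2),
\]
using that $|w(s_2)|\sim s_2^{a}$ with rational $a\ge a_1>0$, and $|\tilde\alpha(s_2)|\sim s_2^{a_0}$ with integer $a_0\ge 1$, and then to split into cases according to the relation between $a$ and $a_0$.

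First I would handle the three cases separately. If $a<a_0$, then the term $w(s_2)$ dominates in $W(s_2)$, and consequently $|W(s_2)|\sim s_2^a$, so that $a'=a$. If $a>a_0$, then the term $\tilde\alpha(s_2)$ dominates, giving $|W(s_2)|\sim s_2^{a_0}$, hence $a'=a_0\ge 1$. If $a=a_0$, there are two sub-possibilities: either the leading coefficients of $w$ and $\tilde\alpha$ do not cancel, in which case $|W(s_2)|\sim s_2^{a_0}$ and $a'=a_0\ge 1$, or they do cancel, and then $W(s_2)$ vanishes to a strictly higher order, giving $a'>a_0\ge 1$.

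Next, I would combine these cases to characterize when $a'<1$ can occur. Inspecting the three cases above, one sees that $a'\ge 1$ except in the sub-case $a<a_0$ with $a<1$; and in that single remaining sub-case $a'=a$. Since by definition $a\ge a_1$, this forces $1>a'=a\ge a_1$, which is precisely the conclusion of the lemma.

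There is no real obstacle: the argument is just a careful bookkeeping of leading terms. The only thing to be pedantic about is the cancellation possibility at $a=a_0$, which I would handle by noting that $W(s_2)$ is fractionally analytic, hence possesses a well-defined (finite or infinite) order of vanishing, and in the cancellation sub-case this order is strictly larger than $a_0\ge 1$, so the estimate $a'\ge 1$ persists.
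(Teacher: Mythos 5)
Your case analysis is correct and fills in the details that the paper deems ``immediate''; the paper does not actually write out a proof, so there is nothing to compare against. You correctly isolate the three relative positions of $a$ and $a_0$, handle the possible cancellation at $a=a_0$ (which forces $a$ integral, hence $a\ge 1$, so $a'>1$ even after cancellation), and observe that $a'<1$ is only possible when $a<a_0$, in which case $a'=a<1$ and $a\ge a_1$ holds by construction of $a=a_{l_1}$.
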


Next,  as in Subsection \ref{dyadictildez}, we can reduce to estimating the contributions by the $\tilde \ka^l$- homogeneous  domains  
$\tilde D_l$ of the form
$$
 \tilde D_l:=\{(\tilde z,s_2):   2^{-M} s_2^{\tilde a_l}\le |\tilde z|\le 2^{M} s_2^{\tilde a_l}\}, \quad l\le \tilde L,
$$
 and  the transition domains of the form
\begin{eqnarray*}
\tilde E_l&:=&\{(\tilde z,s_2):   2^M s_2^{\tilde a_{l+1}}<|\tilde z|<2^{-M} s_2^{\tilde a_l}\}, \quad l\le \tilde L-1,\\
\tilde E_{\tilde L}&:=&\{(\tilde z,s_2):  |\tilde z|<2^{-M} s_2^{\tilde a_{\tilde L}}\} .
\end{eqnarray*}

More precisely, in Case 1 where $c\notin \mathcal C_{l_1}, $ according to Remark \ref{cnotinC} we just need the transition domain $\tilde E_{\tilde L},$ where $\tilde L=l_1.$  

In Case 2, where $c\in \mathcal C_{l_1},$ we need to distinguish two subcases: if we are in  Subcase (i), then   
according to  Remark \ref{cinCi}   (a) we shall need the transition domains $\tilde E_l$ with $l\ge l_1\ge 1$ and the homogeneous domains  $\tilde D_l$ with $l\ge l_1+1\ge 2.$ Moreover,  in analogy with  Remark \ref{onstop} (a), we may and shall   treat  the case where $l_1=1$ and where  all non-trivial real roots of $P$ have multiplicity 1 separartely,
 so that we shall here even assume that $l_1\ge 2.$

In Subcase (ii) of Case 2, according to Remark \ref{cinCii} we may assume that $l_1\ge 2$ and shall then need the transition domains $\tilde E_l$ with $l\ge l_1-1\ge 1$ and the homogeneous domains  $\tilde D_l$ with $l\ge l_1\ge 2.$

\smallskip

Moreover, our discussion in Subsection \ref{9Dlcont} shows that under these assumptions we may assume that  $\tilde A_1\ge 1,$ 
and that the edges $\tilde \ga_1$ and $\ga_1$ are supported on the same line $\tilde L_1=L_1$ of ``slope'' $\tilde a_1=a_1,$ so that, by Lemma \ref{key}, we now have  (in analogy with  the identities \eqref{keyestl=1} and  \eqref{keyestlg1}) that 

 \begin{equation}\label{9keyestl=1}
\frac{\tilde A_1-1}{\tilde a_1}+\tilde B_1+2=n_e,
\end{equation}
(even when $\tilde A_1<1$),  and \begin{equation}\label{9keyestlg1}
\frac{\tilde A_l-1}{\tilde a_l}+\tilde B_l+2\le n_e,\quad  l=1,\dots, \tilde L
\end{equation}
(compare with \eqref{9leqm}).

Having fixed $l_1$ and $l$  according to these assumptions, we shall put 
$$
 \tilde a:= \tilde a_l.
$$
Note that if we are in Case 1, or Subcase (i) of Case 2, then $\tilde a\ge \tilde a_{l_1}=a,$ but in Subcase (ii) it may also happen that $\tilde a<a$ (for instance if $l=l_1-1,$ where $\tilde a=a_{l_1-1}<a_{l_1}=a$).

\medskip
\subsection{Resolution of singularity on the transition domain $\tilde E_l$}

Fix $\tilde E_l$ with $l\ge l_1,$ respectively $l\ge l_1-1\ge 1$  if we are in the Subcase (ii) of Case 2. Arguing as in Subsection \ref{tElresolution}, we here find that

\begin{equation}\label{9tPhi0onEl}
\tilde\Phi_{0}(\tilde z, s')=\tilde U(\tilde z,s_2)s_2^{\tilde A_l} \tilde z^{\tilde B_l+2}+ s_1(\tilde z+W(s_2))+\tilde z \tilde g(s_2)+\tilde h(s_2).
\end{equation}
And, after applying the change of  coordinates $\tilde z=2^{-j} z, \ s_2=2^{-k} \si_2$  putting  $A:=\tilde A_l, B:= \tilde B_l$ and 
$$
d_{j,k}:=2^{-kA-j(B+2)},
$$
assuming that \eqref{tEjk1} holds true, we obtain
$$
\tilde\Phi_0^s(z,s_1,\si_2)- s_1y_1=d_{j,k} U^s(z,\si_2) \si_2^{A} z^{B+2}+ s_1(2^{-j} z+W(s_2)-y_1)+2^{-j} z \tilde g(s_2)
+\tilde h(s_2),
$$
where $U$ has similar properties as in \eqref{Phi0s}. Let us then write
$$
W(s_2)=2^{-ka'} \si_2^{a'}\tilde W(\si_2),
$$
so that $|\tilde W|\sim 1,$ whereas all derivatives of $\tilde W$ can be assumed to be very small. We shall use the following  abbreviations:
\begin{eqnarray}\nonumber
Y_1:&=& 2^{ka'} y_1,\\
\gamma(\si_2,Y_1)&:=&\si_2^{a'}\tilde W(\si_2)-Y_1,  \label{9Y1K}\\
K&:=&2^{j-ka'}.\nonumber
\end{eqnarray}
These allow us  to re-write again as in \eqref{tPhi0s} and \eqref{tFjk}
\begin{eqnarray*}\nonumber
&&\tilde\Phi_0^s(z,s_1,\si_2)- s_1y_1\\
&=&d_{j,k} U^s(z,\si_2) \si_2^{A} z^{B+2}+ 2^{-j} s_1\big(K\ga(\si_2,Y_1)+z\big)+2^{-j} z \tilde g(s_2)+\tilde h(s_2)
\end{eqnarray*}
and 
\begin{eqnarray*}\nonumber
&&\tilde F_{j,k}(\la,2^{-k}\si_2,s_3,y_1) = 2^{-j}\iint e^{-i\la s_3(\tilde\Phi_0^s(z,s_1,\si_2)- s_1y_1)} \chi_0(s_1)\chi_1(z)  \,\eta dz ds_1\\ 
&&=2^{-j} e^{-i\la s_3\tilde h(s_2)} \,\int \vp\big(s_3\la2^{-j} [K\ga(\si_2,Y_1)+z]\big)\\ \nonumber
&&\hskip5cm \times e^{-i\la s_3\big(d_{j,k} U^s(z,\si_2) \si_2^{A} z^{B+2} +2^{-j}z \tilde g(2^{-k}\si_2)\big)}\chi_1(z)\eta dz,
\end{eqnarray*}
where $z\sim 1\sim \si_2,$ and where $\vp:=\widehat {\chi_0}\in \S(\RR)$ is rapidly decaying.

\subsection{$L^2$ - estimation of $\M_{j,k}^\la$ for the contribution by $\tilde E_l$}
\medskip
The $L^2$-estimates \eqref{tL2Eljk} again remain valid here.

\subsection{$L^{1+\ve}$ - estimates of $\M_{j,k}^\la$ for the contribution by $\tilde E_l$ when $\la\ge 2^j$}\label{9L1ontEl}
\medskip

Again, we can essentially follow the discussions from Subsection \ref{tElresolution}, and shall only highlight places where the arguments require  modifications. Recall from \eqref{tgh} that
$$
\tilde h(s_2)= \breve\phi_1(w(s_2), s_2), \quad  \tilde g(s_2)=\pa_{z_1}\breve\phi_1(w(s_2), s_2).
$$
Thus,  if we  put again
$$
\tilde g^s(\si_2):=\tilde g(2^{-k}\si_2),\quad  \tilde h^s(\si_2):=\tilde h(2^{-k}\si_2),
$$
we see that 
$$
\tilde h^s(\si_2)= \breve\phi_1(2^{-ka}\si_2^a\tilde w(\si_2), 2^{-k}\si_2), \quad  \tilde g(\si_2)=\pa_{z_1}\breve\phi_1(2^{-ka}\si_2^a\tilde w(\si_2), 2^{-k}\si_2).
$$
But,  \eqref{pdil} implies that here
$$
p(2^{-ka}\si_2^a\tilde w(\si_2), 2^{-k}\si_2)=2^{-k/\tilde \ka^e_2}p(2^{-k(a-a_1)}\si_2^a\tilde w(\si_2),\si_2), 
$$
where $a-a_1\ge 0,$ and, by \eqref{kae2<1}, $2^{-k/\tilde \ka^e_2}=2^{-\ve_0 k}2^{-2k},$ for some $\ve_0>0.$ 

Similarly, since $\pa_{y_1}p$ is $\tilde\ka^e$-homogeneous of degree $1-\tilde \ka^e_1,$ we see that 
$$
\pa_{y_1}p(2^{-ka}\si_2^a\tilde w(\si_2), 2^{-k}\si_2)=2^{-k\frac {1-\tilde \ka^e_1}{\tilde \ka^e_2}}\pa_{y_1}p(2^{-k(a-a_1)}\si_2^a\tilde w(\si_2),\si_2).
$$
And, if we again assume that $j\gtrsim k\tilde a,$ recalling that $\tilde a\ge a_1,$ we see that also 
$$
2^{-j}2^{-k\frac {1-\tilde \ka^e_1}{\tilde \ka^e_2}} \lesssim  2^{-k/\tilde \ka^e_2}.
$$
 These estimates imply that
 $$
 \tilde h^s(\si_2)=O(2^{-\ve_0 k}2^{-2k}), \quad 2^{-j} \tilde g^s(\si_2)=O(2^{-\ve_0 k}2^{-2k}) \text{   for  } j\gtrsim k\tilde a,
 $$
 which means that {\it we can absorb these terms (as functions of $\si_2$) into  the term $s_2^2B(s_2)=2^{-2k}\si_2^2B(2^{-k}\si_2)$ of the amplitude of the oscillatory integral \eqref{tmujk}.}
\medskip

From here on, we can simply  follow the arguments for the  reduction to the case $K|\ga(\si_2,Y_1)|\lesssim 1$ in Subsection 
\ref {Kgagg1}, distinguishing now between the {\bf Subcase 2.a: $\la 2^{-ka'}\lesssim 1$}, and 
 {\bf Subcase 2.b: $\la 2^{-ka'}\gg  1,$} and replacing accordingly $a$ by $a'$ in these steps of proof. 
 
 Note that the main difference compared to the previous discussion lies in how we get rid of the terms $\tilde h^s(\si_2)$ and 
 $2^{-j} \tilde g^s(\si_2),$ which turns out to be even a bit easier here than in Subsection \ref {Kgagg1}.
\smallskip

Next, we can also easily adapt the arguments which led to \eqref{applemma5.2} and the identity
 \begin{eqnarray*}
\mu_{j,k,K|\ga|\lesssim1}^\la(y+\Ga)&=&\la^{\frac32}2^{-k}
\int  e^{-i\la s_3\big(\breve\phi_1(y_1, 2^{-k}\si_2)+2^{-2k}\si_2^2B(2^{-k}\si_2)-2^{-k}\si_2 y_2-y_3\big)} \\
&&\hskip2cm \times \chi_1(\si_2) \chi_1(s_3) \, \chi_0(K\ga(\si_2,Y_1))\, \eta d\si_2ds_3.
\end{eqnarray*}
But, due to \eqref{pdil}, we may absorb the term $\breve\phi_1(y_1, 2^{-k}\si_2)$ into $2^{-2k}\si_2^2B(2^{-k}\si_2)$ within the amplitude, so that without loss of generality we may assume that 
 \begin{eqnarray*}
\mu_{j,k,K|\ga|\lesssim1}^\la(y+\Ga)&=&\la^{\frac32}2^{-k}
\int  e^{-i\la s_3\big(2^{-2k}\si_2^2B(2^{-k}\si_2)-2^{-k}\si_2 y_2-y_3\big)} \\
&&\hskip2cm \times \chi_1(\si_2) \chi_1(s_3) \, \chi_0(K\ga(\si_2,Y_1))\, \eta d\si_2ds_3.
\end{eqnarray*}
This is essentially as in Subsection \ref{Kgagg1}, if we assume there that $\om(y_1)\equiv 0,$ and replace $a$ by $a'$ in these arguments.

\medskip
In this way,  combining all these estimates, we see that we here obtain the following analogue to estimate \eqref{tMjk}:
\begin{equation*}
\|\M_{j,k}^\la\|_{1+\ve\to 1+\ve}\lesssim \la^{1+\ve}2^{-j-k}+  \la^{\frac 12+\ve}2^{-j} +\la^{\frac 12+\ve}2^{-k}+\la^{\frac 12+\ve}2^{-ka'}.
\end{equation*}

By means of interpolation with our $L^2$ - estimate  \eqref{tL2Eljk} and summation in $\la$ we arrive then at the following analogue to 
\eqref{tsuminla1}:
Arguing as in Subsection \ref{LpEl}, we  see that we can sum over all dyadic $\la$s and obtain that, for every $\ve'>0,$ 
\begin{eqnarray}\nonumber
\sum\limits_{\la\gg 1}\|\M_{j,k}^\la\|_{p\to p}&\lesssim& (d_{j,k}^{-1}2^{-k})^{\frac 2p-1+\ve'}\,2^{-j} 
+  (d_{j,k}^{-1})^{\frac 1p-\frac 12+\ve'}\, 2^{-j}\\  \label{9tsuminla1}
&+&(d_{j,k}^{-1})^{\frac 1p-\frac 12+\ve'}\, 2^{-k(\frac 2p-1)}2^{-j(2-\frac 2p)}\\ 
&+&(d_{j,k}^{-1})^{\frac 1p-\frac 12+\ve'}\, 2^{-ka'(\frac 2p-1)}2^{-j(2-\frac 2p)}. \nonumber
\end{eqnarray}
Putting again  $A:=\tilde A_l, B:=\tilde B_l,$ we recall from \eqref{9keyestl=1} and \eqref{9keyestlg1} that 
$$
\frac{A-1}{\tilde a}+ B+2\le n_e, \quad  \text{and} \quad d_{j,k}^{-1}=2^{kA+j(B+2)}.
$$
Moreover, since we are on $\tilde E_l,$ by \eqref{tEjk1} we have $j\ge k\tilde a.$ 
\smallskip

This shows that we can argue in exactly the same way as in Subsection \ref{geolemma}  in order to show that we can sum the first two terms on the right-hand side of \eqref{9tsuminla1} over all $k$s and $j$s such that $j\ge k\tilde a.$

\smallskip 
We are thus left with showing that 
\begin{equation}\label{9tsuminjk1}
\sum\limits_{j,k\gg1:  j\ge k \tilde a}\Big((d_{j,k}^{-1})^{\frac 1p-\frac 12+\ve'}\, 2^{-k(\frac 2p-1)}2^{-j(2-\frac 2p)}
+  (d_{j,k}^{-1})^{\frac 1p-\frac 12+\ve'}\, 2^{-ka'(\frac 2p-1)}2^{-j(2-\frac 2p)}\Big)<\infty,
\end{equation}
provided $\ve'>0$ is sufficiently small.

The first sum in \eqref{9tsuminjk1} can be estimated in the same way as in  Subsection \ref{geolemma}  - we just have to replace $m$ be $n_e$ in the argument.

\smallskip
As for the second sum, note that the terms in the second sum can be estimated by the terms of the first sum, provided $a'\ge 1.$ 
\medskip

This leaves as with the case where $a'<1.$ Then, by Lemma \ref{ona'}, $a'=a\ge a_1,$  and as before we can distinguish between the cases $\tilde a\ge a,$ and $\tilde a<a.$ 
\smallskip

In the first case, we can follow the estimates  from Subsection \ref{geolemma}, with $m$ replaced by $n_e,$ and only need to verify that $(n-n_e)a\ge 1.$ But, $ a\ge a_1=1/(n-n_e)$ by \eqref{9a1}, so that the desired estimate holds true.
\smallskip

In the second case where  $\tilde a<a,$  we can again argue as in Subsection \ref{geolemma}, since we also have that $\tilde a\ge a_1.$ 

This completes the proof of \eqref{9tsuminjk1}.

\medskip
\subsection{$L^p$ - estimates of $\M_{j,k}^\la$  for the contribution by $E_l$ and $\tilde E_l$  when $\la<2^{j}$}
\medskip

Arguing in a similar way as in Subsection \ref{tLpElla<j}, we again  have the  $L^2$-estimate \eqref{L2Eljk2}.

\medskip

As for the $L^{1+\ve}$-estimates, let us first look at the {\bf Step 1-situation and the contribution by $E_l$.}  Again, we shall first assume that $\tilde\al(s_2)\equiv 0.$ 
\smallskip

Arguing in a similar way as in Subsection \ref{tLpElla<j}, we may here again assume that
$$
 \Phi_0^s(z,s_1,\si_2)- s_1y_1=-s_1y_1.
$$ 
We can therefore adopt our discussion from Subsection \ref{tLpElla<j} verbatim,  in which we only made use of the assumption $p>3/2.$

\bigskip
Let us next look at the {\bf Step 2-situation  and the contribution by $\tilde E_l$:}
By \eqref{9tPhi0onEl}, we can here assume that
$$
\tilde\Phi_0^s(z,s_1,\si_2)- s_1y_1= s_12^{-ka'}\ga(\si_2,Y_1)+\tilde h(s_2),
$$
where $z\sim 1\sim \si_2,$  with $Y_1,K$ and $\ga(\si_2,Y_1)$ now defined by \eqref{9Y1K}. But, the term  $\tilde h(s_2)$ can again be absorbed into  the term $s_2^2B(s_2)$ and shall henceforth be dropped.

We then  find that
\begin{equation}\nonumber
\tilde F_{j,k}(\la,2^{-k}\si_2,s_3,y_1) 
=2^{-j} \,\int \vp\big(s_3\la2^{-ka'} \ga(\si_2,Y_1)\big)\chi_1(z)\eta dz,
\end{equation}
where $\vp:=\widehat {\chi_0}\in \S(\RR)$ is rapidly decaying, and
\begin{eqnarray*}
\mu_{j,k}^\la(y+\Ga)&=&\la^{\frac52}2^{-k}\times \\
&&\iint  \tilde F_{j,k}(\la,2^{-k}\si_2,s_3,y_1) e^{-i\la s_3(s_2^2B(s_2)-2^{-k}\si_2 y_2-y_3)} \chi_1(\si_2) d\si_2 \chi_1(s_3) ds_3. 
\end{eqnarray*}

\medskip

{\bf Assume first that $\la2^{-ka'}\ge 1.$}  We can here argue in the same way as in Subsection  \ref{tLpElla<j}, only with $a$ replaced by $a',$ since we only made use of the assumption $p>3/2.$

\medskip

{\bf Assume next that $\la2^{-ka'}<1.$}  Since we  had already seen that we may  assume that $\tilde h^s\equiv 0,$ we can again follow the reasoning from Subsection  \ref{tLpElla<j}, and  write
\begin{eqnarray*}
\mu_{j,k}^\la(y+\Ga)&=&\la^{\frac52}2^{-j-k}\times \\
&&\hskip-2cm \iint  e^{-i\la s_3(2^{-2k}\si_2^2B(2^{-k}\si_2)-2^{-k}\si_2 y_2-y_3)} \chi_1(\si_2)  \vp\big(s_3\la2^{-ka'} \ga(\si_2,Y_1)\big)\eta d\si_2\, \chi_1(s_3) ds_3. 
\end{eqnarray*}
But note  that $\la 2^{-ka'} \ga(\si_2,Y_1)=\la 2^{-ka'}\si_2^ {a'}\tilde W(\si_2)- \la y_1,$ where the first term is harmless, since  $\la 2^{-ka'}\le 1.$
Thus we may again argue as in the Step 1 situation discussed before.
\smallskip

This concludes our discussion of the case where $\la<2^j.$

\subsubsection{Completion of Step 1: the case where $\tilde\al(s_2)\not\equiv 0$} This case can be handled like a Step 2 case, where 
formally $a=+\infty$ and $W(s_2):=\tilde\al(s_2),$ so that $a'=a_0\ge 1,$  since $a_0\in \NN.$ We leave the details to the interested reader.

\subsection{On the contributions by the homogeneous domains $\tilde D_l$ }\label{9tDlcont}
\medskip

These can easily  be handled in analogy to our discussions in Subsection \ref{9tDlcont}, with the obvious modifications of the functions $W(s_2)$ as indicated at the beginning of Section \ref{9step2}.

\medskip

\subsection{ The case where $\tilde\ka^e_2=0$} \label{kae2=0} In view of  Remark \ref{ka2=0}, this case behaves very much like the cases  explained before  where we would have stopped our resolution algorithm, after applying a change of coordinates by subtracting a complete real roots. It can therefore be handled in a similar way as we did in such situations. We leave the details to the interested reader.

\medskip
\section{Higher steps of the resolution algorithm and conclusion of the proof of Theorem \ref{thm-a+}}\label{9endproof1}
\medskip
Here, we can again just proceed as we did in the corresponding Section \ref{endproof1} for the proof of Theorem \ref{thm-a-}; this completes the proof of Theorem \ref{thm-a+}.

\medskip
\section{Proof of the Iosevich-Sawyer-Seeger conjecture for surfaces of class $\A^e_{n-1}$}\label{isasee}
\medskip
In this final section, we shall briefly sketch how the proof of Theorem \ref{thm-a+} can be modified in order to prove the following
\begin{prop}\label{prop-ae}
Assume that $S$ is the graph of $1+\phi,$ and accordingly $x^0=(0,0,1),$  where $\phi$ is  analytic and of type 
$\A_{n-1}^e$. Then, if  the density $\rho$ is supported in a sufficiently small neighborhood  of $x^0,$  the  condition  
$p>\max\{ 3/2, h, {2n}/(n+1)\}=max\{ 3/2,  {2n}/(n+1)\}$  is  sufficient for $\M$ to be $L^p$-bounded. 
\end{prop}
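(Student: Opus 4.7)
The plan is to follow the architecture of the proof of Theorem \ref{thm-a+} essentially verbatim, exploiting the fact that the target exponent of Proposition \ref{prop-ae}, namely $p>\max\{3/2,2n/(n+1)\}$, is strictly weaker than $p>\max\{3/2,p_e\}$: since always $n_e\le n$ and hence $p_e\le 2n/(n+1)$, this ``slack'' is exactly what is needed to absorb the two places where the structural assumptions $(A1)$, $(A2)$ had been used. Concretely, I would first replace every appearance of $n_e$ by $n$ in the summation arguments of Subsections \ref{geolemma}, \ref{Lpl0}, \ref{9LpEl} and \ref{9Lpl0}. The only non-trivial input used there was Lemma \ref{key} together with the identity $n_{\ka^1}=n_e$ that was established for $\A^+_{n-1}\setminus\A^e_{n-1}$ via the correspondence between $\N(\Phi)$ and $\N(\breve\phi_1)$. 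For class $\A^e_{n-1}$ this identity may fail, but the right vertex of the first edge of $\N(\Phi)$ is still $(n-2,0)$, hence by Lemma \ref{geo} we always have $n_{\ka^1}=n-1/a_1\le n$, and so Lemma \ref{key} still yields the (weaker) bound $(A_l-1)/a_l+B_l+2\le n$ for every $l$. Plugging this into the same summation machinery, the critical inequalities to be verified reduce to $n(\tfrac 1p-\tfrac 12)+\tfrac 2p-2<0$ and $n(\tfrac 1p-\tfrac 12)<1$, i.e.\ to $p>2(n+2)/(n+4)$ and $p>2n/(n+2)=h$; both are strictly implied by $p>2n/(n+1)$, so every summation converges.

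The second, more substantive, modification concerns the failure of the Multiplicity Lemma (Lemma \ref{multi}) in the $\A^e_{n-1}$ setting. The proof of Theorem \ref{thm-a+} used condition $(A1)$ to exclude the exceptional form \eqref{Pfac}, namely $P(z_1,s_2)=(z_1-cs_2^p)^{n-2}$, of the $\ka^1$-principal part of $\Phi$, and this was crucial to guarantee $\tilde A_1\ge 1$ after passing to Step 2. In our setting this exceptional form can actually occur, but when it does, the very change of coordinates $\tilde z:=z_1-cs_2^p$ that enters Step 2 reduces $\tilde P$ to the pure monomial $\tilde z^{n-2}$, so that all non-trivial roots of $\tilde\Phi$ in this homogeneous subdomain collapse to $\tilde z=0$ with full multiplicity $n-2$. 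This is precisely the ``stopping'' configuration analysed in Subsection \ref{D1c}: one obtains the factorisation \eqref{Dlreso} with $N_0=n-2$, the narrow homogeneous domain is absorbed into the final transition domain $\tilde E_{\tilde L}$ of $\tilde\Phi$, and the transition-domain estimates of Subsections \ref{L2ontEl} and \ref{9L1ontEl}, combined with the bound $(\tilde A-1)/\tilde a+N_0+2\le n$ established above, apply directly.

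The main technical obstacle I anticipate is verifying that this ``stopping by collapse'' mechanism propagates correctly through higher iterations of the algorithm: at every step $p\ge 2$ where a sub-cluster of roots produces a principal part of the exceptional form \eqref{Pfac}, one must confirm that the corresponding homogeneous domain in the $z_p$-coordinates can again be recast as a transition domain on which a resolution of type \eqref{Dlreso} holds, and that the left endpoint of the resulting new first edge still satisfies the vertical-coordinate bound needed for Lemma \ref{key}. Once this induction is in place, the $L^2$ and $L^{1+\ve}$ estimates, the interpolation, the summations over dyadic $\la, j, k$, and the termination of the resolution algorithm all proceed as in Sections \ref{9step2} and \ref{9endproof1}, now consistently with $n$ in place of $n_e$, yielding the claimed $L^p$-boundedness of $\M$ for every $p>\max\{3/2,2n/(n+1)\}$.
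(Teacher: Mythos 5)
Your high-level strategy matches the paper's: since $p>\max\{3/2,2n/(n+1)\}$ is weaker than $p>\max\{3/2,p_e\}$, one simply re-runs the proof of Theorem~\ref{thm-a+} with a coarser Newton-polyhedron bound. The paper actually works with a sharper intermediate quantity $n'_e$ (the $t_1$-intercept at height $1$ of the first edge of $\N(\Phi)$, which in the $\A^e$ case has steeper slope $a'_1>a_1$ and satisfies $n/2<n_e<n'_e\le n$) rather than the crude upper bound $n$, but since all the needed inequalities only use $n'_e\le n$ together with $p>2n/(n+1)$, your replacement of $n_e$ by $n$ is sound. One small imprecision: the two ``critical inequalities'' you list do not exhaust the summations; the Step~2 arguments of Subsection~\ref{9L1ontEl} also produce an exponent-of-$2^k$ condition of the form $(an+1)(\tfrac1p-\tfrac12)-a<0$, which does hold here but requires noting in addition that the relevant modulus of slope $a$ satisfies $a\ge a'_1\ge 1/(n-2)>1/n$.

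The genuine gap is in the treatment of the exceptional Multiplicity Lemma case. After the change of coordinates $\tilde z:=z_1-c_1s_2^{a'_1}$ reduces the $\ka^1$-principal part to the monomial $\tilde z^{n-2}$, it is \emph{not} true that one has reached the stopping configuration of Subsection~\ref{D1c}: the monomial form of $\tilde P$ only says that all roots of $\Phi$ in that narrow domain share the leading term $c_1s_2^{a'_1}$, not that they coincide. What actually happens is that the first edge of $\N(\tilde\Phi)$ becomes strictly steeper (modulus of slope $a''_1>a'_1$), and one must again decompose the narrow domain $\{|\tilde z|<\ve s_2^{a'_1}\}$ into transition and homogeneous domains --- including a new $D_1$ --- and continue the algorithm. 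Two ingredients you do not supply are then essential. First, because $a'_1\in\NN$, the coordinate change is \emph{analytic}, so $\N(\tilde\Phi)$ is a genuine Newton polyhedron and $\tilde A_1\ge 1$ holds automatically; this is exactly what lets Lemma~\ref{key} be applied at the next step without invoking the excluded hypothesis $(A1)$. Second, the correct termination argument is a dichotomy: either the exceptional form \eqref{Pfac} stops appearing at some step, in which case one reverts to the argument of Theorem~\ref{thm-a+}, or it appears at every step, in which case the successive leading jets form the Taylor expansion of a single real \emph{analytic} root $r(s_2)$ of full multiplicity $n-2$; only after subtracting this complete root does one obtain the factorisation $\tilde\Phi(\tilde z,s_2)=\tilde V(\tilde z,s_2)\tilde z^{n-2}$ with $|\tilde V|\sim1$, and only then can the stopping arguments of Subsection~\ref{D1c} be applied (here, $n'_e=n$ with respect to the final coordinates). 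Your ``stopping by collapse'' mechanism conflates the collapse of the principal part $\tilde P$ with the collapse of the actual roots of $\tilde\Phi$, and that is a real gap.
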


Details are again left to the interested reader.  Recall  that this result will complete the proof of the Iosevich-Sawyer-Seeger conjecture for arbitrary analytic hypersurfaces in $\RR^3.$ 

\smallskip
\begin{proof}

We again assume that $(y_1,y_2)$ are line-adapted coordinates in which \eqref{defp} and \eqref{extype} hold true, so that 
$$
p\y=y_1^n +c_1 y_1 y_2^{\beta+2}, \qquad \text{with} \ \beta\in\NN.
$$
Since $\pa_{y_1}^2 p\y=n(n-1) y_1^{n-2}, $ we now find that the first edge $\ga_1$ of $\N(\Phi)$ has a modulus of slope $a_1'$ such that $+\infty\ge a_1'>a_1=1/(n-n_e).$ This first edge will intersect the line $s_2=1$ at a point $(n'_e-2,1),$ 
so that $a'_1=1/(n-n'_e),$ and
\begin{equation}\label{ne'}
\frac n2<n_e<n'_e\le n. 
\end{equation}
\smallskip

In Step 1 of the resolution algorithm, we can then proceed as in the proof of Theorem \ref{thm-a+}, only with $a_1$ replaced by $a'_1,$ and $n_e$ replaced by $n'_e.$ Note that now $a\ge a'_1,$ and $1/p-1/2<1/(2n)\le 1/(2n'_e),$  so that the discussion in Subsection \ref{9LpEl} carries over directly. Similarly, since now
$$
n-\frac 1{a'_1}=n-(n-n'_e)=n'_e, \quad \text{and} \quad \frac 2p-1< \frac 1{n'_e},
$$
also  the discussion in Subsection \ref{9Lpl0} carries over, since now we can therein assume that $k\ge j/a'_1.$

\smallskip

In Step 2 (and higher later on), recall  that we had put $z_1:=y_1.$ We first note that  here we apply changes of coordinates of the form
$$
\tilde z:= z_1-w(s_2),
$$
where now $|w(s_2)|\sim s_2^a,\quad\text{with} \quad a\ge a'_1.$ It is important that the control of the functions $\tilde g$ and $\tilde h$ that we had discussed in Subsection \ref{9L1ontEl} still works here: recall that
$$
p(2^{-ka}\si_2^a\tilde w(\si_2), 2^{-k}\si_2)=2^{-k/\tilde \ka^e_2}p(2^{-k(a-a_1)}\si_2^a\tilde w(\si_2),\si_2), 
$$
and since $a-a_1\ge a'_1-a_1>0,$ we again can conclude that $2^{-k(a-a_1)}\lesssim 1.$ Similarly, since now $j\ge ka\ge ka'_1\ge ka_1,$ we still have that 
$$
2^{-j}2^{-k\frac {1-\tilde \ka^e_1}{\tilde \ka^e_2}} \lesssim  2^{-k/\tilde \ka^e_2}.
$$

Thus, we can basically proceed as before, but there is one {\bf major difference:}
\smallskip 

In our application of the Multiplicity Lemma \ref{multi} for the study of the contribution by the domain $D_1,$ we can no longer exclude the first possibility \eqref{Pfac}, i.e., it may happen that 
$$
P(z_1,s_2)=(z_1-c_1 s_2^p)^{n-2},\qquad c_1\in\RR\setminus\{0\}, p\in\NN_{\ge 1},
$$
vanishes of maximal order $n-2$ along a nontrivial real root. Note that here we would perform a change of coordinates 
$\tilde z:= z_1-w(s_2),$ with $w(s_2):=c_1s_2^{a'_1},$ where $a'_1=p$ is a natural number, in order to pass from $\Phi$ to 
$\tilde \Phi.$ This is now an analytic change of coordinates, so that $\N(\tilde\Phi)$ is still a Newton polyhedron (and not just Newton-Puiseux), which implies that automatically we will have $\tilde A_1\ge 1.$ 

However, since in the new coordinates, $\tilde P(\tilde z,s_2)=\tilde z^{n-2},$ the first edge $\tilde \ga_1$ of $\N(\tilde\Phi)$ will have a (modulus of) slope $a''_1$ such that $\infty \ge a''_1>a'_1>a_1,$ and we shall be reduced to a region of the form
$$
\tilde D_1^{c_1}:=\{|\tilde z|<\ve s_2^{a'_1}\},
$$
in which we shall again have to devise transition domains $\tilde E_l$ and homogeneous domains $\tilde D_l,$  including the case $l=1.$ 

For  all other cases in our application of Lemma \ref{multi}, the discussion at the end of Subsection \ref{9Dlcont} carries over.
\medskip

 Now, if we proceed to higher steps of our resolution algorithm, we shall either no longer encounter the first possibility \eqref{Pfac}  from a certain step on, so that we can directly adapt our discussion from the proof of Theorem \ref{thm-a+},
 or  the first possibility \eqref{Pfac}  will arise at {\it every} Step j, within the corresponding domain $D_1^{(j)}.$ 
 However, when $j$ is sufficiently large, say when $j\ge j_0,$ then $D_1^{(j)}$ will contain exactly one real root, of maximal multiplicity $n-2,$ (compare the discussion in Section \ref{endproof1}), if we consider this domain in our original coordinates $(z_1,s_2),$ and we see that this root $r(s_2)$ will have a Taylor series expansion of the form
 $$
 r(s_2)=c_1s_2^{a'_1}+c_2s_2^{a'_2}+\cdots +c_js_2^{a'_j}+\cdots,
 $$
 with natural numbers $a'_1<a'_2<\cdots <a'_j<\cdots,$ and real coefficients $c_j.$  Moreover, we may then write 
 $\Phi(z_1,s_2)=V(z_1,s_2)(z_1-r(s_2))^{n-2},$ with $|V|\sim 1.$ As what we always did   in such a situation, we can now  perform the analytic change of coordinates
 $$
\tilde z:= z_1-r(s_2),
$$
and  find that in these new coordinates $(\tilde z, s_2),$ we have 
$$
\tilde\Phi(\tilde z,s_2)=\tilde V(\tilde z,s_2)\tilde z^{n-2}.
$$
As in earlier situations where we had stopped the resolution of singularities algorithm, we now see that we have arrived at a good resolution of singularities and can apply our usual arguments to prove $L^p$ - boundedness of $\M$ for
  $p>\max\{ 3/2,2n/(n+1)\}$ (note that here $n'_e=n$ if we define $n'_e$ with respect to these new coordinates).

\end{proof}



\begin{thebibliography}{99999999}



\bibitem[AGV88]{agv}{
Arnol'd, V.I.,  Gusein-Zade, S.M. and  Varchenko, A.N.,
\newblock { Singularities of differentiable maps.  {Vol. II}},  Monodromy and asymptotics of integrals,
 {\em Monographs in Mathematics, 83}.
\newblock Birkh\"auser, Boston Inc., Boston, MA, 1988.}


 \bibitem[Bou85]{bourgain85} Bourgain, J.,
  \newblock Estimations de certaines fonctions maximales.
 \newblock {\em C. R. Acad. Sci. Paris S\'er. I Math.}, 301 (1985) no. 10, 499--502.

\bibitem[BDIM19]{bdim19}
S. Buschenhenke, S. Dendrinos, I.A. Ikromov,  D. M\"uller Estimates
for maximal functions  associated to hypersurfaces in $\bR^3$ with
height $h<2:$ Part I, Transactions of the American Mathematical
Society, Volume 372, 2019,  No. 2, pages 1363-1406.


\bibitem[Gr12]{greenblatt12}
Greenblatt, M.,
\newblock Maximal averages over hypersurfaces  and the Newton polyhedron.
\newblock {\em J. Funct. Anal.},  262,  (2012), 2314--2348.


\bibitem[Gr13]{greenblatt13}
Greenblatt, M.,
\newblock $L^p$ boundedness of maximal averages over hypersurfaces in $\RR^3$.
\newblock {\em Trans. Amer. Math. Soc.},  365,  (2013), 1875--1900.



\bibitem[IKM10]{IKM-max} Ikromov, I.\,A., Kempe, M.,  M\"uller, D.,
 \emph{Estimates for maximal functions  associated to hypersurfaces in $\bR^3$  and related problems of harmonic analysis}.  Acta Math. 204 (2010), 151--271.


\bibitem[IM11a]{IM-ada}
Ikromov, I.\,A.,  M\"uller, D.,
\newblock On adapted coordinate systems.
\newblock {\em   Trans. Amer. Math. Soc.,} 363 (2011), no. 6, 2821--2848.


\bibitem[IM16]{IMmon}
Ikromov, I.\,A.,  M\"uller, D.,
\newblock  Fourier restriction for hypersurfaces in three dimensions and Newton polyhedra;
  {\sl Annals of Mathematics Studies 194}, Princeton University Press, Princeton and Oxford 2016; 260 pp.


\bibitem[IU]{IU}  Ikromov, I. A.; Usmanov, S. È. On the boundedness of maximal operators associated with hypersurfaces.
 (Russian) Sovrem. Mat. Fundam. Napravl. 64 (2018), no. 4, 650–681. 42B25


 \bibitem[ISa96]{iosevich-sawyer1}
Iosevich, A. and Sawyer, E.,
\newblock Oscillatory integrals and maximal averages over homogeneous surfaces.
\newblock {\em Duke Math. J.}, 82 (1996), no. 1, 103--141.


\bibitem[ISaSe99]{io-sa-seeger}
Iosevich, A.,  Sawyer, E. and Seeger, A.,
\newblock On averaging operators associated with convex hypersurfaces of finite
  type.
\newblock {\em J. Anal. Math.}, 79 (1999), 159--187.


\bibitem[ISa97]{iosevich-sawyer}
Iosevich, A., Sawyer, E.,
\newblock Maximal averages over surfaces.
\newblock {\em Adv. Math.}, 132 (1997), 46--119.


\bibitem[NSeW93]{nagel-seeger-wainger}
Nagel, A.,  Seeger, A.  and Wainger, S.,
\newblock Averages over convex hypersurfaces.
\newblock {\em Amer. J. Math.}, 115 (1993), no. 4, 903--927.

\bibitem[PS97]{phong-stein}
Phong, D.H. and  Stein, E.M.,
\newblock The {N}ewton polyhedron and oscillatory integral operators.
\newblock {\em Acta Math.}, 179 (1997), no. 1, 105--152.


\bibitem[PSS99]{PSS} Phong, D.\,H., Stein, E.\,M.,  Sturm, J.\,A.,
\newblock On the growth and stability of real-analytic functions.
\newblock {\em Amer. J. Math.}, 121 (1999), no. 3, 519-554.


\bibitem[S76]{stein-sphere}
Stein, E.M.,
\newblock Maximal functions. {I}. {S}pherical means.
\newblock {\em Proc. Nat. Acad. Sci. U.S.A.}, 73 (1976), no. 7, 2174--2175.

\bibitem[S93]{stein-book}
Stein, E.\,M.,
\newblock {Harmonic analysis: Real-variable methods, orthogonality, and
  oscillatory integrals}. {\em Princeton Mathematical Series} 43.
\newblock Princeton University Press, Princeton, NJ, 1993.


\bibitem[V76]{Va}
Varchenko, A.\,N.,
\newblock Newton polyhedra and estimates of oscillating integrals.
\newblock {\em Funkcional. Anal. i Prilo\v zen}, 10 (1976), 13--38 (Russian); English translation in
\newblock {\em Funktional Anal. Appl.}, 18 (1976), 175--196.

\bibitem[Z5]{Zi}
Zimmermann, E.,
\newblock On $L^p$-estimates for maximal averages over hypersurfaces \\not satisfying the transversality condition.
\newblock {\em Doctoral thesis, Kiel 2014.}




\end{thebibliography}
\end{document}